\renewcommand{\PrintDOI}[1]{%
  \href{http://dx.doi.org/#1}{{\tt DOI:#1}}%
}
\renewcommand{\eprint}[1]{#1}
\def\thm@space@setup{%
 \thm@preskip=\parskip \thm@postskip=0pt
}
\def\th@remark{%
  \thm@headfont{\itshape}%
  \normalfont 
  \thm@preskip\parskip \thm@postskip=0pt
}
\numberwithin{equation}{section}
\newtheorem{Theorem}{Theorem}[section]
\newtheorem{Def}[Theorem]{Definition}
\newtheorem{Lem}[Theorem]{Lemma}
\newtheorem{Prop}[Theorem]{Proposition}
\newtheorem{Cor}[Theorem]{Corollary}
\newtheorem{Rem}[Theorem]{Remark}
\newtheorem{Rems}[Theorem]{Remarks}
\newtheorem{Exa}[Theorem]{Example}
\newcommand\bp{\begin{proof}}
\newcommand\ep{\end{proof}}
\mathchardef\mhyph="2D
\date{}
\newcommand{\C}{\mathbb{C}}
\newcommand{\N}{\mathbb{N}}
\newcommand{\R}{\mathbb{R}}
\newcommand{\Z}{\mathbb{Z}}
\newcommand{\id}{\mathrm{id}}
\newcommand{\End}{\mathrm{End}}
\newcommand{\opp}{\mathrm{op}}
\newcommand{\Hsp}{\mathcal{H}}
\newcommand{\Ksp}{\mathscr{K}}
\newcommand{\Ker}{\mathrm{Ker}}
\newcommand{\Diag}{\mathrm{Diag}}
\newcommand{\Ad}{\mathrm{Ad}}
\newcommand{\ad}{\mathrm{ad}}
\newcommand{\msA}{\mathscr{A}}
\newcommand{\msC}{\mathscr{C}}
\newcommand{\msD}{\mathscr{D}}
\newcommand{\msF}{\mathscr{F}}
\newcommand{\msI}{\mathscr{I}}
\newcommand{\msR}{\mathscr{R}}
\newcommand{\msS}{\mathscr{S}}
\newcommand{\msZ}{\mathscr{Z}}
\newcommand{\msP}{\mathscr{P}}
\newcommand{\msB}{\mathscr{B}}
\newcommand{\msJ}{\mathscr{J}}
\newcommand{\msU}{\mathscr{U}}
\newcommand{\mbX}{\mathbf{X}}
\newcommand{\mbT}{\mathbf{T}}
\newcommand{\mbY}{\mathbf{Y}}
\newcommand{\mbz}{\mathbf{z}}
\newcommand{\mbZ}{\mathbf{Z}}
\newcommand{\mbQ}{\mathbf{Q}}
\newcommand{\mbW}{\mathbf{W}}
\newcommand{\mbq}{\mathbf{q}}
\newcommand{\mbR}{\mathbf{R}}
\newcommand{\mbl}{\boldsymbol{\lambda}}
\newcommand{\mbr}{\mathbf{r}}
\newcommand{\mbn}{\mathbf{n}}
\newcommand{\mbd}{\boldsymbol{\delta}}
\newcommand{\mbS}{\mathbf{S}}
\newcommand{\mbD}{\mathbf{D}}
\newcommand{\mcA}{\mathcal{A}}
\newcommand{\mcL}{\mathcal{L}}
\newcommand{\mcO}{\mathcal{O}}
\newcommand{\mcF}{\mathcal{F}}
\newcommand{\mcE}{\mathcal{E}}
\newcommand{\mcN}{\mathcal{N}}
\newcommand{\wpi}{\widetilde{\pi}}
\newcommand{\wtheta}{\widetilde{\theta}}
\newcommand{\mfh}{\mathfrak{h}}
\newcommand{\mfu}{\mathfrak{u}}
\newcommand{\mfsu}{\mathfrak{su}}
\newcommand{\mfgl}{\mathfrak{gl}}
\newcommand{\mfn}{\mathfrak{n}}
\newcommand{\mfsl}{\mathfrak{sl}}
\newcommand{\mfs}{\mathfrak{s}}
\newcommand{\msRtimes}{\underset{\msR}{\otimes}}
\newcommand{\msktimes}{\otimes_{k}}
\newcommand{\kstartimes}{\otimes_{k_*}}
\newcommand{\mskctimes}{\otimes_{\kc}}
\newcommand{\bprod}{\mathop{{\prod}^{\mathrlap{b}}}}
\newcommand{\Det}{\mathrm{Det}}
\newcommand{\cop}{\mathrm{cop}}
\newcommand{\nc}{\mathrm{nc}}
\newcommand{\Spec}{\mathrm{Spec}}
\newcommand{\transp}{\mathrm{tr}}
\newcommand{\kc}{\overline{k}}
\newcommand{\ext}{\mathrm{ext}}
\newcommand{\Ind}{\mathrm{Ind}}
\newcommand{\Rep}{\mathrm{Rep}}
\begin{document}

\title{The field of quantum $GL(N,\C)$ in the C$^*$-algebraic setting}

\author{K. De Commer}
\address{Vakgroep wiskunde, Vrije Universiteit Brussel (VUB), B-1050 Brussels, Belgium}
\email{kenny.de.commer@vub.be, mflore@vub.ac.be}
\author{M. Flor\'{e}}
\date{}

\maketitle

\begin{abstract}
Given a unital $*$-algebra $\msA$ together with a suitable positive filtration of its set of irreducible bounded representations, one can construct a C$^*$-algebra $A_0$ with a dense two-sided ideal $A_c$ such that $\msA$ maps into the multiplier algebra of $A_c$. When the filtration is induced from a central element in $\msA$, we say that $\msA$ is an s*-algebra. We also introduce the notion of $\msR$-algebra relative to a commutative s$^*$-algebra $\msR$, and of Hopf $\msR$-algebra. We formulate conditions such that the completion of a Hopf $\msR$-algebra gives rise to a continuous field of Hopf C$^*$-algebras over the spectrum of $R_0$. We apply the general theory to the case of quantum $GL(N,\C)$ as constructed from the FRT-formalism. 
\end{abstract}

\section*{Introduction}

Let $GL(N,\C)$ be the group of invertible $N$-by-$N$ complex matrices. Viewing $GL(N,\C)$ as an affine real variety, we can associate to it the unital $*$-algebra $\mcO^{\R}(GL(N,\C))$ of regular functions, which is generated by the coordinate functions $X_{ij}$ and their complex conjugates $X_{ij}^*$ together with the inverse of the determinant $\Det(X)^{-1}$ and its adjoint $(\Det(X)^*)^{-1}$. Through the FRT-formalism \cite{FRT89,DSWZ92}, one can deform $\mcO^{\R}(GL(N,\C))$ as a Hopf $*$-algebra into a Hopf $\C[\mbq,\mbq^{-1}]$-algebra $\mcO^{\R}_\mbq(GL(N,\C))$ over the Laurent polynomials in $\mbq$. Our goal will be to complete this algebraic deformation into an analytic deformation by constructing a continuous field of Hopf C$^*$-algebras $C_{0,\mbq}(GL(N,\C))$ over the positive reals, having $C_0(GL(N,\C))$ as the fiber at $\mbq = 1$. 

The similar result for the unitary group $U(N)$, or rather $SU(N)$, was proven in \cite{Nag00}. In that case, the construction of the completion $C_{\mbq}(U(N))$ is immediate, since each element of $\mcO_\mbq(U(N))$ has uniformly bounded norm over all possible representations on Hilbert spaces, at least with values of $\mbq$ limited to a closed interval. The difficulty hence only consists in showing that the obtained field of C$^*$-algebras is continuous at $\mbq = 1$. For general semisimple compact Lie groups, continuity of the associated field of C$^*$-algebraic quantizations was shown in \cite[Theorem 1.2]{NT11}. 

For the case of $GL(N,\C)$, there is an extra initial complication since elements in $\mcO_\mbq^{\R}(GL(N,\C))$ do \emph{not} have uniformly bounded norms over all representations, so that it is a priori not even clear what the associated C$^*$-algebra should be. However, at least $\mcO_\mbq^{\R}(GL(N,\C))$ still has enough bounded representations to separate its elements. In itself this will not yet be sufficient to construct a C$^*$-algebraic completion, but one can also show that there exists a good positive filtration of the irreducible representations. This allows one to define an analogue of the C$^*$-algebra $C_{b,\mbq}(GL(N,\C))$ of uniformly bounded continuous functions and, inside of it, an analogue of the C$^*$-algebra $C_{0,\mbq}(GL(N,\C))$ of functions vanishing at infinity. We mention that such a theory of C$^*$-completions of unital $*$-algebras was studied in a more general context in \cite{Mey17}. 

To show that the irreducible representations of $\mcO_\mbq^{\R}(GL(N,\C))$ admit a good filtration, we make use of the well-known fact that there is a quantum trace of the absolute value squared $X^*X$ of the generating matrix $X$ of $\mcO_\mbq^{\R}(GL(N,\C))$ which lies in the center of $\mcO_\mbq^{\R}(GL(N,\C))$. This quantum trace is in fact the second leading term of a \emph{quantum Cayley-Hamilton identity} for $X^*X$. This is directly related to the corresponding quantum Cayley-Hamilton identity for the \emph{reflection equation algebra} \cite{P-S95,JW17}, of which the matrix coefficients of $X^*X$ generate a copy. The interplay with the reflection equation algebra will be needed to prove the continuity of our associated field of C$^*$-algebras, which is one of the harder results to achieve. We note that the reflection equation algebra also plays an important r\^{o}le in the formal quantization problem for coadjoint orbits of semisimple complex Lie groups with the Semenov-Tian-Shansky Poisson structure \cite{DM02,Mud07a}. 

We mention that in the case of $SL(N,\C)$, the dual problem of finding a continuous field of C$^*$-algebras $C_{\mbq}^*(SL(N,\C))$, quantizing the reduced group C$^*$-algebra of $SL(N,\C)$, was recently solved, in fact for any semi-simple complex Lie group, in \cite{MV18}, without however taking into account the quantum group structure. We also mention that a duality theory for fields of C$^*$-algebraic quantum groups is available through the use of continuous fields of multiplicative unitaries \cite[Section 4.2]{Bla96}, which could in principle be used to link the two approaches. This will however not be attempted in this paper.

The structure of the paper is as follows. 

In the first two sections, we develop some general theory. In the \emph{first section} we develop a theory of unital $*$-algebras endowed with a particular positive filtration on their set of irreducible bounded representations on Hilbert spaces. When the filtration is induced by an element in a matrix amplification of $\msA$, resp. by an element in the center of $\msA$, we call the resulting structure a \emph{weak s$^*$-algebra}, resp.~ \emph{s$^*$-algebra}. One can define for s$^*$-algebras a \emph{local completion}, producing a C$^*$-algebra $A_0$ of elements vanishing at infinity which we call the \emph{$C_0$-hull}. Part of the theory developed in this section is a special case of \cite[Section 7]{Mey17}, but we supplement the latter with some more working tools.

In the \emph{second section} we enhance s$^*$-algebras in various ways. We first consider a relative version of s$^*$-algebras with respect to a given \emph{commutative} s$^*$-algebra $\msR$. We call the resulting structure an \emph{$\msR$-algebra}. Its $C_0$-hull is a field of C$^*$-algebras over the spectrum of $\msR$. Next we study $\msR$-algebras endowed with a compatible coproduct, called \emph{Hopf $\msR$-algebra}. We show that the $C_0$-hull of a Hopf $\msR$-algebra gives rise to a field of C$^*$-bialgebras, and present particular conditions ensuring that this is actually a continuous field of Hopf C$^*$-algebras.

In the last two sections, we apply the general theory to the case of quantum $GL(N,\C)$ and its various associated $*$-algebras constructed through the FRT- and RE-formalism. In the \emph{third section}, we recall the main properties of the $*$-algebras we are interested in, paying particular attention to the reflection equation algebra and the associated quantum Cayley-Hamilton identity. We will also use the relation between the reflection equation algebra and the quantized enveloping algebra of the Lie algebra $\mfgl(N,\C)$ \cite{JL92}. Passing to the Borel part, this leads to a well-known quantum version of the \emph{Cholesky decomposition}.   In the \emph{fourth section}, we will prove our main theorem showing that $C_{0,\mbq}(GL(N,\C))$, as constructed from $\mcO_\mbq^{\R}(GL(N,\C))$, is a continuous field of Hopf C$^*$-algebras over the positive reals.

In an appendix, we clarify the connection of our theory with the notion of `quantum generator of a C$^*$-algebra' introduced by S.L. Woronowicz in \cite{Wor95}, and with the notion of C$^*$-hull introduced by R. Meyer in \cite{Mey17}.

\emph{Conventions}: 

We write $\prod$ for the algebraic direct product (of vector spaces, algebras,...) and $\bprod\;$ for the uniformly bounded direct product (of Banach spaces, Banach algebras, C$^*$-algebras,...) with respect to the supremum norm. For $V$ a subset of a Banach space $B$, we denote by $V^{\nc}$ the normclosure of $V$ in $B$. For $\Hsp$ a Hilbert space, we denote by $B(\Hsp)$ the C$^*$-algebra of bounded operators on $\Hsp$.

\emph{Acknowledgements}: The work of K. De Commer was partially supported by the FWO grant G.0251.15N and the grant H2020-MSCA-RISE-2015-691246-QUANTUM DYNAMICS.

\section{$C_0$-hulls and s$^*$-algebras}\label{Sec1}

Let $\msA$ be a unital $*$-algebra, that is, a unital $\C$-algebra equipped with an antilinear, antimultiplicative involution $*$. An element $a\in \msA$ is called \emph{self-adjoint} if $a^* = a$. The following definition introduces the notion of `closed spectral condition', which for the moment will be just a list of statements without content.

\begin{Def} Let $\msA$ be a unital $*$-algebra. We call \emph{closed spectral conditions} any family $\msS$ of statements of the following form: 
\[
\textrm{`The joint spectrum of the set of commuting selfadjoint}
\]
\[
\textrm{elements $Y_1,\ldots,Y_m \in M_n(\msA)$ lies in the closed set $S \subseteq \R^m$.'}
\]
We will abbreviate such a condition as $(Y_1,\ldots,Y_m;S)$. We call \emph{unital spectral $*$-algebra} any unital $*$-algebra together with a family of closed spectral conditions $\msS$.
\end{Def}

For $\pi$ a linear map on $\msA$ and $Y\in M_n(\msA)$, we will write $\pi(Y) = (\id\otimes \pi)(Y)$ for brevity. 

\begin{Def}\label{DefP} 
Let $\msA$ be a unital spectral $*$-algebra with closed spectral conditions $\msS$. We call \emph{admissible} representation of $\msA$ (with respect to $\msS$) any Hilbert space  $\Hsp_{\pi}$ endowed with a unital $*$-homomorphism $\pi: \msA \rightarrow B(\Hsp_{\pi})$ such that, for any $(Y_1,\ldots,Y_m;S) \in \msS$:
\[
\textrm{`The joint spectrum of the set of commuting selfadjoint}
\]
\[
\textrm{elements $\pi(Y_1),\ldots,\pi(Y_m) \in M_n(B(\Hsp_\pi))$ is a subset of $S \subseteq \R^m$.'}
\]
\end{Def}

Given $\msS$, we will more generally say that the joint spectrum of a set of commuting selfadjoint elements $(Y_1,\ldots,Y_m)$ in $M_n(\msA)$ lies in a set $S \subseteq \R^m$ if this is true for $(\pi(Y_1),\ldots,\pi(Y_m))$ for each admissible representation $\pi$. In particular, we write $Y\geq 0$ if $\pi(Y)$ is positive for each admissible representation $\pi$.

Let $\msA$ be a unital spectral $*$-algebra. We choose
\[
P^{\msA}_{\msS} = P^{\msA} = P = \{\pi\}
\] 
to be a maximal collection of mutually inequivalent \emph{topologically irreducible} non-zero admissible representations $\pi$ of $\msA$, that is, without proper \emph{closed} invariant subspaces. 

\begin{Def}
We call a unital spectral $*$-algebra $\msA$ \emph{C$^*$-faithful} if $P$ separates the elements in $\msA$. 
\end{Def}

By passing to a quotient, one can always assume that $\msA$ is C$^*$-faithful. Of course, this could turn $\msA$ into the zero algebra. Moreover, for concrete examples one wants that $\msA$ is C$^*$-faithful from the outset, so that no information is lost in considering the bounded representations. This excludes already a large class of unital $*$-algebras, such as for example the Weyl $*$-algebra generated by $x,x^*$ with $x^*x - xx^* = 1$, but C$^*$-faithfulness will be satisfied for the unital $*$-algebras we are ultimately interested in. 

In the following we will continue to work with a general unital spectral $*$-algebra. For $\pi$ an admissible representation of $\msA$ and $\pi' \in P$, we write $\pi' \preceq \pi$ if $\pi'$ factors through the normclosure $\pi(\msA)^{\nc}$ of $\pi(\msA)$. We write 
\[
P_{\pi} = \{\pi' \in P\mid \pi' \preceq \pi\}.
\]

Assume now that $P$ is endowed with a positive filtration $\msP$, i.e. there are given subsets $P_{\leq M}\subseteq P$ for $M>0$ with 
\[
P = \cup_{M>0} P_{\leq M} \qquad \textrm{and} \qquad P_{\leq M} \subseteq P_{\leq M'} \textrm{ for }M\leq M'.
\] 
We also write 
\[
P_{\geq M} = P\setminus \cup_{M'<M} P_{\leq M'}.
\]

\begin{Def}\label{DefCfiltr}
We call $\msP$ a \emph{C$^*$-filtration} if the following two conditions are satisfied:
\begin{itemize}
\item
For each $a\in \msA$ and $M>0$, we have $\sup_{\pi \in P_{\leq M}} \|\pi(a)\| <\infty$.

\item For each admissible representation $\pi$ there exists $M>0$ such that $P_{\pi} \subseteq P_{\leq M}$. 
\end{itemize}
\end{Def}

\begin{Def}
We say two filtrations $\msP$ and $\msP'$ of $P$ are \emph{equivalent}, $\msP \sim \msP'$, if for each $M>0$ there exists $M'\geq M$ such that $P_{\leq M} \subseteq P_{\leq M'}'$ and $P_{\leq M}' \subseteq P_{\leq M'}$. 
\end{Def}

Clearly $\msP$ is a C$^*$-filtration if and only if $\msP'$ is a $C^*$-filtration. In fact, we have the following.

\begin{Lem}
If a C$^*$-filtration exists, it is unique up to equivalence.
\end{Lem}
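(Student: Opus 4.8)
The plan is to show that any two C$^*$-filtrations $\msP$ and $\msP'$ on $P$ satisfy $\msP\sim\msP'$. Since the defining relation for equivalence is symmetric in $\msP$ and $\msP'$, and since the filtrations are monotone, it suffices to prove: for every $M>0$ there is some $M'>0$ with $P_{\leq M}\subseteq P'_{\leq M'}$. (Carrying this out both ways and taking, for a given $M$, the maximum of $M$ and the two bounds so obtained, produces a single $M'\geq M$ with $P_{\leq M}\subseteq P'_{\leq M'}$ and $P'_{\leq M}\subseteq P_{\leq M'}$, which is the required condition.)

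So fix $M>0$ and form the direct sum representation $\pi_M=\bigoplus_{\pi\in P_{\leq M}}\pi$ on $\Hsp_{\pi_M}=\bigoplus_{\pi\in P_{\leq M}}\Hsp_\pi$. The first axiom of a C$^*$-filtration, applied to $\msP$, says precisely that $\sup_{\pi\in P_{\leq M}}\|\pi(a)\|<\infty$ for each $a\in\msA$, so $\pi_M(a)$ is a bounded operator and $\pi_M$ is a genuine unital $*$-homomorphism $\msA\to B(\Hsp_{\pi_M})$. I would then verify that $\pi_M$ is admissible: given a closed spectral condition $(Y_1,\ldots,Y_m;S)\in\msS$, the operators $\pi_M(Y_1),\ldots,\pi_M(Y_m)$ are commuting and selfadjoint and decompose as orthogonal direct sums $\pi_M(Y_j)=\bigoplus_{\pi\in P_{\leq M}}\pi(Y_j)$; the joint spectrum of such a uniformly bounded orthogonal direct sum of commuting selfadjoint tuples is the closure of the union of the joint spectra of the summands, and since each summand has joint spectrum contained in the \emph{closed} set $S$ (as every $\pi\in P_{\leq M}$ is admissible), so does $\pi_M$.

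Next I would check that every $\pi\in P_{\leq M}$ satisfies $\pi\preceq\pi_M$, i.e.\ $\pi\in P_{\pi_M}$. Indeed $\Hsp_\pi$ is a $\pi_M(\msA)$-invariant subspace of $\Hsp_{\pi_M}$, hence reducing, since $\pi_M$ is a $*$-representation; thus the orthogonal projection onto $\Hsp_\pi$ commutes with $\pi_M(\msA)$ and therefore with its norm closure $\pi_M(\msA)^{\nc}$, and compression to $\Hsp_\pi$ defines a $*$-homomorphism $\pi_M(\msA)^{\nc}\to B(\Hsp_\pi)$ through which $\pi$ factors. Hence $P_{\leq M}\subseteq P_{\pi_M}$. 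Now apply the second axiom of a C$^*$-filtration, this time to $\msP'$ and to the admissible representation $\pi_M$, to obtain $M'>0$ with $P_{\pi_M}\subseteq P'_{\leq M'}$. Chaining the two inclusions gives $P_{\leq M}\subseteq P'_{\leq M'}$, as needed.

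The individual verifications (well-definedness of $\pi_M$, the description of the joint spectrum of a direct sum, the fact that invariant subspaces of $*$-representations reduce) are all routine. The one genuinely load-bearing idea is the passage to the single representation $\pi_M=\bigoplus_{\pi\in P_{\leq M}}\pi$: the first C$^*$-filtration axiom is exactly what makes this direct sum into a bona fide bounded representation, and the second axiom then transfers the bound across to the other filtration. Accordingly, the step I expect to need the most care is confirming that $\pi_M$ is admissible — that is, that the closed spectral conditions genuinely survive the (possibly infinite) orthogonal direct sum, which is where closedness of the sets $S$ in the definition of closed spectral conditions is used.
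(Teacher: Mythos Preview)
Your proof is correct and follows exactly the approach sketched in the paper: form the direct sum $\pi_M=\bigoplus_{\pi\in P_{\leq M}}\pi$, note it is bounded by the first axiom and admissible, and then apply the second axiom for $\msP'$ to extract the required $M'$. The paper states this in a single sentence, while you have helpfully spelled out the admissibility check and the verification that $P_{\leq M}\subseteq P_{\pi_M}$.
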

\begin{proof}
This follows immediately from the second condition in Definition \ref{DefCfiltr} upon considering representations of the form $\oplus_{\pi \in P_{\leq M}} \pi$, which are bounded by the first condition in Definition \ref{DefCfiltr}.
\end{proof}

\begin{Exa}\label{ExaMainExample}
Let $\msA$ be a unital spectral $*$-algebra, and let 
\[
X \in M_N(\msA) = M_N(\C)\otimes \msA,\qquad X =  (X_{ij})_{ij} = \sum_{ij} e_{ij} \otimes X_{ij}
\]
be a \emph{quantum generator} of $\msA$, that is, the entries $X_{ij}$ generate $\msA$ as a unital $*$-algebra. Then
\[
P_{\leq M}^X := \{\pi \in P \mid \|\pi(X)\|\leq M\}
\]
defines a C$^*$-filtration $\msP^X$.
\end{Exa}

In the following discussion, we fix a unital spectral $*$-algebra $\msA$ with C$^*$-filtration $\msP$. Then\footnote{We take the supremum in $[0,+\infty)$, so if $P_{\leq M} = \emptyset$ we have $\|a\|_M =0$.}
\[
\|a\|_M = \sup\{\|\pi(a)\|\mid\pi \in P_{\leq M}\}, \qquad a\in \msA
\] 
defines a C$^*$-seminorm on $\msA$. Let 
\[
\msI_{M} = \{a\in \msA\mid \|a\|_M=0\},
\] 
and write $A_{\leq M}$ for the completion of $\mathscr{A}/\msI_{M}$ with respect to the norm $\|-\|_M$. Then $A_{\leq M}$ is a unital C$^*$-algebra (possibly the zero C$^*$-algebra), equipped with a unital $*$-homomorphism 
\[
\theta_M: \msA \rightarrow A_{\leq M},
\] 
and each $\pi \in P_{\leq M}$ extends to a unital $*$-representation $\pi_M$ of $A_{\leq M}$ such that
\[
\xymatrix{ \msA \ar[r]^{\theta_M} \ar[rd]_{\pi} & A_{\leq M}  \ar[d]^{\wpi_M} \\ & B(\Hsp_{\pi})}
\] 
commutes. We obtain in this way a commuting triangle 
\[
\xymatrix{  \msA \ar[r]^{\theta_M} \ar[dr] & A_{\leq M}  \ar@{^{(}->}[d]\\  & \bprod_{\pi \in P_{\leq M}} B(\Hsp_{\pi})}
\] 
with the image of $A_{\leq M}$ the closure of the image of $\msA$ in $\bprod_{\pi \in P_{\leq M}} B(\Hsp_{\pi})$. In the following, we will identify $A_{\leq M}$ with its image inside $\bprod_{\pi \in P_{\leq M}} B(\Hsp_{\pi})$.

The following lemma extends the above discussion to arbitrary admissible representations.

\begin{Lem}\label{LemIrToRep} 
Let $\pi: \msA \rightarrow B(\Hsp_{\pi})$ be an admissible representation with $P_{\pi}\subseteq P_{\leq M}$. Then there exists a unique unital $*$-homomorphism $\pi_M: A_{\leq M} \rightarrow B(\Hsp_{\pi})$ such that the following diagram commutes:  
\[
\xymatrix{ \msA \ar[r]^{\theta_M} \ar[rd]_{\pi} & A_{\leq M}  \ar[d]^{\pi_M} \\ & B(\Hsp_{\pi})}.
\] 
\end{Lem}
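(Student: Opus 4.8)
The plan is to show that $\pi$ is contractive with respect to the seminorm $\|-\|_M$, i.e.\ that $\|\pi(a)\| \le \|a\|_M$ for every $a \in \msA$. Once this is established, $\msI_M \subseteq \Ker \pi$, so $\pi$ descends to a contractive unital $*$-homomorphism on $\msA/\msI_M$, which extends by continuity to the completion $A_{\leq M}$ and yields the desired $\pi_M$ with $\pi_M \circ \theta_M = \pi$; the extension of a $*$-homomorphism defined on a dense unital $*$-subalgebra is again a unital $*$-homomorphism. Uniqueness is immediate, since $\theta_M(\msA)$ is dense in $A_{\leq M}$ and $\pi_M$ is prescribed there.

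To prove the contractivity estimate, set $B := \pi(\msA)^{\nc} \subseteq B(\Hsp_\pi)$, a unital C$^*$-algebra (it contains $\pi(1)=\id$), on which the norm agrees with the operator norm. By the standard fact that the norm of an element of a C$^*$-algebra is the supremum of the norms of its images under the topologically irreducible representations, we have $\|\pi(a)\| = \sup_\rho \|\rho(\pi(a))\|$, where $\rho$ runs over the irreducible $*$-representations of $B$. So it suffices to show $\|\rho(\pi(a))\| \le \|a\|_M$ for each such $\rho$.

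Given an irreducible representation $\rho$ of $B$, consider the composite $\rho\circ\pi \colon \msA \to B(\Hsp_\rho)$, viewing $\pi$ as a map onto $B$. This is a unital $*$-representation, it is \emph{nonzero} (since $\rho(\pi(\msA))$ is dense in $\rho(B)\neq 0$), and it is \emph{topologically irreducible}: because $\pi(\msA)$ is dense in $B$, the set $\rho(\pi(\msA))$ has the same closed invariant subspaces as $\rho(B)$. It is also \emph{admissible}: for any $(Y_1,\dots,Y_m;S)\in\msS$, the commuting selfadjoint tuple $\pi(Y_1),\dots,\pi(Y_m)$ has joint spectrum inside $S$ since $\pi$ is admissible, and applying the $*$-homomorphism $\rho$ entrywise cannot enlarge the joint spectrum of a commuting tuple of selfadjoint elements — any continuous function vanishing on that joint spectrum is killed by functional calculus, hence so is its image under $\rho$ — so the joint spectrum of $((\rho\circ\pi)(Y_i))_i$ is again contained in $S$. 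By maximality of $P$, the representation $\rho\circ\pi$ is unitarily equivalent to some $\pi'\in P$, and it manifestly factors through $B=\pi(\msA)^{\nc}$, so $\pi'\preceq\pi$, i.e.\ $\pi'\in P_\pi\subseteq P_{\leq M}$. Hence $\|\rho(\pi(a))\| = \|(\rho\circ\pi)(a)\| = \|\pi'(a)\| \le \|a\|_M$, which is what we needed. (Note the first condition in Definition~\ref{DefCfiltr} guarantees $\|a\|_M<\infty$, so this bound is meaningful.)

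The only step with genuine content is the identification, up to equivalence, of the irreducible representations of $B=\pi(\msA)^{\nc}$ with elements of $P_\pi$; the remainder is bookkeeping. Within that step the one point requiring care is the verification that the pulled-back representations $\rho\circ\pi$ remain admissible, which rests on the fact that a $*$-homomorphism does not enlarge the joint spectrum of a commuting tuple of selfadjoint elements.
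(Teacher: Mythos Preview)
Your proof is correct and follows essentially the same approach as the paper: both establish the key inequality $\|\pi(a)\|\le\|a\|_M$ by passing to irreducible representations of $B=\pi(\msA)^{\nc}$ and identifying the composites with elements of $P_\pi\subseteq P_{\leq M}$. The paper phrases this via pure states on $B$ rather than irreducible representations, and omits the admissibility check that you spell out; otherwise the arguments coincide.
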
 

\begin{proof} 
Let $A_{\pi}$ be the closure of $\pi(\msA)$, and let $\msP_\pi$ be the set of pure states on $A_{\pi}$. For $\omega\in \msP_{\pi}$, let $(\Hsp_{\omega},\pi_{\omega})$ be the associated irreducible representation. Then $\pi_{\omega} \in P_{\leq M}$ by the second condition in Definition \ref{DefP}. Since 
\[
\|\pi(a)\| = \sup_{\omega\in \msP_{\pi}} |\omega(\pi(a))| \leq  \sup_{\omega\in \msP_{\pi}} \|\pi_{\omega}(a)\|  \leq \|a\|_M,\qquad a\in \msA,
\] 
the lemma follows.
\end{proof} 

Since $\|-\|_M \leq \|-\|_{M'}$ if $M\leq M'$, we obtain surjective C$^*$-homomorphisms 
\[
\wtheta_{M',M}:A_{\leq M'}\twoheadrightarrow A_{\leq M},\quad \theta_{M'}(a)\mapsto \theta_M(a),\qquad a\in \msA
\] 
such that the 
\[
\xymatrix{ \msA \ar[r]^{\theta_{M'}} \ar[rd]_{\theta_{M}} &  A_{\leq M'}  \ar[d]^{\wtheta_{M',M}} \\ & A_{\leq M}}
\]  
commute. These maps are obtained as restrictions of the natural projections of direct products 
\[
\xymatrix{A_{\leq M'} \ar@{->>}[d] \ar@{^{(}->}[r] &  \bprod_{\pi \in P_{\leq M'}} B(\Hsp_{\pi}) \ar@{->>}[d]  \\ A_{\leq M}  \ar@{^{(}->}[r] & \bprod_{\pi \in P_{\leq M}} B(\Hsp_{\pi}).} 
\]

Let $A_b$ be the projective limit C$^*$-algebra of the $A_{\leq M}$, 
\[
A_b = \{(x_M)_{M>0}\mid  x_M \in A_{\leq M}, \wtheta_{M',M}(x_{M'}) = x_M,\sup_M \|x_M\|<\infty\} \subseteq \bprod_{M>0} A_{\leq M}.
\] 
Then we obtain surjective $*$-homomorphisms 
\[
\wtheta_M: A_b \twoheadrightarrow A_{\leq M}.
\] 
In particular, the admissible representations of $\msA$ lead to representations $\pi_b: A_b \rightarrow B(\Hsp_{\pi})$ with $\pi_b = \pi_M \circ \wtheta_M$ for any $M$ large enough, making 
\[
\xymatrix{A_b \ar@{->>}[d] \ar@{^{(}->}[r] &  \bprod_{\pi \in P} B(\Hsp_{\pi}) \ar@{->>}[d]  \\ A_{\leq M}  \ar@{^{(}->}[r] & \bprod_{\pi \in P_{\leq M}} B(\Hsp_{\pi})} 
\]
commute.

Define for $M>0$
\[
A_{<M} = \cap_{\pi\in P_{\geq M}} \Ker(\pi_b) \subseteq A_b,
\] 
which are closed two-sided  ideals in $A_b$ with $A_{<M} \subseteq A_{<M'}$ for $M\leq M'$. Define 
\[
A_{c} = \cup_{M>0} A_{<M} =  \{x\in A_b\mid  \exists M>0, \forall \pi \in P_{\geq M}: \pi_b(x) = 0\},
\] 
which is a $*$-invariant two-sided ideal of $A_b$, and denote $A_0$ for the closure of $A_c$ inside $A_b$. Then $A_0$ is a C$^*$-algebra and a closed 2-sided ideal inside $A_b$.

\begin{Def}
We call $A_0$ the \emph{$C_0$-hull} of the spectral $*$-algebra $\msA$.
\end{Def}

Left and right multiplication give natural $*$-homomorphisms
\[
A_b \rightarrow M(A_0),\qquad A_b \rightarrow M(A_{<M}),
\]
where $M(-)$ denotes the multiplier C$^*$-algebra. The latter $*$-homomorphisms factor over $A_{\leq M}$, 
\[
\xymatrix{ A_b \ar[r] \ar[rd] & A_{\leq M}  \ar[d] \\ & M(A_{<M}).}
\] 

Let us write $\pi_0$ for the restriction of some $\pi_b$ to $A_0$. By the following lemma, all irreducible representations of $A_0$ arise from irreducible representations of $\msA$. 

\begin{Lem}\label{LemRepInd} 
Let $\pi'$ be a non-zero irreducible representation of $A_0$ on a Hilbert space $\Hsp$. Then $\pi' \cong \pi_0$ for a unique $\pi \in P$.
\end{Lem}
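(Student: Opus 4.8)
The plan is to start from a non-zero irreducible representation $\pi' : A_0 \to B(\Hsp)$ and first extend it to a representation of the multiplier algebra $M(A_0)$, which in turn receives a $*$-homomorphism from $A_b$ via left multiplication. Composing, we obtain a unital $*$-representation $\widetilde{\pi} : A_b \to B(\Hsp)$; since $A_0$ is a closed two-sided ideal in $A_b$, the standard theory of ideals and multiplier algebras guarantees that $\widetilde{\pi}$ restricts back to $\pi'$ on $A_0$ (using that $\pi'(A_0)\Hsp$ is dense in $\Hsp$ by irreducibility and non-degeneracy), and that $\widetilde{\pi}$ is again irreducible because $\pi'(A_0)$ already acts irreducibly. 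Precomposing with $\theta := \widetilde{\theta}_M \circ (\text{inclusion})$, or more directly with the canonical map $\msA \to A_b$, yields a $*$-homomorphism $\rho : \msA \to B(\Hsp)$. I would check that $\rho$ is an admissible representation: the spectral conditions in $\msS$ are closed conditions on joint spectra of commuting self-adjoint matrices over $\msA$, and these are preserved under the $*$-homomorphism $\msA \to A_b$ (each $A_{\leq M}$, hence $A_b$, is built as a completion/product of admissible representations, so the spectral conditions already hold there) and then under any further $*$-homomorphism into a $B(\Hsp)$.

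Next I would show $\rho$ is topologically irreducible and identify it with a unique element of $P$. Irreducibility of $\rho$ follows because $\rho(\msA)$ is dense in $\widetilde{\pi}(A_b)$ (as $\msA \to A_b$ has dense range) and $\widetilde{\pi}(A_b)$ contains $\pi'(A_0)$, which acts irreducibly on $\Hsp$; so $\rho(\msA)$ has no proper closed invariant subspace. Since $P$ was chosen as a maximal family of mutually inequivalent topologically irreducible admissible representations, there is a unique $\pi \in P$ with $\pi \cong \rho$; uniqueness of $\pi$ is exactly the statement that distinct elements of $P$ are inequivalent. I then need to verify the naming is consistent, i.e. that $\pi' \cong \pi_0$ where $\pi_0$ is the restriction to $A_0$ of $\pi_b$. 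This holds because the unitary implementing $\pi \cong \rho$ intertwines $\pi_b$ (the unique extension of $\pi$ to $A_b$) with $\widetilde{\pi}$, and restricting to the ideal $A_0$ intertwines $\pi_0$ with $\pi'|_{A_0} = \pi'$.

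The main obstacle I anticipate is the bookkeeping at the level of multiplier algebras: one must be careful that $\pi'$ extends uniquely to $M(A_0)$ (this uses non-degeneracy of $\pi'$, which follows from irreducibility since $\pi' \neq 0$), and that the resulting representation of $A_b$ genuinely restricts to $\pi'$ on $A_0$ rather than to some enlargement — this is the classical fact that a non-degenerate representation of an ideal $J \trianglelefteq B$ extends uniquely to $B$ and the extension is irreducible iff the original is. A secondary point requiring care is confirming that $\rho$ as constructed lands among the admissible representations and is non-zero: non-vanishing is clear since $\widetilde{\pi}$ is unital and $\Hsp \neq 0$, while admissibility requires the observation, which should be recorded explicitly, that the closed spectral conditions descend from $\msA$ to $A_b$ by construction of the $A_{\leq M}$ as quotients of $\msA$ by C$^*$-seminorms coming from admissible representations. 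Once these two structural facts are in place, the identification $\pi' \cong \pi_0$ for a unique $\pi \in P$ is essentially formal.
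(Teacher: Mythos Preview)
Your overall strategy---extend $\pi'$ to $A_b$ via $M(A_0)$, then pull back to $\msA$---is the same as the paper's, and the admissibility and uniqueness discussions are adequate. But there is a genuine gap in the irreducibility argument for $\rho$: you assert that $\rho(\msA)$ is dense in $\widetilde\pi(A_b)$ because ``$\msA \to A_b$ has dense range.'' This is false in general. For a simple instance, take $\msA = \C[x]$ with $x^* = x$, no spectral conditions, and filtration by $|x|\le M$; then $A_b = C_b(\R)$, and polynomials are certainly not norm-dense there. So your argument for irreducibility of $\rho$ does not go through as written.

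The paper supplies the missing step: since the $A_{<M}$ are an increasing family of closed ideals with dense union in $A_0$, any non-zero irreducible $\pi'$ on $A_0$ must already be non-zero (hence irreducible and non-degenerate) on some $A_{<M}$. The extension $\widetilde\pi$ of $\pi'$ to $A_b$ therefore coincides with the extension of $\pi'|_{A_{<M}}$ through $A_b \to M(A_{<M})$, and the latter map factors through $A_{\leq M}$. Now the point is that $\theta_M(\msA)$ \emph{is} dense in $A_{\leq M}$ by construction, so $\rho(\msA)$ is dense in $\widetilde\pi(A_b) = \widetilde\pi_M'(A_{\leq M})$, and irreducibility follows. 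This factoring through $A_{\leq M}$ is not a cosmetic detail---it is exactly what replaces your incorrect density claim. The same factoring also gives admissibility of $\rho$ in one line (anything factoring through $A_{\leq M}$ is admissible), which is cleaner than your argument via the product embedding, though yours is not wrong.
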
 
\begin{proof} 
Since the $A_{<M}$ form an increasing net of closed $2$-sided ideals with union dense in $A_0$, already $\pi'_{\mid A_{<M}}$ must be irreducible non-zero (and hence non-degenerate) for some $M>0$. It follows that the extension $\wpi'$ of $\pi'$ to $A_b$ via $A_b \rightarrow  M(A_0)$ factors over $A_{\leq M}$, 
\[
\wpi': A_b \rightarrow A_{\leq M} \rightarrow M(A_{<M}) \rightarrow B(\Hsp),
\] 
as a unital representation. Write $\wpi_M'$ for the resulting representation of $A_{\leq M}$. Then we can in particular define a representation $\pi$ of $\msA$ by 
\[
\pi: \msA \rightarrow A_{\leq M} \overset{\wpi_M'}{\rightarrow} B(\Hsp),
\] 
which is irreducible since $\theta_M(\msA)$ is dense in $A_{\leq M}$ and $\pi'$ is already irreducible on $A_0$. We have that $\pi$ is admissible since it factors through $A_{\leq M}$. Hence $\pi$ is equivalent to an element in $P$. On the other hand, we clearly have $\wpi_M' = \pi_M$, hence $\wpi' = \pi_b$ and $\pi' = \pi_0$. The construction of $\pi$ is independent of the choice of $M$ above. 

To prove uniqueness, assume that $\rho \in P_{\leq M}$ with $\rho_0 \cong \pi'$. Necessarily $\rho_0$ is non-degenerate, and its extension to $A_b$ must coincide with $\rho_b$. On the other hand, this extension is equivalent to $\wpi'$. Since $\rho_b$ factors over $A_{\leq M}$, the same is true for $\wpi'$, and the above reasoning gives that $\rho$ is equivalent to $\pi$. 
\end{proof}

In general, $A_b$ can be small and even zero if $\msA$ is not C$^*$-faithful. But even when $\msA$ is C$^*$-faithful the C$^*$-algebra $A_0$ can be small or even zero. To ensure that $A_0$ is large enough, we introduce the following property.

\begin{Def} 
We say that a C$^*$-filtration $\msP$ satisfies the \emph{(compact) Tietze extension property} if for any $0<M$  the projection
\[
(\wtheta_M)_{\mid A_c}: A_c \rightarrow A_{\leq M}
\] 
is surjective. 
\end{Def} 

\begin{Lem}
Assume $\msP$ has the (compact) Tietze extension property. Then for any $0<M$ there exists $M'>M$ for which the projection
\[
(\wtheta_M)_{\mid A_{<M'}}: A_{<M'} \rightarrow A_{\leq M}
\] 
is surjective. 
\end{Lem}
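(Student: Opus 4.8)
The plan is to exploit the fact that $A_{\leq M}$ is unital. The Tietze extension property already supplies surjectivity of $\wtheta_M$ out of the full ideal $A_c = \bigcup_{L>0} A_{<L}$; what remains is to show that a single stage $A_{<M'}$ of this increasing union already suffices, and the unit of $A_{\leq M}$ is precisely what forces this to happen at a finite stage.

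First I would use the Tietze extension property to produce an element $x \in A_c$ with $\wtheta_M(x) = 1_{A_{\leq M}}$, the unit of $A_{\leq M}$. Since $A_c = \bigcup_{L>0} A_{<L}$ and $A_{<L} \subseteq A_{<L'}$ whenever $L \leq L'$, this $x$ lies in $A_{<L}$ for some $L>0$; replacing $L$ by any $M' > M$ (which only enlarges the ideal, as $A_{<L} \subseteq A_{<M'}$ once $L \leq M'$) we may assume $x \in A_{<M'}$ with $M' > M$.

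Next I would observe that $\wtheta_M(A_{<M'})$ is a two-sided ideal of $A_{\leq M}$. This is the routine fact that the image of a two-sided ideal under a surjective $*$-homomorphism of C$^*$-algebras is again a (closed) two-sided ideal: given $a \in A_b$ and $y \in A_{<M'}$, one has $ay, ya \in A_{<M'}$ because $A_{<M'}$ is an ideal in $A_b$, hence $\wtheta_M(a)\wtheta_M(y) = \wtheta_M(ay)$ and $\wtheta_M(y)\wtheta_M(a) = \wtheta_M(ya)$ both lie in $\wtheta_M(A_{<M'})$, and every element of $A_{\leq M}$ has the form $\wtheta_M(a)$ by surjectivity of $\wtheta_M \colon A_b \twoheadrightarrow A_{\leq M}$. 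Finally, this ideal contains $\wtheta_M(x) = 1_{A_{\leq M}}$, so it must be all of $A_{\leq M}$; this is exactly the claimed surjectivity of $(\wtheta_M)_{\mid A_{<M'}}$.

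I do not expect a real obstacle here: once one has a preimage of the unit inside $A_c$, the argument collapses to the elementary observation that an ideal containing the identity is the whole algebra. The only points requiring a moment's care are the bookkeeping that $M'$ can be taken strictly larger than $M$, and the (standard, but worth recording) fact that images of ideals under surjective C$^*$-homomorphisms are ideals. Note that unitality of $A_{\leq M}$ is automatic: $A_b$ is an inverse limit of the unital C$^*$-algebras $A_{\leq L}$ along unital connecting maps $\wtheta_{L',L}$, hence is itself unital, and $\wtheta_M$ is a unital homomorphism.
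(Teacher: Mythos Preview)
Your proof is correct and is essentially the same as the paper's: pick a preimage of the unit in $A_c$, locate it in some $A_{<M'}$, and use that $A_{<M'}$ is an ideal of $A_b$ together with surjectivity of $\wtheta_M$ on $A_b$. The paper phrases the last step slightly more concretely by writing $A_b e \subseteq A_{<M'}$ and $\wtheta_M(A_b e) = A_{\leq M}$, which is exactly your ``ideal containing the unit'' observation unpacked.
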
 
\begin{proof}
Pick $e\in A_{<M'} \subseteq  A_c$ with $\wtheta_M(e) = 1$. Then $A_be  \subseteq A_{<M'}$ while $\wtheta_M(A_be) = A_{\leq M}$. 
\end{proof}

It follows in particular that if the Tietze extension property holds, we can choose an increasing net of positive contractions $e_M \in A_c$ such that $\wtheta_M(e_M) = 1 \in A_{\leq M}$. We call the $e_M$ \emph{associated local units}. It is again clear that this property does not depend on the chosen C$^*$-filtration up to equivalence.

\begin{Lem}\label{LemRepSam} 
If $\msP$ satisfies the Tietze extension property, then $\{\pi_0 \mid \pi \in P\}$ is a maximal set of mutually inequivalent irreducible non-degenerate representations of $A_0$. 
\end{Lem}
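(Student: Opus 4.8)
The bulk of the statement is already contained in Lemma \ref{LemRepInd}; the only genuinely new ingredient is the Tietze extension property, which I would use solely through the associated local units $e_M \in A_c$ with $\wtheta_M(e_M) = 1 \in A_{\leq M}$. The plan is to first show that each $\pi_0$ is non-zero, non-degenerate and topologically irreducible, and then to obtain mutual inequivalence and maximality directly from Lemma \ref{LemRepInd}.

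For non-degeneracy: fix $\pi \in P$, and by the second condition of Definition \ref{DefCfiltr} applied to $\pi$ itself choose $M > 0$ with $P_\pi \subseteq P_{\leq M}$; since $\pi \preceq \pi$ this also gives $\pi \in P_{\leq M}$, so $\pi_b = \pi_M \circ \wtheta_M$. Picking $e_M \in A_c$ with $\wtheta_M(e_M) = 1$ and using that $\pi_0$ is the restriction of $\pi_b$ to $A_0 \supseteq A_c$, one gets $\pi_0(e_M) = \pi_M(1) = \id_{\Hsp_\pi}$, whence $\pi_0(A_0)\Hsp_\pi = \Hsp_\pi$; in particular $\pi_0$ is non-zero since $\Hsp_\pi \neq 0$.

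For irreducibility: writing $\theta_b : \msA \to A_b$ for the canonical map (with $M$-component $\theta_M$, so $\pi_b \circ \theta_b = \pi$), the element $\theta_b(a)e_M$ lies in the two-sided ideal $A_c \subseteq A_0$ for every $a \in \msA$, and
\[
\pi_0(\theta_b(a)e_M) = \pi_b(\theta_b(a))\,\pi_b(e_M) = \pi(a).
\]
Hence $\pi(\msA) \subseteq \pi_0(A_0)$, so any closed invariant subspace for $\pi_0(A_0)$ is one for $\pi(\msA)$; as $\pi$ is topologically irreducible, so is $\pi_0$. Finally, Lemma \ref{LemRepInd} says every non-zero irreducible representation of $A_0$ is equivalent to $\pi_0$ for a \emph{unique} $\pi \in P$: the uniqueness clause gives that distinct $\pi \in P$ yield inequivalent $\pi_0$, and --- since an irreducible non-degenerate representation is in particular non-zero --- the existence clause shows $\{\pi_0 \mid \pi \in P\}$ meets every equivalence class, hence is maximal.

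I do not expect a real obstacle. The only point needing care is the simultaneous choice of $M$: it must be large enough that $\pi_b$ factors through $A_{\leq M}$ via $\wtheta_M$ (so that evaluating $\pi_M$ at $\wtheta_M(e_M) = 1$ is legitimate), while $e_M$ is an associated local unit for that same $M$. Everything else is formal bookkeeping with the maps $\msA \xrightarrow{\theta_b} A_b \xrightarrow{\wtheta_M} A_{\leq M} \xrightarrow{\pi_M} B(\Hsp_\pi)$ and the identity $\pi_0 = \pi_b|_{A_0}$.
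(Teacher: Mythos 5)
Your proof is correct and follows essentially the same route as the paper's: both exploit the Tietze extension property (you via the local units $e_M$, the paper via surjectivity of $\wtheta_M|_{A_c}$) to see that $\pi_0(A_0)$ contains $\pi(\msA)$, hence is irreducible, and then both invoke Lemma \ref{LemRepInd} for inequivalence and maximality. The only cosmetic difference is that the paper records the slightly stronger identity $\pi_0(A_0) = \pi_M(A_{\leq M})$, whereas you only need the inclusion $\pi(\msA) \subseteq \pi_0(A_0)$ — either suffices.
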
 
\begin{proof} 
If $\pi\in P$, say $\pi \in P_{\leq M}$, it follows from the Tietze extension property that $\pi_0(A_0) = \pi_M(A_{\leq M})$, where the latter contains $\pi(\msA)$ as a dense subset. Hence $\pi_0$ is irreducible. The lemma now follows directly from Lemma \ref{LemRepInd}. 
\end{proof}

\begin{Lem}\label{LemEqMulti}
Assume the Tietze extension property holds. Then $A_b = M(A_0)$. 
\end{Lem}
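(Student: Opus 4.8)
The plan is to show that the canonical $*$-homomorphism $\Phi: A_b \to M(A_0)$ coming from left and right multiplication is both injective and surjective. Injectivity is the easier half: if $x \in A_b$ satisfies $xA_0 = A_0 x = 0$, then in particular $x e_M = 0$ for every associated local unit $e_M \in A_c \subseteq A_0$. Applying $\wtheta_M$ and using $\wtheta_M(e_M) = 1 \in A_{\leq M}$, one gets $\wtheta_M(x) = \wtheta_M(x)\wtheta_M(e_M) = \wtheta_M(x e_M) = 0$ for all $M>0$; since $A_b$ embeds into $\bprod_{M} A_{\leq M}$ via $x \mapsto (\wtheta_M(x))_M$, this forces $x = 0$. (One should first note $A_0$ is a nondegenerate ideal in $A_b$ so that $\Phi$ is well-defined and $A_0 \subseteq M(A_0)$ sits compatibly; nondegeneracy follows from the existence of the $e_M$, whose images exhaust $A_{\leq M}$ and hence $A_c$ is dense in... more precisely $e_M x \to x$ for $x \in A_c$, using that $A_b e_M \supseteq$ a set mapping onto $A_{\leq M}$.)

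For surjectivity, let $T \in M(A_0)$. I would build the preimage component-wise: for each $M>0$, I claim $T$ induces an element $x_M \in A_{\leq M}$ compatible under the $\wtheta_{M',M}$. Indeed, fix $M$ and choose $M'>M$ with $(\wtheta_M)_{\mid A_{<M'}}: A_{<M'} \to A_{\leq M}$ surjective (the lemma just before this one). Since $A_{<M'}$ is an ideal of $A_b$ contained in $A_0$, the multiplier $T$ restricts to a multiplier of $A_{<M'}$; but $\wtheta_M$ maps $A_{<M'}$ onto the \emph{unital} C$^*$-algebra $A_{\leq M}$ with kernel $A_{<M'} \cap \Ker(\wtheta_M)$, an ideal, so $M(A_{<M'})$ maps to $M(A_{\leq M}) = A_{\leq M}$ compatibly with $\wtheta_M$. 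Define $x_M \in A_{\leq M}$ to be the image of $T$ under $M(A_0) \to M(A_{<M'}) \to M(A_{\leq M}) = A_{\leq M}$. One checks this is independent of the choice of $M'$ (two choices $M_1', M_2'$ can be compared inside a common larger one using that the maps $A_{<M_i'} \to A_{\leq M}$ all factor the same multiplier action), and that $\wtheta_{M',M}(x_{M'}) = x_M$ for $M \le M'$ by the same compatibility of multiplier maps applied along the surjections $A_{\leq M'} \twoheadrightarrow A_{\leq M}$. Since $\|x_M\| \le \|T\|$ uniformly, $(x_M)_M \in A_b$, and I would then verify that $\Phi((x_M)_M) = T$ by testing against elements of $A_{<M'} \subseteq A_0$, where both sides act through $A_{\leq M}$ by construction.

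The main obstacle I anticipate is the bookkeeping in surjectivity: making sure the locally defined components $x_M$ genuinely glue, i.e. that the element of $M(A_{\leq M})$ extracted from $T$ does not depend on the auxiliary $M'$ and is coherent across all $M$. This rests on the fact that the multiplier action of $T$ on the various ideals $A_{<M'}$ is "the same" action, passed through the quotient maps $\wtheta_M$ — a soft but slightly fiddly point that is cleanest to phrase via the universal property $M(J) \leftarrow M(I)$ for a (nondegenerate) surjection $I \twoheadrightarrow J$ of C$^*$-algebras with $I$ an ideal in a larger algebra carrying $T$. Once that naturality is set up, everything else is routine. I would also remark that the inclusion $A_0 \subseteq A_b$ followed by $\Phi$ recovers the standard embedding $A_0 \hookrightarrow M(A_0)$, so $\Phi$ is genuinely an isomorphism of C$^*$-algebras extending it.
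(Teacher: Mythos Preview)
Your proposal is correct, and the core idea---use the associated local units $e_M$ to relate a multiplier $T$ to honest elements of $A_{\leq M}$---is exactly the one in the paper. The difference is in packaging: the paper works entirely inside the concrete embedding $A_0 \subseteq A_b \subseteq \bprod_{\pi \in P} B(\Hsp_\pi)$, so that $M(A_0)$ is also realized inside $\bprod_{\pi} B(\Hsp_\pi)$ from the outset. Injectivity of $A_b \to M(A_0)$ is then automatic from Lemma~\ref{LemRepSam}, and for surjectivity one simply observes that for any $x \in M(A_0)$ and any local unit $e \in A_c$ with $\wtheta_M(e)=1$, the elements $x$ and $xe \in A_0$ have the same projection to $\bprod_{\pi \in P_{\leq M}} B(\Hsp_\pi)$, which therefore lies in $A_{\leq M}$. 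This sidesteps precisely the ``fiddly'' compatibility bookkeeping you flag: there is no need to pass through $M(A_{<M'}) \to M(A_{\leq M})$ or to verify independence of $M'$, because the component $x_M$ is read off directly as the image of $xe$. Your abstract multiplier-algebra route works and has the virtue of not invoking the ambient representation, but the paper's concrete approach is shorter and makes the coherence of the $x_M$ immediate.
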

\begin{proof} 
By Lemma \ref{LemRepSam} , it follows that we have an embedding $A_b \subseteq M(A_0)$ induced by
\[
\xymatrix{ A_0 \ar@{^{(}->}[r] \ar@{^{(}->}[d] & A_b  \ar@{-->}[d]  \ar@{^{(}->}[ld] \\ \bprod_{\pi \in P} B(\Hsp_{\pi}) &  \ar@{_{(}->}[l] M(A_0).}
\] 
Let us show that in fact $A_b = M(A_0)$. For this, it is enough to show that the restriction of $x\in M(A_0)$ to $\bprod_{\pi \in P_{\leq M}} B(\Hsp_{\pi})$ lands in $A_{\leq M}$. However, pick any associated local unit $e \in A_c$ restricting to $1$ in $A_{\leq M}$. Then the projections of $x$ and $xe \in A_0$ into $\bprod_{\pi \in P_{\leq M}} B(\Hsp_{\pi})$ coincide, and the projection of $xe$ lies in $A_{\leq M}$. 
\end{proof}

We can now also approximate elements in $A_0$ with elements in $\msA$. For $C$ a pre-C$^*$-algebra, i.e. a dense $*$-subalgebra of a C$^*$-algebra, denote by $M(C)$ the unital $*$-algebra of multipliers of $C$ \cite[Appendix]{VDae94}. Then we have a natural $*$-homomorphism 
\[
\theta:\msA \rightarrow M(A_c),
\]
 using the fact that each $A_{<M} \subseteq \prod_{\pi \in P} B(\Hsp_{\pi})$ only has non-zero components in $\prod_{\pi \in P_{\leq M}} B(\Hsp_{\pi})$. 

\begin{Lem}\label{LemTietDens}
Assume the Tietze extension property holds, and let $\{e_M\}$ be associated local units. Then the sum of all subspaces $e_M\theta(\msA)$ is dense in $A_0$. 
\end{Lem}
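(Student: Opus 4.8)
The plan is to show that $\bigcup_{M>0} \overline{e_M \theta(\msA)}$ is dense in $A_0$, where the bar denotes norm-closure. Since $A_0$ is the closure of $A_c = \bigcup_{M>0} A_{<M}$, it suffices to approximate an arbitrary element $x \in A_{<M}$ by elements of the form $e_{M'} \theta(a)$ with $a \in \msA$ and $M'$ suitably large.

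First I would fix $x \in A_{<M}$ and $\varepsilon > 0$. By construction $A_{<M} = \bigcap_{\pi \in P_{\geq M}} \Ker(\pi_b)$, so $x$ is supported (has non-zero components) only in $\bprod_{\pi \in P_{\leq M}} B(\Hsp_\pi)$; more precisely $\wtheta_{M'}(x)$ determines $x$ once $M' \geq M$, and $x$ lies in the image of $A_{\leq M'}$ under the canonical identifications. Since $\theta_{M'}(\msA)$ is dense in $A_{\leq M'}$, pick $a \in \msA$ with $\|\wtheta_{M'}(x) - \theta_{M'}(a)\|_{M'} < \varepsilon$ for a chosen $M' \geq M$. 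The subtlety is that $\theta(a)$ as an element of $M(A_c)$ need not lie in $A_0$ itself — it is only a multiplier — so I multiply by a local unit: choose $M' $ large enough that $e_{M'}$ restricts to $1$ in $A_{\leq M}$ (possible by the lemma on associated local units, taking $M'>M$ with $\wtheta_M(e_{M'})=1$), so that $e_{M'} x = x$. Then $e_{M'}\theta(a) \in A_0$, and I estimate
\[
\|x - e_{M'}\theta(a)\| = \|e_{M'}(x - \theta(a))\| \leq \|e_{M'}\| \cdot \|\wtheta_{M'}(x - \theta(a))\|_{M'} < \varepsilon,
\]
using that $e_{M'}$ is supported below level $M'$, that $e_{M'}$ is a contraction, and that the norm of an element of $A_c$ supported below $M'$ equals its norm in $A_{\leq M'}$.

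The main obstacle I anticipate is bookkeeping the supports and the identification of norms: one must be careful that $e_{M'}$ annihilates all components $\pi$ with $\pi \in P_{\geq M'}$ so that $e_{M'}\theta(a) = e_{M'}\theta_{M'}(a)$ genuinely lives in $A_{<M'} \subseteq A_0$ and that multiplying $x - \theta(a)$ (a priori only a multiplier) on the left by $e_{M'}$ lands back in $A_0$ with the expected norm bound. This is exactly the kind of argument already used in the proof of Lemma \ref{LemEqMulti}, where $xe$ and $x$ were shown to have the same projection into $\bprod_{\pi \in P_{\leq M}} B(\Hsp_\pi)$; I would invoke the same mechanism. Once these support considerations are pinned down, the density statement follows since $\varepsilon$ and $x \in A_{<M}$ were arbitrary and $A_c$ is dense in $A_0$.
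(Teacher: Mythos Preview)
Your approach is essentially the same as the paper's, but there is one genuine bookkeeping slip that you yourself anticipate without resolving. You assert that ``$e_{M'}$ is supported below level $M'$'', i.e.\ that $\pi_b(e_{M'})=0$ for $\pi\in P_{\geq M'}$, so that $e_{M'}\theta(a)\in A_{<M'}$ and the norm is computed in $A_{\leq M'}$. This is not part of the definition of associated local units: one only knows $\wtheta_{M'}(e_{M'})=1$ and $e_{M'}\in A_c$, meaning $e_{M'}\in A_{<M''}$ for some $M''$ which is in general \emph{strictly larger} than $M'$. Your estimate $\|e_{M'}(x-\theta(a))\|\leq \|e_{M'}\|\cdot\|\wtheta_{M'}(x)-\theta_{M'}(a)\|_{M'}$ therefore fails; the right-hand side should carry $\|\cdot\|_{M''}$, but you have only arranged the approximation at level $M'$.

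The fix is to decouple the two roles of the index. Take the local unit $e_M$ (so that $e_Mx=x$ for $x\in A_{<M}$), choose $M'$ with $e_M\in A_{<M'}$, and \emph{then} approximate $x$ by $\theta_{M'}(a)$ in $A_{\leq M'}$. Now $e_M(x-\theta(a))\in A_{<M'}$ and the estimate goes through with $\|\cdot\|_{M'}$. This is exactly how the paper argues: it first fixes $e_M\in A_{<M'}$, observes that $a\mapsto e_M\theta(a)$ is $\|\cdot\|_{M'}$-bounded and hence extends to $A_{\leq M'}$, and concludes that the closure of $e_M\theta(\msA)$ contains $e_M A_b\supseteq A_{<M}$.
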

\begin{proof}
Let $\msF$ be the sum of all $e_M\theta(\msA) \subseteq A_c \subseteq A_0$.  Assume $e_M \in A_{<M'}$. Then the linear map 
\[
\msA \rightarrow A_{<M'},\quad a \mapsto e_M\theta(a) 
\]
is $\|-\|_{M'}$-bounded, and hence factors through a bounded map on $A_{\leq M'}$. Since $\msA$ is dense in $A_{\leq M'}$ with respect to $\|-\|_{M'}$, it follows that $\msF$ contains all $e_Mx$ with $x\in A_b$. In particular, $\msF$ contains all $x = e_Mx$ with $x\in A_{<M}$. Hence $\msF$ contains $A_c$, and thus is dense in $A_0$.
\end{proof}

We will be interested in unital spectral $*$-algebras with C$^*$-filtration which are very closely related to the simple ones in Example \ref{ExaMainExample}. However, as we want some more flexibility, we introduce the following notions which relax the generating condition. When $X_1,\ldots,X_n$ are matrices over $\msA$, we write $X_1\oplus \ldots  \oplus X_n$ for the associated block diagonal matrix over $\msA$.

\begin{Def}
Let $\msA$ be a unital spectral $*$-algebra. We call $X \in M_{K,N}(\msA)$ a \emph{quantum control} if 
\[
P_{\leq M}^X := \{\pi \in P \mid \|\pi(X)\|\leq M\}
\]
is a C$^*$-filtration $\msP^X$ of $P$. 

We call \emph{central control} any quantum control $C_1\oplus \ldots \oplus C_n$ with each $C_i \in \msZ(\msA)$, the center of $\msA$.  

We call two rectangular matrices $X,Y$ over $\msA$ \emph{bounded equivalent} if there exist $m,M>0$ such that for each $\pi \in P$ 
\[
m\|\pi(Y)\|\leq \|\pi(X)\|\leq M\|\pi(Y)\|.
\]
\end{Def}

The following lemma collects some easy properties of quantum and central controls.

\begin{Lem}\label{LemLem}
Let $\msA$ be a unital spectral $*$-algebra. 
\begin{enumerate}
\item If $X$ is a quantum control and $Y$ is bounded equivalent with $Y$, then $Y$ is a quantum control. 
\item The matrix $X\in M_N(\msA)$ is a quantum control if and only if $X^*X$ is a quantum control. We may hence always assume that a quantum control is positive.
\item If $\oplus C_i$ is a central control, then $\sum C_i^*C_i$ is a central control. We may hence always take a central control which is a single positive element $C\in \msZ(\msA)$.
\item If $Q\in M_N(\msA)$ is invertible, $X\in M_N(\msA)$ and $Q\oplus Q^{-1} \oplus X$ a quantum control, then $Q\oplus Q^{-1} \oplus QX$ is a quantum control.  
\end{enumerate}
\end{Lem}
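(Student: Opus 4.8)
The plan is to reduce all four parts to the observation, recorded right after the definition of equivalent filtrations, that two equivalent positive filtrations of $P$ are simultaneously $C^*$-filtrations (or simultaneously not). Thus for (1), (2) and (4) it is enough to verify that the two filtrations in play are equivalent, and I would obtain (3) from (1) and (2) together with one elementary norm estimate. Throughout I use that each admissible $\pi$ is a unital $*$-homomorphism, so $\|\pi(X)\|^2=\|\pi(X^*X)\|$, $\pi(Q^{-1})=\pi(Q)^{-1}$, and $\|\pi(A\oplus B)\|=\max\{\|\pi(A)\|,\|\pi(B)\|\}$; note also that $\|\pi(Y)\|<\infty$ always, so $\msP^Y$ is automatically a positive filtration, the only content being whether it is a $C^*$-filtration.

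For (1): from $\alpha\|\pi(Y)\|\le\|\pi(X)\|\le\beta\|\pi(Y)\|$ for all $\pi\in P$ one reads off $P_{\le M}^X\subseteq P_{\le M/\alpha}^Y$ and $P_{\le M}^Y\subseteq P_{\le\beta M}^X$, so with $M'=\max\{M,M/\alpha,\beta M\}$ one gets $\msP^X\sim\msP^Y$; hence $Y$ is a quantum control. For (2): $\|\pi(X)\|^2=\|\pi(X^*X)\|$ gives $P_{\le M}^X=P_{\le M^2}^{X^*X}$ for all $M$, whence $\msP^X\sim\msP^{X^*X}$ in either direction; since $\pi(X^*X)=\pi(X)^*\pi(X)\ge 0$ for each admissible $\pi$, the element $X^*X$ is positive, which yields the ``hence'' clause. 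For (4): writing out $\|\pi(Q\oplus Q^{-1}\oplus Y)\|=\max\{\|\pi(Q)\|,\|\pi(Q)^{-1}\|,\|\pi(Y)\|\}$ for $Y\in\{X,QX\}$, and using $\|\pi(Q)\pi(X)\|\le\|\pi(Q)\|\,\|\pi(X)\|$ together with $\|\pi(X)\|=\|\pi(Q)^{-1}\pi(Q)\pi(X)\|\le\|\pi(Q)^{-1}\|\,\|\pi(Q)\pi(X)\|$, one checks that $\|\pi(Q\oplus Q^{-1}\oplus X)\|\le M$ forces $\|\pi(Q\oplus Q^{-1}\oplus QX)\|\le\max\{M,M^2\}$ and conversely; the two filtrations are therefore equivalent, and the claim follows.

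For (3): $\msZ(\msA)$ is a $*$-subalgebra, so $\sum_i C_i^*C_i\in\msZ(\msA)$, and it is positive since $\pi(\sum_i C_i^*C_i)=\sum_i\pi(C_i)^*\pi(C_i)\ge 0$. By (2), $\oplus_i C_i^*C_i=(\oplus_i C_i)^*(\oplus_i C_i)$ is again a quantum control. Now $\|\pi(\oplus_i C_i^*C_i)\|=\max_i\|\pi(C_i)\|^2$, while monotonicity of the norm on positive operators gives $\max_i\|\pi(C_i)\|^2\le\|\sum_i\pi(C_i)^*\pi(C_i)\|\le\sum_i\|\pi(C_i)\|^2\le n\max_i\|\pi(C_i)\|^2$, so $\oplus_i C_i^*C_i$ and $\sum_i C_i^*C_i$ are bounded equivalent; by (1), $\sum_i C_i^*C_i$ is a quantum control, hence a single positive central control.

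I do not expect any serious obstacle: the content is entirely in keeping track of constants in the filtration equivalences. The one place deserving a little care is (3), where one cannot directly compare $\oplus_i C_i$ with $\sum_i C_i^*C_i$ through bounded equivalence — their norms under $\pi$ differ by a square — which is why the argument first passes through $\oplus_i C_i^*C_i$ using (2). (I read the hypothesis ``$Y$ is bounded equivalent with $Y$'' in (1) as the intended ``$Y$ is bounded equivalent with $X$''.)
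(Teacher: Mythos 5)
The paper states this lemma without proof (it merely "collects some easy properties"), so there is no argument to compare against; your reconstruction is correct and is the natural one, reducing everything to the remark that equivalent positive filtrations are simultaneously C$^*$-filtrations. All four reductions check out: (1) translates bounded equivalence into filtration equivalence with linear constants; (2) uses the C$^*$-identity $\|\pi(X)\|^2=\|\pi(X^*X)\|$; (3) correctly routes through $\oplus_i C_i^*C_i$ (applying (2) first) before invoking the elementary sandwich $\max_i\|\pi(C_i)\|^2\le\|\sum_i\pi(C_i)^*\pi(C_i)\|\le n\max_i\|\pi(C_i)\|^2$ and then (1); and (4) uses $\pi(Q^{-1})=\pi(Q)^{-1}$ together with submultiplicativity to bound each filtration by a quadratic reparametrization of the other. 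Your reading of the typo in (1) ("$Y$ bounded equivalent with $Y$" for "with $X$") is also the obviously intended one.
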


\begin{Def} 
Let $\msA$ be a unital spectral $*$-algebra. We call $\msA$ a \emph{weak s$^*$-algebra} if there exists a quantum control for $\msA$. We call $\msA$ an \emph{s$^*$-algebra} if there exists a central control for $\msA$. 
\end{Def}

Although it would be natural to assume also C$^*$-faithfulness, it will be technically more convenient not to require this. In any case, one can always pass to $\msA/\msI$ for $\msI =\cap_{\pi \in P} \Ker(\pi)$. 

\begin{Prop}\label{PropCentTiet}
Any s$^*$-algebra $\msA$ has the Tietze extension property. 
\end{Prop}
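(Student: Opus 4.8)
The goal is to show that if $\msA$ is an s$^*$-algebra, with central control which by Lemma~\ref{LemLem}(3) we may take to be a single positive central element $C\in\msZ(\msA)$ inducing the C$^*$-filtration $P_{\leq M} = \{\pi\in P\mid \|\pi(C)\|\leq M\}$, then the projection $(\wtheta_M)_{\mid A_c}: A_c\to A_{\leq M}$ is surjective for every $M>0$. Equivalently, by the lemma following the definition of the Tietze extension property, it suffices to produce, for each $M$, a single element $e\in A_c$ with $\wtheta_M(e)=1\in A_{\leq M}$; then $A_be\subseteq A_c$ maps onto $A_{\leq M}$. So the plan is to construct such a local unit $e$ out of functional calculus applied to the image of $C$ in $A_b$.

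First I would observe that since $C$ is central in $\msA$, its image $\theta(C)\in A_b$ is central in $A_b$, and for each $\pi\in P$ the operator $\pi_b(\theta(C))$ is a scalar multiple of the identity on $\Hsp_\pi$ — indeed $\pi$ is topologically irreducible, so $\pi(\msA)^{\nc}$ has trivial center intersected with scalars, forcing $\pi_b(\theta(C))=\lambda_\pi\,\mathrm{id}$ with $\lambda_\pi\in\R$ (real since $C$ can be taken selfadjoint, positive even). By definition of the filtration, $\pi\in P_{\leq M}$ precisely when $|\lambda_\pi|\leq M$, and $\pi\in P_{\geq M'}$ precisely when $|\lambda_\pi|\geq M'$ — wait, more carefully, $\pi\in P_{\geq M'}$ means $\pi\notin P_{\leq M''}$ for all $M''<M'$, i.e. $|\lambda_\pi|\geq M'$. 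Now choose $M'>M$ and a continuous function $f:\R\to[0,1]$ with $f\equiv 1$ on $[-M,M]$ and $f\equiv 0$ outside $(-M',M')$. Set $e := f(\theta(C))$, defined via continuous functional calculus inside the C$^*$-algebra $A_b$ (using that $\theta(C)$, or rather a bounded transform of it, is a selfadjoint element whose image under every $\wtheta_{M''}$ is bounded — one should be slightly careful here since $\theta(C)$ need not itself lie in $A_b$ if $C$ is unbounded across the filtration, so instead apply functional calculus fiberwise: define $e$ to be the element whose component in $B(\Hsp_\pi)$ is $f(\lambda_\pi)\,\mathrm{id}$, and check this is a genuine element of $A_b$).

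Next I would verify the two required properties of $e$. For $\wtheta_M(e)=1$: in each fiber $\pi\in P_{\leq M}$ we have $|\lambda_\pi|\leq M$, so $f(\lambda_\pi)=1$, hence the image of $e$ in $\bprod_{\pi\in P_{\leq M}}B(\Hsp_\pi)$ is the identity; since $1\in A_{\leq M}$ (the algebras are unital) and $e$ agrees with it fiberwise, $\wtheta_M(e)=1$. For $e\in A_c$: we need $\pi_b(e)=0$ for all $\pi$ outside some $P_{\leq L}$; take $L=M'$, then $\pi\in P_{\geq M'}$ gives $|\lambda_\pi|\geq M'$, so $f(\lambda_\pi)=0$ and $\pi_b(e)=0$. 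Thus $e\in A_{<M'}\subseteq A_c$. The one genuine technical point — the main obstacle — is justifying that the fiberwise-defined $e$ actually lies in $A_b = \{(x_M)\mid x_M\in A_{\leq M}, \wtheta_{M',M}(x_{M'})=x_M, \sup\|x_M\|<\infty\}$, i.e. that for each $M''$ the element $f(\theta(C))$ projects into $A_{\leq M''}$ rather than merely into the big product $\bprod_{\pi\in P_{\leq M''}}B(\Hsp_\pi)$. This is where centrality of $C$ is used crucially: on $A_{\leq M''}$, the image $\theta_{M''}(C)$ is a selfadjoint (bounded, since $\|C\|_{M''}\leq M''$) central element, so $f(\theta_{M''}(C))$ makes sense in the C$^*$-algebra $A_{\leq M''}$ by ordinary functional calculus, and its fiber over $\pi\in P_{\leq M''}$ is $f(\lambda_\pi)\,\mathrm{id}$ by the spectral mapping theorem applied inside $\wpi_{M''}$; the compatibility $\wtheta_{M',M}(f(\theta_{M'}(C)))=f(\theta_M(C))$ follows since $\wtheta_{M',M}$ is a $*$-homomorphism sending $\theta_{M'}(C)$ to $\theta_M(C)$; and $\sup_{M''}\|f(\theta_{M''}(C))\|\leq \|f\|_\infty\leq 1$. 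Hence $e=(f(\theta_{M''}(C)))_{M''>0}\in A_b$, and the argument closes. I expect the bookkeeping around which $f$ to pick and confirming $e$ is an increasing net (for the "associated local units" statement) to be routine once this compatibility is in hand.
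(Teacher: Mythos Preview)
Your proposal is correct and follows essentially the same approach as the paper: construct a local unit $e$ by applying continuous functional calculus to the positive central control $C$, using that $\pi(C)$ is scalar on each irreducible fiber. The only cosmetic difference is that the paper first forms the bounded transform $r=(1+C)^{-1}\in A_b$ and then applies a function $f_M\in C_c((0,1])$ with $f_M\equiv 1$ on $[(1+M)^{-1},1]$, thereby sidestepping the issue you correctly flag (that $C$ itself need not lie in $A_b$); your resolution of this point---working level by level with $f(\theta_{M''}(C))\in A_{\leq M''}$ and checking compatibility under the $\wtheta_{M',M}$---is equally valid and amounts to the same construction.
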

\begin{proof}
Let $C \in \msZ(\msA)$ be a positive central control. We may assume $\msP = \msP^{C}$. Note that there exists a unique contraction $r\in A_b$ with $\pi_b(r) = (1+\pi(C))^{-1}$ for all $\pi \in P$. Since however $C$ is central, $\pi(C)$ is a scalar $c_{\pi}\geq 0$ for each $\pi \in P$, and 
\[
\pi \in P_{\leq M} \quad \iff \quad c_{\pi} \leq M.
\]
If we now choose $f_M \in C_c((0,1])$ such that $f_M(x) = 1$ for $x \geq (1+M)^{-1}$, we see that $e_M = f_M(r)$ lies in $A_c$ and satisfies $\pi_M(e_M)= 1$. Hence the Tietze extension property holds. 
\end{proof}

In the following, we will simply write $(1+C)^{-1}$ for the element $r$ in the proof above. Note that by this same proof, we see that $(1+C)^{-1} \in A_0$. In case $\msA$ is an s$^*$-algebra with positive central control $C$, we will call the $e_M$ as in the proof above the \emph{central local units associated with $C$}. 

We will be interested to know when a weak s$^*$-algebra is actually an s$^*$-algebra. We will deduce this by means of the existence of a well-behaved trace. We first recall the following Pimsner-Popa type estimate, see for example \cite[Proposition 2.6.2]{Wat90}. We will again use shorthand notation: for $X$ an $N$-by-$N$-matrix over a vector space, we write
\[
\Tr(X) := (\Tr\otimes \id)X = \sum_i X_{ii}.
\]

\begin{Lem}\label{LemPimPop}
Let $B_0$ be a C$^*$-algebra, and $X\in M_N(B_0)$. Embed $B_0 \subseteq M_N(B_0)$ as constant diagonal matrices. Then
\[
\frac{1}{N}X^*X \leq \Tr(X^*X).
\]
\end{Lem}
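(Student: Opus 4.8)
The statement to prove is the Pimsner--Popa type estimate: for $B_0$ a C$^*$-algebra and $X \in M_N(B_0)$, embedding $B_0 \subseteq M_N(B_0)$ as constant diagonal matrices, one has $\frac{1}{N} X^*X \leq \Tr(X^*X)$. The plan is to view this as an inequality inside $M_N(B_0)$ between $\frac{1}{N}X^*X$ (a genuine matrix) and $\Tr(X^*X) \otimes 1_N$ (a scalar matrix over $B_0$), and to prove it by a direct computation using a conditional-expectation/averaging argument together with positivity.

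First I would set $E \colon M_N(B_0) \to B_0 \subseteq M_N(B_0)$ to be the normalized partial trace $E(Y) = \frac{1}{N}\Tr(Y) \otimes 1_N$, which is a (completely) positive, unital, trace-preserving conditional expectation onto the constant diagonal copy of $B_0$. The claim is then exactly $X^*X \leq N\, E(X^*X)$. To see this, write $u_k \in M_N(\C)$, $k = 1,\dots,N^2$, for the standard matrix units $e_{ij}$ reindexed; more conveniently, I would use a family of unitaries $\{v_k\}_{k=1}^{N^2}$ in $M_N(\C)$ (for instance the Weyl unitaries generated by the cyclic shift and the diagonal $N$-th roots of unity) with the property that $\frac{1}{N^2}\sum_k v_k Y v_k^* = \frac{1}{N}\Tr(Y)\otimes 1_N$ for every $Y \in M_N(\C)$, and hence for every $Y \in M_N(B_0)$ by $B_0$-linearity. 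Thus $N\,E(X^*X) = \frac{1}{N}\sum_k v_k X^*X v_k^*$.

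Next, observe that one of the indices, say $k=1$, can be taken to give $v_1 = 1$, so that $\frac{1}{N}\sum_k v_k X^*X v_k^* = \frac{1}{N}X^*X + \frac{1}{N}\sum_{k\geq 2} v_k X^*X v_k^*$. Since $X^*X \geq 0$, each term $v_k X^*X v_k^* = (X v_k^*)^* (X v_k^*) \geq 0$, so the sum over $k \geq 2$ is positive, giving $N\,E(X^*X) \geq \frac{1}{N}X^*X$, i.e. exactly $\Tr(X^*X) \geq \frac{1}{N}X^*X$ under the stated embedding. Alternatively, and perhaps cleanly enough to avoid introducing Weyl unitaries explicitly, I would just use the permutation unitaries: averaging $X^*X$ over conjugation by the $N$ cyclic permutation matrices already moves the diagonal entries around, and combined with averaging over conjugation by diagonal sign (or root-of-unity) matrices kills the off-diagonal part; keeping the identity term separate yields the inequality. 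The main obstacle — really the only point requiring care — is verifying the averaging identity $\frac{1}{N^2}\sum_k v_k Y v_k^* = \frac{1}{N}\Tr(Y) \otimes 1_N$ for a suitable finite unitary family; this is a standard fact about the uniqueness of the tracial state on $M_N(\C)$ (the average is a unital completely positive map fixing the scalars and commuting with left multiplication by $M_N(\C)$, hence must be $E$), and once it is in hand the positivity argument above closes the proof immediately.
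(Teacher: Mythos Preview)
Your argument is correct. The paper itself does not prove this lemma but simply cites the general Pimsner--Popa inequality from Watatani's monograph on the index for C$^*$-subalgebras, where for a conditional expectation $E:A\to B$ of index-finite type one has $a \leq \|\mathrm{Index}\,E\|\,E(a)$ for all $a\geq 0$; the present case is the inclusion $B_0\subseteq M_N(B_0)$ with $E=\frac{1}{N}\Tr(-)\otimes 1_N$ and index $N$. Your route is a direct, self-contained instance of this: the averaging identity $\frac{1}{N^2}\sum_k v_k Y v_k^* = \frac{1}{N}\Tr(Y)\otimes 1_N$ for the Weyl unitaries is exactly the quasi-basis computation specialised to $M_N$, and isolating the $v_1=1$ summand gives the inequality by positivity. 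This buys you an elementary proof that avoids the general index-theoretic machinery, at the cost of being tailored to the matrix-algebra inclusion; the citation buys generality and brevity.
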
 

\begin{Prop}\label{PropDensr} 
 Let $\msA$ be a weak s$^*$-algebra with positive quantum control $X\in M_N(\msA)$ and assume $\Tr(X) \in \msZ(\msA)$. Then $\msA$ is an s$^*$-algebra with central control $\Tr(X)$.
\end{Prop}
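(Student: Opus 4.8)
The plan is to exhibit $C := \Tr(X)$ itself as a central control. Note first that $C \geq 0$: for any admissible $\pi$ we have $\pi(C) = \Tr(\pi(X)) = \sum_i \pi(X)_{ii}$, and each diagonal compression $\pi(X)_{ii}$ of the positive matrix $\pi(X)$ is positive, so $\pi(C)\ge 0$. Since moreover $C\in\msZ(\msA)$ by hypothesis, $C$ is a single positive central element, and to conclude that it is a central control it suffices to show it is a quantum control, i.e.\ that $\msP^C$ is a C$^*$-filtration. By Lemma~\ref{LemLem}(1) and the fact that $X$ is already a quantum control, it is enough to prove that $X$ and the $1$-by-$1$ matrix $C$ are bounded equivalent.

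So the heart of the matter is the two-sided estimate
\[
\tfrac1N\,\|\pi(C)\|\ \leq\ \|\pi(X)\|\ \leq\ N\,\|\pi(C)\|,\qquad \pi\in P.
\]
For the right-hand inequality I would use that, $\pi$ being (topologically) irreducible and $C$ central, $\pi(C) = c_\pi\cdot 1$ for a scalar $c_\pi = \|\pi(C)\|\ge 0$; then, applying the Pimsner--Popa estimate of Lemma~\ref{LemPimPop} inside $B(\Hsp_\pi)$ to the positive operator square root of $\pi(X)$, one gets $\tfrac1N\pi(X)\le \Tr(\pi(X)) = \pi(C) = c_\pi$, hence $\|\pi(X)\|\le N c_\pi = N\|\pi(C)\|$. (This is exactly why one first reduces, via Lemma~\ref{LemLem}(2), to a \emph{positive} quantum control: the square root lives in the representation, not necessarily in $M_N(\msA)$.) For the left-hand inequality one only needs $\|\pi(C)\| = \big\|\sum_i \pi(X)_{ii}\big\| \le \sum_i \|\pi(X)_{ii}\| \le N\|\pi(X)\|$, since each compression has norm at most $\|\pi(X)\|$.

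Once the bounded equivalence is established, Lemma~\ref{LemLem}(1) gives that $C$ is a quantum control; being positive and central, it is a central control, so $\msA$ is an s$^*$-algebra with central control $\Tr(X)$, as claimed. I do not anticipate a genuine obstacle here: the only slightly delicate points are the passage to the operator square root of $\pi(X)$ in order to invoke Lemma~\ref{LemPimPop}, and the observation that irreducibility turns $\pi(C)$ into a scalar so that the operator inequality $\tfrac1N\pi(X)\le\pi(C)$ can be read directly as the desired norm bound.
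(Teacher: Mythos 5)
Your proposal is correct and follows essentially the same route as the paper: both reduce to showing that $X$ and $\Tr(X)$ are bounded equivalent, obtaining $\|\pi(\Tr(X))\|\le N\|\pi(X)\|$ from $\|\Tr\|=N$ and $\|\pi(X)\|\le N\|\pi(\Tr(X))\|$ from the Pimsner--Popa estimate of Lemma~\ref{LemPimPop} applied (in effect) to the operator square root. Your extra remark that irreducibility makes $\pi(C)$ a scalar is not actually needed to pass from the operator inequality $\tfrac1N\pi(X)\le \Tr(\pi(X))$ to the norm bound, but it does no harm.
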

\begin{proof}  
It is sufficient to show that $X$ and $\Tr(X)$ are bounded equivalent. But for any $\pi \in P$ we have 
\[
\|\pi(\Tr(X))\| = \|\Tr(\pi(X))\| \leq N \|X\|
\] 
since $\|\Tr\| = N$, while 
\[
\|\pi(X)\|\leq  N \|\Tr(\pi(X))\| = N\|\pi(\Tr(X))\|
\] 
by Lemma \ref{LemPimPop}. 
\end{proof}

\begin{Exa} 
Let $\msA$ be a weak s$^*$-algebra without closed spectral conditions and with quantum control $X$. If $P = P_{\leq M}$ for some $M$, i.e.~ $\|\pi(X)\|$ is uniformly bounded over all $\pi \in P$, then $A_b = A_0$ coincides with the usual universal C$^*$-algebraic envelope of $\msA$. Clearly $\msA$ is an s$^*$-algebra with central control $1$. 
\end{Exa}

\begin{Exa}\label{ExaComm} 
Let $\msA$ be a weak s$^*$-algebra with $\msA$ commutative. If $X$ is a positive quantum control, then $\Tr(X)$ is central and hence $\msA$ is an s$^*$-algebra. We may thus take $X = C \in \msA$.  

Now all irreducible representations are one-dimensional, and we may identify $P$ with the set of (non-zero) admissible $*$-characters $\{\chi\}$ on $\msA$. We claim that $P$ is a locally compact space with respect to pointwise convergence on $\msA$, and that moreover we have a homeomorphism $P \cong \Spec(A_0)$ by means of $\chi \mapsto \widetilde{\chi}$. 

Indeed, the above map is a well-defined set-theoretic bijection by Lemma \ref{LemRepSam}. To see that it sends opens to opens, it is sufficient to see that for any $a\in \msA$ the image of 
\[
U_{M,a,\epsilon}(\chi) = \{\omega \in P \mid \omega(C)<M,|\chi(a)-\omega(a)|< \epsilon\}
\]
is open. However, choosing any $x \in A_0$ such that $\wtheta_{M+\epsilon}(x) = \theta_{M+\epsilon}(a)$, it follows immediately that the image of $U_{M,a,\epsilon}(\chi)$ is the open 
\[
U_{M,x,\epsilon}'(\chi_0) = \{\omega\in \Spec(A_0) \mid \omega((1+C)^{-1})>(1+M)^{-1},|\chi_0(x)-\omega(x)|< \epsilon\}.
\]
Conversely, since $\theta_M(\msA)$ is dense in $A_{\leq M}$, it is not hard to see that the above $U_{M,x,\epsilon}'(\chi_0)$ form in fact a subbasis for the topology on $\Spec(A_0)$.  
\end{Exa} 

We now turn to the notion of morphism between unital spectral $*$-algebras.

\begin{Def} 
Let $\msA,\msB$ be unital spectral $*$-algebras with closed spectral conditions $\msS_\msA,\msS_\msB$. A unital $*$-homomorphism $\alpha: \msA \rightarrow \msB$ will be called \emph{admissible} if  $\pi\circ \alpha$ is $\msA$-admissible for all $\msB$-admissible representations $\pi$. 
\end{Def} 

Note that we only need to check $\msA$-admissibility of $\pi \circ \alpha$ for \emph{irreducible} $\msB$-admissible representations $\pi$.

\begin{Prop}
Let $\msA$ and $\msB$ be unital spectral $*$-algebras with C$^*$-filtration $\msP^{\msA},\msP^{\msB}$. 
Let $\alpha: \msA \rightarrow \msB$ be an admissible unital $*$-homomorphism. Then there exists
\[
F: \R^+ \rightarrow \R^+, \quad M\mapsto F(M)
\] 
satisfying 
\begin{equation}\label{EqEstMorAb}
\pi \in P_{\leq M}^{\msB} \Rightarrow \left(P^{\msA}_{\pi\circ \alpha} \subseteq P_{\leq F(M)}^{\msA} \right).
\end{equation} 
\end{Prop}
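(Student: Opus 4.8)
The plan is to reduce everything to the case of a single positive quantum control and exploit the fact that admissibility lets us transport bounded representations along $\alpha$. First I would invoke Lemma \ref{LemLem} to replace the given quantum controls on $\msA$ and $\msB$ by positive ones, and fix positive quantum controls $X^{\msA}\in M_K(\msA)$ and $X^{\msB}\in M_L(\msB)$; since C$^*$-filtrations are unique up to equivalence, it suffices to prove \eqref{EqEstMorAb} for the associated filtrations $\msP^{X^{\msA}},\msP^{X^{\msB}}$, and a change of filtration only composes $F$ with an affine (or monotone) reparametrisation. So the goal reduces to: there is a monotone $F:\R^+\to\R^+$ such that whenever $\pi$ is a $\msB$-admissible representation with $\|\pi(X^{\msB})\|\le M$, every $\pi'\in P^{\msA}_{\pi\circ\alpha}$ satisfies $\|\pi'(X^{\msA})\|\le F(M)$.

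Next I would make the key observation that the element $\alpha(X^{\msA})\in M_K(\msB)$ — i.e.\ the matrix obtained by applying $\alpha$ entrywise — is a fixed element of a matrix amplification of $\msB$, so by the first condition in Definition \ref{DefCfiltr} (applied in $M_K(\msB)$, whose representations are those of $\msB$) the quantity
\[
F(M):=\sup_{\pi\in P^{\msB}_{\le M}}\|\pi(\alpha(X^{\msA}))\|
\]
is finite for every $M>0$, and it is clearly monotone in $M$. This is the function I claim works. Now take any $\msB$-admissible $\pi\in P^{\msB}_{\le M}$ and any $\pi'\in P^{\msA}_{\pi\circ\alpha}$. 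Because $\alpha$ is admissible, $\pi\circ\alpha$ is an $\msA$-admissible representation, and $\pi'\preceq \pi\circ\alpha$ means $\pi'$ factors through the norm-closure of $(\pi\circ\alpha)(\msA)$; hence $\|\pi'(a)\|\le\|(\pi\circ\alpha)(a)\|$ for all $a\in\msA$, and the same inequality holds entrywise, so $\|\pi'(X^{\msA})\|\le\|(\pi\circ\alpha)(X^{\msA})\|=\|\pi(\alpha(X^{\msA}))\|$.

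The last step is to bound the right-hand side by $F(M)$. One must be slightly careful: $F(M)$ was defined as a supremum over $\pi\in P^{\msB}_{\le M}$, which is the set of \emph{irreducible} admissible representations, whereas our given $\pi$ need not be irreducible. But this is exactly the situation handled in the proof of Lemma \ref{LemIrToRep}: decomposing $\pi$ via the pure states of the C$^*$-algebra $(\pi\circ?)$... more precisely, writing $B_\pi$ for the closure of $\pi(\msB)$ and using that its pure-state GNS representations $\pi_\omega$ all lie in $P^{\msB}_{\le M}$ (by the second condition in Definition \ref{DefCfiltr} — here we need $\pi\in P^{\msB}_{\le M}$, i.e.\ $P^{\msB}_\pi\subseteq P^{\msB}_{\le M}$), one gets $\|\pi(b)\|=\sup_\omega\|\pi_\omega(b)\|$ for every $b\in M_K(\msB)$, whence $\|\pi(\alpha(X^{\msA}))\|\le\sup_{\pi''\in P^{\msB}_{\le M}}\|\pi''(\alpha(X^{\msA}))\|=F(M)$. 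Chaining the inequalities gives $\|\pi'(X^{\msA})\|\le F(M)$, which is \eqref{EqEstMorAb}. The main obstacle is precisely this reducibility gap — making sure $F$ is defined as a supremum over a set large enough (all of $P^{\msB}_{\le M}$) to dominate the norm in an arbitrary, possibly reducible, bounded representation $\pi$ with $P^{\msB}_\pi\subseteq P^{\msB}_{\le M}$; the Pimsner–Popa-style pure-state argument from Lemma \ref{LemIrToRep} closes it cleanly, and the rest is bookkeeping about filtration equivalence.
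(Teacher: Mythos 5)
Your argument proceeds via a quantum control, but the proposition is stated for \emph{arbitrary} unital spectral $*$-algebras with a C$^*$-filtration $\msP^{\msA},\msP^{\msB}$; there is no hypothesis that $\msA$ or $\msB$ is a weak s$^*$-algebra. Your very first step (``invoke Lemma \ref{LemLem} to replace the given quantum controls by positive ones'') therefore assumes structure that isn't there: a C$^*$-filtration need not arise from any $X\in M_{K,N}(\msA)$, and the paper nowhere claims it does. So as written the proof only covers the weak s$^*$-algebra case, which is a genuine gap relative to the stated level of generality. A second, smaller issue: your closing worry about $\pi$ being reducible is unfounded. By definition $P^{\msB}_{\le M}\subseteq P^{\msB}$, so any $\pi\in P^{\msB}_{\le M}$ is already topologically irreducible, and the inequality $\|\pi(\alpha(X^{\msA}))\|\le F(M)$ is immediate from your own definition of $F$ as a supremum over $P^{\msB}_{\le M}$; the pure-state decomposition you import from Lemma \ref{LemIrToRep} is not needed here.

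The paper's proof avoids quantum controls entirely and is both shorter and strictly more general. It forms the single admissible representation $\wpi_M=\oplus_{\pi\in P^{\msB}_{\le M}}\pi$ of $\msB$, which is bounded by the first filtration condition and admissible because the spectral conditions are closed. Then $\wpi_M\circ\alpha$ is a bounded admissible representation of $\msA$ (admissibility of $\alpha$), so the \emph{second} filtration condition produces $M'=F(M)$ with $P^{\msA}_{\wpi_M\circ\alpha}\subseteq P^{\msA}_{\le M'}$. Finally, for each $\pi\in P^{\msB}_{\le M}$, $\pi\circ\alpha$ is a subrepresentation of $\wpi_M\circ\alpha$, whence $(\pi\circ\alpha)(\msA)^{\nc}$ is a quotient of $(\wpi_M\circ\alpha)(\msA)^{\nc}$ and so $P^{\msA}_{\pi\circ\alpha}\subseteq P^{\msA}_{\wpi_M\circ\alpha}\subseteq P^{\msA}_{\le M'}$. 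The point is that the second condition of Definition \ref{DefCfiltr} already supplies exactly the needed $F(M)$ once you aggregate the $\pi$'s into one representation; constructing $F$ concretely from $\|\pi(\alpha(X^{\msA}))\|$ is an interesting alternative in the weak s$^*$-algebra setting, and does make $F$ more explicit, but it cannot replace the abstract argument under the stated hypotheses.
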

\begin{proof}
Let $\wpi_M = \oplus_{\pi \in P_{\leq M}^{\msB}}$. Then $\wpi_M \circ \alpha$ is a bounded admissible representation of $\msA$.  By the second condition in Definition \ref{DefCfiltr}, there exists $M' = F(M)$ such that $P_{\wpi_M \circ \alpha}^{\msA} \subseteq P_{\leq M'}^{\msA}$, hence $P_{\pi \circ \alpha}^{\msA}\subseteq P_{\leq M'}^{\msA}$ for all $\pi \in P_{\leq M}^{\msB}$. 
\end{proof}

Clearly any admissible $^*$-morphism $\alpha$ extends to unital C$^*$-algebra morphisms 
\[
\alpha_{N,M}: A_{\leq N} \rightarrow B_{\leq M},\qquad \theta_N^{\msA}(a) \mapsto \theta_M^{\msB}(\alpha(a)),\qquad a\in \msA, N \geq F(M).
\] 
As these are compatible with the projective system, we obtain a unital $*$-homomorphism 
\[
\alpha_b: A_b \rightarrow B_b,
\] 
and hence a map 
\[
\alpha_0: A_0  \hookrightarrow A_b \rightarrow B_b  \rightarrow M(B_0).
\] 

\begin{Def} 
We call $\alpha_0: A_0 \rightarrow M(B_0)$ the \emph{induced C$^*$-algebra morphism} from $\alpha: \msA\rightarrow \msB$.
\end{Def}

In general, the map $\alpha_0$ need not be non-degenerate. This will be the case however if $\msA$ has the Tietze extension property.

\begin{Lem}\label{LemNonDeg}
Let $\alpha: \msA \rightarrow \msB$ be an admissible morphism, and assume $\msA$ has the Tietze extension property. Then $\alpha_0: A_0 \rightarrow M(B_0)$ is non-degenerate.
\end{Lem}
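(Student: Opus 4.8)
The plan is to test non-degeneracy of $\alpha_0: A_0 \to M(B_0)$ against the irreducible representations of $B_0$, which by Lemma \ref{LemRepSam} (applicable since $\msB$ has a C$^*$-filtration, but more to the point we only need that every irreducible representation of $B_0$ is some $\pi_0$ for $\pi\in P^{\msB}$, which is Lemma \ref{LemRepInd}) are exactly the $\pi_0$ for $\pi \in P^{\msB}$. So it suffices to show that for each such $\pi$, the representation $\pi_0 \circ \alpha_0$ of $A_0$ on $\Hsp_\pi$ is non-degenerate, i.e.\ $\overline{\pi_0(\alpha_0(A_0))\Hsp_\pi} = \Hsp_\pi$.

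First I would unwind what $\pi_0 \circ \alpha_0$ is: chasing the definition of $\alpha_0$ through $A_0 \hookrightarrow A_b \xrightarrow{\alpha_b} B_b \to M(B_0)$ and then restricting along $\pi_0$, one checks that $\pi_0 \circ \alpha_0$ agrees on $\theta^{\msA}(\msA)$ (extended to $M(A_c)$) with the representation $(\pi\circ\alpha)_0$ of $A_0$ coming from the admissible $\msA$-representation $\pi \circ \alpha$ — here admissibility of $\alpha$ is exactly what guarantees $\pi \circ \alpha$ is an admissible representation of $\msA$, so that $(\pi \circ \alpha)_b$ and its restriction $(\pi\circ\alpha)_0$ make sense. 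Concretely, using that $\msB$-admissible $\pi$ lies in some $P^{\msB}_{\leq M}$ and that $\pi \circ \alpha$ factors through $A_{\leq F(M)}$ by the previous Proposition, both $\pi_0\circ\alpha_0$ and $(\pi\circ\alpha)_0$ factor through $A_{\leq F(M)}$ and coincide there after composing with $\theta_{F(M)}$; since $\theta_{F(M)}(\msA)$ is dense, they coincide on $A_0$.

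Now I use the Tietze extension property of $\msA$. Let $\{e_M\}$ be associated local units for $\msA$. By Lemma \ref{LemTietDens}, the sum of all $e_M \theta^{\msA}(\msA)$ is dense in $A_0$. Hence it is enough to show that $\bigcup_M \overline{\pi_0(\alpha_0(e_M\theta^{\msA}(\msA)))\Hsp_\pi}$ is dense in $\Hsp_\pi$. Since $\pi \circ \alpha$ is an admissible representation of $\msA$, there is $M_0$ with $P^{\msA}_{\pi\circ\alpha} \subseteq P^{\msA}_{\leq M_0}$, and for $M \geq M_0$ the local unit $e_M$ satisfies $(\pi\circ\alpha)_0(e_M) = (\pi\circ\alpha)_{M}(\theta^{\msA}_{M}(e_M)) = \mathrm{id}_{\Hsp_\pi}$ because $\theta^{\msA}_M(e_M) = 1 \in A_{\leq M}$ and $(\pi \circ \alpha)$ factors through $A_{\leq M}$ as a unital representation (it is a $*$-homomorphism out of the unital $\msA$, hence its extension to $A_{\leq M}$ is unital). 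Therefore $\pi_0(\alpha_0(e_M \theta^{\msA}(a)))\xi = \pi(\alpha(a))\xi$ for all $a \in \msA$, $\xi \in \Hsp_\pi$ and $M\geq M_0$. Since $\pi$ itself is a nonzero (hence nondegenerate, being irreducible and unital) representation of $\msA$, the vectors $\pi(\alpha(a))\xi = (\pi\circ\alpha)(a)\xi$... wait — $\pi\circ\alpha$ need not be irreducible, but it is unital, so $\pi(\alpha(1))\Hsp_\pi = \Hsp_\pi$ already gives non-degeneracy. Thus $\overline{\pi_0(\alpha_0(A_0))\Hsp_\pi} = \Hsp_\pi$, as desired.

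The main obstacle, and the step deserving the most care, is the identification $\pi_0 \circ \alpha_0 = (\pi\circ\alpha)_0$ on $A_0$ — i.e.\ making sure the diagram chase through the projective limits $A_b, B_b$ and the multiplier algebras is correct, in particular that $\alpha_0$ lands in $M(B_0)$ in a way compatible with the representation $\pi_b$ on $B_b \to M(B_0) \to B(\Hsp_\pi)$. Everything after that is a routine application of Lemma \ref{LemTietDens} together with unitality. One should also double-check the edge case where $\pi_0$ or $\alpha$ could be degenerate on the nose, but since all algebras and $*$-homomorphisms in sight are unital and the $\pi \in P$ are nonzero, no such pathology arises.
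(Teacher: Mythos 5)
Your proof is correct, but it takes a considerably more roundabout route than the paper's. The paper argues directly with local units: fix $M>0$ and choose $N$ large enough that $\alpha_{N,M}:A_{\leq N}\to B_{\leq M}$ is defined; since $\wtheta_N(e_N)=1$ and $\alpha_{N,M}$ is unital, $\alpha_0(e_N)$ acts as the identity on $B_{<M}$, so $\alpha_0(A_c)B_{<M}=B_{<M}$ for every $M$, and density of $\alpha_0(A_0)B_0$ follows immediately. You instead reduce to testing $\pi_0\circ\alpha_0$ on each irreducible representation $\pi_0$ of $B_0$ (via Lemma~\ref{LemRepInd}) and then show $\pi_0(\alpha_0(e_M))=\id_{\Hsp_\pi}$ for large $M$. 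This works, and the core local-unit computation is really the same, but your version carries two extra steps that you assert without proof: (i) the reduction itself, namely that degeneracy of $\alpha_0$ would be detected by some irreducible representation of $B_0$ — true, but it needs the pure-state/GNS argument that a proper closed right ideal $\overline{\alpha_0(A_0)B_0}$ is annihilated (in the appropriate sense) by some pure state, whose GNS representation is then a witness — and (ii) the identification $\pi_0\circ\alpha_0=(\pi\circ\alpha)_0$ on $A_0$, which you flag as the delicate point but only sketch. Both gaps are fillable, so the proof is sound, but the paper's direct argument sidesteps them entirely and is preferable.
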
 
\begin{proof} 
Fix $M>0$, and let $N>0$ be large enough so that we can define $\alpha_{N,M}: A_{\leq N}\rightarrow B_{\leq M}$. Let $\{e_{K}\} \subseteq A_c$ be a system of associated local units. Then clearly $\alpha_{N,M}(\wtheta_{N}(e_N)) =1$. Hence $\alpha_0(A_c)B_{<M} = B_{<M}$, and $\alpha_0(A_0)B_0$ dense in $B_0$.  
\end{proof}

The following definition introduces a stronger notion for an admissible $*$-morphism.

\begin{Def}
We call an admissible $*$-morphism $\alpha: \msA \rightarrow \msB$ \emph{proper} if there exists $G: \R^+ \rightarrow \R^+$ such that 
\begin{equation}\label{EqEstProp}
\pi \in P_{\geq G(M)}^{\msB} \Rightarrow (P_{\pi \circ \alpha}^{\msA} \subseteq P_{\geq M}^{\msA}).
\end{equation}
\end{Def}

If $\alpha$ is proper we clearly have that the induced C$^*$-algebra morphism $\alpha_0$ maps $A_0$ into $B_0$, 
\[
\alpha_0: A_0 \rightarrow B_0.
\] 

Note that if $\alpha: \msA \rightarrow \msB$ is a surjective admissible unital $*$-homomorphism between unital spectral $*$-algebras, a C$^*$-filtration on $P^{\msA}$ induces a C$^*$-filtration on $P^{\msB}$ by 
\begin{equation}\label{EqNewFilt}
P_{\leq M}^{\msB} = \{\pi \in P^{\msB}\mid \pi \circ \alpha \in P_{\leq M}^{\msA}\}. 
\end{equation} 
Then automatically $\alpha$ is proper, and $\msB$ is a weak s$^*$-algebra or s$^*$-algebra whenever $\msA$ is.  

\begin{Prop}\label{PropSun} 
Let $\msA$ be an s$^*$-algebra, and $\alpha: \msA \rightarrow \msB$ a surjective, proper, admissible $^*$-morphism. Then $\alpha_0:A_0 \rightarrow B_0$ is surjective.
\end{Prop}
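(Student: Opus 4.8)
The plan is to deduce the surjectivity from Lemma~\ref{LemTietDens}, applied on both sides, by pushing local units forward along $\alpha$. Since $\msA$ is an s$^*$-algebra, Proposition~\ref{PropCentTiet} gives it the (compact) Tietze extension property; fix a system $\{e_N^{\msA}\}_{N>0}\subseteq A_c$ of associated local units (increasing positive contractions with $\wtheta_N^{\msA}(e_N^{\msA})=1\in A_{\leq N}$). Fix also $F$ as in \eqref{EqEstMorAb} for the admissible morphism $\alpha$ — we may take $F$ non-decreasing with $F(M)\geq M$ — and $G$ as in \eqref{EqEstProp} for its properness. From properness we already know $\alpha_0(A_0)\subseteq B_0$, and in fact $\alpha_0(A_c)\subseteq B_c$: if $x\in A_{<K}$ and $\rho\in P_{\geq G(K)}^{\msB}$, then $P_{\rho\circ\alpha}^{\msA}\subseteq P_{\geq K}^{\msA}$, on which $x$ vanishes, so $\rho_b(\alpha_0(x))=(\rho\circ\alpha)_b(x)=0$, giving $\alpha_0(x)\in B_{<G(K)}$.

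First I would show that $\msB$ too has the Tietze extension property, with local units $e_k^{\msB}:=\alpha_0(e_{F(k)}^{\msA})\in B_c$ for $k>0$. These are positive contractions (images of positive contractions under the $*$-homomorphism $\alpha_0$) and increasing in $k$ (as $F$ is non-decreasing and $\{e_N^{\msA}\}$ is increasing); and since by construction of $\alpha_b$ one has $\wtheta_M^{\msB}\circ\alpha_b=\alpha_{N,M}\circ\wtheta_N^{\msA}$ for $N\geq F(M)$,
\[
\wtheta_k^{\msB}(e_k^{\msB})=\alpha_{F(k),k}\bigl(\wtheta_{F(k)}^{\msA}(e_{F(k)}^{\msA})\bigr)=\alpha_{F(k),k}(1)=1\in B_{\leq k},
\]
using that $\alpha_{F(k),k}$ is unital. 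Hence $\{e_k^{\msB}\}_{k>0}$ is a system of associated local units for $\msB$, so $\msB$ has the Tietze extension property, and Lemma~\ref{LemTietDens} together with surjectivity of $\alpha$ (so $\theta^{\msB}(\msB)=\theta^{\msB}(\alpha(\msA))$) gives that $\sum_{k>0}e_k^{\msB}\,\theta^{\msB}(\alpha(\msA))$ is dense in $B_0$.

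The crux is the identity
\[
\alpha_0\bigl(u\,\theta^{\msA}(a)\bigr)=\alpha_0(u)\,\theta^{\msB}(\alpha(a)),\qquad u\in A_c,\ a\in\msA,
\]
both sides lying in $B_c\subseteq B_b\hookrightarrow\bprod_{\rho\in P^{\msB}}B(\Hsp_\rho)$, which I would verify by evaluating at each $\rho\in P^{\msB}$. Here one uses that $\rho_b$ factors, via Lemma~\ref{LemIrToRep}, as a composite $B_b\to B_{\leq L}\to B(\Hsp_\rho)$ through a \emph{unital} — hence non-degenerate — representation of $B_{\leq L}$, and so extends canonically to a $*$-homomorphism on $M(B_c)$ sending $\theta^{\msB}(b)\mapsto\rho(b)$; similarly $(\rho\circ\alpha)_b=\rho_b\circ\alpha_b$ (immediate from the constructions) extends to a $*$-homomorphism on $M(A_c)$ sending $\theta^{\msA}(a)\mapsto\rho(\alpha(a))$. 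Multiplicativity of these extensions then yields
\[
\rho_b\bigl(\alpha_0(u\,\theta^{\msA}(a))\bigr)=(\rho\circ\alpha)_b(u)\cdot\rho(\alpha(a))=\rho_b(\alpha_0(u))\cdot\rho(\alpha(a))=\rho_b\bigl(\alpha_0(u)\,\theta^{\msB}(\alpha(a))\bigr)
\]
for every $\rho$, hence the identity. Granting it, the proof concludes: $A_0=\overline{\sum_{N>0}e_N^{\msA}\,\theta^{\msA}(\msA)}$ by Lemma~\ref{LemTietDens} for $\msA$, so applying the contractive map $\alpha_0$ and using the identity,
\[
\alpha_0(A_0)=\overline{\sum_{N>0}\alpha_0(e_N^{\msA})\,\theta^{\msB}(\alpha(\msA))}\supseteq\overline{\sum_{k>0}\alpha_0(e_{F(k)}^{\msA})\,\theta^{\msB}(\alpha(\msA))}=\overline{\sum_{k>0}e_k^{\msB}\,\theta^{\msB}(\alpha(\msA))}=B_0,
\]
while $\alpha_0(A_0)\subseteq B_0$; thus $\alpha_0\colon A_0\to B_0$ is onto.

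The step I expect to be the main obstacle is the componentwise identity above. Its content is only that $\alpha_b$, the maps $\theta^{\msA},\theta^{\msB}$ into the multiplier algebras, and the representations $\rho_b$ fit into compatible commuting diagrams; but making the bookkeeping with $M(A_c)$ and $M(B_c)$ precise — in particular the canonical extensions of $\rho_b$ and of $(\rho\circ\alpha)_b$ to these multiplier algebras and their compatibility with the $\theta$'s — is where the care is needed. Everything else is routine: the normalisation of $F$, the cofinality built into $F(M)\geq M$, and the passage $\theta^{\msB}(\msB)=\theta^{\msB}(\alpha(\msA))$ from surjectivity of $\alpha$.
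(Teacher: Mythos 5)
Your argument is correct but takes a genuinely different route from the paper's. The paper stays entirely at the level of the quotient C$^*$-algebras: using the pullback filtration \eqref{EqNewFilt} on $P^{\msB}$ it observes that each $\alpha_M\colon A_{\leq M}\to B_{\leq M}$ is surjective, applies the Tietze extension property for $\msA$ to lift two arbitrary $y,y'\in B_{<M}$ to $x\in A_{<M'}$ and $x'\in A_{<M''}$, and then checks fibrewise that $\alpha_0(xx')=yy'$ --- the point being that both $y$ and $y'$ kill everything in $P_{\geq M}^{\msB}$, which absorbs the lack of control over $\alpha_0(x)$ between levels $M$ and $M'$ --- concluding $\alpha_0(A_c)=B_c$ via Cohen factorization $B_{<M}B_{<M}=B_{<M}$. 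You instead push the local units $\{e_N^{\msA}\}$ forward along $\alpha_b$ to manufacture associated local units for $\msB$ (thereby explicitly establishing the Tietze extension property for $\msB$, which the paper notes only in passing just before the proposition), prove the multiplier-algebra compatibility $\alpha_0(u\,\theta^{\msA}(a))=\alpha_0(u)\,\theta^{\msB}(\alpha(a))$, and then invoke Lemma~\ref{LemTietDens} on both $\msA$ and $\msB$. The identity you single out as the delicate step does hold and is verifiable exactly as you sketch, by comparing the non-degenerate extensions of $\rho_b$ and of $(\rho\circ\alpha)_b=\rho_b\circ\alpha_b$ to $M(B_c)$ and $M(A_c)$ respectively; there is no gap. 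The paper's argument is more elementary (it never passes through the multiplier algebras) and yields the sharper conclusion $\alpha_0(A_c)=B_c$ outright; yours is more structural and hands you explicit local units $e_k^{\msB}=\alpha_0(e_{F(k)}^{\msA})$ for $\msB$ as a byproduct.
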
 
\begin{proof}
Fix a C$^*$-filtration $\msP^{\msA}$ for $\msA$, and let $\msP^\msB$ be the C$^*$-filtration for $\msB$ defined by \eqref{EqNewFilt}. Then the maps $\theta_M^{\msB} \circ \alpha$ induce surjective C$^*$-homomorphisms
\[
\alpha_{M}: A_{\leq M}\rightarrow B_{\leq M}.
\]

Take now $y,y'\in B_{<M}$, and choose $M'$ such that $A_{<M'} \twoheadrightarrow A_{\leq M}$. Pick $x\in A_{<M'}$ such that 
\[
\alpha_{M}(\wtheta_M^{\msA}(x)) = \wtheta_M^{\msB}(y).
\] 
Note that $\alpha_0(x) \in B_{<M'}$, so we can not conclude that $\alpha_0(x) = y$ as we do not know the behaviour of $\alpha_0(x)$ on the range from $M$ to $M'$. Choose however $M''$ such that $A_{<M''} \twoheadrightarrow A_{\leq M'}$ and pick $x'\in A_{<M''}$ such that 
\[
\alpha_{M'}(\wtheta_{M'}^{\msA}(x')) = \wtheta_{M'}^{\msB}(y').
\] 
Then it is clear that $xx' \in A_c$ with $\alpha_0(xx') = yy'$. Since $B_{<M}B_{<M} = B_{<M}$, this shows that $\alpha_0(A_c) = B_c$ and hence $\alpha_0$ surjective.
\end{proof}

\section{$\msR$-algebras, $\msR$-bialgebras and Hopf $\msR$-algebras}

In this section, we build a relative version of the theory in the previous section, called (weak) $\msR$-algebra. 

Fix $k$ a commutative $\C$-algebra. When $k$ is equipped with a particular $*$-structure, we write $k_*$. When we moreover have a fixed s$^*$-algebra-structure on $k_*$ with respect to some closed spectral conditions, we write $\msR$. We will  assume that $\msR$ is C$^*$-faithful. We write $\Theta = \Spec(\msR)$ for the associated locally compact space, so the $C_0$-hull of $\msR$ becomes $R_0 = C_0(\Theta)$. Note that we have an injective unital $*$-homomorphism 
\[
\msR \rightarrow C(\Theta) = M(C_c(\Theta)),
\] 
the $*$-algebra of all continuous functions on $\Theta$. In particular, we can interpret $f(x)$ for $f\in \msR$ and $x \in \Theta$.

\begin{Def}  
We call \emph{$k_*$-algebra} any unital $*$-algebra $\msA$ with a unital $*$-homomorphism $\iota: k_* \rightarrow \msZ(\msA)$, where $\msZ(\msA)$ is the center of $\msA$. We call $(\msA,\iota)$ a \emph{weak $\msR$-algebra}, resp.~ \emph{$\msR$-algebra}, if $\msA$ is a weak s$^*$-algebra, resp.~ an s$^*$-algebra, such that $\iota:  \msR \rightarrow \msA$ is admissible.
\end{Def} 

We have for a general weak $\msR$-algebra a $*$-homomorphism 
\[
\iota_0:R_0 =C_0(\Theta) \rightarrow  A_b  \rightarrow M(A_0)
\] 
with image in $\msZ(M(A_0))$. By Lemma \ref{LemNonDeg} this map is non-degenerate if $\msA$ is an $\msR$-algebra. It follows that $A_0$ is a $C_0(\Theta)$-algebra, i.e.~ a C$^*$-algebra with a fixed non-degenerate $*$-homomorphism $\iota_0: C_0(\Theta) \rightarrow \msZ(M(A_0))$. Here, we do not assume necessarily that $\iota_0$ is injective. 

Let $\msA$ be an $\msR$-algebra. For $x \in \Theta$ we define 
\[
\msJ_x = \{\iota(f) - f(x)\mid f\in k_*\}\subseteq \msA,\qquad \msA_x = \msA/(\msA \msJ_x).
\] 
We write the quotient map as 
\[
\pi_x: \msA \rightarrow \msA_x,\quad a \mapsto a_x.
\]
Note that $\iota$ defines a partition of the set of irreducible admissible representations
\[
P^{\msA} = \sqcup_{x\in \Theta} P_x^{\msA},
\] 
where $\pi \in P_x^{\msA}$ if $\pi(\iota(f)) = f(x)\id_{\Hsp_{\pi}}$ for all $f\in k_*$. For $x\in \Theta$ we will write

On the other hand, for $A_0$ a general $C_0(\Theta)$-algebra, we can consider the fibers $(A_{0})_x =A_{0,x} = A_0/A_0J_x$ with 
\[
J_x = \{\iota_0(f)-f(x)\mid f\in C_0(\Theta)\} \subseteq M(A_0).
\] 
Write $a_x$ for the image of $a\in A_0$ inside $A_{0,x}$, and 
\[
\pi_x^0: A_0 \rightarrow A_{0,x},\quad a\mapsto a_x.
\] 
for the corresponding quotient map. 

\begin{Def}
Let $A_0$ be a $C_0(\Theta)$-algebra. We call $x \mapsto A_{0,x}$ the \emph{field of C$^*$-algebras} associated to $A_0$. 
\end{Def}

Such a field is automatically upper semicontinuous for the norm, in the sense that for each $a\in A_0$ fixed, the map $x\mapsto \|a_x\|$ is upper semicontinuous \cite[Proposition 1.2]{Rie89}. 

\begin{Def} 
Let $A_0$ be a $C_0(\Theta)$-algebra. We call $x\mapsto A_{0,x}$ a \emph{continuous field} of C$^*$-algebras if for all $a\in A_0$ the map $x\mapsto \|a_x\|$ is continuous.
\end{Def}

Assume now that $A_0$ is the C$_0$-hull of an $\msR$-algebra $\msA$.  By the discussion preceding Proposition \ref{PropSun}, each $\msA_x$ for $x\in \Theta$ inherits an s$^*$-algebra structure from $\msA$, and by properness we obtain surjective $*$-morphisms 
\[
A_{\leq M} \rightarrow A_{x,\leq M},\qquad A_c \rightarrow A_{x,c}, \qquad A_0 \rightarrow A_{x,0}.
\] 
This last map clearly factors through a surjective $*$-homomorphism
\[
\rho_x:A_{0,x}\rightarrow A_{x,0}.
\]

\begin{Lem} 
The map $\rho_x: A_{0,x} \rightarrow A_{x,0}$ is an isomorphism of C$^*$-algebras.
\end{Lem}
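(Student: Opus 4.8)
The plan is to show that $\rho_x$ is injective, since it is already known to be a surjective $*$-homomorphism of C$^*$-algebras. By Lemma \ref{LemRepSam} applied to the s$^*$-algebra $\msA_x$, the irreducible representations of $A_{x,0}$ are exactly the $\pi_0$ for $\pi$ an irreducible admissible representation of $\msA_x$; and these correspond precisely to the $\pi \in P_x^\msA$, i.e.\ the irreducible admissible representations of $\msA$ that kill $\msA\msJ_x$. So it suffices to show that the irreducible representations of $A_{0,x}$ are also parametrized by $P_x^\msA$ in a compatible way, i.e.\ that pulling back an irreducible representation of $A_{x,0}$ along $\rho_x$ gives \emph{all} irreducible representations of $A_{0,x}$. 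Equivalently, I would show that every irreducible representation of $A_{0,x}$ factors through $\rho_x$.

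First I would unwind the two quotients. An irreducible representation of $A_{0,x} = A_0/A_0 J_x$ is the same as an irreducible representation $\sigma$ of $A_0$ with $\sigma(\iota_0(f)) = f(x)$ for all $f \in C_0(\Theta)$ (using that $\iota_0$ is central and non-degenerate, so $\sigma$ extends to $M(A_0)$). By Lemma \ref{LemRepInd}, $\sigma \cong \pi_0$ for a unique $\pi \in P^\msA$; the condition $\sigma(\iota_0(f)) = f(x)$ translates, via $\pi_b \circ \iota_0 = $ the map $R_0 \to A_b$ evaluated at $\pi$, into $\pi(\iota(f)) = f(x)$ for all $f \in \msR$, hence (by density/continuity, since $\msR \subseteq C(\Theta)$ is the relevant completion and $k_* \to \msR$) into $\pi \in P_x^\msA$. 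So such $\pi$ factors through $\msA \to \msA_x = \msA/\msA\msJ_x$, giving an irreducible admissible representation $\bar\pi$ of $\msA_x$, whose associated $\bar\pi_0$ is an irreducible representation of $A_{x,0}$. It remains to check that the composite $A_{0,x} \xrightarrow{\rho_x} A_{x,0} \xrightarrow{\bar\pi_0} B(\Hsp_\pi)$ agrees with $\sigma = \pi_0$ (pushed to the quotient $A_{0,x}$). This is a diagram chase: both maps, precomposed with $\msA \to A_0 \to A_{0,x}$, equal $\pi$ (for $\rho_x$ this is the defining compatibility of the surjection $A_0 \to A_{x,0}$ with $\theta$ and $\pi_x$; for $\sigma$ it is the definition of $\pi_0$), and $\theta(\msA)$ has dense image in $A_{0,x}$, so the two C$^*$-algebra maps coincide.

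Having this, injectivity of $\rho_x$ follows: if $0 \ne a \in A_{0,x}$, pick an irreducible representation $\sigma$ of $A_{0,x}$ with $\sigma(a) \ne 0$; by the above $\sigma$ factors as $\bar\pi_0 \circ \rho_x$, so $\rho_x(a) \ne 0$. Hence $\rho_x$ is an injective surjective $*$-homomorphism, i.e.\ an isomorphism. As a variant that avoids the representation-theoretic bookkeeping, one could instead argue directly with the ideals: show $A_0 J_x$ maps onto the kernel of $A_0 \to A_{x,0}$, i.e.\ that $\ker(A_0 \to A_{x,0})$ is generated as an ideal by $\iota_0(f) - f(x)$; this in turn reduces, level by level along the filtration, to the analogous statement for the $A_{\leq M}$ versus $A_{x,\leq M}$ and ultimately to $\msA_x = \msA/\msA\msJ_x$ being the right algebraic quotient, which is true by construction. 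Either route works; I would present the representation-theoretic one since Lemmas \ref{LemRepInd} and \ref{LemRepSam} are already in place.

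The main obstacle I anticipate is the careful identification, at the level of representations, of the parameter $x \in \Theta = \Spec(\msR)$ with the central character: one must pass between the algebraic evaluation $f \mapsto f(x)$ on $k_*$ (resp.\ $\msR$) and the evaluation on $C_0(\Theta) = R_0$, and check that the two notions of "fiber over $x$" — killing $\msA\msJ_x$ on the algebra side versus killing $A_0 J_x$ on the C$^*$-side — cut out the same set $P_x^\msA$ of irreducible representations. This uses that $\msR$ is C$^*$-faithful and that $\msR \hookrightarrow C(\Theta)$, together with the commuting triangles relating $\theta$, $\iota$, $\iota_0$ and the $\pi_b$; none of it is deep, but it is where one could slip. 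Everything else is a formal consequence of Lemma \ref{LemRepInd}, Lemma \ref{LemRepSam}, and surjectivity of $\rho_x$ (the latter already noted before the statement, via properness of $\pi_x$ and Proposition \ref{PropSun}).
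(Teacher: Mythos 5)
Your proposal is correct and follows essentially the same route as the paper: surjectivity is already in place, and injectivity is deduced by identifying the irreducible representations of $A_{x,0}$ with $P^{\msA_x}$ (via Lemma \ref{LemRepSam}), the irreducible representations of $A_{0,x}$ with $P_x^{\msA}$ (via Lemma \ref{LemRepInd}), and using $P^{\msA_x} = P_x^{\msA}$. You fill in more of the bookkeeping than the paper's terse one-line proof, but the argument is the same one.
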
 
\begin{proof}
We have already shown that the map is surjective. The injectivity follows immediately from the fact that the irreducible representations of $A_{x,0}$ are the $\pi_0$ with $\pi \in P^{\msA_x}$, that the  irreducible representations of $A_{0,x}$ are the $\pi_0$ with $\pi \in P_x^{\msA}$, and the equality $P^{\msA_x} = P^{\msA}_x$.  
\end{proof}

The $C_0(\Theta)$-algebra $A_0$ hence defines an upper semi-continuous field of C$^*$-algebras $x\mapsto A_{x,0}$. In the following, we will always identify $A_{x,0}$ and $A_{0,x}$ through $\rho_x$. 

\begin{Def}
We call a (weak) $\msR$-algebra $\msA$ \emph{strongly C$^*$-faithful} if $\msA$ and all $\msA_x$ are C$^*$-faithful.

We call an $\msR$-algebra $\msA$ \emph{continuous} if the associated field of C$^*$-algebras $x\mapsto A_{x,0}$ is continuous. 
\end{Def}

The following notion of morphism between $\msR$-algebras is obvious. 

\begin{Def}
We call \emph{$\msR$-morphism} between (weak) $\msR$-algebras $(\msA,\iota_{\msA})$ and $(\msB,\iota_{\msB})$ any admissible $^*$-morphism $\alpha: \msA \rightarrow \msB$ such that 
\[
\alpha\circ \iota_{\msA}  = \iota_{\msB}.
\] 
\end{Def}

Let $\msA,\msB$ be $\msR$-algebras, and let $\alpha: \msA \rightarrow \msB$ be an $\msR$-morphism. Then clearly $\alpha$ descends to admissible $*$-morphisms $\alpha_x: \msA_x \rightarrow \msB_x$. On the other hand, the completion $\alpha_0: A_0 \rightarrow M(B_0)$ is $C_0(\Theta)$-linear, and hence can be localized to non-degenerate C$^*$-homomorphisms $\alpha_{0,x}: A_{0,x} \rightarrow M(B_{0,x})$. The following lemma is not hard to verify.

\begin{Lem} 
Let $\alpha: \msA \rightarrow \msB$ be an $\msR$-morphism between $\msR$-algebras $\msA,\msB$. Then $\alpha_{x,0} = \alpha_{0,x}$.
\end{Lem}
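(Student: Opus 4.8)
The plan is a diagram chase in which all the actual content sits at the level of the unital C$^*$-algebras $A_{\leq M}$, where the image of $\msA$ is dense; one then passes to projective limits and restricts to the $C_0$-hulls. Throughout I write $\pi^\msA_x\colon\msA\to\msA_x$ and $\pi^\msB_x\colon\msB\to\msB_x$ for the fibre quotient maps; by the very definition of $\alpha_x$ we have $\alpha_x\circ\pi^\msA_x=\pi^\msB_x\circ\alpha$, equivalently $\alpha_x(a_x)=(\alpha(a))_x$ for $a\in\msA$. Since $\msA,\msB,\msA_x,\msB_x$ are all s$^*$-algebras they have the Tietze extension property, so $A_b=M(A_0)$, $B_b=M(B_0)$, $A_{x,b}=M(A_{x,0})$, $B_{x,b}=M(B_{x,0})$ by Lemma \ref{LemEqMulti}, and $\alpha_0$ is non-degenerate by Lemma \ref{LemNonDeg} so that its localisation $\alpha_{0,x}$ is well defined. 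I also use the fact, recalled in Section 2 just before Proposition \ref{PropSun} and the isomorphism statement for $\rho_x$, that the surjections $A_0\to A_{x,0}$ there are exactly the induced C$^*$-morphisms $(\pi^\msA_x)_0$ (restrictions of $(\pi^\msA_x)_b$), and that $(\pi^\msA_x)_0=\rho^\msA_x\circ\pi^{0,\msA}_x$; likewise for $\msB$.

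First I would work at level $M$. Fix $M>0$ and choose $N$ large enough that $\alpha_{N,M}\colon A_{\leq N}\to B_{\leq M}$ and $(\alpha_x)_{N,M}\colon A_{x,\leq N}\to B_{x,\leq M}$ are both defined; let $r^\msA_{x,N}\colon A_{\leq N}\to A_{x,\leq N}$ and $r^\msB_{x,M}\colon B_{\leq M}\to B_{x,\leq M}$ be the corresponding components of $(\pi^\msA_x)_b$ and $(\pi^\msB_x)_b$. For $a\in\msA$ one then computes
\[
r^\msB_{x,M}\bigl(\alpha_{N,M}(\theta^\msA_N(a))\bigr)=\theta^{\msB_x}_M\bigl((\alpha(a))_x\bigr)=\theta^{\msB_x}_M\bigl(\alpha_x(a_x)\bigr)=(\alpha_x)_{N,M}\bigl(r^\msA_{x,N}(\theta^\msA_N(a))\bigr),
\]
so $r^\msB_{x,M}\circ\alpha_{N,M}=(\alpha_x)_{N,M}\circ r^\msA_{x,N}$ by density of $\theta^\msA_N(\msA)$ in $A_{\leq N}$. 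These identities are compatible with the connecting surjections $\wtheta_{\bullet,\bullet}$, so passing to projective limits over $M$ yields $(\pi^\msB_x)_b\circ\alpha_b=(\alpha_x)_b\circ(\pi^\msA_x)_b$ as $*$-homomorphisms $A_b\to B_{x,b}$.

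Next I would restrict this identity to $A_0\subseteq A_b$: there $\alpha_b$ restricts to $\alpha_0\colon A_0\to M(B_0)$; by properness $(\pi^\msA_x)_b$ carries $A_0$ into $A_{x,0}$, on which $(\alpha_x)_b$ restricts to $(\alpha_x)_0=\alpha_{x,0}$; and $(\pi^\msB_x)_b$ restricts on $B_0$ to $(\pi^\msB_x)_0$. Hence $(\pi^\msB_x)_b\circ\alpha_0=\alpha_{x,0}\circ(\pi^\msA_x)_0$ as maps $A_0\to M(B_{x,0})$. Now $(\pi^\msA_x)_0=\rho^\msA_x\circ\pi^{0,\msA}_x$, while the two unital $*$-homomorphisms $(\pi^\msB_x)_b$ and $\widetilde{\rho^\msB_x}\circ\widetilde{\pi^{0,\msB}_x}$ from $M(B_0)$ to $M(B_{x,0})$ (tildes denoting multiplier extensions) both restrict on $B_0$ to the surjection $(\pi^\msB_x)_0=\rho^\msB_x\circ\pi^{0,\msB}_x$, hence coincide by uniqueness of the extension of a non-degenerate $*$-homomorphism. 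Substituting the defining property $\widetilde{\pi^{0,\msB}_x}\circ\alpha_0=\alpha_{0,x}\circ\pi^{0,\msA}_x$ of the localisation turns the displayed equality into $\widetilde{\rho^\msB_x}\circ\alpha_{0,x}\circ\pi^{0,\msA}_x=\alpha_{x,0}\circ\rho^\msA_x\circ\pi^{0,\msA}_x$, and cancelling the surjection $\pi^{0,\msA}_x\colon A_0\to A_{0,x}$ gives $\widetilde{\rho^\msB_x}\circ\alpha_{0,x}=\alpha_{x,0}\circ\rho^\msA_x$; under the identifications $A_{0,x}\cong A_{x,0}$, $B_{0,x}\cong B_{x,0}$ this is precisely $\alpha_{0,x}=\alpha_{x,0}$.

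There is no essential difficulty of idea here; the work is entirely bookkeeping, and the two points that need care are the verification that the fibre-quotient surjections $A_0\to A_{x,0}$ really are the induced morphisms $(\pi^\msA_x)_0$ — so that restricting the ``$b$''-level identity to the $C_0$-hulls is legitimate — and the handling of the various multiplier extensions, both of which become routine once one knows $A_b=M(A_0)$ and that surjective $*$-homomorphisms are non-degenerate.
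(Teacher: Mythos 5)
The paper states this lemma without proof (``The following lemma is not hard to verify''), so there is no argument to compare against; your diagram chase supplies the missing verification correctly. Your key step --- establishing $r^\msB_{x,M}\circ\alpha_{N,M}=(\alpha_x)_{N,M}\circ r^\msA_{x,N}$ on the dense image of $\msA$, passing to the projective limit to get $(\pi^\msB_x)_b\circ\alpha_b=(\alpha_x)_b\circ(\pi^\msA_x)_b$, then restricting to $A_0$ and untangling the identifications via $\rho_x$ and strict density in the multiplier algebra --- is exactly the bookkeeping the authors are signalling as routine, and all the ingredients you invoke (Tietze extension giving $A_b=M(A_0)$, properness of $\pi_x^\msA$ so that $(\pi_x^\msA)_b$ carries $A_0$ into $A_{x,0}$, and the factorization $(\pi_x^\msA)_0=\rho_x^\msA\circ\pi_x^{0,\msA}$) are justified by the surrounding text.
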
 

We now want to put coproducts on $\msR$-algebras. Since this requires working with completed balanced tensor products on the C$^*$-level, we want to have a good regularity condition avoiding the need to use different completions. The most natural condition to consider is nuclearity, but we will settle for something stronger which is easier to check representation-theoretically, namely being type $I$. This notion can easily be transported to the setting of unital spectral $*$-algebras.

\begin{Def} 
We call a unital spectral $*$-algebra $\msA$ \emph{type $I$} if for all $\pi \in P^{\msA}$ the normclosure of $\pi(\msA)$ contains a compact operator.
\end{Def} 

A (weak) s$^*$-algebra or $\msR$-algebra will be called type $I$ if its underlying spectral $^*$-algebra  is type $I$.

The following proposition follows immediately from Lemma \ref{LemRepInd} and its proof, using the Tietze extension property for the converse direction.

\begin{Prop}\label{PropTypI} 
Assume that $\msA$ is a type $I$ weak s$^*$-algebra. Then $A_0$ and every $A_{\leq M},A_{<M}$ is a type $I$ C$^*$-algebra, and in particular is nuclear. Conversely, if $\msA$ is an s$^*$-algebra and $A_0$ type $I$, then $\msA$ is type $I$. 
\end{Prop}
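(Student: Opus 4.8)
The plan is to read off both directions from the structural results already established, especially Lemma \ref{LemRepInd} and Lemma \ref{LemRepSam}. First suppose $\msA$ is a type $I$ weak s$^*$-algebra. Fix $M>0$; I want to show $A_{\leq M}$ is type $I$. By construction $A_{\leq M}$ is identified with a sub-C$^*$-algebra of $\bprod_{\pi\in P_{\leq M}} B(\Hsp_\pi)$, and each $\pi\in P_{\leq M}$ extends to an irreducible representation $\wpi_M$ of $A_{\leq M}$; conversely, by the argument in Lemma \ref{LemRepInd}, every irreducible representation of $A_{\leq M}$ arises (up to equivalence) this way, since pure states of $A_{\leq M}$ restrict to admissible irreducible representations of $\msA$ lying in $P_{\leq M}$. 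For each such $\pi$, the normclosure of $\pi(\msA)$ contains a compact operator by the type $I$ hypothesis; since $\theta_M(\msA)$ is dense in $A_{\leq M}$ and $\wpi_M$ restricts to $\pi$ on $\theta_M(\msA)$, the normclosure of $\wpi_M(A_{\leq M})$ equals that of $\pi(\msA)$ and hence contains a compact operator. Thus every irreducible representation of $A_{\leq M}$ contains the compacts in its image, i.e.\ $A_{\leq M}$ is a type $I$ C$^*$-algebra, hence nuclear. The same argument applies verbatim to $A_{<M}$ (whose irreducible representations are again the $\pi_b$ for $\pi\in P_{\leq M}$, by Lemma \ref{LemRepInd}) and to $A_0$ (whose irreducible representations are the $\pi_0$, $\pi\in P$, again by Lemma \ref{LemRepInd}): in each case the image of an irreducible representation has the same normclosure as $\pi(\msA)$ for the corresponding $\pi\in P$, so contains a compact operator. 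Finally, $A_b = \varprojlim A_{\leq M}$, but we do not claim $A_b$ is type $I$; only $A_0$, the $A_{\leq M}$ and the $A_{<M}$ are asserted to be, and nuclearity of each of these is automatic for type $I$ C$^*$-algebras.

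For the converse, suppose $\msA$ is an s$^*$-algebra and $A_0$ is type $I$. By Proposition \ref{PropCentTiet}, $\msA$ has the Tietze extension property, so Lemma \ref{LemRepSam} applies: for each $\pi\in P$, say $\pi\in P_{\leq M}$, the representation $\pi_0$ of $A_0$ is irreducible with $\pi_0(A_0) = \pi_M(A_{\leq M}) \supseteq \pi(\msA)$ as a dense subset. Since $A_0$ is type $I$, the normclosure of $\pi_0(A_0)$ contains a compact operator; but this normclosure coincides with the normclosure of $\pi(\msA)$, so the latter contains a compact operator as well. As this holds for every $\pi\in P^{\msA}$, the spectral $*$-algebra $\msA$ is type $I$ by definition.

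I do not expect any real obstacle here: the content is entirely bookkeeping, matching each flavour of C$^*$-algebra ($A_{\leq M}$, $A_{<M}$, $A_0$) against the corresponding description of its irreducible representations from Section \ref{Sec1}, and invoking the standard fact that a C$^*$-algebra all of whose irreducible representations contain the compacts in their image is type $I$ and hence nuclear. The one point that needs a word of care is that in the forward direction one should \emph{not} assert anything about $A_b$, which need not be type $I$ (it is a projective limit, typically non-separable, and its irreducible representations need not exhaust a nice class); the statement is correctly restricted to $A_0$ and the $A_{\leq M}$, $A_{<M}$. In the converse direction, the only subtlety is that one genuinely needs the Tietze extension property (equivalently, that $\msA$ is an s$^*$-algebra, not merely a weak one) to know via Lemma \ref{LemRepSam} that $\pi_0$ remains irreducible with dense image $\pi(\msA)$ — without it, $\pi_0$ could fail to be irreducible or could have image whose closure is strictly larger, and the implication would break.
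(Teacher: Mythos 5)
Your proposal is correct and follows exactly the route the paper intends (the paper's own proof is a one-line remark that the statement ``follows immediately from Lemma \ref{LemRepInd} and its proof, using the Tietze extension property for the converse direction''). The only small imprecision worth flagging is in the forward direction for $A_0$ and the $A_{<M}$: without the Tietze extension property (which a weak s$^*$-algebra need not satisfy) one cannot assert that $\pi_0(A_0)$, resp.\ $\pi_b(A_{<M})$, has the \emph{same} normclosure as $\pi(\msA)$; what one does get, and what suffices, is that $\pi_0(A_0)$ is a non-zero closed two-sided ideal in $\overline{\pi(\msA)}$ (image of a closed ideal of $A_b$ under $\pi_b$, whose closure of $\pi_b(A_b)$ coincides with $\overline{\pi(\msA)}$ because $\pi_b$ factors through $A_{\leq M}$ and $\theta_M(\msA)$ is dense there), and any non-zero closed ideal of an irreducibly represented C$^*$-algebra containing $K(\Hsp_\pi)$ itself contains $K(\Hsp_\pi)$. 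With that one-line substitution the argument is airtight; the converse direction, where you do invoke Tietze via Lemma \ref{LemRepSam} to get the genuine equality $\pi_0(A_0)=\overline{\pi(\msA)}$, is exactly right.
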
 

We now introduce the notion of relative tensor product of weak $\msR$-algebras.

\begin{Def} 
Let $\msA,\msB$ be  weak $\msR$-algebras. The \emph{$\msR$-relative (or $\msR$-balanced) tensor product} $\msA\msRtimes \msB$ of $\msA$ and $\msB$ is the $k_*$-algebra $\msA\kstartimes \msB$ endowed with the closed spectral conditions
\[
\msS_{\msA, \msB} := \{(y_i\otimes 1;S) \mid (y_i;S) \in \msS_\msA\} \cup \{(1\otimes z_i;S) \mid (z_i;S) \in \msS_\msB\}
\] 
and, writing $P^{\msA,\msB}$ for the set of irreducible admissible representations of $\msA \kstartimes \msB$, the filtration 
\[
P^{\msA,\msB}_{\leq M} = \{\pi \mid P_{\pi_{\mid \msA}} \subseteq P^{\msA}_{\leq M} \textrm{ and }P_{\pi_{\mid \msB}}\subseteq  P^{\msB}_{\leq M}\}.
\]
\end{Def}

\begin{Lem}
The filtration $\msP^{\msA,\msB}$ is a C$^*$-filtration, and $\msA\msRtimes \msB$ is a weak s$^*$-algebra. If $\msA,\msB$ are s$^*$-algebras, then $\msA\msRtimes \msB$ is an s$^*$-algebra.
\end{Lem}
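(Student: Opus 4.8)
The plan is to build controls for $\msA\msRtimes\msB$ out of those of $\msA$ and $\msB$, and to recognise $\msP^{\msA,\msB}$, up to equivalence, as the associated filtration. Two facts will be used throughout. First, for any admissible representation $\rho$ of a unital spectral $*$-algebra and any (rectangular) matrix $T$ over it one has $\|\rho(T)\|=\sup_{\rho'\in P_{\rho}}\|\rho'(T)\|$: this is the matrix-entry version of the estimate in the proof of Lemma~\ref{LemIrToRep}, obtained by passing to pure states of $\rho(\msA)^{\nc}$ and their GNS representations, together with the fact that each such GNS representation factors through $\rho(\msA)^{\nc}$ norm-non-increasingly; consequently $P_{\rho}\subseteq P^{T}_{\leq M}$ if and only if $\|\rho(T)\|\leq M$. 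Second, by the design of $\msS_{\msA,\msB}$, for every admissible representation $\pi$ of $\msA\kstartimes\msB$ the restrictions $\pi_{\mid\msA}$, $\pi_{\mid\msB}$ along $a\mapsto a\otimes1$, $b\mapsto1\otimes b$ are admissible for $\msA$, resp.\ $\msB$; and if $\pi'\preceq\pi$ then $P_{\pi'_{\mid\msA}}\subseteq P_{\pi_{\mid\msA}}$ and $P_{\pi'_{\mid\msB}}\subseteq P_{\pi_{\mid\msB}}$, because the surjection $\pi(\msA\kstartimes\msB)^{\nc}\twoheadrightarrow\pi'(\msA\kstartimes\msB)^{\nc}$ implementing $\pi'\preceq\pi$ restricts to a surjection of the C$^*$-subalgebras generated by $\msA\otimes1$, resp.\ $1\otimes\msB$, intertwining the corresponding restricted representations.

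First I would verify the two conditions of Definition~\ref{DefCfiltr} for $\msP^{\msA,\msB}$ directly, for an arbitrary choice of C$^*$-filtrations $\msP^{\msA},\msP^{\msB}$; monotonicity is clear, and $P^{\msA,\msB}=\cup_{M>0}P^{\msA,\msB}_{\leq M}$ then follows from the second condition applied to each $\pi$ (using $\pi\in P_{\pi}$). For uniform boundedness, write $a\in\msA\kstartimes\msB$ as a finite sum $\sum_i a_i\otimes b_i$; for $\pi\in P^{\msA,\msB}_{\leq M}$ the first fact and $P_{\pi_{\mid\msA}}\subseteq P^{\msA}_{\leq M}$ give $\|\pi_{\mid\msA}(a_i)\|\leq\sup_{\pi'\in P^{\msA}_{\leq M}}\|\pi'(a_i)\|<\infty$, and likewise for $b_i$, so $\|\pi(a)\|\leq\sum_i\|\pi_{\mid\msA}(a_i)\|\,\|\pi_{\mid\msB}(b_i)\|$ is bounded uniformly in $\pi$. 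For the second condition, given an admissible $\pi$ of $\msA\kstartimes\msB$, apply the second condition for $\msP^{\msA}$ and $\msP^{\msB}$ to the admissible representations $\pi_{\mid\msA}$ and $\pi_{\mid\msB}$ to obtain $M$ with $P_{\pi_{\mid\msA}}\subseteq P^{\msA}_{\leq M}$ and $P_{\pi_{\mid\msB}}\subseteq P^{\msB}_{\leq M}$; then the second fact yields $P_{\pi'_{\mid\msA}}\subseteq P_{\pi_{\mid\msA}}\subseteq P^{\msA}_{\leq M}$ and $P_{\pi'_{\mid\msB}}\subseteq P^{\msB}_{\leq M}$ for every $\pi'\in P_{\pi}$, i.e.\ $P_{\pi}\subseteq P^{\msA,\msB}_{\leq M}$.

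Next, for the weak s$^*$-algebra assertion I would fix quantum controls $X,Y$ of $\msA,\msB$ with $\msP^{\msA}=\msP^{X}$, $\msP^{\msB}=\msP^{Y}$ (legitimate since being a C$^*$-filtration is an equivalence invariant and C$^*$-filtrations are unique up to equivalence), write $X^{(1)},Y^{(2)}$ for their images over $\msA\kstartimes\msB$, and set $Z=X^{(1)}\oplus Y^{(2)}$. Then $\pi(Z)=\pi_{\mid\msA}(X)\oplus\pi_{\mid\msB}(Y)$ is block-diagonal, so $\|\pi(Z)\|=\max(\|\pi_{\mid\msA}(X)\|,\|\pi_{\mid\msB}(Y)\|)$, and the first fact gives $P^{Z}_{\leq M}=P^{\msA,\msB}_{\leq M}$; by the previous paragraph $\msP^{Z}$ is a C$^*$-filtration, hence $Z$ is a quantum control and $\msA\msRtimes\msB$ is a weak s$^*$-algebra. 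For the s$^*$-algebra assertion, take positive central controls $C_{\msA}\in\msZ(\msA)$, $C_{\msB}\in\msZ(\msB)$ with $\msP^{\msA}=\msP^{C_{\msA}}$, $\msP^{\msB}=\msP^{C_{\msB}}$ (Lemma~\ref{LemLem}(3)) and put $C=C_{\msA}\otimes1+1\otimes C_{\msB}$, which lies in $\msZ(\msA\kstartimes\msB)$ (each summand is central in one tensor leg and constant in the other) and is positive. For irreducible admissible $\pi$, each of $\pi(C_{\msA}\otimes1)=\pi_{\mid\msA}(C_{\msA})$ and $\pi(1\otimes C_{\msB})$ lies in the centre of $\pi(\msA\kstartimes\msB)$, hence is a non-negative scalar $c_{\msA}(\pi)$, resp.\ $c_{\msB}(\pi)$, by Schur's lemma and admissibility of $\pi_{\mid\msA},\pi_{\mid\msB}$ (recalling $C_{\msA},C_{\msB}\geq0$), and moreover $c_{\msA}(\pi)=\|\pi_{\mid\msA}(C_{\msA})\|$; so by the first fact $P_{\pi_{\mid\msA}}\subseteq P^{C_{\msA}}_{\leq M}$ iff $c_{\msA}(\pi)\leq M$, and similarly for $\msB$. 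Hence $\pi\in P^{\msA,\msB}_{\leq M}$ iff $c_{\msA}(\pi)\leq M$ and $c_{\msB}(\pi)\leq M$, while $\pi\in P^{C}_{\leq M}$ iff $c_{\msA}(\pi)+c_{\msB}(\pi)\leq M$; since the $c$'s are non-negative, $P^{C}_{\leq M}\subseteq P^{\msA,\msB}_{\leq M}\subseteq P^{C}_{\leq2M}$, so $\msP^{C}\sim\msP^{\msA,\msB}$ is a C$^*$-filtration, $C$ is a central control, and $\msA\msRtimes\msB$ is an s$^*$-algebra.

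I expect the only genuinely non-routine points to be the two facts isolated at the start: the norm identity $\|\rho(T)\|=\sup_{\rho'\in P_{\rho}}\|\rho'(T)\|$ for \emph{matrices} rather than scalars, which is where the C$^*$-structure theory (pure states, GNS, Schur) really enters, and the stability of $\preceq$ under restriction to a tensor leg. Everything else — the uniform bound in the first condition, the scalar bookkeeping for $C$, and the matching of $\msP^{Z}$ and $\msP^{C}$ against $\msP^{\msA,\msB}$ — is formal. A subsidiary point one must keep in mind is that $\msP^{\msA,\msB}$ depends on the chosen C$^*$-filtrations of $\msA$ and $\msB$ only up to equivalence, which is what licenses the reduction to controls of the special forms used above.
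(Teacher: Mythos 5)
Your proof is correct and follows essentially the same route as the paper's: you verify the two conditions of Definition~\ref{DefCfiltr} directly, build the (central) quantum control on the tensor product from $(X\otimes1)\oplus(1\otimes Y)$, and for the s$^*$-case reduce to single positive central elements $C_{\msA},C_{\msB}$ and show $C_{\msA}\otimes1+1\otimes C_{\msB}$ works. The only cosmetic difference is that the paper passes from the block-diagonal to the column $\bigl(\begin{smallmatrix}X\otimes1\\ 1\otimes Y\end{smallmatrix}\bigr)$ via bounded equivalence and Lemma~\ref{LemLem}, while you instead compare the filtrations $\msP^{C}$ and $\msP^{\msA,\msB}$ by hand; you also spell out explicitly (as ``first fact'' and ``second fact'') what the paper compresses into ``clearly'' and ``it follows immediately''.
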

\begin{proof}
If $a\in \msA$, $b\in \msB$ and $\pi \in P^{\msA,\msB}_{\leq M}$, then 
\[
\|\pi(a\otimes b)\|\leq \|\pi(a)\| \|\pi(b)\| \leq \|a\|_{M}\|b\|_{M},
\]
so the first condition in Definition \ref{DefCfiltr} is satisfied. If $\pi$ is any admissible representation of $\msA \kstartimes \msB$, then there exists $M>0$ such that $P_{\pi_{\mid \msA}} \subseteq P^{\msA}_{\leq M}$ and $P_{\pi_{\mid \msB}}\subseteq  P^{\msB}_{\leq M}$, and clearly $P_{\pi}^{\msA,\msB} \subseteq P^{\msA,\msB}_{\leq M}$. Hence $\msP^{\msA,\msB}$ is a C$^*$-filtration.

If now $X,Y$ are respective quantum controls for $\msA$ and $\msB$, it follows immediately that $(X\otimes 1) \oplus (1\otimes Y)$ is a quantum control for $\msA\msRtimes \msB$, since an admissible representation $\pi$ of $\msA$ has $P_{\pi} \subseteq P^{\msA}_{\leq M}$ if and only if $\|\pi(X)\|\leq M$ (and similarly for $Y$ with respect to $\msB$). 

Clearly $(X\otimes 1) \oplus (1\otimes Y) = \begin{pmatrix} X\otimes 1 & 0 \\ 0 &  1\otimes Y \end{pmatrix}$ is bounded equivalent to $ \begin{pmatrix} X \otimes 1 \\  1 \otimes Y \end{pmatrix}$, hence if $X,Y$ are central then $X^*X\otimes 1+ 1\otimes Y^*Y$ is a central control for $\msA\msRtimes \msB$.
\end{proof}

Let now  $\msA,\msB$ be $\msR$-algebras, and momentarily write $\msC = \msA\msRtimes \msB$. Let us write $\underset{C_0(\Theta)}{\otimes}$ for the maximal balanced tensor product of $C_0(\Theta)$-algebras. By definition of $\msP^{\msA,\msB}$ we obtain a family of compatible  $*$-isomorphisms
\[
C_{\leq M} \cong A_{\leq M} \underset{C_0(\Theta)}{\otimes} B_{\leq M}, \qquad C_b = \underset{\longleftarrow}{\lim}^b \, C_{\leq M}.
\]
We obtain a natural map 
\[
\rho: A_0 \underset{C_0(\Theta)}{\otimes} B_0 \rightarrow A_b\underset{C_0(\Theta)}{\otimes} B_b  \rightarrow C_b = M(C_0).
\]

However, it seems unclear if this map will have range in $C_0$, as we have insufficient information on the irreducible representations for the factors $\msA,\msB$ appearing in a general $\pi \in P_{\geq M}^{\msA,\msB}$. We therefore restrict to $\msA,\msB$ type $I$ from here on.

\begin{Prop}
Let $\msA,\msB$ be type $I$ $\msR$-algebras. Then $\msC = \msA\msRtimes \msB$ is a type $I$ $\msR$-algebra, and $\rho$ gives an isomorphism $A_0 \underset{C_0(\Theta)}{\otimes} B_0 \cong C_0$. 
\end{Prop}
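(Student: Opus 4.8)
The plan is to bootstrap from the already-established structural facts: we know that $\msC = \msA \msRtimes \msB$ is a weak s$^*$-algebra, that $\msC$ is an s$^*$-algebra when $\msA,\msB$ are, and that $C_{\leq M} \cong A_{\leq M} \underset{C_0(\Theta)}{\otimes} B_{\leq M}$ compatibly with the projective system, so that $C_b = M(C_0)$ equals the bounded projective limit of the $C_{\leq M}$. First I would check that $\msC$ is type $I$: every $\pi \in P^{\msA,\msB}$ restricts (up to the usual identification) to admissible representations $\pi_{\mid \msA}$ and $\pi_{\mid \msB}$, and by type $I$-ness of $\msA,\msB$ together with the factorization through $A_{\leq M} \underset{C_0(\Theta)}{\otimes} B_{\leq M}$ — a tensor product of type $I$ C$^*$-algebras, hence type $I$ — the image $\pi(\msC)$ has norm-closure equal to an irreducible subalgebra of a type $I$ C$^*$-algebra, so it contains compacts. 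Then $\msC$ is an s$^*$-algebra by the earlier lemma, and by Proposition \ref{PropTypI} all of $C_0, C_{\leq M}, C_{<M}$ are type $I$, hence nuclear, so the maximal balanced tensor product $A_0 \underset{C_0(\Theta)}{\otimes} B_0$ is the unique one and we need not worry about which completion appears.

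Next I would show $\rho$ lands in $C_0$ and is surjective onto it. Pick central controls $C_\msA \in \msZ(\msA)$, $C_\msB \in \msZ(\msB)$; then $C := C_\msA^* C_\msA \otimes 1 + 1 \otimes C_\msB^* C_\msB$ is a central control for $\msC$ and the central local units $e_M = f_M((1+C)^{-1})$ lie in $C_c$. The key point is that on the $C_0$-level the balanced tensor product of central local units for $\msA$ and $\msB$ gives an element of $A_0 \underset{C_0(\Theta)}{\otimes} B_0$ whose image under $\rho$ restricts to $1$ in $C_{\leq M}$ for $M$ suitably small relative to the indices, which shows $\rho(A_0 \underset{C_0(\Theta)}{\otimes} B_0) \cap C_{<M}$ is large; combined with the Tietze extension property (Proposition \ref{PropCentTiet}) and Lemma \ref{LemTietDens} applied to $\msC$, density of the $e_M \theta(\msC)$ in $C_0$ reduces surjectivity to showing each $e_M \theta(a \otimes b)$ is hit, which it is since $\theta_\msC(a\otimes b) = \theta_\msA(a) \otimes \theta_\msB(b)$ at the level of $C_{\leq M'}$ for large $M'$. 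For injectivity I would compare irreducible representations: by Lemma \ref{LemRepSam} the irreducibles of $C_0$ are exactly the $\pi_0$, $\pi \in P^{\msA,\msB}$, and by type $I$-ness each such $\pi$ decomposes as (a subrepresentation of) $\pi_{\mid\msA} \otimes \pi_{\mid\msB}$ with $\pi_{\mid\msA},\pi_{\mid\msB}$ irreducible; running the same decomposition on the irreducibles of $A_0 \underset{C_0(\Theta)}{\otimes} B_0$ — which, for a balanced tensor product of type $I$ $C_0(\Theta)$-algebras, are precisely the $\sigma_0 \otimes \tau_0$ over a common point $x \in \Theta$ — gives a bijection on irreducible representations compatible with $\rho$, forcing $\ker \rho = 0$.

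The main obstacle I expect is controlling the relationship between an irreducible admissible representation $\pi$ of $\msC$ and the tensor product of irreducibles of $\msA$ and $\msB$ over a point of $\Theta$: one needs that $\pi$ factors as $\pi_{\mid\msA} \otimes \pi_{\mid\msB}$ (not merely that it restricts to these), which is exactly where type $I$-ness and the structure of the maximal balanced tensor product over $C_0(\Theta)$ enter — this is the standard fact that $\mathrm{Irr}(A \underset{C_0(\Theta)}{\otimes} B) \cong \bigsqcup_{x} \mathrm{Irr}(A_x) \times \mathrm{Irr}(B_x)$ for nuclear (type $I$) $C_0(\Theta)$-algebras, and making sure the filtration $\msP^{\msA,\msB}$ matches up correctly under this identification is the delicate bookkeeping step. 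Once that identification is in hand, together with the already-proven fiberwise identifications $C_{0,x} \cong \msC_x$-hull and $\msC_x = \msA_x \msRtimes \msB_x$, the isomorphism $A_0 \underset{C_0(\Theta)}{\otimes} B_0 \cong C_0$ follows by checking it fiberwise and invoking that a $C_0(\Theta)$-linear $*$-homomorphism between $C_0(\Theta)$-algebras which is a fiberwise isomorphism is an isomorphism.
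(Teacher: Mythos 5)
Your proposal is essentially correct and follows the same three-step scheme as the paper: (i) establish, from type $I$-ness of $A_{\leq M}$ and $B_{\leq M}$, that every irreducible admissible representation $\pi$ of $\msC$ factors as $\pi_1\otimes\pi_2$ with $\pi_1\in P^{\msA}_x$, $\pi_2\in P^{\msB}_x$ for a common $x$; (ii) deduce that $\rho$ lands in $C_0$; (iii) prove injectivity and surjectivity. However, there are two genuine points of divergence from the paper, and one place where your argument is under-specified.

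For injectivity the paper does something more elementary than you: it reduces to showing that $A_{<M}\underset{C_0(\Theta)}{\otimes} B_{<M}\rightarrow A_{\leq M}\underset{C_0(\Theta)}{\otimes} B_{\leq M}=C_{\leq M}$ is injective, which follows purely from the fact that $A_{<M}$ is an ideal in $A_{\leq M}$ and $B_{<M}$ in $B_{\leq M}$, without invoking type $I$ at all. Your route — matching irreducible representations of $A_0\underset{C_0(\Theta)}{\otimes}B_0$ against those of $C_0$, using the standard description $\mathrm{Irr}(A_0\underset{C_0(\Theta)}{\otimes}B_0)\cong\bigsqcup_x \mathrm{Irr}(A_{0,x})\times\mathrm{Irr}(B_{0,x})$ for type $I$ $C_0(\Theta)$-algebras — does work and is a legitimate alternative, but it is heavier and re-invokes the type $I$ machinery where the paper avoids it. On the other hand, your argument that $\rho$ has range inside $C_0$ (rather than only $M(C_0)$) is bundled with the surjectivity discussion and never cleanly isolated. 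The crisp version is the one in the paper: from the factorization $\pi=\pi_1\otimes\pi_2$ and the definition of the filtration $P^{\msA,\msB}_{\leq M}$, one checks that $\pi\in P^{\msA,\msB}_{\geq M}$ forces $\pi_1\in P^{\msA}_{\geq M}$ or $\pi_2\in P^{\msB}_{\geq M}$, whence $\rho\big(A_{<M}\underset{C_0(\Theta)}{\otimes}B_{<M}\big)\subseteq C_{<M}$; the local-units picture you sketch is then really only needed for surjectivity, which both you and the paper treat the same way (as in Proposition \ref{PropSun}).

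\end{document}
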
 
\begin{proof}
Since $A_{\leq M}$ and $B_{\leq M}$ are type $I$, any $\pi \in P^{\msA,\msB}$ is of the form $\pi_1\otimes \pi_2$ where $\pi_1 \in P^{\msA}_x$, $\pi_2\in P^{\msB}_x$ for some $x\in \Theta$. Moreover, we see that $\pi \in P^{\msA,\msB}_{\leq M}$ if and only if $\pi_1\in P^{\msA}_{\leq M}$ and $\pi_2\in P^{\msB}_{\leq M}$. It follows immediately that 
\[
\rho(A_{<M} \underset{C_0(\Theta)}{\otimes} B_{<M}) \subseteq C_{<M},  
\]
and hence $\rho$ maps  $A_0 \underset{C_0(\Theta)}{\otimes} B_0$ into $C_0$. 

To see that $\rho$ is injective, it is sufficient to show that the restrictions
\[
A_{<M} \underset{C_0(\Theta)}{\otimes} B_{<M} \rightarrow C_{<M} 
\]
are injective. But this follows from the fact that 
\[
A_{<M} \underset{C_0(\Theta)}{\otimes} B_{<M} \rightarrow  C_{\leq M} = A_{\leq M} \underset{C_0(\Theta)}{\otimes} B_{\leq M}
\]
is injective (this does not use the type $I$-property, only the fact that $A_{<M}$ can be seen as an ideal in $A_{\leq M}$, and similarly $B_{<M}$ in $B_{\leq M}$). 

The surjectivity of $\rho$ follows immediately from the existence of local units as in the proof of Proposition \ref{PropSun}.
\end{proof} 

Of course, the results above can be obtained for any finite number of tensorands. 

\begin{Rem}\label{RemTypI}
Note that the conclusion $A_0 \underset{C_0(\Theta)}{\otimes} B_0 \cong C_0$ is still valid if we only assume that $\msA$ is type $I$. The same holds for multiple tensor products as long as all factors but one are type $I$.
\end{Rem}

Also the following observation is straightforwardly proved. 

\begin{Lem} 
Let $\msA,\msB,\msC,\msD$ be type $I$ $\msR$-algebras, and assume we are given $\msR$-morphisms 
\[
\alpha: \msA \rightarrow \msC,\quad \beta: \msB\rightarrow \msD.
\] 
Then the balanced tensor product 
\[
\alpha\otimes \beta: \msA\msRtimes \msB \rightarrow \msC\msRtimes \msD
\] 
is an $\msR$-morphism.
\end{Lem}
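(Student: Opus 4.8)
The plan is to verify the three defining properties of an $\msR$-morphism for the map $\alpha\otimes\beta$: that it is a well-defined unital $*$-homomorphism $\msA\msRtimes\msB\to\msC\msRtimes\msD$, that it is admissible, and that it intertwines the structure maps. The first and last are purely algebraic; the content is the middle one, and it is there that the type $I$ hypothesis is used.

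First I would record that $\alpha$ and $\beta$, being $\msR$-morphisms, are in particular $k_*$-linear, so that $\iota_\msC(f)\alpha(a)\otimes\beta(b) = \alpha(a)\otimes\iota_\msD(f)\beta(b)$ in $\msC\kstartimes\msD$ for all $f\in k_*$. Hence the algebraic map $\alpha\otimes_\C\beta$ on $\msA\otimes_\C\msB$ descends to the balanced tensor product $\msA\kstartimes\msB$ and yields a unital $*$-homomorphism $\alpha\otimes\beta:\msA\msRtimes\msB\to\msC\msRtimes\msD$ (it is unital and $*$-preserving because $\alpha,\beta$ are). Compatibility with the structure maps is then immediate: writing $\iota$ for the common map $f\mapsto\iota_\msA(f)\otimes1=1\otimes\iota_\msB(f)$ into $\msA\msRtimes\msB$, and likewise for $\msC\msRtimes\msD$, one computes $(\alpha\otimes\beta)(\iota_\msA(f)\otimes1)=\iota_\msC(f)\otimes1$, i.e. $(\alpha\otimes\beta)\circ\iota=\iota$.

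The substantive step is admissibility. By the remark following the definition of admissible morphism, it suffices to check that $\pi\circ(\alpha\otimes\beta)$ is $\msA\msRtimes\msB$-admissible for every \emph{irreducible} $\msC\msRtimes\msD$-admissible representation $\pi$. Applying the argument from the proof of the preceding proposition to the type $I$ $\msR$-algebras $\msC,\msD$, any such $\pi$ is of the form $\pi_1\otimes\pi_2$ with $\pi_1\in P^{\msC}_x$ and $\pi_2\in P^{\msD}_x$ for a common $x\in\Theta$. On $a\otimes b$ one has $(\pi\circ(\alpha\otimes\beta))(a\otimes b)=\pi_1(\alpha(a))\otimes\pi_2(\beta(b))$, so $\pi\circ(\alpha\otimes\beta)=(\pi_1\circ\alpha)\otimes(\pi_2\circ\beta)$ as a representation of $\msA\msRtimes\msB$. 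Now $\pi_1\circ\alpha$ is $\msA$-admissible because $\alpha$ is admissible and $\pi_1$ is $\msC$-admissible, and symmetrically $\pi_2\circ\beta$ is $\msB$-admissible. Moreover, for a closed spectral condition $(Y_1,\ldots,Y_m;S)\in\msS_\msA$ the commuting selfadjoint tuple $((\pi_1\circ\alpha)(Y_i)\otimes\id)_i$ has the same joint spectrum as $((\pi_1\circ\alpha)(Y_i))_i$, which lies in $S$; the symmetric statement holds for conditions coming from $\msB$. Since $\msS_{\msA,\msB}$ consists precisely of conditions of these two shapes, $\pi\circ(\alpha\otimes\beta)$ satisfies all of them, hence is $\msA\msRtimes\msB$-admissible, and $\alpha\otimes\beta$ is an $\msR$-morphism. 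The only delicate point — the "hard part," such as it is — is this admissibility step: it relies on knowing that the irreducible admissible representations of a relative tensor product of type $I$ $\msR$-algebras genuinely factor as balanced tensor products, together with the elementary fact that replacing a commuting selfadjoint tuple $(T_i)$ by $(T_i\otimes\id)$ does not change the joint spectrum; both are already available from the discussion above. The same argument, with the evident bookkeeping, applies to any finite number of tensorands.
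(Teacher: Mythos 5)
Your proof is correct, and since the paper simply records this lemma as ``straightforwardly proved'' without supplying an argument, there is no proof in the paper to compare against word-for-word. One remark worth making, though: you route the admissibility check through the type~$I$ factorisation of irreducible representations of $\msC\msRtimes\msD$, but this is more machinery than the check actually needs. By the way the spectral conditions $\msS_{\msC,\msD}$ are built, an arbitrary admissible representation $\pi$ of $\msC\msRtimes\msD$ automatically restricts to an admissible representation $\rho=\pi\vert_{\msC\otimes 1}$ of $\msC$ (and likewise on $1\otimes\msD$), with no need for $\pi$ to split as $\pi_1\otimes\pi_2$. Since $\alpha$ is admissible, $\rho\circ\alpha$ is then $\msA$-admissible, and for $(y_i;S)\in\msS_\msA$ the tuple $\pi\bigl((\alpha\otimes\beta)(y_i\otimes 1)\bigr)=(\rho\circ\alpha)(y_i)$ has joint spectrum in $S$; symmetrically for $\msS_\msB$. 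This gives admissibility of $\pi\circ(\alpha\otimes\beta)$ directly, without invoking the type~$I$ hypothesis at all --- that hypothesis is carried in the statement because it is the standing assumption of the surrounding discussion (and because it is genuinely needed for the very next line, identifying $(\alpha\otimes\beta)_0$ with $\alpha_0\otimes\beta_0$ on $A_0\underset{C_0(\Theta)}{\otimes}B_0$). Your factorisation-based argument is valid, but it makes the lemma look as if it depends on type~$I$ when it does not.
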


We can write the associated induced C$^*$-morphism as 
\[
\alpha_0 \otimes \beta_0:= (\alpha\otimes \beta)_0: A_0\underset{C_0(\Theta)}{\otimes} B_0 \rightarrow M(C_0\underset{C_0(\Theta)}{\otimes} D_0),
\] 
as it clearly corresponds to the universal tensor product of the morphisms $\alpha_0$ and $\beta_0$.

We can now introduce the notion of (weak) $\msR$-bialgebra. 

\begin{Def} We call \emph{$k_*$-bialgebra} any $k_*$-algebra $\msA$ together with $k_*$-morphisms 
\[
\Delta:\msA \rightarrow \msA \kstartimes \msA,\qquad \varepsilon: \msA \rightarrow k_*
\] 
satisfying the usual definitions of a bialgebra. We call Hopf $k_*$-algebra any $k_*$-bialgebra admitting an antipode $S: \msA \rightarrow \msA$.

We call \emph{(weak) $\msR$-bialgebra}, resp.~ \emph{(weak) $\msR$-Hopf algebra}, any $\msA$ as above which is a (weak) $\msR$-algebra and for which $\Delta,\varepsilon$ are $\msR$-morphisms, with $\varepsilon$ proper.
\end{Def} 

Note that in the above definition, the antipode is not required to commute with the $*$-structure, nor to be an $\msR$-morphism. 

If $(\msA,\Delta)$ is a type $I$ $\msR$-bialgebra, it follows by the general constructions discussed above that we obtain $C_0(\Theta)$-compatible non-degenerate $*$-homomorphisms 
\[
\Delta_0: A_0 \rightarrow M(A_0\underset{C_0(\Theta)}{\otimes} A_0),\qquad \varepsilon_0: A_0 \rightarrow C_0(\Theta),
\] 
where $\varepsilon_0$ lands in $C_0(\Theta)$ since $\varepsilon$ is proper and surjective. We clearly have the coassociativity condition 
\[
(\Delta_0\otimes \id)\Delta_0 = (\id\otimes \Delta_0)\Delta_0,
\] 
hence $(C_0(\Theta),A_0,\Delta_0)$ can be seen as a field of C$^*$-bialgebras over $\Theta$ in the sense of the following definition.\footnote{We slightly change the terminology of \cite{Bla96} to accord more with the Hopf algebraic terminology. We also diverge from the terminology used in \cite{VV01}.}

\begin{Def}\cite[D\'{e}finition 4.1]{Bla96}
Let $A_0$ be a type $I$ (or nuclear) $C_0(\Theta)$-algebra. We call \emph{$C_0(\Theta)$-bialgebra} structure on $A_0$, or call $A_0$ a \emph{field of C$^*$-bialgebras over $\Theta$}, if $A_0$ is endowed with  a non-degenerate coassociative $C_0(\Theta)$-linear $*$-homomorphism
\[
\Delta_0: A_0 \rightarrow M(A_0\underset{C_0(\Theta)}{\otimes} A_0).
\] 
We call $(A_0,\Delta_0)$ a \emph{Hopf $C_0(\Theta)$-algebra}, or \emph{field of Hopf C$^*$-algebras over $\Theta$}, if the following density conditions are satisfied:
\[
\Delta_0(A_0)(1\otimes A_0)\textrm{ and }(A_0\otimes 1)\Delta_0(A_0)\textrm{ are dense in }A_0\underset{C_0(\Theta)}{\otimes} A_0.
\]
\end{Def} 

Unfortunately, it is not clear if in general the completion $(A_0,\Delta_0)$ of a type $I$ Hopf $\msR$-algebra $(\msA,\Delta)$ will be a field of Hopf C$^*$-algebras. To be able to show this, we need a quantum control compatible with the comultiplication. 

\begin{Prop}\label{PropFieldHopf} 
Let $(\msA,\Delta)$ be a Hopf $\msR$-algebra, and assume that $Y\oplus X$ is a quantum control for $\msA$ with $Y\in \msR$ and $X\in M_N(\msA)$ with  
\[
(\id\otimes \Delta)(X)  = X_{12}X_{13},\qquad (\id\otimes \varepsilon)(X) = I_N.
\] 
Assume moreover that there exist invertible $Q, \widetilde{Q} \in M_N(k_*)$ such that 
\[
\Tr(Q^*X^*XQ) \in \msZ(\msA),\qquad \Tr(\widetilde{Q}XX^*\widetilde{Q}^*) \in \msZ(\msA).
\]
Then $(A_0,\Delta_0)$ is a field of Hopf C$^*$-algebras.
\end{Prop}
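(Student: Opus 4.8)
The plan is to verify the two density conditions $\Delta_0(A_0)(1\otimes A_0)$ and $(A_0\otimes 1)\Delta_0(A_0)$ are dense in $A_0\underset{C_0(\Theta)}{\otimes}A_0$, the only thing missing from being a field of Hopf C$^*$-algebras (coassociativity and non-degeneracy of $\Delta_0$ being already established for type $I$ Hopf $\msR$-algebras, and type $I$-ness being needed here — so first I would record why the hypotheses force $\msA$ to be type $I$, presumably because a quantum control of the special multiplicative form $X_{12}X_{13}$ with $(\id\otimes\varepsilon)(X)=I_N$ together with the Hopf structure yields enough structure; alternatively this may have been arranged in the ambient setup and I would simply invoke it). The conceptual idea mirrors the classical proof that a Hopf algebra with bijective antipode satisfies $\Delta(A)(1\otimes A)=A\otimes A$ via the maps $T_1(a\otimes b)=\Delta(a)(1\otimes b)$, $T_2(a\otimes b)=(a\otimes 1)\Delta(a)$, whose algebraic inverses are built from $S$. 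The new wrinkle is that $S$ is neither a $*$-map nor an $\msR$-morphism and hence does not complete to the C$^*$-level; so the boundedness must come entirely from the multiplicative quantum control $X$ together with the trace-centrality conditions.

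The key steps, in order, would be: (1) On the algebraic level, define $T_1, T_2: \msA\kstartimes\msA\to\msA\kstartimes\msA$ by $T_1(a\otimes b)=\Delta(a)(1\otimes b)$ and $T_2(a\otimes b)=(a\otimes 1)\Delta(a)$; these are bijective $k_*$-linear maps with inverses $T_1^{-1}(a\otimes b)=(a\otimes 1)((S\otimes\id)\Delta(b))$ and $T_2^{-1}(a\otimes b)=((\id\otimes S)\Delta(a))(1\otimes b)$, by the standard Hopf algebra computation using the antipode axioms. (2) Show that $T_1,T_2$ and their inverses are \emph{bounded} for the relevant C$^*$-seminorms, i.e.\ they extend to the completions $C_{\leq M}=A_{\leq M}\underset{C_0(\Theta)}{\otimes}A_{\leq M}$, possibly after enlarging $M$. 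The forward maps $T_1,T_2$ are obviously bounded (products of $*$-homomorphic images and multipliers). For the inverses, the issue is that $S$ appears; here I would use the matrix $X$. Writing the relation $(\id\otimes\Delta)(X)=X_{12}X_{13}$ together with $(\id\otimes S)(X)$ being a bounded inverse of $X$ in the appropriate sense — indeed $(\id\otimes S)(X)=X^{-1}$ in $M_N(\msA)$ by the antipode axiom applied entrywise and the multiplicativity of $X$ — one expresses $S$ on the subalgebra generated by the matrix coefficients of $X$ in terms of $X^{-1}$, whose norm is controlled because the two central trace elements $\Tr(Q^*X^*XQ)$ and $\Tr(\widetilde{Q}XX^*\widetilde{Q}^*)$ control $\|\pi(X)\|$ and $\|\pi(X)^{-1}\|$ uniformly on each $P_{\leq M}$ (via Lemma \ref{LemPimPop}, which gives a lower bound on $X^*X$, hence an upper bound on $\|X^{-1}\|$). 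The fact that $Y\oplus X$ is a quantum control and $X$ is multiplicative means $X$ also controls the coproduct legs, so the bounds are compatible with passing through $\Delta$. (3) From boundedness, conclude that $T_1,T_2$ extend to isomorphisms of $C_{\leq M}\cong A_{\leq M}\underset{C_0(\Theta)}{\otimes}A_{\leq M}$ for each $M$ (or each $M$ up to a shift), compatibly with the projective system, hence to isomorphisms of $C_b$ and, using associated local units together with the ideal structure of $A_{<M}$ inside $A_{\leq M}$, to isomorphisms of $C_0=A_0\underset{C_0(\Theta)}{\otimes}A_0$. In particular $T_1, T_2$ have dense range at the C$^*$-level, which is exactly the two required density conditions since $\Delta_0(A_0)(1\otimes A_0)$ is the closure of $T_1(\msA\otimes\msA)$ and similarly for $T_2$.

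The main obstacle I anticipate is step (2): establishing the uniform norm bound on $\pi(X)^{-1}$ over $\pi\in P_{\leq M}$ and then leveraging it to bound the antipode on the generators, while keeping everything compatible with the coproduct so that the inverse maps $T_1^{-1}, T_2^{-1}$ really are $\|-\|_{M}$-bounded on $C_{\leq M}$ and not merely on $C_{\leq M'}$ for uncontrolled $M'$. Concretely I expect one needs: first that $\|\pi(X)\|^2 = \|\pi(X^*X)\| \le \|\pi(\Tr(Q^*X^*XQ))\|\cdot\|Q^{-1}\|^2\cdot\|Q\|^{\text{?}}$-type estimates pin down membership in $P_{\leq M}$ purely in terms of the central elements (so that the filtration by $X$ agrees up to bounded equivalence with one by a genuine central control, recovering the s$^*$-structure à la Proposition \ref{PropDensr}); second that $\Tr(\widetilde{Q}XX^*\widetilde{Q}^*)$ central gives, via Lemma \ref{LemPimPop} applied to $X^*$, a uniform \emph{lower} bound $\pi(X)\pi(X)^*\ge \varepsilon_M>0$ on $P_{\leq M}$, hence $\|\pi(X)^{-1}\|\le\varepsilon_M^{-1/2}$; and third that writing any element of $\msA$ via the coordinates of $X$ and applying $S$ entrywise lands one in the algebra generated by entries of $X^{-1}$, whose $\pi$-norms are then controlled by $\varepsilon_M^{-1/2}$ on $P_{\leq M}$. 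Once these three uniform estimates are in hand the rest is the formal bookkeeping of extending bounded maps along projective limits and through the local-unit/ideal machinery already developed in Section \ref{Sec1}, which is routine.
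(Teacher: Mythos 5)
Your high-level instinct — that the Galois maps $T_1, T_2$ and the antipode are the mechanism — is right, but the proposed way to make it C$^*$-analytic has a genuine gap at step~(2). You want to show $T_1^{-1}(a\otimes b)=(a\otimes 1)(S\otimes\id)\Delta(b)$ is $\|-\|_M$-bounded. This cannot work in the way sketched: $S$ is an anti-automorphism that is neither $*$-preserving nor an $\msR$-morphism, and having a uniform bound on $\|\pi(X^{-1})\|$ over $P_{\leq M}$ (which does hold, in fact automatically by the first condition in Definition~\ref{DefCfiltr}) gives you nothing beyond control of $S$ \emph{on the entries of $X$}. It does not bound $S$ as a linear map on $\msA$ — $S$ is anti-multiplicative and interacts badly with the $*$ — and a fortiori does not bound the linear operator $T_1^{-1}$ on the completed balanced tensor product. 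Your remark that ``the bounds are compatible with passing through $\Delta$'' is not a proof, and in general the antipode does not extend to a bounded operator on any $A_{\leq M}$. A secondary error: Lemma~\ref{LemPimPop} gives $\frac{1}{N}X^*X\leq \Tr(X^*X)$, i.e.\ an \emph{upper} bound, so applying it to $X^*$ cannot produce the lower bound $\pi(X)\pi(X)^*\geq\varepsilon_M$ you claim.

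The paper's proof takes a different route that sidesteps bounding $T_2^{-1}$ entirely. It first proves a \emph{containment}: setting $C=\Tr(Q^*X^*XQ)$ and using the multiplicativity $(\id\otimes\Delta)X=X_{12}X_{13}$, it shows that for $\pi\in P_{\leq M}$ and $\pi'$ deep enough in the filtration (with $M'=MK_M^2$, $K_M=\sup_{\pi\in P_{\leq M}}\|\pi(X^{-1})\|$), one has $(\pi*\pi')(C)\geq K_M^{-2}\pi'(C)\geq M$, whence $(A_{<M}\otimes 1)\Delta_0(A_{<M})\subseteq A_{<M'}\underset{C_0(\Theta)}{\otimes}A_{<M'}$ and so $(A_0\otimes 1)\Delta_0(A_0)\subseteq A_0\underset{C_0(\Theta)}{\otimes}A_0$. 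Only then is the antipode used, and only at the \emph{algebraic} level, to invoke the purely algebraic surjectivity $(\msA\otimes 1)\Delta(\msA)=\msA\msRtimes\msA$; density in $A_0\underset{C_0(\Theta)}{\otimes}A_0$ is then extracted by multiplying by central local units, invoking Lemma~\ref{LemTietDens}, and letting the local units converge strictly to~$1$. No analytic control of $S$ is ever needed. This is the step you would need to find; the bound on $X^{-1}$ enters only through $K_M$ in the filtration estimate, not through any bound on $T_1^{-1}$ or $T_2^{-1}$.
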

\begin{proof}
We are going to show that the condition $\Tr(Q^*X^*XQ) \in \msZ(\msA)$ implies that 
\[
(A_0\otimes 1)\Delta_0(A_0) \textrm{ is dense in } A_{0} \underset{C_0(\Theta)}{\otimes} A_{0}.
\]
Since $X^*$ satisfies the same conditions as $X$ with respect to $\Delta^{\opp}$, the proposition will follow. 

Put $Z = Q^*X^*XQ$ and $C = \Tr(Z) \in \msZ(\msA)$. It is clear that $Y \oplus Z$ is still a quantum control (cf. Lemma \ref{LemLem}), and by Lemma \ref{PropDensr} $Y \oplus C$ is a central control. We may also assume that $Y$ is positive. In the following, we will use the filtration associated to this central control.

For $\pi,\pi' \in P^{\msA}$ with $\pi_{\mid \msR} = \pi_{\mid \msR}'$, let us write $\pi * \pi' = (\pi\otimes \pi')\Delta$, which is again an admissible representation. We claim that for each $M>0$, there exists $M'>0$ such that $\pi\in P_{\leq M}^{\msA}$ and $\pi' \in P_{\geq M'}^{\msA}$ implies $P_{\pi * \pi'}^{\msA}\subseteq P_{\geq M}^{\msA}$.  

To see this, note first that $X$ is invertible with inverse $X^{-1} = (S(X_{ij}))_{ij}$, where $S$ is the antipode of $\msA$. For $M$ fixed, put 
\[
K_M = \sup_{\pi \in P_{\leq M}} \|\pi(X^{-1})\|.
\]
Put $M' = MK_M^2$. Pick $\pi,\pi'\in P^{\msA}$ which agree on $\msR$, with $\pi \in P_{\leq M}$, and let $Q_{\pi} = Q_{\pi'}= \pi(Q) \in M_N(\C)$. Further put $\pi(C) = C_{\pi} \in \R^+$. Assume that $\pi' \in P_{\geq M'}$. Since $\pi$ and $\pi'$ agree on $\msR$, we necessarily have $C_{\pi'} \geq M'$.  On the other hand, since $\|\pi(X^{-1})\|\leq K_M$ we have $K_M^{-2} \leq \pi(X^*X)$. It follows that 
\begin{eqnarray*}
(\pi*\pi')(C) &=& (\Tr(Q_{\pi}^* - Q_{\pi})\otimes \id\otimes \id)(\pi'(X)_{13}^* \pi(X^*X)_{12} \pi'(X)_{13}) \\ & \geq & K_M^{-2} (\Tr(Q_{\pi}^* - Q_{\pi})\otimes \id\otimes \id)(\pi'(X^*X)_{13}) \\ &\geq & M' K_M^{-2} = M.
\end{eqnarray*}
Hence $P_{\pi * \pi'}^{\msA}\subseteq P_{\geq M}^{\msA}$. This proves the claim. 

From the above, it follows immediately that 
\[
(A_{<M}\otimes 1)\Delta_0(A_{<M})\subseteq A_{<M'} \underset{C_0(\Theta)}{\otimes} A_{<M'},
\]
with $M'$ as above in terms of $M$, and hence
\begin{equation}\label{EqIncl}
(A_0\otimes 1)\Delta_0(A_0) \subseteq  A_{0} \underset{C_0(\Theta)}{\otimes} A_{0}.
\end{equation}
To see that the left hand side is actually a dense subspace, note that 
\[
(\msA\otimes 1)\Delta(\msA) = \msA \msRtimes \msA
\] 
by the existence of the antipode. Take $a,b\in \msZ(A_{<M})$. Then from the above observation, we deduce 
\[
(\msA b \otimes 1) \Delta_0(\msA a)= (\msA \msRtimes \msA)(b\otimes 1)\Delta_0(a)
\]
(where we drop the notation for the maps $\msA \rightarrow M(A_c)$ etc.) Multiplying with central local units $e_{N}\otimes e_{N'}$ with $N,N'\geq M'$, we see from Lemma \ref{LemTietDens} that $(A_0\otimes 1)\Delta_0(A_0)$ contains all $(A_{0} \underset{C_0(\Theta)}{\otimes} A_{0})(b\otimes 1)\Delta_0(a)$. Taking now $a,b$ local units converging strictly to 1, we see that the left hand side of \eqref{EqIncl} is dense in the right hand side.
\end{proof}

To end this section, we will study a particular situation allowing us to deduce continuity of an $\msR$-algebra. We first introduce the following notion. 

\begin{Def} 
Let $Y$ be a central control for $\msR$. We call a type $I$ Hopf $\msR$-algebra $(\msU,\Delta)$ of \emph{compact type} if $Y$ is also a central control for $\msU$ and if there exists a $*$-preserving, unit-preserving, positive $k_*$-linear map $\Phi: \msU \rightarrow \msR$ such that 
\begin{equation}\label{EqInv}
(\Phi\otimes \id)\Delta(a) = \Phi(a)1 = (\id\otimes \Phi)\Delta(a),\qquad a\in \msU.
\end{equation}
\end{Def}

Here positivity means that $\Phi(a^*a) \in \msR \subseteq C(\Theta)$ is a positive function for all $a\in \msU$. Note that by $k_*$-linearity, we obtain positive linear maps $\Phi_x: \msU_x \rightarrow \C$ for $x\in \Theta$, where $\Phi_x(a_x) =(\Phi(a))(x)$.

Let $\msU$ be a type $I$ Hopf $\msR$-algebra of compact type. Then for each $x\in \Theta$ we have $U_{x,b} = U_{x,0}$ by the assumption on the central control, and we simply write this C$^*$-algebra as $U_x$ in the following. Moreover, we have a $*$-homomorphism $\msU \rightarrow \msU_x \rightarrow U_x$ with dense image. In particular, we can deduce in this case directly that $\Delta_x(U_x)(1\otimes U_x)$ and $(U_x\otimes 1)\Delta_x(U_x)$ are dense in $U_x\otimes U_x$. It follows that $U_x$ defines a compact quantum group in the sense of Woronowicz \cite{Wor98}. In particular, $(U_x,\Delta_x)$ has an invariant state. Since the existence of the counit prevents $U_x$ from being zero, this invariant state must restrict to $\Phi_x$ on $\msU_x$ by uniqueness of unit-preserving invariant functionals. We will write the invariant state on $U_x$ as $\Phi_x$ as well. Since $f\msU$ is dense in $fU$ for any $f\in C_c(\Theta)$, it follows that for each $a\in U_0$ the map $x \mapsto \Phi_x(a_x)$ lies in $C_0(\Theta)$. We hence obtain a contractive, positive map 
\[
\Phi_0: U_0 \rightarrow C_0(\Theta),\quad a \mapsto (x\mapsto \Phi_x(a_x)).
\]
Note that since $\Phi_x$ is a non-zero positive invariant functional on $\msU_x$, it is in fact faithful \cite[Proposition 3.4]{VDae98}. Hence each $\msU_x$ is C$^*$-faithful, and we may identify $\msU_x \subseteq U_x$. 

We will also need the notion of coaction. 

\begin{Def} Let $(\msA,\Delta)$ be a $k_*$-bialgebra and $\msB$ a unital $k_*$-algebra. We call \emph{coaction} of $\msA$ on $\msB$ any $k_*$-homomorphism 
\[
\alpha: \msB \rightarrow \msB\kstartimes \msA
\]
satisfying the usual coaction identities
\[
(\alpha\otimes \id)\alpha = (\id\otimes \Delta)\alpha,\qquad (\id\otimes \varepsilon)\alpha = \id_{\msB}.
\]

If $(\msA,\Delta)$ is a type $I$ $\msR$-bialgebra and $\msB$ an $\msR$-algebra, we further require that $\alpha$ is an $\msR$-morphism
\[
\alpha: \msB \rightarrow \msB\msRtimes \msA.
\]
\end{Def} 

Such a coaction lifts to a non-degenerate C$^*$-homomorphism
\[
\alpha_0: B_0 \rightarrow M(B_0 \underset{C_0(\Theta)}{\otimes} A_0).
\]

Fix in the following an $\msR$-algebra $\msA$ with an $\msR$-coaction $\alpha: \msA \rightarrow \msA \msRtimes \msU$ by a type $I$ Hopf $\msR$-algebra $\msU$ of compact type.

\begin{Def}
We say that $\msA$ has a \emph{coinvariant central control} if there exists a central control $\oplus C_i$ with $C_i \in \msZ(\msA)$ and
\[
\alpha(C_i) = C_i\otimes 1.
\]
\end{Def}

We may again replace $\oplus C_i$ by $C = \sum C_i^*C_i$. 

In the following, we assume that $\msA$ has a positive coinvariant central control $C$, which we fix. Then $\alpha(e_M) = e_M\otimes 1$ for the associated central local units. 

\begin{Lem} The $\msR$-morphism $\alpha$ is proper.
\end{Lem}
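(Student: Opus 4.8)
The plan is to produce, for $\msA\msRtimes\msU$, a single positive central element that serves both as a central control and as a coinvariant element for $\alpha$, namely $C\otimes 1$, where $C\in\msZ(\msA)$ denotes the fixed positive coinvariant central control. Since properness does not depend on the choice of C$^*$-filtration up to equivalence, it then suffices to verify \eqref{EqEstProp} with respect to the central control $C$ on $\msA$ and $C\otimes 1$ on $\msA\msRtimes\msU$, and I expect to be able to do this with $G=\id$.

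The first step, and the one with the only real content, is to check that $C\otimes 1\in\msZ(\msA\kstartimes\msU)$ is indeed a central control for $\msA\msRtimes\msU$, i.e.\ that the filtration $\{\pi\in P^{\msA\msRtimes\msU}:\|\pi(C\otimes 1)\|\le M\}$ is a C$^*$-filtration equivalent to $\msP^{\msA,\msU}$. One inclusion is immediate from the definition of $\msP^{\msA,\msU}$: if $\pi\in P^{\msA,\msU}_{\le M}$ then $P_{\pi_{\mid \msA}}\subseteq P^{\msA}_{\le M}$, and since $C$ is a central control for $\msA$ this forces $\|\pi(C\otimes 1)\|=\|\pi_{\mid \msA}(C)\|\le M$. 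The converse is where admissibility of $\iota_{\msA}\colon\msR\to\msA$, part of the $\msR$-algebra structure, enters: if $\|\pi(C\otimes 1)\|\le M$ then every irreducible $\rho\preceq\pi_{\mid \msA}$ has $\|\rho(C)\|\le M$, hence $\rho\in P^{\msA}_{\le M}$; applying \eqref{EqEstMorAb} to $\iota_{\msA}$ gives $F\colon\R^+\to\R^+$ with $P^{\msR}_{\rho\circ\iota_{\msA}}\subseteq P^{\msR}_{\le F(M)}$, so (as $Y$ is a central control for $\msR$) $\|\rho(\iota_{\msA}(Y))\|\le F(M)$ uniformly over such $\rho$; since $\iota(Y)=\iota_{\msA}(Y)\otimes 1=1\otimes\iota_{\msU}(Y)$ and $\iota_{\msU}(Y)$ is a central control for $\msU$ by the compact-type hypothesis, this yields $\|\pi(1\otimes Y)\|\le F(M)$, whence $P_{\pi_{\mid \msU}}\subseteq P^{\msU}_{\le F(M)}$ and $\pi\in P^{\msA,\msU}_{\le\max(M,F(M))}$.

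With this in hand the rest is formal. For an irreducible admissible representation $\pi$ of $\msA\msRtimes\msU$ the central element $C\otimes 1$ acts by a scalar $\lambda_\pi\ge 0$, and $\pi\in P^{\msA\msRtimes\msU}_{\ge M}$ precisely when $\lambda_\pi\ge M$. By coinvariance $\alpha(C)=C\otimes 1$, so $(\pi\circ\alpha)(C)=\pi(C\otimes 1)=\lambda_\pi\cdot 1$; any $\rho\in P^{\msA}_{\pi\circ\alpha}$ factors through the norm-closure of $(\pi\circ\alpha)(\msA)$, in which $C$ maps to $\lambda_\pi\cdot 1$, so $\rho(C)=\lambda_\pi\cdot 1$ and $\rho\in P^{\msA}_{\ge M}$. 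Thus $P^{\msA}_{\pi\circ\alpha}\subseteq P^{\msA}_{\ge M}$, giving \eqref{EqEstProp} with $G=\id$. The main obstacle is really contained in the first step: recognizing that admissibility of $\iota_{\msA}$ already forces the \emph{size} of the $\msU$-leg of an irreducible admissible representation of $\msA\msRtimes\msU$ to be dominated by that of its $\msA$-leg, so that the $\msA$-side element $C\otimes 1$ alone controls $\msA\msRtimes\msU$; no use of type $I$ or of the invariant functional on $\msU$ is needed here, and coinvariance of $C$ then does the rest.
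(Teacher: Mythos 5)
Your proof is correct and follows essentially the same route as the paper's, which is a single terse paragraph: the paper first observes (without elaboration) that $C\oplus Y$ is a central control for $\msA$, then notes that $C\otimes 1$ together with $1\otimes Y=Y\otimes 1$ determine the C$^*$-filtration on $\msA\msRtimes\msU$, so that $C\otimes 1$ is a central control there, and then declares properness immediate from coinvariance. Your write-up spells out the step the paper treats as obvious, namely that $C$ alone (without explicitly adjoining $Y$) already controls the $\msU$-leg; that is exactly where the admissibility of $\iota_{\msA}$ via \eqref{EqEstMorAb}, the identity $\iota_{\msA}(Y)\otimes 1=1\otimes\iota_{\msU}(Y)$ in the balanced tensor product, and the compact-type hypothesis that $Y$ is a central control for $\msU$ all enter — the same ingredients the paper relies on implicitly. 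Both proofs then finish identically: since $\alpha(C)=C\otimes 1$ and $C$ is central, \eqref{EqEstProp} holds with $G=\id$.
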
 
\begin{proof}
If $Y$ is a central control for $\msR$, it is clear that $C\oplus Y$ is also a central control. Since however $C\otimes 1$ and $1\otimes Y = Y\otimes 1$ determine the C$^*$-filtration on $\msA\msRtimes \msU$, it follows that $C\otimes 1$ is a central control for $\msA\msRtimes \msU$. The properness is now immediate. 
\end{proof}

Let 
\[
\alpha_0: A_0 \rightarrow A_0 \underset{C_0(\Theta)}{\otimes} U_0
\] 
be the completion of $\alpha$. Then we can form the idempotent maps 
\[
E: \msA \rightarrow \msA,\quad a \mapsto (\id\otimes \Phi)\alpha(a),\qquad E_0: A_0 \rightarrow A_0,\quad a\mapsto  (\id\otimes \Phi_0)\alpha_0(a),
\] 
which we call the associated \emph{conditional expectations}. Note that these maps descend to maps $E_x: \msA_x \rightarrow \msA_x$ and $E_{0,x}: A_{0,x}\rightarrow A_{0,x}$. It is also easily seen that the image of $E$ is precisely the set of coinvariant elements $\msA^{\msU}$, defined as 
\[
\msA^{\msU} = \{a \in \msA \mid \alpha(a) = a\otimes 1\}.
\]
In the following, we drop again the notation for the map $\theta: \msA \rightarrow M(A_c)$. 
\begin{Lem}
Let $a\in \msA$, and let $e_M$ be a central local unit. Then $e_M E(a) =E_0(e_Ma)$. 
\end{Lem}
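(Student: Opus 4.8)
The plan is to expand both sides with the Sweedler-type notation $\alpha(a)=\sum_{(a)}a_{(1)}\otimes a_{(2)}$ for the (finite) image of $a$ in $\msA\kstartimes\msU$, and to reduce the asserted equality to the single statement that the completed invariant map $\Phi_0$ restricts, fibrewise, to the algebraic map $\Phi$.

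First I would unwind the left-hand side. Since $E(a)=(\id\otimes\Phi)\alpha(a)=\sum_{(a)}a_{(1)}\,\iota(\Phi(a_{(2)}))$ lies in $\msA$ and $\iota(\Phi(a_{(2)}))$ is central, dropping the map $\theta\colon\msA\rightarrow M(A_c)$ as in the statement we get
\[
e_M E(a)=\sum_{(a)}\big(e_M a_{(1)}\big)\,\iota(\Phi(a_{(2)})),
\]
where $\iota(\Phi(a_{(2)}))\in M(A_0)$ is the central multiplier that acts on the field $x\mapsto A_{0,x}$ by the scalar function $\Phi(a_{(2)})\in\msR\subseteq C(\Theta)$, by the very definition of the ideals $\msJ_x$. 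For the right-hand side, recall that coinvariance $\alpha(C)=C\otimes1$ gives $\alpha_0(e_M)=e_M\otimes1$, and that the induced $C^*$-morphism extends to a multiplicative map $\alpha_b\colon A_b=M(A_0)\rightarrow M\big(A_0\underset{C_0(\Theta)}{\otimes}U_0\big)$ (using that $\msU$ is type $I$) which is compatible with the canonical maps $\theta$ into the multiplier algebras and with the relative tensor-product identification of the previous section. Hence
\[
\alpha_0(e_M a)=\alpha_b(e_M)\,\alpha_b(\theta(a))=(e_M\otimes1)\big((\theta\otimes\theta)\alpha(a)\big)=\sum_{(a)}\big(e_M a_{(1)}\big)\otimes a_{(2)},
\]
where the two copies of $\theta$ are those of $\msA$ and $\msU$. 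Each summand lies in $A_0\underset{C_0(\Theta)}{\otimes}U_0$: if $e_M\in A_{<M'}$ then $e_M a_{(1)}\in A_{<M'}$, and the filtration of $\msA\msRtimes\msU$ is governed by $C\otimes1$ (as in the lemma preceding the statement), so this tensor is annihilated by every $\pi\in P^{\msA,\msU}_{\geq M'}$. Applying $\id\otimes\Phi_0$ — after extending $\Phi_0$ strictly to $M(U_0)=U_b$ by $\Phi_0(u)(x)=\Phi_x(u_x)$, which is bounded and continuous thanks to the local units of $\msU$ — we obtain
\[
E_0(e_M a)=\sum_{(a)}\big(e_M a_{(1)}\big)\,\Phi_0(a_{(2)}),
\]
with $\Phi_0(a_{(2)})\in C_b(\Theta)=M(C_0(\Theta))$ acting as a multiplier of $A_0$.

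Comparing the two displays, the lemma reduces to the identity $\iota(\Phi(u))=\Phi_0(u)$ in $M(A_0)$ for every $u\in\msU$ (on the right, $u$ abbreviates $\theta(u)\in U_b$). Both sides are central multipliers acting on the field $x\mapsto A_{0,x}$ by a scalar, and a multiplier of $A_0$ is determined by its fibres, so it suffices to compare the two scalar functions at each $x\in\Theta$. By construction $\Phi_0(u)(x)=\Phi_x(u_x)$, where $u_x$ is the image of $u$ under $\msU\rightarrow\msU_x\hookrightarrow U_x$ (using compact type and the faithfulness of $\msU_x$ in $U_x$) and $\Phi_x$ on the right is now the Haar state of the compact quantum group $U_x$; as recalled in the discussion of algebras of compact type, this Haar state restricts on $\msU_x$ to the algebraic functional $\Phi_x$, so $\Phi_x(u_x)=(\Phi(u))(x)$, which is exactly the value of the scalar function attached to $\iota(\Phi(u))$ at $x$. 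Hence $\iota(\Phi(u))=\Phi_0(u)$, and inserting this termwise into the two displays gives $E_0(e_M a)=e_M E(a)$.

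The only genuinely delicate points are the multiplier bookkeeping: $\msU$ is unital while its $C_0$-hull $U_0$ need not be, so $\theta(u)$ lives in $M(U_0)$ rather than $U_0$ and $\Phi_0$ must be extended there; and one must keep careful track that the symbol $\alpha$ entering $\alpha_0(e_M a)$ and $\alpha(e_M)=e_M\otimes1$ always refers to the completed coaction, which forces invoking the compatibility of the induced $C^*$-morphism with the maps $\theta$ and with the relative tensor product. Modulo this, the argument is a routine Sweedler-notation manipulation.
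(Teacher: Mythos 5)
Your argument lands on the same idea as the paper's: check the identity fibrewise, where everything becomes algebraic. The paper does this immediately --- at each $x\in\Theta$ the element $\alpha_{0,x}(e_{M,x}a_x)=(e_{M,x}\otimes 1)\alpha_x(a)$ lies in the \emph{algebraic} tensor product $A_{0,x}\odot\msU_x$, so one simply applies $\id\otimes\Phi_x$ and is done. You instead carry out the Sweedler expansion globally, reduce to the identity $\iota(\Phi(u))=\Phi_0(u)$, and only then pass to fibres; this is the same computation with extra steps. Two of your intermediate assertions are imprecise as stated. First, $\theta(\msU)$ is \emph{not} contained in $M(U_0)=U_b$: the compact-type hypothesis says the filtration on $\msU$ is governed by the central control $Y$ of $\msR$, not that elements of $\msU$ have uniformly bounded norm over $P^{\msU}$ (e.g.\ $\mbq\in\mcO_\mbq(U(N))$ is unbounded over $\Theta$). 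One should work with $\theta(u)\in M(U_c)$, not $M(U_0)$. Second, for the same reason $\iota(\Phi(u))$ --- a possibly unbounded scalar function on $\Theta$ --- need not define an element of $M(A_0)$, so the "identity in $M(A_0)$" you reduce to is not well posed globally; it only makes sense after multiplying by the local unit $e_M$, or equivalently after restricting to a fibre. Neither issue breaks the proof, since you do ultimately argue fibre by fibre; but the cleaner route (and the one the paper takes) is to pass to the fibre \emph{first}, where the completed coaction restricts to the algebraic one and all the multiplier bookkeeping disappears.
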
 
\begin{proof} 
It is sufficient to check this at each fiber $x\in \Theta$. But since $\alpha_{0,x}(e_{M,x}a_x) = (e_{M,x}\otimes 1)\alpha_x(a) $ lies in the algebraic tensor product of $A_{0,x}$ and $\msU_x$, the result there follows simply by applying $(\id\otimes \Phi_x)$ to both sides.
\end{proof}

Write now $\msB = E(\msA)$, and let $B_{\leq M}$ be the completion of $\msB$ in $A_{\leq M}$. Let $B_b \subseteq A_b$ be the C$^*$-algebraic projective limit of the $B_{\leq M}$, and let $B_0 = B_b \cap A_0$. Note that clearly $e_M \in B_0$ for all $M$. In particular, 
the inclusion $B_0 \rightarrow A_0$ is non-degenerate.

\begin{Lem}
The identity $B_0 = E_0(A_0)$ holds.
\end{Lem}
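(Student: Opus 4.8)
The plan is to prove the two inclusions $B_0 \subseteq E_0(A_0)$ and $E_0(A_0) \subseteq B_0$ separately, using the local units $e_M$ and the previous two lemmas as the main tools. For the inclusion $E_0(A_0) \subseteq B_0$, first note that $E_0$ is the completion of $E$, so $E_0$ maps $A_c$ into $A_b$; I would show more precisely that $E_0$ carries $A_{<M}$ into $B_{\leq M}$-valued multipliers, so that $E_0(A_0) \subseteq B_b$. Combined with the fact that $E_0$ is idempotent and $A_0$ is an ideal, $E_0(A_0) \subseteq A_0$, whence $E_0(A_0) \subseteq B_b \cap A_0 = B_0$. The cleanest way to see $E_0(A_0) \subseteq B_b$ is fiberwise: at each $x$, $E_{0,x}$ is the completion of $E_x$ and its image lies in the closure of $\msB_x = E_x(\msA_x)$ inside $A_{0,x}$; since the field structure is compatible, this globalizes.

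For the reverse inclusion $B_0 \subseteq E_0(A_0)$, the key observation is that $E$ is the identity on $\msB = E(\msA)$ (idempotency), hence $E_0$ is the identity on the closure of $\msB$ inside $A_0$. So I would argue that $B_0$ is precisely this closure: by Lemma \ref{LemTietDens} applied to the sub-object $\msB$ (or directly, since $e_M \in B_0$ and the maps $\msB \to B_{<M'}$, $b \mapsto e_M\theta(b)$, are $\|-\|_{M'}$-bounded), the sum of the subspaces $e_M\theta(\msB)$ is dense in $B_0$. Now using the preceding lemma, $e_M E(a) = E_0(e_M a)$ for $a \in \msA$, so each $e_M\theta(E(a)) = e_M E(a) = E_0(e_M a)$ lies in $E_0(A_0)$ (since $e_M a \in A_c \subseteq A_0$ and $E_0(A_0) \subseteq A_0$ is a subspace). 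As every element of $\msB$ is of the form $E(a)$, the dense subspace $\sum_M e_M\theta(\msB)$ of $B_0$ is contained in $E_0(A_0)$; since $E_0$ is a bounded (contractive) map and $A_0$ is complete, $E_0(A_0)$ is closed, so $B_0 \subseteq E_0(A_0)$.

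The main obstacle I anticipate is the inclusion $E_0(A_0) \subseteq B_b$, i.e.\ making rigorous that the completion $E_0$ really does land in the projective limit $B_b$ of the $B_{\leq M}$ rather than merely in $A_b$. This requires knowing that $E$, being $k_*$-linear and $\|-\|_M$-continuous on each piece, descends to maps $A_{\leq M} \to B_{\leq M}$ compatibly with the $\wtheta_{M',M}$; the compatibility follows because $E$ is defined intrinsically via $\alpha$ and $\Phi$, both of which are compatible with the filtration (properness of $\alpha$ was just established, and $\Phi$ is $k_*$-linear hence descends to the $\Phi_x$). Once one has $E_M : A_{\leq M} \to B_{\leq M}$ with $\wtheta_{M',M} \circ E_{M'} = E_M \circ \wtheta_{M',M}$, the map $E_b := \varprojlim E_M : A_b \to B_b$ restricts to $E_0$ on $A_0$, and the containment $E_0(A_0) \subseteq B_b \cap A_0 = B_0$ is immediate. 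Everything else is a routine density-and-continuity argument leveraging the idempotency of $E$ and the two lemmas immediately preceding the statement.
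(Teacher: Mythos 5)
Your proof is correct, and both directions go through, but the route differs from the paper's in a way worth noting. For the inclusion $E_0(A_0)\subseteq B_0$, you anticipate needing to set up projective-limit compatibility ($E$ descending to maps $A_{\leq M}\to B_{\leq M}$, or a fiberwise version). This is heavier than what is needed, and in fact the maps do not descend level-by-level without an index shift: the natural estimate gives $\|E(a)\|_M\leq\|a\|_{M'}$ only for some $M'>M$, since $e_M$ need not be supported in $P_{\leq M}$. The paper simply observes that it suffices to check $E_0(e_M\theta(\msA))\subseteq B_0$, since $\sum_M e_M\theta(\msA)$ is dense in $A_0$ by Lemma~\ref{LemTietDens}, $E_0$ is contractive, and $B_0$ is closed; then $E_0(e_M\theta(a))=e_M\theta(E(a))$ lies in $B_0$ because $E(a)\in\msB$ and $e_M\in B_0$. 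That two-line density argument subsumes the projective-limit bookkeeping you worry about, and you should use it. For $B_0\subseteq E_0(A_0)$, your argument (density of $\sum_M e_M\theta(\msB)$ in $B_0$, each term lies in $E_0(A_0)$ via the previous lemma, and $E_0(A_0)$ is closed since $E_0$ is a bounded idempotent) is valid and genuinely different in form from the paper's, which instead proves $E_0(b)=b$ for all $b\in B_0$ directly by evaluating in each irreducible $\pi\in P_{\leq M}$ and approximating $e_Mb$ by $e_Mb_m$ with $b_m\in\msB$. Your density claim for $\msB$ is not literally an instance of Lemma~\ref{LemTietDens} (which is stated for $\msA$), but your parenthetical sketch of the direct argument, mirroring the proof of that lemma, does work. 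The two approaches are morally equivalent — both rest on idempotency of $E$, compatibility with local units, and density — but the paper's pointwise computation avoids having to first establish that $\sum_M e_M\theta(\msB)$ is dense in $B_0$.
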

\begin{proof}
To prove $B_0\subseteq E_0(A_0)$, it is sufficient to prove that $E_0(b) = b$ for $b\in B_0$, since $E_0$ is idempotent. However, choose $\pi \in P_{\leq M}$ and $e_M$ with $\pi_0(e_M) = 1$. Choose a sequence $b_m \in \msB$ with $e_Mb_m \rightarrow e_M b$ in norm inside $A_0$. Then using the previous lemma, we obtain
\[
\pi_0(b) = \lim_m \pi(b_m)  = \lim_m \pi(E(b_m))= \lim_m \pi_0(E_0(e_Mb_m)) = \pi_0(E_0(e_Mb)) = \pi_0(e_ME_0(b)) = \pi_0(E_0(b)).
\]
Hence $b = E_0(b)$.

Conversely, it is sufficient to show that the $E_0(e_M\msA) \subseteq B_0$. This follows from $e_ME(a) = E_0(e_Ma)$ for each $a\in \msA$, and the fact that $E(a) \in \msB$ and $e_M \in B_0$. 
\end{proof}

We will now assume in the following that moreover $\msB$ is central in $\msA$. Then clearly also $B_0$ is central in $A_0$. Consider the following set of characters $\chi$ on $\msB$: 
\[
\Omega = \{\chi \mid \exists \pi \in P^{\msA}, \forall b\in \msB: \pi(b)= \chi(b)\id_{\Hsp_\pi}\}.
\] 
We endow $\Omega$ with the topology of pointwise convergence on $\msB$. Since $B_0$ is the closure of the span of the $e_M\msB$, it follows that for each $\chi \in \Omega$ there exists a unique non-zero character $\chi_0 \in \Spec(B_0)$ such that $\chi_0(f(1+C)^{-1})b) = f((1+\chi(C))^{-1})\chi(b)$ for $b\in \msB$ and $f\in C_c((0,1])$. This leads to a natural embedding
\[
\Omega \rightarrow \Spec(B_0).
\]

\begin{Prop}\label{PropCharComm}
The topological space $\Omega$ is locally compact, and the natural embedding $\Omega \rightarrow \Spec(B_0)$ is a homeomorphism.
\end{Prop}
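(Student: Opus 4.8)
The plan is to transcribe the argument of Example~\ref{ExaComm}, with the commutative C$^*$-algebra $B_0$ playing the role of $A_0$ there, and to supply surjectivity by means of the conditional expectation $E_0$. Since $\msB$ is central in $\msA$, Schur's lemma shows that every $\pi \in P^{\msA}$ restricts to a character of $\msB$, so $\Omega$ is precisely the image of $P^{\msA}$ under restriction; similarly each irreducible representation $\pi_0$ of $A_0$ restricts to a character of the central subalgebra $B_0$, and the embedding $\Omega\to\Spec(B_0)$ recorded before the statement sends $\pi\vert_{\msB}$ to $\pi_0\vert_{B_0}$. The defining formula $\chi_0(f((1+C)^{-1})b)=f((1+\chi(C))^{-1})\chi(b)$ recovers $\chi(C)$ from the value $\chi_0((1+C)^{-1})$ and then $\chi(b)$ for every $b\in\msB$, so the embedding is injective; and since $\Spec(B_0)$ is automatically locally compact, the local compactness of $\Omega$ will follow once the embedding is shown to be a surjective homeomorphism.

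For surjectivity, fix a non-zero character $\psi\in\Spec(B_0)$, with kernel the maximal ideal $I_y\subseteq B_0$, and consider the fibre $A_0/\overline{A_0 I_y}$. The crucial claim is that this fibre is non-zero. To prove it one first verifies that
\[
E_0=(\id\otimes\Phi_0)\alpha_0\colon A_0\to B_0
\]
is a $B_0$-bimodule, unital, positive, idempotent projection onto $B_0$; this uses that $\alpha_0(x)=x\otimes 1$ for $x\in B_0$, which follows from coinvariance of $\msB=\msA^{\msU}$ and of the $e_M$, together with $\Phi_0(1)=1$. The bimodule property then gives $E_0(\overline{A_0 I_y})\subseteq I_y$, hence $\overline{A_0 I_y}\cap B_0=I_y$. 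As $\psi$ is non-zero and $B_0$ is the closed span of the subspaces $e_M\msB$, some $e_M$ lies outside $I_y$, hence outside $\overline{A_0 I_y}$, so the fibre is non-zero. Choosing a non-zero irreducible representation of the fibre and composing with the quotient map yields a non-zero irreducible representation $\rho$ of $A_0$ annihilating $I_y$; by centrality $\rho\vert_{B_0}$ is a character with kernel containing the maximal ideal $I_y$, and it is non-zero (otherwise $\rho$ would annihilate the dense subspace $B_0 A_0$, using non-degeneracy of $B_0\subseteq A_0$), so $\rho\vert_{B_0}=\psi$. By Lemma~\ref{LemRepSam}, $\rho\cong\pi_0$ for a unique $\pi\in P^{\msA}$, and then $\pi\vert_{\msB}\in\Omega$ maps to $\psi$.

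It remains to check that the embedding is a homeomorphism. For continuity of $\Omega\to\Spec(B_0)$ it suffices to show $\chi\mapsto\chi_0(x)$ is continuous for $x=f((1+C)^{-1})b$, where it equals $f((1+\chi(C))^{-1})\chi(b)$, visibly continuous for the topology of pointwise convergence on $\msB$; one then passes to arbitrary $x\in B_0$ via the uniform bound $|\chi_0(x)-\chi_0(x')|\le\|x-x'\|$ and density. For continuity of the inverse, note first that $\chi_0\mapsto\chi(C)=\chi_0((1+C)^{-1})^{-1}-1$ is continuous, since $(1+C)^{-1}\in B_0$ and $\chi_0((1+C)^{-1})=(1+\chi(C))^{-1}\in(0,1]$. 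Given $b\in\msB$ and a base point $\chi^{(0)}_0$, choose $M>\chi^{(0)}(C)$; on the open neighbourhood $\{\chi_0:\chi_0((1+C)^{-1})>(1+M)^{-1}\}$ one has $\chi(C)<M$, hence $\chi_0(e_M b)=\chi(b)$, and $\chi_0\mapsto\chi_0(e_M b)$ is continuous because $e_M b\in B_0$; so $\chi_0\mapsto\chi(b)$ is continuous near $\chi^{(0)}_0$, hence everywhere. This yields the homeomorphism, and therefore the local compactness of $\Omega$.

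The main obstacle is the non-vanishing of the fibre $A_0/\overline{A_0 I_y}$ in the surjectivity step: this is the only place where one genuinely exploits the averaging map $E_0$ and its conditional-expectation properties, rather than just the formal structure already assembled before the statement. The remaining points are routine adaptations of Example~\ref{ExaComm}.
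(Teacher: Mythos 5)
Your proof is correct, and the surjectivity step is handled by a genuinely different device than the paper's. The paper applies the GNS construction to the state $\chi\circ E_0$ on $A_0$; $B_0$-bilinearity of $E_0$ (from centrality and coinvariance of $\msB$) forces $B_0$ to act by $\chi$ on the GNS space $\Hsp_\chi$, and after factoring through some $A_{<M}$ one cuts down to an irreducible $\pi\preceq\pi_\chi$ whose restriction to $\msB$ recovers $\chi$. You instead argue at the level of ideals: the same $B_0$-bimodule property of $E_0$ gives $\overline{A_0 I_y}\cap B_0=I_y$, so some local unit $e_M$ survives in the fiber $A_0/\overline{A_0 I_y}$, and any irreducible representation of that nonzero fiber, pulled back to $A_0$, restricts to $\psi$ on $B_0$; Lemma \ref{LemRepSam} then supplies the element of $P^{\msA}$. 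These two surjectivity arguments are morally the same (the GNS representation of $\chi\circ E_0$ factors through your fiber), but yours makes the ideal-theoretic role of $E_0$ more transparent, while the paper's GNS route is slightly shorter. For injectivity and the homeomorphism the paper simply defers to Example \ref{ExaComm}, and you spell the adaptation out explicitly — in particular the use of $\chi_0(e_M b)=\chi(b)$ on the open set where $\chi(C)<M$ — which is exactly the intended argument.

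One small remark for completeness: in the step $\overline{A_0 I_y}\cap B_0=I_y$ the inclusion $I_y\subseteq\overline{A_0 I_y}$ uses $I_y=\overline{I_y^2}$ (a closed ideal of a C$^*$-algebra is a C$^*$-algebra), which you implicitly rely on; it is worth saying so explicitly since otherwise the two sets need not contain one another.
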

\begin{proof}
Note first that if $\chi$ is a non-zero character on $B_0$, we can make a Hilbert space $\Hsp_{\chi}$ with non-degenerate representation $\pi_{\chi}$ of $A_0$ by the GNS-construction applied to $\chi \circ E_0$. Since $B_0$ acts by $\chi$ on $\Hsp_{\chi}$, and since $\chi(e_M)\neq 0$ for some $M>0$, the representation $\pi_{\chi}$ factors over some $A_{<M}$ via $A_0 \rightarrow M(A_{<M})$. This leads to a unital $*$-representation of $\msA$ through $\msA \rightarrow M(A_{<M})$. Clearly this is an admissible representation of $\msA$. Then any $\pi \preceq \pi_{\chi}$ restricts to a character $\chi'$ on $\msB$ with $\chi = \chi'_0$. 

The above shows that $\Omega \rightarrow \Spec(B_0)$ is a settheoretic bijection. The remainder of the proof can now be treated similarly as in Example \ref{ExaComm}.
\end{proof}

Note that $B_0$ is a $C_0(\Theta)$-algebra. Hence with $\Omega = \Spec(B_0)$ as above, we obtain a continuous map $\Omega \rightarrow \Theta$. 

We can now state the following particular situation in which a type $I$ $\msR$-algebra is continuous. We will use the notation as above.

\begin{Theorem}\label{TheoFieldCont}
Let $\msA$ be a countably generated $\msR$-algebra with coaction $\alpha$ by a type $I$ Hopf $\msR$-algebra $\msU$ of compact type. Assume that $\msA$ has a coinvariant central control and that $\msB = E(\msA)$ is central in $\msA$. Assume moreover that the map $\Spec(B_0)  \rightarrow \Theta$ is open, and assume that the maps $E_{0,x}:A_{0,x}\rightarrow B_{0,x}$ are faithful. Then $A_0$ is a continuous field of C$^*$-algebras over $\Theta$.
\end{Theorem}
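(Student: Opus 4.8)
The statement to prove is that, under the hypotheses, the field $x \mapsto A_{0,x}$ is continuous, i.e.\ for every $a \in A_0$ the function $x \mapsto \|a_x\|$ is continuous on $\Theta$. Since upper semicontinuity is automatic (by \cite[Proposition 1.2]{Rie89}, as noted after the definition of the associated field), the content is \emph{lower} semicontinuity: if $x_\lambda \to x$ in $\Theta$, then $\liminf_\lambda \|a_{x_\lambda}\| \geq \|a_x\|$. The plan is to reduce the estimate on $A_0$ to an estimate on the commutative subalgebra $B_0 = E_0(A_0)$, where continuity is governed by the topology of $\Omega = \Spec(B_0)$, and then to use the hypothesis that $\Spec(B_0) \to \Theta$ is open together with faithfulness of the fibrewise conditional expectations $E_{0,x}$.

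First I would record the basic structural facts: $B_0$ is a central $C_0(\Theta)$-subalgebra of $A_0$ with $\Spec(B_0) = \Omega$ (Proposition \ref{PropCharComm}), the conditional expectation $E_0 : A_0 \to B_0$ is a contractive positive $C_0(\Theta)$-linear idempotent, and it localizes to positive maps $E_{0,x} : A_{0,x} \to B_{0,x}$ which by hypothesis are \emph{faithful}. Faithfulness of a conditional expectation onto an abelian C$^*$-algebra is a strong norm-control condition: for $a \geq 0$ in $A_{0,x}$ one has $E_{0,x}(a) = 0 \iff a = 0$, and combined with the GNS/Stinespring estimate this should give, for all $a \in A_{0,x}$,
\[
\|a_x\| = \sup\bigl\{ \|E_{0,x}(c^* a^* a c)\|^{1/2} / \|E_{0,x}(c^*c)\|^{1/2} \mid c \in A_{0,x},\ E_{0,x}(c^*c) \neq 0 \bigr\},
\]
or some equivalent variational formula expressing $\|a_x\|$ purely in terms of the values of $E_{0,x}$ on elements of the form $c^*c$ and $c^* a^* a c$. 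The point of such a formula is that each term on the right is the norm of an element of $B_{0,x}$, hence — since $B_0$ is abelian, $\Spec(B_0) = \Omega$, and $B_0 = \overline{\mathrm{span}}\, e_M \msB$ — is a \emph{supremum of continuous functions of $\chi \in \Omega$}, and the open map $\Omega \to \Theta$ transports lower semicontinuity in $\chi$ to lower semicontinuity in $x$.

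Concretely, the key steps in order: (1) prove the variational norm formula above on each fibre $A_{0,x}$ using faithfulness of $E_{0,x}$ — this is where the GNS construction for states of the form $\omega \circ E_{0,x}$, $\omega \in \Spec(B_{0,x})$, enters, exactly as in the proof of Proposition \ref{PropCharComm}; (2) fix $a \in A_0$ and $\varepsilon > 0$ and a point $x_0$, choose $c \in A_0$ (using countable generation of $\msA$ and density, and local units $e_M$) nearly realizing $\|a_{x_0}\|$ in the formula at $x_0$, so that $\|a_{x_0}\|^2 \leq \|E_{0,x_0}(c^* a^* a c)_{x_0}\| / \|E_{0,x_0}(c^*c)_{x_0}\| + \varepsilon$; (3) observe that $x \mapsto E_0(c^* a^* a c)$ and $x \mapsto E_0(c^*c)$ are elements of $B_0$, whose fibre norms $x \mapsto \|E_0(\cdot)_x\|$ are suprema over $\Omega_x$ (the fibre of $\Omega \to \Theta$) of continuous functions on $\Omega$; (4) use that $\Omega \to \Theta$ is open — hence that the fibres $\Omega_x$ vary lower-semicontinuously in a suitable sense, more precisely that for any $h \in B_0$ the function $x \mapsto \|h_x\|$ is \emph{continuous}, which is the statement that $B_0$ is itself a continuous field over $\Theta$, and this is a standard consequence of openness of $\Spec(B_0) \to \Theta$ for abelian $C_0(\Theta)$-algebras; (5) combine: near $x_0$, $\|a_x\|^2 \geq \|E_0(c^*a^*ac)_x\|/\|E_0(c^*c)_x\|$ (by the formula, this time the $\geq$ direction, which holds for every fixed $c$ at every $x$), and the right side is continuous in $x$ and close to $\|a_{x_0}\|^2 - \varepsilon$ at $x_0$, giving $\liminf_{x\to x_0}\|a_x\|^2 \geq \|a_{x_0}\|^2 - \varepsilon$. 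Letting $\varepsilon \to 0$ finishes lower semicontinuity, and with upper semicontinuity we get continuity.

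\textbf{Main obstacle.} The delicate point is step (1) together with the interplay in steps (2)--(3): one needs the variational formula for $\|a_x\|$ in terms of $E_{0,x}$ to hold with the \emph{approximating element $c$ chosen at a single fibre $x_0$ but then evaluated at nearby fibres}, and one must ensure $c$ can be lifted to an honest element of $A_0$ (not just $A_{0,x_0}$) without destroying the estimate — this is where countable generation of $\msA$, density of $\theta(\msA)$, and the local units $e_M \in B_0$ are used, and it is the technical heart. A secondary subtlety is justifying that openness of $\Spec(B_0) \to \Theta$ really yields continuity of $x \mapsto \|h_x\|$ for $h \in B_0$: for abelian $C_0(\Theta)$-algebras this amounts to the classical fact that for an open continuous surjection $\Omega \to \Theta$ the pushforward of $\|h\|$ is continuous, which one should state and invoke carefully (paying attention to the non-properness of $\Omega \to \Theta$, handled again via the local units). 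Modulo these points, the argument is a relative (fibrewise) version of the commutative computation already carried out in Example \ref{ExaComm} and Proposition \ref{PropCharComm}.
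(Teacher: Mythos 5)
Your argument is correct, but it takes a genuinely different route from the paper's. The paper factors the statement into Proposition \ref{PropCharComm} followed by Theorem \ref{TheoBlaS}, and the latter is proved entirely within Blanchard's Hilbert C$^*$-module framework: one invokes Blanchard's characterization of separable continuous fields to produce a right Hilbert $C_0(\Theta)$-module $\mcF$ carrying a fibrewise faithful representation of the (already continuous, by openness) field $C_0(\Omega)$, forms the GNS Hilbert $C_0(\Omega)$-module $\mcE$ from $E_0$, and exhibits $\mcE \underset{C_0(\Omega)}{\otimes} \mcF$ as a fibrewise faithful field of representations of $A_0$, invoking Blanchard's theorem once more to conclude. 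Your route performs the GNS construction fibrewise and directly: faithfulness of $E_{0,x}$ gives the variational identity $\|a\|^2_{A_{0,x}} = \sup_c \|E_{0,x}(c^*a^*ac)\|/\|E_{0,x}(c^*c)\|$, and you run an $\varepsilon$-argument for lower semicontinuity by approximating the supremum at $x_0$, lifting the witnessing $c$ to $A_0$, and exploiting the inequality $\|a_x\|^2 \geq \|E_0(c^*a^*ac)_x\|/\|E_0(c^*c)_x\|$ (which holds unconditionally, i.e.\ without faithfulness) at nearby $x$, together with continuity of $x \mapsto \|h_x\|$ for $h \in B_0$. Both routes share the same two pivots --- openness of $\Omega \to \Theta$ forcing $B_0$ to be a continuous field, and faithfulness of $E_{0,x}$ making the fibrewise GNS representation isometric --- but you replace Blanchard's abstract characterization with a direct estimate. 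A dividend is that your route does not appear to need separability of $A_0$, whereas the characterization the paper invokes does; argued your way, the countable-generation hypothesis could be dropped. What the paper's route buys is a reusable standalone criterion (Theorem \ref{TheoBlaS}).

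One correction to your self-assessment of the ``main obstacle'': lifting $c \in A_{0,x_0}$ to $A_0$ is automatic, since $A_0 \to A_{0,x_0}$ is surjective, and any lift preserves the estimate at $x_0$ tautologically, because $E_0(c^*a^*ac)_{x_0}$ depends only on $c_{x_0}$ by $C_0(\Theta)$-linearity of $E_0$. Neither countable generation of $\msA$ nor the local units play a role here. Likewise, the ``non-properness'' of $\Omega \to \Theta$ causes no difficulty in continuity of $x\mapsto\|h_x\|$ for $h \in B_0$: upper semicontinuity is automatic for any $C_0(\Theta)$-algebra (as already noted in the paper), and lower semicontinuity follows directly from openness by pushing forward a basic open neighbourhood in $\Omega$ witnessing $|h|>r$ at a point of $\Omega_{x_0}$.
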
 
\begin{proof}
This follows by combining Proposition \ref{PropCharComm} with the upcoming Theorem \ref{TheoBlaS}, noting that $A_0$ will be separable since $\msA$ is countably generated and using Lemma \ref{LemTietDens}.
\end{proof}

\begin{Theorem}\label{TheoBlaS}
Let $A_0$ be a separable, nuclear $C_0(\Theta)$-algebra, and $C_0(\Omega) \subseteq A_0$ a commutative $C_0(\Theta)$-subalgebra. Assume that the ensuing continuous map $\Omega \rightarrow \Theta$ is open, and assume there exists a \emph{conditional expectation}  
\[
E_0: A_0 \rightarrow C_0(\Omega),
\]
i.e. $E_0$ is a contractive, positive, non-degenerate $C_0(\Theta)$-linear map. Assume moreover that the specialisations $E_{0,x}: A_{0,x} \rightarrow C_0(\Omega)_x$ are faithful. Then $A_0$ is a continuous field of C$^*$-algebras over $\Theta$.
\end{Theorem}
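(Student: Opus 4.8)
The plan is to verify directly the only nontrivial half of the definition of a continuous field, namely that for each $a\in A_0$ the function $x\mapsto\|a_x\|$ on $\Theta$ is \emph{lower} semicontinuous; upper semicontinuity holds for any $C_0(\Theta)$-algebra by \cite[Proposition 1.2]{Rie89}. Moreover, since $\|a_x\|^2 = \|(a^*a)_x\|$ and $t\mapsto\sqrt t$ is continuous, it suffices to treat positive $a$, so from now on $a = a^*\geq 0$.

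The crux is to read off the fibre norm from the conditional expectation: I claim that for $a\geq 0$ in $A_0$ and every $x\in\Theta$,
\[
\|a_x\| \;=\; \sup_{n\geq 1}\bigl\|E_{0,x}(a_x^{\,n})\bigr\|^{1/n}.
\]
Each term is $\leq\|a_x^{\,n}\|^{1/n}=\|a_x\|$ because $E_{0,x}$ is contractive, so the supremum is $\leq\|a_x\|$. For the reverse inequality it is enough to show $\liminf_n\|E_{0,x}(a_x^{\,n})\|^{1/n}\geq\|a_x\|$. Fix $0<\epsilon<r:=\|a_x\|$ and let $h_\epsilon\colon[0,r]\to[0,1]$ be continuous with $h_\epsilon\equiv 0$ on $[0,r-\epsilon]$ and $h_\epsilon(r)=1$. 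Then $h_\epsilon(a_x)$ is a nonzero positive element (as $r\in\mathrm{spec}(a_x)$), hence $E_{0,x}(h_\epsilon(a_x))$ is a nonzero positive element of $C_0(\Omega)_x$ by \emph{faithfulness} of $E_{0,x}$, and the elementary estimate $h_\epsilon(t)\leq (r-\epsilon)^{-n}t^{\,n}$ on $[0,r]$ gives, via functional calculus and positivity of $E_{0,x}$, that $\|E_{0,x}(a_x^{\,n})\|\geq (r-\epsilon)^{n}\,\|E_{0,x}(h_\epsilon(a_x))\|$; taking $n$-th roots and letting $n\to\infty$ yields the bound, and $\epsilon$ is arbitrary.

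Next I would pass to the fibres of the structural map $p\colon\Omega\to\Theta$. For a commutative $C_0(\Theta)$-algebra one has a canonical identification $C_0(\Omega)_x\cong C_0(\Omega_x)$, with $\Omega_x:=p^{-1}(x)$ and the quotient map being restriction of functions; since $E_0$ is $C_0(\Theta)$-linear it descends compatibly to $E_{0,x}$, so the formula above becomes
\[
\|a_x\| \;=\; \sup_{n\geq 1}\Bigl(\,\sup_{\omega\in\Omega_x}\bigl|E_0(a^n)(\omega)\bigr|\,\Bigr)^{1/n}.
\]
Now the openness hypothesis enters: for any fixed $g\in C_0(\Omega)$ the function $x\mapsto\sup_{\omega\in\Omega_x}|g(\omega)|$ is lower semicontinuous precisely because $p$ is open. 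Indeed, if this supremum exceeds $\alpha$ at $x_0$, pick $\omega_0\in\Omega_{x_0}$ with $|g(\omega_0)|>\alpha$; then $V:=\{\omega:|g(\omega)|>\alpha\}$ is an open neighbourhood of $\omega_0$, so $p(V)$ is an open neighbourhood of $x_0$, and for every $x\in p(V)$ the fibre $\Omega_x$ meets $V$, whence $\sup_{\Omega_x}|g|>\alpha$. Applying this with $g=E_0(a^n)$ shows each summand is l.s.c.\ in $x$, hence so is their supremum $x\mapsto\|a_x\|$; together with the automatic upper semicontinuity this gives continuity, and the theorem follows.

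The step I expect to be the main obstacle is the middle one — expressing the fibre norm through $E_{0,x}$ — because $C_0(\Omega)_x$ can be a genuinely non-unital, infinite-dimensional abelian C$^*$-algebra, so the ``spectral radius $=$ growth rate of moments'' idea has to be realized via functional calculus and faithfulness of $E_{0,x}$ rather than via a single faithful state; the fibre identification $C_0(\Omega)_x\cong C_0(\Omega_x)$, the descent of $E_0$, and the openness-implies-lower-semicontinuity lemma are then routine. It is worth noting that this argument does not appear to use separability or nuclearity of $A_0$; those hypotheses are presumably carried along to match the framework of \cite{Bla96} and because they hold in the intended application.
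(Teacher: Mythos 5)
Your argument is correct and takes a genuinely different — and considerably more elementary — route than the paper. The paper reduces the statement to Blanchard's Hilbert-module machinery: it invokes \cite[Proposition 3.14]{Bla96} to see that $C_0(\Omega)$ is itself continuous over $\Theta$, then uses \cite[Th\'{e}or\`{e}me 3.3]{Bla96} to produce a Hilbert $C_0(\Theta)$-module $\mcF$ carrying a representation of $C_0(\Omega)$ with faithful fibres, builds the GNS-type module $\mcE$ of $A_0$ over $C_0(\Omega)$ out of $E_0$, and concludes by applying \cite[Th\'{e}or\`{e}me 3.3]{Bla96} again to the representation of $A_0$ on $\mcE\underset{C_0(\Omega)}{\otimes}\mcF$. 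Your proof replaces this entire apparatus with the single observation that, for $a\geq 0$,
\[
\|a_x\| \;=\; \sup_{n\geq 1}\bigl\|E_{0,x}(a_x^{\,n})\bigr\|^{1/n}
\;=\; \sup_{n\geq 1}\Bigl(\sup_{\omega\in\Omega_x}\bigl|E_0(a^n)(\omega)\bigr|\Bigr)^{1/n},
\]
where faithfulness of $E_{0,x}$ gives the hard inequality $\geq$ via $h_\epsilon(a_x)\leq (r-\epsilon)^{-n}a_x^n$, and the identification $C_0(\Omega)_x\cong C_0(\Omega_x)$ is the standard Gelfand argument (the spectrum of the quotient is exactly the fibre $p^{-1}(x)$). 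Since openness of $p$ makes $x\mapsto\sup_{\Omega_x}|g|$ lower semicontinuous for any fixed $g\in C_0(\Omega)$, the right-hand side is a supremum of lower semicontinuous functions of $x$, hence lower semicontinuous; combined with automatic upper semicontinuity this gives continuity. What your approach buys is a self-contained proof that visibly does not use separability or nuclearity of $A_0$ — those hypotheses appear in the theorem only to fit Blanchard's framework and the intended application. What the paper's approach buys is a quick reduction to a well-known reference and compatibility with the rest of the machinery already being imported from \cite{Bla96}; it also places the result in the broader picture where one would want Hilbert-module localisation techniques anyway.
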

\begin{proof}
From \cite[Proposition 3.14]{Bla96}, $C_0(\Omega)$ is a continuous field of C$^*$-algebras over $\Theta$. By \cite[Th\'{e}or\`{e}me 3.3.4) $\Rightarrow$ 1)]{Bla96} we can find a right Hilbert $C_0(\Theta)$-module $\mcF$ with a $*$-homomorphism $C_0(\Omega) \rightarrow \mcL(\mcF)$ such that the localisations $C_0(\Omega)_x \rightarrow \mcL(\mcF_x)$ are faithful. On the other hand, we can complete $A_0$ into a right Hilbert $C_0(\Omega)$-module $\mcE$ by $\langle a,b \rangle_{C_0(\Omega)} = E_0(a^*b)$, and the corresponding localisation is the completion of $A_{0,x}$ with respect to $E_{0,x}$. It is now easy to check that the representation of $A_0$ on $\mcE \underset{C_0(\Omega)}{\otimes} \mcF$ descends to faithful representations of the $A_{0,x}$ on the
\[
(\mcE \underset{C_0(\Omega)}{\otimes} \mcF)_x = \mcE_x \underset{C_0(\Omega)_x}{\otimes} \mcF_x,
\]
and hence $A_0$ is a continuous  field over $\Theta$ by \cite[Th\'{e}or\`{e}me 3.3.1) $\Rightarrow$ 4)]{Bla96}.
\end{proof}

\section{The Hopf $k_*$-algebras $\mcO_\mbq(U(N)),\mcO_\mbq(T(N))$ and $\mcO_\mbq(GL(N,\C))$} 

In this section, we consider our main examples, based on the FRT-formalism \cite{FRT89}. 

\subsection{Definition of  $\mcO_\mbq(U(N)),\mcO_\mbq(T(N))$ and $\mcO_\mbq(GL(N,\C))$}

Let $\mbq$ be a formal variable. We let $k$ be the subalgebra of $\C(\mbq)$ generated by $\mbq,\mbq^{-1}$ and the $[n]_\mbq^{-1}$ for $n\in \N\setminus \{0\}$ where 
\[
[n]_\mbq = \frac{\mbq^{n}-\mbq^{-n}}{\mbq -\mbq^{-1}} =\mbq^{n-1} + \mbq^{n-3}+ \ldots + \mbq^{-n+3} + \mbq^{-n+1}.
\]
In the following we write $\kc = \C(\mbq)$. We define $k_*$ to be $k$ endowed with the $*$-structure $\mbq^* = \mbq$ inherited from $\kc$. 

Consider the following matrix $\mbR \in M_N(\C)\otimes M_N(\C)\otimes k = M_N(k) \msktimes M_N(k)$, 
\[
\mbR =  \mbq^{-1} \sum_i e_{ii}\otimes e_{ii} + \sum_{i\neq j} e_{ii}\otimes e_{jj} + (\mbq^{-1} -\mbq)\sum_{i<j} e_{ij} \otimes e_{ji},
\]
so that $\mbR$ satisfies the $R$-matrix identity 
\[
\mbR_{12}\mbR_{13}\mbR_{23} = \mbR_{23}\mbR_{13}\mbR_{12}.
\]
Note that $\mbR$ is invertible with inverse 
\[
\mbR^{-1} = \mbq \sum_i e_{ii}\otimes e_{ii} + \sum_{i\neq j} e_{ii}\otimes e_{jj} + (\mbq -\mbq^{-1})\sum_{i<j} e_{ij} \otimes e_{ji}.
\]
When interpreting $\mbR\in M_N(k_*) \kstartimes M_N(k_*)$, we have $\mbR^* = \mbR_{21}$.

\begin{Def}[\cite{FRT89}]
We define the FRT $k$-algebra $\mcO_\mbq(M_N(\C))$ to be the universal $k$-algebra generated by the matrix entries of $X = \sum_{ij} e_{ij} \otimes X_{ij}$ with universal relations
\[
\mbR_{12}X_{13}X_{23} = X_{23}X_{13}\mbR_{12}.
\]
We endow $\mcO_\mbq(M_N(\C))$ with the unique $k$-bialgebra structure such that 
\[
(\id\otimes \Delta)X = X_{12}X_{13},\qquad (\id\otimes \varepsilon)X = I_N.
\]
\end{Def} 

\begin{Lem} The $k$-algebra $\mcO_\mbq(M_N(\C))$ is a (non-commutative) noetherian domain. Moreover, $\mcO_\mbq(M_N(\C))$ is free as a $k$-module with basis $\{X_{11}^{k_{11}}X_{12}^{k_{12}}\ldots X_{n-1n}^{k_{n-1n}}\mid k_{ij} \in \N\}$. This result still holds true if the $X_{ij}$ are permuted. 
\end{Lem}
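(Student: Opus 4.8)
The plan is to prove this by the classical technique of presenting $\mcO_\mbq(M_N(\C))$ as an iterated Ore (skew polynomial) extension over $k$, which yields the ``noetherian domain'' and ``free $k$-module with PBW basis'' assertions at one stroke, and then to observe that the argument is essentially insensitive to the chosen ordering of the generators. First I would unpack the defining identity $\mbR_{12}X_{13}X_{23} = X_{23}X_{13}\mbR_{12}$ into the familiar commutation relations among the scalar generators $X_{ij}$: for two generators in the same row or the same column one obtains a $\mbq$-commutation relation $X Y = \mbq^{\pm 1} Y X$; for an ``antidiagonal'' pair $(X_{il},X_{kj})$ with $i<k$, $j<l$ one obtains ordinary commutation; and for a ``diagonal'' pair $(X_{ij},X_{kl})$ with $i<k$, $j<l$ one obtains a relation of the shape $X_{ij}X_{kl} - X_{kl}X_{ij} = (\mbq-\mbq^{-1})X_{il}X_{kj}$ (up to the precise normalisation fixed by the given $\mbR$).

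Next I would order the $N^2$ generators, say lexicographically $X_{11}\prec X_{12}\prec\cdots\prec X_{NN}$, and let $A^{(m)}$ be the subalgebra generated by the first $m$ of them. Using the relations above one checks that for each $m$ there are a $k$-algebra automorphism $\sigma_m$ of $A^{(m-1)}$ and a $\sigma_m$-derivation $\delta_m$ of $A^{(m-1)}$ with $A^{(m)} = A^{(m-1)}[X_{i_m j_m};\sigma_m,\delta_m]$: the point is that for this order the relation of $X_{i_m j_m}$ with each earlier generator expresses $X_{i_m j_m}X_{i_l j_l}$ as $(\text{scalar})\,X_{i_l j_l}X_{i_m j_m}$ plus a term lying in $A^{(m-1)}$, and the fact that the resulting $\sigma_m,\delta_m$ are well defined is exactly the compatibility of these rewriting rules with the relations already present in $A^{(m-1)}$, which is forced by the braid identity $\mbR_{12}\mbR_{13}\mbR_{23} = \mbR_{23}\mbR_{13}\mbR_{12}$. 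Granting this, the conclusions follow from standard skew-polynomial-ring theory: if $R$ is a left and right noetherian domain and $\sigma$ an automorphism, then $R[x;\sigma,\delta]$ is again a left and right noetherian domain and is free as a left (resp. right) $R$-module on $\{x^n\}_{n\ge 0}$. Since $k$ is a noetherian domain, iterating over $m=1,\dots,N^2$ shows $\mcO_\mbq(M_N(\C))$ is a noetherian domain, free as a $k$-module on the ordered monomials $\prod_{i,j}X_{ij}^{k_{ij}}$.

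For the last assertion, note that being a noetherian domain is intrinsic, so only the PBW-basis statement needs a new argument for a permuted ordering, and this I would get from Bergman's diamond lemma rather than redoing the Ore construction (which is cleanest only for orders refining the product partial order). Fixing any total order on the generators, the relations give a reduction system rewriting every out-of-order quadratic word; one checks that all overlap ambiguities $X_aX_bX_c$ resolve --- the usual finite case analysis, again using the braid identity. For termination one can use the monovariant $\prod X_{i_tj_t}\mapsto\bigl(\sum_t i_t j_t,\ \#\{\text{inversions}\}\bigr)\in\N^2$ with the lexicographic order: the $\mbq$-commutation and swap rules preserve $\sum_t i_tj_t$ and strictly lower the inversion count, while the ``diagonal'' rule's correction term $X_{il}X_{kj}$ has $il+kj < ij+kl$ since $(k-i)(l-j)>0$, so it strictly lowers $\sum_t i_tj_t$; hence reduction terminates, and by Newman's lemma the ordered monomials for that order also form a $k$-basis. (Alternatively one may simply cite the standard references on quantum matrices for this point.)

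The step I expect to be the real work is the verification of the Ore-extension data, equivalently the resolution of the overlap ambiguities: showing that $\sigma_m$ is a genuine automorphism and $\delta_m$ a genuine $\sigma_m$-derivation of $A^{(m-1)}$ amounts to checking consistency against all quadratic relations among the earlier generators, and this is the content that ultimately rests on the Yang--Baxter/braid identity for $\mbR$. Everything else --- the noetherian and domain conclusions, freeness, and the bookkeeping for permuted orderings --- is then routine.
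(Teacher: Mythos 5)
Your proposal is correct and takes essentially the same route as the paper: the paper's proof is precisely the observation that $\mcO_\mbq(M_N(\C))$ is an iterated skew polynomial extension of the noetherian domain $k$ (citing Brown--Goodearl for the standard consequences), together with a citation to Koelink for the fact that arbitrary orderings of the $X_{ij}$ also yield a PBW basis. You simply unpack that second citation into an explicit Bergman diamond-lemma argument with a termination monovariant, which is a reasonable way to make the Koelink reference self-contained, but it is not a different method.
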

\begin{proof} 
This follows from the fact that $\mcO_\mbq(M_N(\C))$ is an iterated skew polynomial algebra over the noetherian domain $k$, see e.g.~ \cite[Lemma I.1.12, Theorem I.1.13 and Theorem 1.2.7]{BrGo02}. The fact that any order of the $X_{ij}$ provides a basis can be deduced from a similar reasoning as in \cite[Theorem 3.1]{Koe91}.
\end{proof}

Let 
\begin{equation}\label{EqDet1}
\Det_\mbq(X) = \sum_{\sigma \in S_N} (-\mbq)^{l(\sigma)} \underset{\rightarrow}{\prod_{i=1}^N} X_{i,\sigma(i)} \in \mcO_\mbq(M_N(\C)),
\end{equation}
where the order in the product is ascending in the first index. We also have the alternative expression
\begin{equation}\label{EqDet2}
\Det_\mbq(X) = \sum_{\sigma \in S_N} (-\mbq)^{l(\sigma)} \underset{\rightarrow}{\prod_{i=1}^N} X_{\sigma(i),i} \in \mcO_\mbq(M_N(\C)),
\end{equation}
where now the order is ascending in the last index, see e.g.~ \cite[Lemma 4.1.1]{BW91} (where $\mbq \leftrightarrow \mbq^{-1}$). Finally, if we change the order of multiplication, the weight has to be inverted, 
\begin{equation}\label{EqDet3}
\Det_\mbq(X) = \sum_{\sigma \in S_N} (-\mbq)^{-l(\sigma)} \underset{\leftarrow}{\prod_{i=1}^N} X_{i,\sigma(i)}  =  \sum_{\sigma \in S_N} (-\mbq)^{-l(\sigma)} \underset{\leftarrow}{\prod_{i=1}^N} X_{\sigma(i),i}.
\end{equation}

By \cite[Theorem 3]{FRT89} $\Det_\mbq(X)$ is a central element with
\[
\Delta(\Det_\mbq(X)) = \Det_\mbq(X)\otimes \Det_\mbq(X).
\]

We can hence localize at $\Det_\mbq(X)$.

\begin{Def} 
We define the FRT Hopf $k$-algebra $\mcO_{\mbq}(GL(N,\C))$ to be the localisation $\mcO_\mbq(M_N(\C))[\Det_\mbq(X)^{-1}]$. 
\end{Def} 

As $\mcO_\mbq(M_N(\C))$ is a domain, we obtain an embedding
\[
\mcO_\mbq(M_N(\C)) \subseteq \mcO_\mbq(GL(N,\C)).
\]
In fact, $\mcO_\mbq(GL(N,\C))$ is still free over $k$, by an argument similar as \cite[Theorem 3.4]{Koe91}. 

It is also well-known that $X$ becomes invertible in $M_N(\mcO_\mbq(GL(N,\C)))$ \cite[Theorem 4]{FRT89}, and that $\mcO_\mbq(GL(N,\C))$  is a Hopf $k$-algebra by the coproduct induced from $\mcO_\mbq(M_N(\C))$.  

The above results hold as well when specializing $\mbq = q$ for $q\in \C\setminus\{0\}$, and in particular we can define the Hopf algebra $\mcO_q(GL(N,\C))$ over $\C$. On the other hand, we can also extend the ring of scalars to the field $\kc = \C(\mbq)$. We denote the resulting Hopf $\kc$-algebra 
\[
\overline{\mcO}_\mbq(GL(N,\C)) = \kc \msktimes \mcO_\mbq(GL(N,\C)).
\] 

We now `double' $\mcO_\mbq(M_N(\C))$ as to take into account anti-holomorphic functions. This process of realification is well-known, see \cite{Zak92,Pod92,DSWZ92} and \cite[Section 10.2]{K-S97}.

\begin{Def} 
We define the FRT $k$-bialgebra $\mcO_\mbq^{\R}(M_N(\C))$ to be generated by two copies of $\mcO_{\mbq}(M_N(\C))$ with respective generators $X,Y$ having the interchange relation
\[
\mbR_{12}X_{13}Y_{23} = Y_{23}X_{13}\mbR_{12}
\]
and coproduct
\[
(\id\otimes \Delta)(X) = X_{12}X_{13},\qquad (\id\otimes \Delta)(Y) = Y_{12}Y_{13}.
\] 
\end{Def} 

We recall from \cite[Section 10.2.5, Example 16]{K-S97} that $\mcO_\mbq^{\R}(M_N(\C))$ may be viewed as the Drinfeld quantum double of $\mcO_\mbq(M_N(\C))$ with itself through the coquasitriangular $\mbr$-functional uniquely determined by
\[
\mbr: \mcO_\mbq(M_N(\C))\msktimes \mcO_\mbq(M_N(\C)) \rightarrow k,\qquad (\id\otimes \id \otimes \mbr)(X_{13}X_{24})= \mbR.  
\]
In particular, the multiplication map 
\begin{equation}\label{EqMultMN}
\mcO_\mbq(M_N(\C)) \msktimes \mcO_\mbq(M_N(\C)) \rightarrow \mcO_\mbq^{\R}(M_N(\C))
\end{equation}
is an isomorphism of $k$-modules, and $\mcO_\mbq^{\R}(M_N(\C))$ is free as a $k$-module.

\begin{Lem} 
The elements $\Det_\mbq(X)$ and $\Det_\mbq(Y)$ are central.
\end{Lem}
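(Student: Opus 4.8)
The plan is to show that $\Det_\mbq(X)$ and $\Det_\mbq(Y)$ remain central in $\mcO_\mbq^{\R}(M_N(\C))$ by leveraging what we already know: each of $\Det_\mbq(X)$, $\Det_\mbq(Y)$ is central in its own copy of $\mcO_\mbq(M_N(\C))$ by \cite[Theorem 3]{FRT89}, and $\mcO_\mbq^{\R}(M_N(\C))$ is generated by the two copies. Since centrality is preserved under products and sums, it suffices to check that $\Det_\mbq(X)$ commutes with every generator $Y_{ij}$ (and, symmetrically, that $\Det_\mbq(Y)$ commutes with every $X_{kl}$); commutation with the $X_{ij}$ is already known, as is self-commutation.

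First I would record the interchange relation $\mbR_{12}X_{13}Y_{23} = Y_{23}X_{13}\mbR_{12}$ in a form that lets me push a $Y$ past a word in the $X$'s. Because $\mbR$ is invertible, this is equivalent to $X_{13}Y_{23} = \mbR_{12}^{-1}Y_{23}X_{13}\mbR_{12}$, i.e. moving $Y_{23}$ to the left of $X_{13}$ costs a conjugation by $\mbR$ in the auxiliary legs. Iterating this over the $N$ factors appearing in the definition \eqref{EqDet1} of $\Det_\mbq(X)$, I would like to see that $\Det_\mbq(X)\cdot Y_{ij}$ equals $Y_{ij}\cdot\Det_\mbq(X)$ up to a conjugation that collapses, using the standard fact that $\Det_\mbq$ is the image under a quantum-determinant/antisymmetrizer construction and that the $R$-matrix conjugation acts trivially on the relevant antisymmetric tensor (equivalently, that $\Det_\mbq(X)$ is group-like with $\Delta(\Det_\mbq(X)) = \Det_\mbq(X)\otimes\Det_\mbq(X)$ and the $\mbr$-functional is trivial on it).

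A cleaner route, which I would actually prefer to carry out, uses the quantum-double description recalled from \cite[Section 10.2.5]{K-S97}: $\mcO_\mbq^{\R}(M_N(\C))$ is the Drinfeld double built from the $\mbr$-functional determined by $(\id\otimes\id\otimes\mbr)(X_{13}X_{24}) = \mbR$. In such a double the cross-relations are governed by $\mbr$, and a standard computation shows that $\mbr(\Det_\mbq(X)\otimes b) = \varepsilon(b)\cdot(\text{scalar})$ and $\mbr(a\otimes \Det_\mbq(X)) = \varepsilon(a)\cdot(\text{scalar})$ for all $a,b$, because $\Det_\mbq(X)$ is group-like and $\mbr$ restricted to group-likes of the FRT bialgebra is computed from the $q$-determinant of $\mbR$, which here is a unit in $k$. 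Feeding this into the double's commutation relation $ab = \sum \mbr(a_{(1)}\otimes b_{(1)})\, b_{(2)}a_{(2)}\, \mbr^{-1}(a_{(3)}\otimes b_{(3)})$ collapses the cross-term to $ba$, giving $\Det_\mbq(X)\,Y_{ij} = Y_{ij}\,\Det_\mbq(X)$, and symmetrically for $\Det_\mbq(Y)$.

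The main obstacle I anticipate is the bookkeeping in verifying that the $\mbr$-functional (equivalently the iterated $\mbR$-conjugation) really does act by a scalar on $\Det_\mbq(X)$ — one must be careful that the antisymmetrizer defining $\Det_\mbq$ interacts correctly with $\mbR$ in the third/fourth tensor legs, and that no stray power of $\mbq$ or sign survives. This is a known computation (it underlies the group-like property of $\Det_\mbq$ and its centrality in the single FRT algebra), so I would cite \cite[Theorem 3]{FRT89} and \cite[Section 10.2]{K-S97} for the structural input and only spell out the short induction moving $Y_{ij}$ past the defining product of $\Det_\mbq(X)$. Everything else — closure of the center under products, and the symmetry $X\leftrightarrow Y$, $\mbR\leftrightarrow\mbR_{21}$ — is routine.
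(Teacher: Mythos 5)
Your preferred route (the quantum-double one) is exactly the paper's approach, and it works. One point of imprecision worth flagging: the claim that $\mbr(\Det_\mbq(X)\otimes b) = \varepsilon(b)\cdot(\text{scalar})$ \emph{for all} $b$ is not literally true for a fixed scalar — since $\Det_\mbq(X)$ is group-like, the map $\chi = \mbr(\Det_\mbq(X),-)$ is a multiplicative character, so the implicit scalar scales with the degree of $b$ (e.g.\ $\chi(X_{ij}) = \mbq^{-1}\delta_{ij}$ but $\chi(X_{ij}X_{kl}) = \mbq^{-2}\delta_{ij}\delta_{kl}$). For your argument this is harmless because the double's commutation relation is only being applied with $b$ a generator $Y_{ij}$, and then $\mbr^{-1}(\Det_\mbq(X),Y_{lj}) = \mbq\,\delta_{lj}$ cancels the $\mbq^{-1}$, giving honest commutation; but the phrasing ``$\varepsilon$ times a scalar'' should really be ``a character whose value on generators is $\mbq^{-1}\delta_{ij}$, with convolution inverse taking the value $\mbq\,\delta_{ij}$.'' The paper establishes exactly this: equation \eqref{EqDetQ}, $\mbr(\Det_\mbq(X),X_{ij}) = \mbq^{-1}\delta_{ij}$ and $\mbr^{-1}(\Det_\mbq(X),X_{ij}) = \mbq\,\delta_{ij}$, proved concretely by factoring $\mbr$ through a skew pairing of the upper/lower triangular quotient bialgebras and reading off the value from the explicit form of $\mbR$ — a more self-contained derivation than your appeal to ``the $q$-determinant of $\mbR$ being a unit.'' Apart from that, your argument matches the paper's, including the symmetry $\mbR\leftrightarrow\mbR_{21}^{-1}$ to handle $\Det_\mbq(Y)$.
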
 
\begin{proof} 
We claim that 
\begin{equation}\label{EqDetQ}
\mbr(\Det_\mbq(X),X_{ij}) = \mbq^{-1}\delta_{ij},\qquad \mbr^{-1}(\Det_\mbq(X),X_{ij}) = \mbq\delta_{ij}.
\end{equation}
With \eqref{EqDetQ} in hand, the definition of the product in the quantum double gives directly that $\Det_\mbq(X)$ commutes with the $Y_{ij}$, and hence is central. For $\Det_\mbq(Y)$ this follows by symmetry (replacing $\mbR$ by $\mbR_{21}^{-1}$).

Up to a change of conventions, the claim in \eqref{EqDetQ} follows from \cite[Theorem 10.9]{K-S97}. For the convenience of the reader we provide a direct proof.

Write $M_N^{\leq}(k)$ and $M_N^{\geq}(k)$ for respectively upper and lower triangular matrices over $k$. Let $\mcO_\mbq(M_N^{\leq}(\C))$, resp.~ $\mcO_\mbq(M_N^{\geq}(\C))$ be the quotient bialgebra of $\mcO_\mbq(M_N(\C))$ by putting $X_{ij} = 0$ for $i>j$, resp. $i<j$. Write $\pi_{+}$, resp. $\pi_-$ for the resulting quotient map and $T^{\pm}$ for the image of $X$ under $\pi_{\pm}$. Then since $\mbR \in M_N^{\leq}(k) \msktimes M_N^{\geq}(k)$, it follows that $\mbr$ factors through a skew pairing between $\mcO_\mbq(M_N^{\leq}(\C))$ and $\mcO_\mbq(M_N^{\geq}(\C))$, which we continue to write as $\mbr$. Since $\pi_{+}(\Det_\mbq(X)) = T_{11}^+ \ldots T_{NN}^+$, it follows from the skew-pairing property of $\mbr$ and the specific form of $\mbR$ that 
\[
\mbr(\Det_\mbq(X),X_{ij}) = \delta_{i\geq j} \mbr(T_{11}^+\ldots T_{NN}^+,T_{ij}^-) = \delta_{i,j} \prod_{k=1}^N \mbr(T_{kk}^+,T_{ii}^-) = \mbq^{-1}\delta_{ij}.
\]
This shows the first equality in \eqref{EqDetQ}. Since $\Det_\mbq(X)$ is grouplike, also the second formula in \eqref{EqDetQ} follows. 
\end{proof}

\begin{Def}\label{DefMainObj} 
We define the FRT Hopf $k_*$-algebra $\mcO_\mbq^{\R}(GL(N,\C))$ to be the localisation of $\mcO_\mbq^{\R}(M_N(\C))$ with respect to $\Det_\mbq(X)$ and $\Det_\mbq(Y)$, endowed with the $*$-structure
\[
X^* = Y^{-1},\qquad Y^* = X^{-1}.
\] 

We define the FRT $k_*$-algebra $\mcO_\mbq(U(N))$ to be the quotient Hopf $k_*$-algebra of $\mcO_{\mbq}(GL_N^{\R}(\C))$ by the relation $X = Y$. We write in this case the generating matrix $X = U$, and the quotient map
\[
\pi_U: \mcO_\mbq^{\R}(GL(N,\C)) \rightarrow \mcO_\mbq(U(N)).
\]

We define the FRT $k_*$-algebra $\mcO_{\mbq}(T(N))$ to be the quotient Hopf $k_*$-algebra of $\mcO_\mbq^{\R}(GL(N,\C))$ by the relations 
\[
X_{ij}= 0 \textrm{ for }i>j,\quad  Y_{ij}=0 \textrm{ for }i<j, \qquad X_{ii}^* = X_{ii}\textrm{ and }Y_{ii}^* = Y_{ii} \textrm{ for all }i.
\]
We write in this case the generating matrices $X = T = T^+$ and $Y= T^-$, and the quotient map
\[
\pi_T: \mcO_\mbq^{\R}(GL(N,\C)) \rightarrow \mcO_\mbq(T(N)).
\]
\end{Def} 
\begin{Rems}
\begin{enumerate}
\item Note that $\mcO_\mbq(U(N))$ can also be seen as $\mcO_\mbq(GL(N,\C))$ with a specific $*$-structure. 
\item From the definition of $\mcO_\mbq(T(N))$ we have $T_{ii}^+ = (T_{ii}^-)^{-1}$. We will write this element simply as $T_i$. We further obtain from the defining relations the $\mbq$-commutation relations
\[
\left\{\begin{array}{lll} T_i T_{jl} = T_{jl}T_i,&& i\neq j, i\neq l, j<l,\\
T_i T_{il} = \mbq T_{il} T_i,&& i < l,\\
T_iT_{li} = \mbq^{-1} T_{li} T_i, && i > l.\end{array}\right.
\]
\item Centrality of $\Det_\mbq(X)$ and $\Det_\mbq(Y)$ and freeness of $\mcO_\mbq(GL(N,\C))$ as a $k$-module show that $\Det_\mbq(X)$ and $\Det_\mbq(Y)$ are not zero-divisors, so that we still get an embedding 
\[
\mcO_\mbq^{\R}(M_N(\C)) \rightarrow \mcO_\mbq^{\R}(GL(N,\C)).
\]
\item The $k$-module isomorphism \eqref{EqMultMN} can be extended to the level of $GL(N,\C)$, showing in particular that $\mcO_\mbq^{\R}(GL(N,\C))$ is free as a $k$-module. 
\end{enumerate}
\end{Rems} 

\begin{Lem} 
We have $ \Det_{\mbq}(Y) = (\Det_\mbq(X)^{-1})^*$ inside $\mcO_\mbq^{\R}(GL(N,\C))$. 
\end{Lem}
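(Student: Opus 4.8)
The plan is to compute $(\Det_\mbq(X))^*$ directly from the explicit formula \eqref{EqDet1} for the quantum determinant and to recognise the outcome as $\Det_\mbq(Y)^{-1}$ by using the Hopf structure.

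First I would record the $Y$-versions of the facts already established for $X$. Since $Y$ generates a copy of $\mcO_\mbq(M_N(\C))$ inside $\mcO_\mbq^{\R}(M_N(\C))$ obeying the same $\mbR$-relations, the element $\Det_\mbq(Y)$ is central and grouplike (so $\Delta(\Det_\mbq(Y)) = \Det_\mbq(Y)\otimes \Det_\mbq(Y)$ and $\varepsilon(\Det_\mbq(Y)) = 1$), it becomes invertible in $\mcO_\mbq^{\R}(GL(N,\C))$ by construction, and the matrix $Y$ is invertible there with $(Y^{-1})_{ij} = S(Y_{ij})$, where $S$ is the antipode. In particular, since an antipode inverts grouplike elements, $S(\Det_\mbq(Y)) = \Det_\mbq(Y)^{-1}$.

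Next, I apply $*$ to \eqref{EqDet1}. As $*$ is antilinear and antimultiplicative with $\mbq^* = \mbq$, and as the defining relation $X^* = Y^{-1}$ reads entrywise $X_{ij}^* = (Y^{-1})_{ji}$ (using the conjugate-transpose $*$-convention on $M_N(-)$, under which indeed $\mbR^* = \mbR_{21}$), one obtains
\[
(\Det_\mbq(X))^* = \sum_{\sigma\in S_N} (-\mbq)^{l(\sigma)}\, \underset{\leftarrow}{\prod_{i=1}^N}(Y^{-1})_{\sigma(i),i},
\]
where now the product is taken in descending order of the last index. On the other hand, applying the (antimultiplicative) antipode $S$ to the expression \eqref{EqDet2} for $\Det_\mbq(Y)$ (the one in which the product is ascending in the last index) and using $S(Y_{ij}) = (Y^{-1})_{ij}$ produces precisely the same sum. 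Hence $(\Det_\mbq(X))^* = S(\Det_\mbq(Y)) = \Det_\mbq(Y)^{-1}$, and taking inverses together with the general identity $(a^*)^{-1} = (a^{-1})^*$ gives $\Det_\mbq(Y) = (\Det_\mbq(X)^{-1})^*$.

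The only delicate point is the bookkeeping: keeping track of how $*$ reverses the order of the product and transposes the matrix indices, and then matching the resulting expression against the correct one of the three formulas \eqref{EqDet1}--\eqref{EqDet3} — it is \eqref{EqDet2} that lines up after applying $S$. I expect no genuinely hard step; all the content sits in the standard facts $S(\Det_\mbq(Y)) = \Det_\mbq(Y)^{-1}$ and $S(Y_{ij}) = (Y^{-1})_{ij}$, which are already implicit in the earlier discussion.
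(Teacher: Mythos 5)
Your proof is correct and takes essentially the same route as the paper: both compute by tracking how the antimultiplicative maps $S$ and $*$ reverse the product in the explicit determinant formula, using that $\Det_\mbq$ is grouplike (so $S$ inverts it) and that the defining $*$-relation identifies the conjugated entries with entries of $Y^{-1}$, then matching against \eqref{EqDet2}. The paper merely reorganizes the algebra slightly—it applies $*$ to $S(\Det_\mbq(X))$ at once (so the composite $*\circ S$ is multiplicative and the product order survives, giving $S(X_{i,\sigma(i)})^* = Y_{\sigma(i),i}$ directly), whereas you first compute $\Det_\mbq(X)^*$ and then match it to $S(\Det_\mbq(Y))$ before inverting—but this is a cosmetic, not a substantive, difference.
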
 
\begin{proof}
With $S$ the antipode on $\mcO_\mbq(GL_N^{\R})$, we have $Y^* = X^{-1} = (S(X_{ij}))_{ij}$. On the other hand, since $\Det_\mbq(X)$ is grouplike, we have $S(\Det_\mbq(X)) = \Det_\mbq(X)^{-1}$. Since $S$ and $*$ are both anti-multiplicative, it follows that 
\[
(\Det_\mbq(X)^{-1})^* =  \sum_{\sigma \in S_N} (-\mbq)^{l(\sigma)} \prod_{i=1}^N S(X_{i,\sigma(i)})^* =   \sum_{\sigma \in S_N} (-\mbq)^{l(\sigma)} \prod_{i=1}^N Y_{\sigma(i),i} = \Det_\mbq(Y). 
\]
\end{proof}

It follows that $\mcO_\mbq^{\R}(GL(N,\C))$ is generated by $\{X_{ij},X_{kl}^*\}$ and the central elements $\Det_\mbq(X)^{-1},(\Det_\mbq(X)^*)^{-1}$  as a $k_*$-algebra. Moreover, in obvious notation, $\Det_\mbq(U)$ is unitary in $\mcO_\mbq(U(N))$. On the other hand, it follows straight from the definition that
\[
\Det_\mbq(T) = T_{1}\ldots T_{N},
\]
with the $T_{i}$ selfadjoint. 

Let us write $\mcO_\mbq(D(N))$ for the $k$-algebra generated by the $T_{i}^{\pm 1}$, and $\mcO_\mbq(M_N^{<}(\C))$, resp.~ $\mcO_\mbq(M_N^{>}(\C)$ for the $k$-algebra generated by the $T_{ij}^+$ with $i<j$, resp.~ $T_{ij}^-$ with $i>j$. Then also the following PBW-decomposition is standard.

\begin{Lem}\label{LemPBWUq} 
The $k$-module $\mcO_\mbq(T(N))$ is free. More precisely, the $k$-modules  $\mcO_\mbq(D(N))$, $\mcO_\mbq(M_N^{<}(\C))$ and $\mcO_\mbq(M_N^{>}(\C))$ are free $k$-modules, and the multiplication map 
\[
\mcO_\mbq(M_N^{<}(\C)) \msktimes \mcO_\mbq(D(N)) \msktimes \mcO_\mbq(M_N^{>}(\C)) \rightarrow \mcO_\mbq(T(N))
\]
is an isomorphism of $k$-modules. 
\end{Lem}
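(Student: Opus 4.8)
The plan is to run a Poincar\'{e}--Birkhoff--Witt argument, realising $\mcO_\mbq(T(N))$ as an iterated skew polynomial / skew Laurent extension built from the generators $\{T_{ij}^+ : i<j\}$, $\{T_i^{\pm 1}\}$ and $\{T_{ij}^- : i>j\}$ adjoined in that order; freeness and the asserted $k$-module isomorphism then drop out of general Ore-extension theory (or, equivalently, out of Bergman's Diamond Lemma applied to the evident rewriting system), just as in the proof that $\mcO_\mbq(M_N(\C))$ is a noetherian domain.

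First I would assemble the complete list of defining relations of $\mcO_\mbq(T(N))$ in these generators. These fall into three families: (i) the FRT relations $\mbR_{12}X_{13}X_{23}=X_{23}X_{13}\mbR_{12}$ restricted to the strictly upper-triangular entries of $X=T^+$, and analogously for the strictly lower entries of $Y=T^-$ --- in each case quantum-plane-type $\mbq$-commutations making $\mcO_\mbq(M_N^{<}(\C))$ and $\mcO_\mbq(M_N^{>}(\C))$ iterated skew polynomial algebras over $k$, hence free with their obvious monomial bases; (ii) the $\mbq$-commutation relations between the diagonal $T_i$ and the off-diagonal generators listed in the Remarks, together with $T_iT_j=T_jT_i$, so that $\mcO_\mbq(D(N))$ is a (commutative) Laurent polynomial ring, also free; (iii) the cross-relations between $\{T_{ij}^+: i<j\}$ and $\{T_{kl}^-:k>l\}$ obtained by expanding the interchange relation $\mbR_{12}X_{13}Y_{23}=Y_{23}X_{13}\mbR_{12}$ in coordinates and specialising $X=T^+$, $Y=T^-$.

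The second step is to check that all of these relations are \emph{triangular} with respect to a degree-then-lexicographic order on monomials in which the normal form is $(\text{strictly upper in }T^+)(\text{diagonal})(\text{strictly lower in }T^-)$: each relation rewrites a non-normal monomial as a $\mbq$-scalar multiple of its normal reordering plus a $k$-combination of normal monomials with strictly fewer off-diagonal factors. The only relations whose triangularity is not immediate are the cross-relations (iii): one must verify that $T_{kl}^-T_{ij}^+$ (for $i<j$, $k>l$) equals a $\mbq$-scalar times $T_{ij}^+T_{kl}^-$ plus corrections lying in $\mcO_\mbq(M_N^{<}(\C))\cdot\mcO_\mbq(D(N))\cdot\mcO_\mbq(M_N^{>}(\C))$ of strictly lower order; equivalently, that the $(\sigma,\delta)$-data needed to adjoin the lower generators on top of $\mcO_\mbq(M_N^{<}(\C))\cdot\mcO_\mbq(D(N))$ are well defined and consistent with the already-imposed relations. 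Granting this, surjectivity of the multiplication map follows by repeated reduction of an arbitrary monomial to normal form, and $k$-linear independence of the normal monomials follows from confluence of the rewriting system. The confluence check can be done directly on the finitely many overlap ambiguities; alternatively one can bypass it by lifting all reductions to $\mcO_\mbq^{\R}(M_N(\C))$ localised at $\Det_\mbq(X),\Det_\mbq(Y)$, where the analogous freeness is already available from the $k$-module isomorphism \eqref{EqMultMN} together with freeness of $\mcO_\mbq(GL(N,\C))$, and read off the independence from there. Combined with the separate freeness of the three factor algebras noted in step one, this gives the lemma.

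I expect the main obstacle to be exactly step two for the cross-relations (iii): unwinding $\mbR_{12}X_{13}Y_{23}=Y_{23}X_{13}\mbR_{12}$ for the explicit $R$-matrix, tracking which monomials appear when a lower generator is commuted past an upper one, and confirming that no term ever raises the relevant order --- a finite but bookkeeping-heavy computation, which is precisely why the result is quoted as \emph{standard}. Conceptually this reflects the fact that $\mcO_\mbq(T(N))$ is (a localisation of) a quotient of the Drinfeld double of $\mcO_\mbq(M_N^{\leq}(\C))$ with $\mcO_\mbq(M_N^{\geq}(\C))$, obtained by identifying the two diagonal copies, the triangular decomposition being the usual statement that a (quotient of a) double is, as a module, the tensor product of its factors.
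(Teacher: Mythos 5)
The paper itself offers no proof of this lemma---it is quoted as ``standard''---so your outline has to be judged on its own merits. Your main route, via iterated Ore extensions or Bergman's Diamond Lemma, is sound: the three factor algebras are free by the usual quantum-plane bases, spanning of the ordered monomials follows from reduction to normal form, independence follows from confluence, and you correctly identify the genuine work as the triangularity check on the cross-relations extracted from $\mbR_{12}X_{13}Y_{23}=Y_{23}X_{13}\mbR_{12}$.

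The ``alternative'' shortcut you mention, however, does have a gap. You propose to read off $k$-linear independence of the normal monomials in $\mcO_\mbq(T(N))$ from the known freeness of $\mcO_\mbq^{\R}(M_N(\C))$ localised at the two determinants, citing \eqref{EqMultMN} and freeness of $\mcO_\mbq(GL(N,\C))$. But $\mcO_\mbq(T(N))$ is a proper \emph{quotient} of that algebra, and linear independence does not descend along a quotient map: you would still have to show that the ideal generated by $X_{ij}$ ($i>j$), $Y_{ij}$ ($i<j$) and the relations $X_{ii}^*=X_{ii}$, $Y_{ii}^*=Y_{ii}$ intersects the $k$-span of the chosen monomials trivially, and this is precisely what is at stake, not something ``already available.'' A cleaner way to bypass the confluence check, consistent with the paper's framework, is to note that the ordered monomials span $\mcO_\mbq(T(N))$ by straightening, and that their images in $\overline{\mcO}_\mbq(T(N))=\kc\otimes_k\mcO_\mbq(T(N))$ are $\kc$-independent by the isomorphism of Lemma~\ref{LemIsoUT} with $U_\mbq(\mfu(N))$ together with the standard PBW theorem for quantized enveloping algebras (no circularity, as Lemma~\ref{LemIsoUT} does not rely on the present lemma); since $k\hookrightarrow\kc$, a $k$-linear dependence would yield a $\kc$-linear one, so the monomials are $k$-independent and the triangular decomposition follows.
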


For $0<q$, the above $k_*$-algebras specialize to Hopf $*$-algebras 
\[
\mcO_q^{\R}(GL(N,\C)),\quad \mcO_q(U(N)),\quad \mcO_q(T(N))
\]
defined by the same relations upon putting $\mbq = q$. For $q= 1$, we obtain the commutative Hopf $*$-algebras consisting of regular complex-valued functions on the respective real algebraic groups, where $T(N)$ consists of all upper triangular matrices in $GL(N,\C)$ with real-valued diagonal entries. We can also view the above Hopf $*$-algebras over $\kc$, in which case we write them as $\overline{\mcO}_\mbq^{\R}(GL(N,\C)),\overline{\mcO}_\mbq(U(N)), \overline{\mcO}_{\mbq}(T(N))$.

\subsection{Quantized enveloping algebra of $\mfgl(N)$ and $\mfu(N)$}

\begin{Def}
We define $U_\mbq(\mfn)$, resp.~ $U_\mbq(\mfn^-)$, to be the universal $\kc$-algebra generated by elements $E_1,\ldots, E_{N-1}$, resp. $F_1,\ldots, F_{N-1}$, such that 
\[
E_{i\pm 1}^2E_i - (\mbq+\mbq^{-1})E_{i\pm 1}E_iE_{i\pm 1} + E_{i}E_{i\pm 1}^2 =0,\quad 
F_{i\pm 1}^2F_i - (\mbq+\mbq^{-1})F_{i\pm 1}F_iF_{i\pm 1} + F_{i}F_{i\pm 1}^2 =0.
\]
We define $U_\mbq(\mfh)$ to be the $\kc$-algebra of Laurent polynomials $\kc[K_1^{\pm 1},\ldots, K_N^{\pm 1}]$, and write $\hat{K}_i = K_iK_{i+1}^{-1}$. 

We define $U_\mbq(\mfgl_N(\C))$ to be generated by $U_\mbq(\mfn),U_\mbq(\mfh),U_\mbq(\mfn^-)$ with interchange relations 
\[
K_iE_j =\mbq^{\delta_{ij}-\delta_{i,j+1}}E_j K_i, \qquad K_iF_j =\mbq^{\delta_{i,j+1}-\delta_{ij}}F_j K_i,\qquad 
E_i F_j - F_jE_i= \delta_{ij} \frac{\hat{K}_i - \hat{K}_i^{-1}}{\mbq-\mbq^{-1}}.
\]
We define $U_\mbq(\mfu(N))$ to be $U_\mbq(\mfgl_N(\C))$ endowed with the $*$-structure
\[
K_i^* = K_i,\qquad E_i^* = \mbq^{-1}F_i \hat{K}_i,\qquad F_i^* = \mbq \hat{K}_i^{-1} E_i.
\]
\end{Def} 
One can endow $U_\mbq(\mfu(N))$ with the Hopf $^*$-algebra structure 
\[
\Delta(K_i) = K_i \otimes K_i,\qquad \Delta(E_i) = E_i \otimes 1 +   \hat{K}_i\otimes E_i,\qquad \Delta(F_i) = F_i \otimes  \hat{K}_i^{-1} + 1 \otimes F_i.
\]
We can divide out by the relation $K_1\ldots K_N= 1$, in which case the resulting Hopf $\kc_*$-algebra is written $U_\mbq^{\ext}(\mfsu(N))$, with quotient map $\pi_\mfs$. On the other hand, we can also consider the Hopf $\kc_*$-subalgebra $U_\mbq(\mfsu(N))$ generated by the $\hat{K}_i,E_i,F_i$, which is defined by universal relations as above. Then $U_\mbq(\mfu(N))$ can be seen as a crossed product of $U_\mbq(\mfsu(N))$ and $\Z$, with extra generator $K_1$. The restriction of $\pi_\mfs$ to $U_\mbq(\mfsu(N))$ is an embedding into $U_\mbq^{\ext}(\mfsu(N))$. When forgetting the $*$-structure, we will write the associated $\kc$-algebras as $U_\mbq(\mfsl_N(\C))$ and $U_\mbq^{\ext}(\mfsl_N(\C))$.

These same definitions can be made  if we replace $\kc_*$ by $\C$ and $\mbq$ by $q>0$ with $q\neq 1$. We obtain a Hopf $*$-algebra $U_q(\mfu(N))$ (over $\C$).

\begin{Lem}\label{LemIsoUT}
There is an isomorphism of Hopf $\overline{k}_*$-algebras 
\[
U_\mbq(\mfu(N)) \rightarrow \overline{\mcO}_\mbq(T(N))^{\cop}
\]
such that 
\[
K_i \mapsto T_{i}^{-1},\qquad E_i \mapsto (\mbq-\mbq^{-1})^{-1} T_{i+1} T_{i+1,i}^- , \qquad F_i \mapsto - (\mbq-\mbq^{-1})^{-1}T_{i,i+1}^+ T_{i+1}^{-1}.\]
\end{Lem}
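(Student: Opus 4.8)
The plan is to read the displayed formulas as the definition of a homomorphism out of the generators-and-relations presentation of $U_\mbq(\mfu(N))$, and then to check in turn that it is a well-defined unital $*$-algebra map, that it intertwines the coproducts once the target is replaced by its co-opposite, and that it is bijective. Conceptually this is nothing but the $L$-operator realization of the quantized Borel subalgebra, so at the points where a long but routine computation is needed I would simply invoke the standard references \cite{FRT89,K-S97} after matching conventions, rather than reproduce it.

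For well-definedness it suffices, since $U_\mbq(\mfu(N))$ is presented by the generators $K_i^{\pm1}, E_i, F_i$ subject to the relations in the Definition, to verify that their images satisfy those relations. Invertibility and mutual commutativity of the $K_i$ is immediate from the corresponding facts for the $T_i^{\pm1}$, and the weight relations $K_iE_j = \mbq^{\delta_{ij}-\delta_{i,j+1}}E_jK_i$, $K_iF_j = \mbq^{\delta_{i,j+1}-\delta_{ij}}F_jK_i$ reduce to the $\mbq$-commutation relations between the $T_i$ and the off-diagonal $T^{\pm}_{jl}$ recorded in the Remarks following Definition~\ref{DefMainObj}. For the $\mbq$-Serre relations among the $E_i$ and among the $F_i$, and for the cross-commutator $E_iF_j - F_jE_i = \delta_{ij}(\hat{K}_i - \hat{K}_i^{-1})/(\mbq-\mbq^{-1})$, one extracts the required quadratic and cubic identities between neighbouring off-diagonal entries of $T^{+}$ and $T^{-}$ from the defining RTT-relations $\mbR_{12}T^{\pm}_{13}T^{\pm}_{23} = T^{\pm}_{23}T^{\pm}_{13}\mbR_{12}$ and $\mbR_{12}T^{+}_{13}T^{-}_{23} = T^{-}_{23}T^{+}_{13}\mbR_{12}$ inherited from $\mcO_\mbq^{\R}(GL(N,\C))$: the particular shape of $\mbR$ — only the $e_{ii}\otimes e_{ii}$ and the strictly-upper $e_{ij}\otimes e_{ji}$ parts are nontrivial — makes only nearest-neighbour entries interact, which is precisely what yields the Serre relations, while the mixed RTT-relation produces the $E$--$F$ commutator, the $\hat{K}_i$-terms coming from the diagonal $T_i$-prefactors. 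For the $*$-structure, one uses $\mbR^{*} = \mbR_{21}$ together with the matrix identities $(T^{+})^{*} = (T^{-})^{-1}$, $(T^{-})^{*} = (T^{+})^{-1}$ (from $X^{*} = Y^{-1}$, $Y^{*} = X^{-1}$) and $T_i^{*} = T_i$ to compute the adjoints of $T_i^{-1}$, $T_{i+1}T^{-}_{i+1,i}$ and $T^{+}_{i,i+1}T_{i+1}^{-1}$ and to match them against $K_i^{*} = K_i$, $E_i^{*} = \mbq^{-1}F_i\hat{K}_i$, $F_i^{*} = \mbq\hat{K}_i^{-1}E_i$; antimultiplicativity of $*$ makes agreement on generators enough.

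For the coalgebra structure, the relations $(\id\otimes\Delta)(T^{\pm}) = T^{\pm}_{12}T^{\pm}_{13}$, i.e. $\Delta(T^{\pm}_{ij}) = \sum_k T^{\pm}_{ik}\otimes T^{\pm}_{kj}$, force (by triangularity) the diagonal entries $T_i$ to be grouplike and the first sub-/super-diagonal entries $T^{-}_{i+1,i}$, $T^{+}_{i,i+1}$ to be skew-primitive; passing to the co-opposite and comparing with $\Delta(K_i) = K_i\otimes K_i$, $\Delta(E_i) = E_i\otimes 1 + \hat{K}_i\otimes E_i$, $\Delta(F_i) = F_i\otimes\hat{K}_i^{-1} + 1\otimes F_i$ shows the map intertwines $\Delta_{U_\mbq(\mfu(N))}$ with $\Delta^{\cop}_{\overline{\mcO}_\mbq(T(N))}$ on the generators, hence throughout — the passage to the co-opposite being exactly what reconciles the ordering of the tensor legs in the $L$-operator/matrix coproduct with the convention $(\id\otimes\Delta)X = X_{12}X_{13}$ built into the FRT-bialgebra. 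Compatibility with the counits is immediate, since $\varepsilon$ kills $E_i, F_i$ and the off-diagonal $T^{\pm}$ and sends $K_i, T_i$ to $1$.

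Bijectivity I would obtain by comparing triangular decompositions into free $\kc$-modules: on one side the triangular decomposition $U_\mbq(\mfu(N)) = U_\mbq(\mfn^{-})\mskctimes U_\mbq(\mfh)\mskctimes U_\mbq(\mfn)$ with $U_\mbq(\mfn^{\pm})$ free over $\kc$ on the PBW monomials indexed by the positive roots of $\mfgl_N$; on the other the $\kc$-version of Lemma~\ref{LemPBWUq}, $\overline{\mcO}_\mbq(T(N)) = \overline{\mcO}_\mbq(M_N^{<}(\C))\mskctimes\overline{\mcO}_\mbq(D(N))\mskctimes\overline{\mcO}_\mbq(M_N^{>}(\C))$. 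The map restricts to an isomorphism $U_\mbq(\mfh)\xrightarrow{\sim}\overline{\mcO}_\mbq(D(N))$ (a Laurent-polynomial substitution $K_i \mapsto T_i^{-1}$), and modulo this Cartan part it carries the two nilpotent factors into the two triangular factors $\overline{\mcO}_\mbq(M_N^{<}(\C))$ and $\overline{\mcO}_\mbq(M_N^{>}(\C))$; filtering both sides by the height of the root part and passing to associated graded algebras, one checks that the top-degree parts of the images of the Chevalley generators $E_i, F_i$ generate the target triangular subalgebras, so the associated graded map — hence the map itself — is surjective, and then surjectivity together with equality of the graded ranks of the two free $\kc$-modules forces injectivity. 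The step I expect to be the real obstacle is not conceptual but bookkeeping: fixing every sign, every power of $(\mbq-\mbq^{-1})$ and every $\hat{K}$-factor so that the images of $E_i, F_i$ come out exactly as in the statement, and keeping the $\cop$ and the $\mbR\leftrightarrow\mbR_{21}$ conventions coherent across the two algebras throughout — once these are nailed down, each of the steps above is a direct verification.
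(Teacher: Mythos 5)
Your proposal is correct and follows essentially the same route as the paper, whose entire proof is the one-line remark that, up to a change of conventions and incorporating the $*$-structure, this is \cite[Theorem 8.33]{K-S97} --- precisely the reference you propose to invoke for the heavy computations after matching conventions. Your more detailed roadmap (well-definedness from the RTT relations, $*$- and coalgebra compatibility via triangularity and skew-primitivity, bijectivity via the two triangular/PBW decompositions) is an accurate account of what that reference covers, and the signs, powers of $(\mbq-\mbq^{-1})$ and $\hat{K}$-factors in the displayed formulas do work out under the paper's conventions, so the bookkeeping you flag as the likely obstacle resolves favorably.
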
 
\begin{proof}
Up to a different choice of convention, and taking into account the $*$-structure, this is \cite[Theorem 8.33]{K-S97}. 
\end{proof} 

The above result also holds over $\C$ when replacing $\mbq$ by $q>0$ distinct from $1$.

Let us recall some well-known elements from the representation theory of $U_\mbq(\mfgl_N(\C))$.

\begin{Def}\label{DefRepGL}
We call a $U_\mbq(\mfgl_N(\C))$-module $V$ \emph{type $1$} if it has a $\kc$-basis consisting of $\mbq$-integral weight vectors, i.e.~ joint $K_i$-eigenvectors whose eigenvalues are powers of $\mbq$. We call a type $1$ module \emph{highest weight} if there exists a generating $\mbq$-integral weight vector $\xi$ which vanishes under all $E_i$. 
\end{Def}

If $\xi$ is a $\mbq$-integral weight vector,  there must exist $\lambda_i \in \Z$ such that 
\[
K_{i} \xi = \mbq^{\lambda_i}\xi
\]
for all $i$. We call $\mbl = (\lambda_1,\ldots,\lambda_N)$ the \emph{associated integral $K$-weight}, where we write $P = \Z^N$ for the set of all integral $K$-weights. As usual we call a $K$-weight \emph{dominant} if  $\lambda_1\geq \lambda_2\geq \ldots \geq \lambda_N$, and we write $P^+$ for the set of dominant integral $K$-weights.

Similar considerations apply to $U_\mbq(\mfsl_N(\C))$. In this case, we consider $\mbq$-integral weight vectors for the $\hat{K}_i$. The set of integral $\hat{K}$-weights is $\hat{P} = \Z^{N-1}$, with dominant integral $\hat{K}$-weights $\hat{P}^+ = \N^{N-1}$. We write 
\[
P \rightarrow \hat{P},\quad \mbl\mapsto \hat{\mbl},\quad \hat{\mbl}_i = \lambda_i - \lambda_{i+1},
\]
which induces a surjective map $P^+ \rightarrow \hat{P}^+$. 

\begin{Theorem}\cite[Theorem 7.23]{K-S97}\label{TheoRepT}
The abelian category of finite dimensional type $1$  $U_\mbq(\mfgl_N(\C))$-modules (resp.~ $U_\mbq(\mfsl(N,\C))$-modules) is semisimple. Moreover, any irreducible type $1$ module is finite dimensional and highest weight. The highest weight vector is unique up to a scalar in $\kc$, and the weight of the highest weight vector uniquely determines the module up to equivalence. Moreover, any highest weight is dominant, and any dominant integral $K$-weight (resp.~ dominant integral $\hat{K}$-weight) arises in this way.
\end{Theorem}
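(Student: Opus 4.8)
The plan is to run the standard highest weight machinery for quantized enveloping algebras; it applies verbatim since $\kc=\C(\mbq)$ makes $\mbq$ transcendental, in particular not a root of unity. I would carry everything out for $U_\mbq(\mfsl_N(\C))$ with $\hat K$-weights and deduce the $U_\mbq(\mfgl_N(\C))$-statement from the crossed product presentation $U_\mbq(\mfgl_N(\C))=U_\mbq(\mfsl_N(\C))\rtimes\Z$: a type $1$ $U_\mbq(\mfgl_N(\C))$-module restricts to a type $1$ $U_\mbq(\mfsl_N(\C))$-module, the extra generator $K_1$ lies inside the simultaneous $K$-weight decomposition (so acts semisimply) and acts centrally on each isotypic block, so both the classification and semisimplicity transfer, the $K_1$-eigenvalue recording the one remaining integer $\lambda_1$, with the constraint $\mbl\in P^+$ amounting to $\hat{\mbl}\in\hat P^+$. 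First I would record the triangular/PBW decomposition $U_\mbq(\mfsl_N(\C))\cong U_\mbq(\mfn^-)\otimes\kc[\hat K_1^{\pm1},\dots,\hat K_{N-1}^{\pm1}]\otimes U_\mbq(\mfn)$, which follows from the defining relations by the usual skew-polynomial/diamond-lemma argument. From it, for $\hat{\mbl}\in\hat P$ one forms the Verma module $M(\hat{\mbl})$ induced from the one-dimensional module $\kc_{\hat{\mbl}}$ of the nonnegative part $\kc[\hat K_i^{\pm1}]U_\mbq(\mfn)$; it is a type $1$ weight module with all weights $\le\hat{\mbl}$ of finite multiplicity, generated by its one-dimensional top weight line, and it has a unique maximal proper submodule, hence a unique irreducible quotient $L(\hat{\mbl})$. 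Any irreducible highest weight module of highest weight $\hat{\mbl}$ is a quotient of $M(\hat{\mbl})$, hence isomorphic to $L(\hat{\mbl})$, which already yields that the highest weight determines the module.

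Next I would classify the finite-dimensional irreducibles. If $V$ is finite-dimensional type $1$, its $\hat K$-weight set is finite, and since $E_j$ shifts the $\hat K$-weight by the simple root $\alpha_j$ (the $\hat K_iE_j$-commutation relations), there is a weight $\hat{\mbl}$ with $\hat{\mbl}+\alpha_j$ not a weight for any $j$; any nonzero $\xi\in V_{\hat{\mbl}}$ is then a highest weight vector, and if $V$ is irreducible then $V=U_\mbq(\mfn^-)\xi$, so $V$ is a highest weight module and $V_{\hat{\mbl}}=\kc\xi$ (the $F_i$ strictly lower weights), giving uniqueness of the highest weight line. For each $i$ the elements $E_i,F_i,\hat K_i^{\pm1}$ generate a copy of $U_\mbq(\mfsl_2)$; restricting $V$ to it and using the elementary representation theory of finite-dimensional type $1$ $U_\mbq(\mfsl_2)$-modules — a highest weight vector killed by $E_i$ has $\hat K_i$-eigenvalue $\mbq^m$ with $m\ge0$ — forces $\hat{\mbl}_i\ge0$, i.e.\ $\hat{\mbl}\in\hat P^+$.

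For the converse I would show that $L(\hat{\mbl})$ is finite-dimensional with highest weight $\hat{\mbl}$ whenever $\hat{\mbl}\in\hat P^+$ (which in particular shows every dominant $\hat K$-weight, and after the $\mfgl_N$-reduction every dominant $K$-weight, arises). In $M(\hat{\mbl})$ with top vector $v$, each vector $F_i^{\hat{\mbl}_i+1}v$ is singular: this is immediate for $j$ not adjacent to $i$, is the standard $U_\mbq(\mfsl_2)$-identity for $j=i$, and uses the quantum Serre relations for $j$ adjacent to $i$. Dividing $M(\hat{\mbl})$ by the submodule generated by these vectors yields a quotient on which every $E_i$ and $F_i$ acts locally nilpotently; the Lusztig-type braid operators $T_i$ then act on this quotient, so its weight multiset is invariant under $S_N$, and being bounded above by $\hat{\mbl}$ it is finite with finite multiplicities. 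Hence the quotient, a fortiori its further quotient $L(\hat{\mbl})$, is finite-dimensional. (One in fact gets equality with $L(\hat{\mbl})$ and the Weyl character formula, but only finite-dimensionality is needed.)

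The last and hardest point is semisimplicity. The finite-dimensional type $1$ modules form a tensor category closed under $\kc$-linear duals (a dual is again type $1$ since $S(\hat K_i)=\hat K_i^{-1}$), so $\mathrm{Ext}^1(L(\hat{\mbl}),L(\hat{\mu}))\cong\mathrm{Ext}^1(\mathbbm{1},L(\hat{\mbl})^*\otimes L(\hat{\mu}))$ and it suffices to split every extension $0\to W\to E\to\mathbbm{1}\to0$ in the category of finite-dimensional type $1$ modules. For this I would invoke the quantum Casimir $C_\mbq$, a central element acting on every type $1$ weight module and acting on $L(\hat{\mbl})$ by a scalar equal, after a global normalisation, to $\mbq^{(\hat{\mbl},\hat{\mbl}+2\rho)}$ with $\rho$ the half-sum of the positive roots; decomposing $W$ into generalized $C_\mbq$-eigenspaces, the summand on which $C_\mbq-c_0$ is invertible splits off from $E$ as well, reducing to the case where every composition factor of $W$ has Casimir eigenvalue $c_0$. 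Since $(\hat{\mbl},\hat{\mbl}+2\rho)=\|\hat{\mbl}\|^2+2(\hat{\mbl},\rho)>0$ for every nonzero dominant $\hat{\mbl}$ and $\mbq$ is transcendental, this forces all composition factors of $W$ to be trivial, so all weights of $E$ vanish; then $E_i=F_i=0$ and $\hat K_i=1$ on $E$, $E$ is a trivial module, and the extension splits. An induction on composition length then gives semisimplicity for $U_\mbq(\mfsl_N(\C))$, and the $U_\mbq(\mfgl_N(\C))$-case follows as above. I expect the construction of $C_\mbq$ and the computation of its eigenvalue (a quantum Harish-Chandra argument) — or, on the alternative deformation route, the flatness checks needed to transport Weyl's complete reducibility theorem from $\mbq=1$ — to be the main obstacle; the remaining ingredients are routine though lengthy $U_\mbq(\mfsl_2)$- and Serre-relation computations.
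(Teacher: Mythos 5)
This theorem is not proved in the paper; it is imported verbatim as \cite[Theorem 7.23]{K-S97}, so there is no internal proof to compare against. Your proposal correctly reconstructs the standard textbook argument behind that citation (triangular/PBW decomposition, Verma modules and their unique simple quotients, $\mfsl_2$-reduction to force dominance, Lusztig braid operators to get finite-dimensionality of $L(\hat{\mbl})$ for $\hat{\mbl}\in\hat P^+$, and a quantum-Casimir argument for complete reducibility, transported to $\mfgl_N$ via the crossed product with $\Z$), and this is essentially the route taken in K-S97 and the other standard references; the one detail worth tidying is that for $\mfsl_N$ the exponent $(\hat{\mbl},\hat{\mbl}+2\rho)$ is only in $\tfrac{1}{N}\Z$, so either adjoin $\mbq^{1/N}$ when writing the Casimir eigenvalue or use a $\C(\mbq)$-rational central element (e.g.\ a quantum trace of $l$-operators), a routine normalisation that does not affect the conclusion.
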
 

For $\mbn \in \hat{P}^+$  we choose a fixed representative $(\overline{V}_{\mbq}(\mbn),\pi_\mbn)$ in the class of irreducible highest weight $U_\mbq(\mfsl_N(\C))$-modules with highest weight $\mbn$. We write the highest weight vector as $\xi_{\mbn}$. For $\mbl \in P^+$, we can extend $\pi_{\hat{\mbl}}$ uniquely to an irreducible highest weight $U_\mbq(\mfgl_N(\C))$-module structure $\pi_{\mbl}$ on $\overline{V}_\mbq(\hat{\mbl})$ with highest weight $\mbl$. Such a representation factors over $U_\mbq^{\ext}(\mfsl(N))$ if and only if $\mbl \in P_0^+$, the set of weights $\mbl$ with $\sum\lambda_i= 0$.

In the case of $U_q(\mfgl_N(\C))$ and $U_q(\mfsl_N(\C))$ for a concrete $q>0$, we slightly change the definition of the type $1$-condition.

\begin{Def}\label{DefRepGL2}
We call a $U_q(\mfgl_N(\C))$-module $V$ \emph{type $1$} if it has a basis over $\C$ consisting of positive weight vectors, i.e.~ joint $K_i$-eigenvectors whose eigenvalues are positive. We call a type $1$ module \emph{highest weight} if there exists a generating positive integral weight vector $\xi$ which vanishes under all $E_i$. 
\end{Def}
 
Let us call $\mbl \in \R^N$ a \emph{weakly integral $K$-weight} if $\lambda_i - \lambda_j \in \Z$ for all $i,j$. We have the following analogue of Theorem \ref{TheoRepT}. If $\xi$ is a positive integral weight vector, let $\mbl = (\lambda_1,\ldots,\lambda_N) \in \R^N$ be the \emph{associated $K$-weight},
\[
K_{i} \xi = \mbq^{\lambda_i}\xi.
\]

\begin{Theorem}\cite[Theorem 7.23]{K-S97}\label{TheoRepTConc}
The abelian category of finite dimensional type $1$  $U_q(\mfgl_N(\C))$-modules (resp.~ $U_q(\mfsl(N,\C))$-modules) is semisimple. Moreover, any irreducible type $1$ module is finite dimensional and highest weight, with weakly integral associated $K$-weights (resp.~ integral associated $\hat{K}$-weights). The highest weight vector is unique up to a scalar in $\C$, and the weight of the highest weight vector uniquely determines the module up to equivalence. Moreover, any highest weight is dominant, and any dominant weakly integral $K$-weight (resp.~ dominant integral $\hat{K}$-weight) arises in this way.
\end{Theorem}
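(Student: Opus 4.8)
The plan is to reduce Theorem~\ref{TheoRepTConc} to its generic counterpart, Theorem~\ref{TheoRepT}: in the proof of \cite[Theorem 7.23]{K-S97} the parameter enters only through being \emph{not a root of unity}, a property shared by every real $q>0$ with $q\neq 1$, so the real task is to show that the weaker positivity condition of Definition~\ref{DefRepGL2} --- diagonalizable $K_i$ with positive eigenvalues --- already forces weak integrality of the weights on finite-dimensional modules. Once that is in place, the arguments of \emph{loc.\ cit.} apply over $\C$ essentially verbatim.

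First I would carry out this integrality reduction. Let $V$ be a finite-dimensional type~$1$ $U_q(\mfgl_N(\C))$-module, so it has a basis of joint $K_i$-eigenvectors with positive eigenvalues $q^{\lambda_i}$, $\lambda_i\in\R$, and the $\hat{K}_i$ act diagonalizably with eigenvalues $q^{\lambda_i-\lambda_{i+1}}>0$. Fixing a node $i$ and restricting to the copy of $U_q(\mfsl_2)$ generated by $E_i,F_i,\hat{K}_i^{\pm 1}$: since $\hat{K}_i E_i=q^2 E_i\hat{K}_i$ and $V$ is finite-dimensional, there is $0\neq w\in V$ with $E_i w=0$ and $\hat{K}_i w=\kappa w$, $\kappa>0$; taking $m\geq 0$ minimal with $F_i^{m+1}w=0$, the standard identity
\[
0 = E_i F_i^{m+1} w = [m+1]_q\,\frac{\kappa q^{-m}-\kappa^{-1}q^{m}}{q-q^{-1}}\,F_i^{m}w,
\]
together with $F_i^m w\neq 0$, $[m+1]_q\neq 0$ and $q-q^{-1}\neq 0$ (valid since $q$ is not a root of unity), forces $\kappa^2=q^{2m}$, hence $\kappa=q^m$ by positivity. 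Therefore every $\hat{K}_i$-eigenvalue of $V$ lies in $q^{\Z}$, so all $\lambda_i-\lambda_{i+1}\in\Z$, the $K$-weights of $V$ are weakly integral, and $V$ restricts to a type~$1$ $U_q(\mfsl_N(\C))$-module in the usual sense --- putting us inside the scope of Theorem~\ref{TheoRepT}.

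The remaining assertions then follow the pattern of the generic case. For irreducible $V$, the (nonzero) space of $U_q(\mfsl_N(\C))$-highest weight vectors of a fixed $\hat{K}$-weight is stable under the commuting $K_1,\dots,K_N$, so it contains a joint $K_i$-eigenvector $\xi$; this is a generating positive weight vector killed by all $E_i$, of dominant weakly integral weight $\mbl$, so $V$ is highest weight, with $\mbl$ --- equivalently $\hat{\mbl}\in\hat{P}^+$ together with the scalar by which the central element $K_1\cdots K_N$ acts --- determining $V$ up to equivalence and $\xi$ unique up to scalar, all exactly as over $\kc$. Conversely, every dominant weakly integral $\mbl$ is realized by equipping the irreducible $U_q(\mfsl_N(\C))$-module of highest weight $\hat{\mbl}$ with the $K_i$-action equal to $q^{\lambda_i}$ on the extreme weight space and propagated along the root lattice; a direct check shows this respects the defining relations. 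Semisimplicity of the category of finite-dimensional type~$1$ modules I would obtain by reducing to the vanishing of extensions of irreducibles: such an extension splits after restriction to $U_q(\mfsl_N(\C))$, and the splitting is $U_q(\mfgl_N(\C))$-linear because each $K_i$ normalizes $U_q(\mfsl_N(\C))$ by an automorphism fixing isomorphism classes of type~$1$ modules --- hence preserves the $U_q(\mfsl_N(\C))$-isotypic components --- while $K_1\cdots K_N$ is central and, for $q$ not a root of unity, a $U_q(\mfgl_N(\C))$-Verma module contains its irreducible quotient with multiplicity one.

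The hard part is the integrality reduction of the second paragraph: pinning down exactly where finite-dimensionality together with positivity (rather than integrality of the torus part being assumed outright) must be invoked. Everything else is either a citation of Theorem~\ref{TheoRepT}/\cite[Theorem 7.23]{K-S97} or routine $U_q(\mfsl_N(\C))$-theory at non-root-of-unity $q$, together with elementary bookkeeping for the one extra central torus direction distinguishing $\mfgl_N$ from $\mfsl_N$.
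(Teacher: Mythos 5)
The paper itself offers no proof of this theorem: it is stated as a citation to \cite[Theorem 7.23]{K-S97}, which covers the semisimple ($\mfsl_N$) case with the usual integral type-$1$ condition. You correctly identify the genuine mathematical content that the citation does not literally cover, namely that the weaker Definition~\ref{DefRepGL2} --- $K_i$ diagonalizable with merely \emph{positive} eigenvalues --- forces weak integrality of the $K$-weights on finite-dimensional modules; and your reduction is correct. The $\mfsl_2$-computation is right: $E_iF_i^{m+1}w = [m+1]_q\frac{\kappa q^{-m}-\kappa^{-1}q^m}{q-q^{-1}}F_i^mw$ with $[m+1]_q\neq 0$ forces $\kappa=q^m$, and then using that $E_i,F_i$ shift $\hat{K}_i$-eigenvalues by $q^{\pm 2}$ one propagates to every $\hat{K}_i$-eigenvalue on $V$. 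The extension to $U_q(\mfgl_N)$ via the extra positive central parameter $K_1\cdots K_N$, and the resulting classification, are then routine as you say.

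Two remarks. First, your semisimplicity argument is correct in spirit but the last clause (``$K_1\cdots K_N$ is central and, for $q$ not a root of unity, a $U_q(\mfgl_N(\C))$-Verma module contains its irreducible quotient with multiplicity one'') is not needed and does not close the self-extension case: if $W$ is a type-$1$ self-extension of an irreducible $V$, the whole of $W$ is a single $U_q(\mfsl_N(\C))$-isotypic component, so ``preserving isotypic components'' says nothing. What actually finishes the argument, and what you should invoke explicitly, is the positivity in the type-$1$ hypothesis: on a type-$1$ module the $K_i$ act simultaneously diagonalizably with positive eigenvalues, hence on each joint weight space the value of each $K_i$ is the unique positive $N$-th root dictated by the $\hat{K}_j$-values and the $z=K_1\cdots K_N$-value. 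Thus the $U_q(\mfgl_N)$-module structure on any $z$-eigenspace is uniquely determined by its $U_q(\mfsl_N)$-structure together with the scalar of $z$, so any $U_q(\mfsl_N)$-equivariant splitting on a $z$-eigenspace is automatically $U_q(\mfgl_N)$-equivariant. Second, your proof (like the paper's citation) does not address the claim that an irreducible type-$1$ module is automatically finite-dimensional; you start from a finite-dimensional $V$. Under Definition~\ref{DefRepGL2} as literally stated this claim is delicate (Verma modules at non-dominant weights are $K$-diagonalizable with positive eigenvalues), so if you wanted a self-contained argument you would need to either strengthen the type-$1$ hypothesis to include local finiteness of the $E_i,F_i$, as in \cite{K-S97}, or restrict the statement to irreducibles in the finite-dimensional category.
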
 

We will continue to use the same notation as above. We note that in the setting of Theorem \ref{TheoRepTConc}, the $V_q(\hat{\mbl})$ can be endowed with a unique-up-to-scalar Hilbert space structure such that $\pi_{\mbl}$ is a $*$-representation of $U_q(\mfu)$. We will write the set of weakly integral $K$-weights as $\widetilde{P}$, and the dominant weakly integral $K$-weights as $\widetilde{P}^+$.

Let us return now to $U_\mbq(\mfgl_N(\C))$. For $\mbd_1 = (1,0,\ldots,0)$, we write $\kc^N = \overline{V} = \overline{V}(\mbd_1)$ and $\pi_{\overline{V}} = \pi_{\mbd_1}$ for the \emph{vector representation}
\[
\pi_{\overline{V}}(E_i) = e_{i,i+1},\quad \pi_{\overline{V}}(F_i) = e_{i+1,i},\quad K_i e_n = \mbq^{\delta_{i,n}}e_n.  
\]

\begin{Lem}\label{LemNonDegForm}
There is a unique non-degenerate $\kc_*$-valued pairing $(-,-)$ of the Hopf $\kc_*$-algebras $U_\mbq(\mfu(N))$ and $\overline{\mcO}_\mbq(U(N))$ such that 
\[
(\id\otimes (a,-))X = \pi_{\overline{V}}(a)_{ij},\qquad a\in U_\mbq(\mfu(N)).
\]
\end{Lem}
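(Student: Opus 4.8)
The plan is to construct the pairing at the level of the FRT (bi)algebras, extend it over the determinant localisation and to the $*$-structures, and finally deduce non-degeneracy from the representation theory of $U_\mbq(\mfgl_N(\C))$. \emph{Uniqueness} is immediate and I would record it first: $\mcO_\mbq(M_N(\C))$ is generated by the entries $X_{ij}$, and $\mcO_\mbq(GL(N,\C))$ is its localisation at the grouplike element $\Det_\mbq(X)$, so the bialgebra-pairing axioms together with antipode compatibility force $(a,x)$ for all $x$ once $(a,X_{ij})=\pi_{\overline{V}}(a)_{ij}$ is prescribed, and extending scalars from $k_*$ to $\kc_*$ adds nothing.

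For \emph{existence}, I would form the matrix $\rho=(\rho_{ij})\in M_N(U_\mbq(\mfgl_N(\C))^{\circ})$ of matrix coefficients of the vector representation, $\rho_{ij}(a)=\pi_{\overline{V}}(a)_{ij}$, inside the Sweedler (Hopf) dual. Since $\pi_{\overline{V}}$ is a representation one has $(\id\otimes\Delta)\rho=\rho_{12}\rho_{13}$ and $(\id\otimes\varepsilon)\rho=I_N$ in $U_\mbq(\mfgl_N(\C))^{\circ}$; and pairing the FRT relation $\mbR_{12}X_{13}X_{23}=X_{23}X_{13}\mbR_{12}$ against an arbitrary element $a$ amounts exactly to the intertwining identity
\[
\mbR\,(\pi_{\overline{V}}\otimes\pi_{\overline{V}})\Delta(a)=(\pi_{\overline{V}}\otimes\pi_{\overline{V}})\Delta^{\opp}(a)\,\mbR ,
\]
which is the classical fact that $\mbR$ is the image of the universal $R$-matrix of $U_\mbq(\mfgl_N(\C))$ under $\pi_{\overline{V}}\otimes\pi_{\overline{V}}$ (up to the standard change of conventions), and in any case is a short check on the generators $K_i,E_i,F_i$. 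The universal property of the FRT construction then yields a bialgebra homomorphism $\mcO_\mbq(M_N(\C))\to U_\mbq(\mfgl_N(\C))^{\circ}$, i.e.\ the desired pairing. As $\Det_\mbq(X)$ is grouplike, its image is grouplike in the Hopf algebra $U_\mbq(\mfgl_N(\C))^{\circ}$, hence invertible, so the pairing extends uniquely over the localisation to $\mcO_\mbq(GL(N,\C))$ and then $\kc$-linearly to $\overline{\mcO}_\mbq(GL(N,\C))$; compatibility with the antipodes is then automatic. To see it is a $*$-pairing, one checks that the vector representation is a $*$-representation of $U_\mbq(\mfu(N))$ for the standard form on $\kc^N$ --- a one-line verification from $\pi_{\overline{V}}(E_i)=e_{i,i+1}$, $\pi_{\overline{V}}(F_i)=e_{i+1,i}$ and $K_i^*=K_i$, $E_i^*=\mbq^{-1}F_i\hat{K}_i$, $F_i^*=\mbq\hat{K}_i^{-1}E_i$ --- so that $\pi_{\overline{V}}(a^*)_{ij}=\overline{\pi_{\overline{V}}(a)_{ji}}$; combined with the defining relation $X_{ij}^*=S(X_{ji})$ of $\mcO_\mbq(U(N))$ (that is, $U^*=U^{-1}$) this yields the $*$-compatibility relation, in the usual normalisation $(a^*,x)=\overline{(a,S(x)^*)}$, on generators, hence on all of $\overline{\mcO}_\mbq(U(N))$.

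\emph{Non-degeneracy} is the main obstacle. Here I would use that, by Theorem \ref{TheoRepT}, the type $1$ finite-dimensional $U_\mbq(\mfgl_N(\C))$-modules form a semisimple category with simple objects $\overline{V}_\mbq(\mbl)$, $\mbl\in P^+$, together with the quantum Peter--Weyl statement that $\overline{\mcO}_\mbq(U(N))$ is precisely the span of the matrix coefficients of these modules --- this can be read off from the FRT presentation and the PBW decompositions, the Borel part being controlled by Lemma \ref{LemIsoUT}. Granting this, for each $\mbl$ the image of $U_\mbq(\mfgl_N(\C))$ in $\End(\overline{V}_\mbq(\mbl))$ is all of $\End(\overline{V}_\mbq(\mbl))$, by simplicity and semisimplicity, so the pairing restricts to a perfect pairing of $\End(\overline{V}_\mbq(\mbl))$ with the corresponding coefficient space; since matrix coefficients of inequivalent simples are linearly independent, non-degeneracy on the $\overline{\mcO}_\mbq(U(N))$ side follows, and on the $U_\mbq(\mfu(N))$ side it follows from $\bigcap_{\mbl}\Ker(\pi_\mbl)=0$, again a consequence of the classification. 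The genuine content is assembling this Peter--Weyl/separation input over the field $\kc$; since it is all classical, one may alternatively just cite the relevant chapters of \cite{K-S97}.
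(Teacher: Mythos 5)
Your proposal is correct and follows essentially the same route as the paper's: existence is obtained from the fact that $\pi_{\overline V}$ is a $*$-preserving representation and that $\mbR$ (equivalently $\Sigma\mbR$) intertwines $(\pi_{\overline V}\otimes\pi_{\overline V})\circ\Delta$, i.e.\ the FRT universal property applies, with the localisation handled because $\Det_\mbq(X)$ is grouplike; non-degeneracy is reduced to the quantum Peter--Weyl decomposition (the paper cites \cite[Corollary 11.23]{K-S97}, which rests on the content of Lemma \ref{LemCosemi}, exactly the input you invoke). Your write-up merely makes explicit the computations and the density/separation arguments that the paper leaves as citations.
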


Here a pairing between Hopf $\kc_*$-algebras is in addition assumed to satisfy 
\[
(a^*,b) = (a,S(b)^*)^*.
\]
\begin{proof}
Since $\pi_{\overline{V}}: U_\mbq(\mfu(N)) \rightarrow M_N(k_*)$ is a $*$-preserving homomorphism, the existence of the pairing follows immediately from the fact that $U$ is a unitary corepresentation and $\Sigma \mbR$ an intertwiner of the tensor product representation $(\pi_{\overline{V}} \otimes \pi_{\overline{V}})\circ \Delta$. The non-degeneracy can be proven along the lines of \cite[Corollary 11.23]{K-S97}, which uses the result stated in the next lemma.
\end{proof}

\begin{Lem}\label{LemCosemi} For $\lambda \in P^+$, there exists a unique $\overline{\mcO}_\mbq(GL(N,\C))$-comodule structure $\nabla_{\mbq,\mbl}$ on $\overline{V}_\mbq(\hat{\mbl})$ such that, under the above pairing, we obtain the $U_\mbq(\mfgl(N))$-representation on $\overline{V}_\mbq(\hat{\mbl})$. Moreover, the induced map 
\[
\oplus_{\mbl\in P^+}\End(\overline{V}_\mbq(\hat{\lambda}))^{\circ} \rightarrow \overline{\mcO}_\mbq(GL(N,\C))
\]
is an isomorphism of $\kc$-coalgebras. Here $V^{\circ}$ denotes the $\kc$-linear dual of a $\kc$-vector space $V$.
\end{Lem}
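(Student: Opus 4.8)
The plan is to realise each $\overline{V}_\mbq(\hat{\mbl})$ as a direct summand of tensor powers of the vector comodule, and then identify the span of all matrix coefficients. Throughout I read the pairing of Lemma~\ref{LemNonDegForm} as a Hopf pairing between $U_\mbq(\mfgl_N(\C))$ and $\overline{\mcO}_\mbq(GL(N,\C))$, which is legitimate since $\mcO_\mbq(U(N))$ is just $\mcO_\mbq(GL(N,\C))$ with a different $*$-structure (Remark after Definition~\ref{DefMainObj}), and likewise $U_\mbq(\mfu(N))$ is $U_\mbq(\mfgl_N(\C))$ with a $*$-structure. The comodule structure $\nabla(e_j)=\sum_i e_i\otimes X_{ij}$ on $\overline{V}=\kc^N$ induces, by the defining property of the pairing, the vector representation $\pi_{\overline{V}}$; since $X$ is invertible in $M_N(\overline{\mcO}_\mbq(GL(N,\C)))$ and $\Det_\mbq(X)$ is grouplike, the contragredient comodule $\overline{V}^{*}$ and the one-dimensional comodules $\Det_\mbq(X)^{\pm 1}$ are defined and again compatible with the pairing, and compatibility is preserved by tensor products, duals and passage to subcomodules (a routine check with the Hopf-pairing identities). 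By Theorem~\ref{TheoRepT} and a weight count, every $\overline{V}_\mbq(\hat{\mbl})$ with $\mbl\in P^+$, carrying its $\mfgl_N(\C)$-structure $\pi_\mbl$, occurs as a summand of some $\overline{V}^{\otimes a}\otimes(\overline{V}^{*})^{\otimes b}$: writing $\mbl=\mu+(\lambda_N,\dots,\lambda_N)$ with $\mu$ dominant with nonnegative entries, $\overline{V}_\mbq(\hat{\mu})$ sits inside $\overline{V}^{\otimes|\mu|}$ and $(\lambda_N,\dots,\lambda_N)$ is a power of $\Det_\mbq(X)^{\pm 1}$. Cutting out this summand by a $U_\mbq(\mfgl_N(\C))$-linear idempotent produces a comodule structure $\nabla_{\mbq,\mbl}$ inducing $\pi_\mbl$, and it is unique: if two comodule structures induce $\pi_\mbl$, then for every $\phi$ in the dual the elements $(\phi\otimes\id)\nabla(v)$ and $(\phi\otimes\id)\nabla'(v)$ of $\overline{\mcO}_\mbq(GL(N,\C))$ pair identically with every $a$, hence agree by non-degeneracy of the pairing.

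Next I assemble the coalgebra map. Choosing a basis and writing $\nabla_{\mbq,\mbl}(v_j)=\sum_i v_i\otimes c^\mbl_{ij}$, the comodule axioms read $\Delta(c^\mbl_{ij})=\sum_k c^\mbl_{ik}\otimes c^\mbl_{kj}$ and $\varepsilon(c^\mbl_{ij})=\delta_{ij}$, so $E^*_{ij}\mapsto c^\mbl_{ij}$ on each matrix coalgebra assembles into a coalgebra morphism
\[
\Psi:\ \bigoplus_{\mbl\in P^+}\End(\overline{V}_\mbq(\hat{\mbl}))^{\circ}\ \longrightarrow\ \overline{\mcO}_\mbq(GL(N,\C)).
\]
To see $\Psi$ is injective, note that each $\pi_\mbl$ is absolutely irreducible over $\kc$ — a $U_\mbq(\mfgl_N(\C))$-endomorphism preserves the $K_i$-eigenspaces, which are distinct since $\mbq$ is transcendental, hence the one-dimensional highest weight space, hence is scalar — and the $\pi_\mbl$ are pairwise inequivalent; so by the Jacobson density theorem $\bigoplus_{\mbl\in F}\pi_\mbl$ is onto $\bigoplus_{\mbl\in F}\End_\kc(\overline{V}_\mbq(\hat{\mbl}))$ for every finite $F\subseteq P^+$. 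Thus a vanishing finite combination $\sum\lambda^\mbl_{ij}c^\mbl_{ij}=0$, paired with all $a$ and using $(\id\otimes(a,-))c^\mbl=\pi_\mbl(a)$, forces every $\lambda^\mbl_{ij}=0$.

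Surjectivity of $\Psi$ is the real point. The span $C$ of the matrix coefficients of all comodules $\overline{V}^{\otimes a}\otimes(\overline{V}^{*})^{\otimes b}$ and their subcomodules is a subbialgebra of $\overline{\mcO}_\mbq(GL(N,\C))$, since products of matrix coefficients of $V$ and $W$ are matrix coefficients of $V\otimes W$, and it contains the algebra generators $X_{ij}$ and $\Det_\mbq(X)^{-1}$, so $C=\overline{\mcO}_\mbq(GL(N,\C))$. It therefore suffices to decompose each $T=\overline{V}^{\otimes a}\otimes(\overline{V}^{*})^{\otimes b}$ as a \emph{comodule} into copies of the $\overline{V}_\mbq(\hat{\mbl})$. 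Here non-degeneracy of the pairing does the work: a $U_\mbq(\mfgl_N(\C))$-submodule $W$ of a finite-dimensional comodule is automatically a subcomodule, for if $\phi$ kills $W$ then $(\id\otimes(a,-))\big((\phi\otimes\id)\nabla(w)\big)=\phi(a\cdot w)=0$ for all $a$ and $w\in W$, whence $(\phi\otimes\id)\nabla(w)=0$; hence the semisimple decomposition of the type~$1$ $U_\mbq(\mfgl_N(\C))$-module $T$ furnished by Theorem~\ref{TheoRepT} is simultaneously a comodule decomposition, so the matrix coefficients of $T$, and thus all of $C=\overline{\mcO}_\mbq(GL(N,\C))$, lie in the image of $\Psi$. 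Combining this with injectivity and the matrix-coalgebra structure of each summand shows $\Psi$ is an isomorphism of $\kc$-coalgebras. I expect the only genuinely delicate points to be the bookkeeping behind ``$U_\mbq(\mfgl_N(\C))$-submodule $=$ subcomodule'' and the compatibility of duals and tensor products with the Hopf pairing; everything else is formal.
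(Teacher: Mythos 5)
The paper itself does not prove Lemma~\ref{LemCosemi}; it is cited as a known result (from \cite{K-S97}), and the paper's own constructive argument appears only in the lifted version Lemma~\ref{LemCosemik}, whose proof goes via the Hecke algebra $H(n)$, its absolute semisimplicity, and quantum Brauer--Schur--Weyl duality to decompose $V^{\otimes n}$ directly into subcomodules. Your proof takes a genuinely different route: you pass from comodule theory to module theory via the pairing, decompose as a $U_\mbq(\mfgl_N(\C))$-module by Theorem~\ref{TheoRepT}, and then \emph{import} that decomposition back into the comodule picture via the observation that a $U_\mbq$-submodule of a finite-dimensional comodule is automatically a subcomodule, with the Jacobson density theorem supplying injectivity of $\Psi$. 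Both proofs are correct mathematics; the paper's is more constructive (explicit idempotents realize the summands) and works uniformly over $k$, while yours is more structural and in a sense shorter.

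The one point you should not gloss over, however, is a circularity created by the paper's own ordering of results. Your ``submodule $\Rightarrow$ subcomodule'' step and your uniqueness argument both invoke the non-degeneracy of the pairing in the direction ``$U_\mbq$ separates $\overline{\mcO}_\mbq$''. But the paper's proof of Lemma~\ref{LemNonDegForm} says explicitly that non-degeneracy ``uses the result stated in the next lemma'', i.e.\ Lemma~\ref{LemCosemi} itself. So, read within the paper's logical order, your argument assumes what it wants to prove. This is precisely the dependency the Hecke-algebra/Schur--Weyl route is designed to sidestep: there the subcomodules of $V^{\otimes n}$ are produced by idempotents $p\in H(n)$ commuting with $\nabla_{m,n}$, which never appeals to the pairing. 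To rescue your argument you would need to either (i) establish non-degeneracy independently (e.g.\ via the Rosso/Drinfeld double pairing on $U_\mbq^{\geq 0}\otimes U_\mbq^{\leq 0}$ and the PBW basis, or by a direct computation on the triangular generators as in the proof of Lemma~\ref{LemNonDegk}), or (ii) replace the appeal to full non-degeneracy by the restricted statement that the pairing is already non-degenerate on the coefficient coalgebra of each $\overline{V}^{\otimes a}\otimes(\overline{V}^{*})^{\otimes b}$, which in fact follows from the Jacobson density argument you already use for injectivity together with the decomposition provided by the Hecke action --- but at that point you are essentially reproducing the paper's route. As written, your proof is a clean argument modulo a forward reference you inherit from the paper, but it is not self-contained in the way the Hecke-algebra proof is.
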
 

Lemma \ref{LemNonDegForm} can be lifted to the level of $\mcO_\mbq(T(N))^{\cop}$ and $\mcO_\mbq(U(N))$.

\begin{Lem}\label{LemNonDegk}
There exists a unique non-degenerate  Hopf $k_*$-algebra pairing between $\mcO_\mbq(T(N))^{\cop}$ and
$\mcO_\mbq(U(N))$, uniquely determined by the property 
\begin{equation}\label{EqPairUT}
p:  \mcO_\mbq(T(N))^{\cop} \kstartimes \mcO_\mbq(U(N))\rightarrow k_*,\qquad (\id\otimes p)(T^+_{13}U_{24}) = \mbR,\quad (\id\otimes p)(T^-_{13}U_{24}) = \mbR_{21}^{-1}.
\end{equation}
\end{Lem}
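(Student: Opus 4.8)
The plan is to obtain $p$ by restricting to the integral forms a $\kc_*$-valued pairing already available from the previous two lemmas. Concretely, composing the non-degenerate Hopf $\kc_*$-algebra pairing of Lemma~\ref{LemNonDegForm} with the isomorphism $\overline{\mcO}_\mbq(T(N))^{\cop}\cong U_\mbq(\mfu(N))$ of Lemma~\ref{LemIsoUT} produces a non-degenerate Hopf $\kc_*$-algebra pairing
\[
\bar p:\overline{\mcO}_\mbq(T(N))^{\cop}\kcstartimes\overline{\mcO}_\mbq(U(N))\to\kc_*.
\]
First I would check that, under the substitutions of Lemma~\ref{LemIsoUT}, the defining property $(\id\otimes(a,-))X=\pi_{\overline{V}}(a)_{ij}$ of Lemma~\ref{LemNonDegForm} turns into $(\id\otimes\bar p)(T^+_{13}U_{24})=\mbR$ and $(\id\otimes\bar p)(T^-_{13}U_{24})=\mbR_{21}^{-1}$; this is a short computation in the vector representation using $\pi_{\overline{V}}(E_i)=e_{i,i+1}$, $\pi_{\overline{V}}(F_i)=e_{i+1,i}$, $\pi_{\overline{V}}(K_i)e_n=\mbq^{\delta_{in}}e_n$ and the explicit forms of $\mbR$ and $\mbR^{-1}$, and it already establishes~\eqref{EqPairUT} for the candidate pairing.

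Next I would check that $\bar p$ restricts to a $k_*$-valued pairing $p$ on $\mcO_\mbq(T(N))^{\cop}\kstartimes\mcO_\mbq(U(N))$. The algebra $\mcO_\mbq(T(N))^{\cop}$ is generated over $k_*$ by the entries $T^\pm_{ij}$ (with the diagonal inverses $T_i^{-1}=T^-_{ii}$ among these generators), and $\mcO_\mbq(U(N))$ is generated over $k_*$ by the $U_{ij}$ together with $\Det_\mbq(U)^{-1}$. By the Hopf-pairing axioms, $\bar p$ evaluated on a product of generators is a $\Z[\mbq,\mbq^{-1}]$-linear combination of products of $\bar p$ on the generators, so it suffices to note that $\bar p(T^\pm_{ij},U_{kl})$ are entries of $\mbR$, $\mbR_{21}^{-1}$ — hence in $k$ — by~\eqref{EqPairUT}, and to deal with the localisation: since $\Det_\mbq(U)$ is grouplike, $t\mapsto\bar p(t,\Det_\mbq(U))$ is a character of $\mcO_\mbq(T(N))^{\cop}$ which is $k$-valued because $\Det_\mbq(U)$ is a $\Z[\mbq]$-polynomial in the $U_{ij}$, and its convolution inverse $t\mapsto\bar p(S(t),\Det_\mbq(U))$ describes the pairing with $\Det_\mbq(U)^{-1}$ and is again $k_*$-valued since $\mcO_\mbq(T(N))^{\cop}$ carries a $k_*$-antipode. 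Compatibility with the $*$-structures and the Hopf axioms are inherited from $\bar p$. Uniqueness is then immediate: a Hopf $k_*$-algebra pairing is determined by its values on a generating set, and these are pinned down by~\eqref{EqPairUT} together with the forced behaviour on $\Det_\mbq(U)^{-1}$ just described.

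Finally, non-degeneracy of $p$ over $k_*$ reduces to that of $\bar p$ over $\kc_*$ by a freeness argument. By Lemma~\ref{LemPBWUq} and the freeness of $\mcO_\mbq(GL(N,\C))$ over $k$, both $\mcO_\mbq(T(N))$ and $\mcO_\mbq(U(N))$ are free $k$-modules, so the natural maps into the scalar extensions $\overline{\mcO}_\mbq(T(N))=\kc\otimes_k\mcO_\mbq(T(N))$ and $\overline{\mcO}_\mbq(U(N))=\kc\otimes_k\mcO_\mbq(U(N))$ are injective with $\kc$-spanning images. Hence if $a\in\mcO_\mbq(T(N))^{\cop}$ satisfies $p(a,b)=0$ for all $b\in\mcO_\mbq(U(N))$, then $\bar p(a,\,\cdot\,)$ vanishes on a $\kc$-spanning subset of $\overline{\mcO}_\mbq(U(N))$, so $a=0$ in $\overline{\mcO}_\mbq(T(N))^{\cop}$ and therefore $a=0$; the symmetric argument handles the other variable. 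I expect the second paragraph to be the main obstacle: because the isomorphism of Lemma~\ref{LemIsoUT} involves the factor $(\mbq-\mbq^{-1})^{-1}\notin k$, it is not formal that the $\kc_*$-pairing is integral, and one genuinely has to exhibit the values $\bar p(T^\pm_{ij},U_{kl})$ as entries of $\mbR^{\pm1}$ and control the pairing with $\Det_\mbq(U)^{-1}$; once that is done, the rest is bookkeeping.
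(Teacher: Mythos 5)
Your proof is correct, and it takes a genuine variant of the paper's route. The paper establishes existence of $p$ \emph{directly over $k_*$}: the assignment $T^{\pm}\mapsto\mbR$, $\mbR_{21}^{-1}$ is compatible with the defining (RTT) relations of both algebras, and since $T^{\pm}$ and $U$ are corepresentations this extends to a Hopf pairing via the universal property, with no detour through $\kc$. The extension-of-scalars / freeness step then appears in the paper \emph{only} for non-degeneracy, where it is checked that the $\kc$-extension of $p$ coincides (via Lemma~\ref{LemIsoUT}) with the non-degenerate pairing of Lemma~\ref{LemNonDegForm}. You reverse the direction: start from the $\kc_*$-pairing $\bar p$, verify it reproduces~\eqref{EqPairUT} on generators, and then descend to $k_*$ by an integrality argument. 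The non-degeneracy step is then identical in substance to the paper's. What your route costs is exactly the integrality check you flag as the main obstacle: you must argue that $\bar p$, whose defining formula through Lemma~\ref{LemIsoUT} visibly involves $(\mbq-\mbq^{-1})^{-1}$, nonetheless takes values in $k_*$ on the integral forms. Your argument for this is sound — the Hopf-pairing axioms reduce evaluation on monomials in the generators to sums of products of the generator pairings $\bar p(T^{\pm}_{ij},U_{kl})$, which are entries of $\mbR^{\pm1}$ and hence lie in $\C[\mbq,\mbq^{-1}]\subseteq k$, and the localisation at $\Det_\mbq(U)$ is handled by $p(t,\Det_\mbq(U)^{-1})=p(S(t),\Det_\mbq(U))$ together with $k_*$-linearity of the antipode. (As a small aside: since $U$ is unitary one also has $\Det_\mbq(U)^{-1}=\Det_\mbq(U)^*$, so the localisation issue can alternatively be absorbed into the $*$-compatibility of the pairing.) What the paper's route buys is that it never has to confront integrality at all, because existence is won by universality over $k_*$ from the start. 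Both arguments are complete.
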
 
\begin{proof}
Again, the existence of the pairing is immediate from the defining universal relations and the fact that $T^{\pm}$ and $U$ are corepresentations. As the algebras involved are free over $k$, it is sufficient to check non-degeneracy after extension of scalars from $k$ to $\kc$. However, it is easily checked that $p$ turns into the non-degenerate pairing of $U_\mbq(\mfu(N))$ and $\overline{\mcO}_\mbq(U(N))$ by the isomorphism of Lemma \ref{LemIsoUT}. 
\end{proof}

On the other hand, the same proof of Lemma \ref{LemCosemi} can be  lifted to a stronger statement as follows. 

\begin{Lem}\label{LemCosemik}
There exist (up to isomorphism) unique free $k$-modules $V_\mbq(\hat{\mbl})$ with a right $\mcO_\mbq(GL(N,\C))$-comodule structure such that, upon extension of scalars from $k$ to $\kc$, we obtain the $\overline{\mcO}_\mbq(GL(N,\C))$-comodules $\overline{V}_\mbq(\hat{\mbl})$. Moreover, the induced map 
\begin{equation}\label{EqPW}
\oplus_{\mbl \in P^+}\End(V_\mbq(\hat{\mbl}))^{\circ} \rightarrow \mcO_\mbq(GL(N,\C))
\end{equation}
is an isomorphism of $k$-coalgebras. Here $V^{\circ}$ now denotes the $k$-linear dual of a $k$-module $V$.
\end{Lem}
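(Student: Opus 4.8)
The plan is to mimic the proof of Lemma~\ref{LemCosemi}, keeping careful track of which scalars get inverted, the point being that the only denominators occurring are the $[n]_\mbq^{-1}$, which lie in $k$ by the very definition of $k$. Concretely, the vector corepresentation is already defined over $k$: $k^N$ becomes a right $\mcO_\mbq(M_N(\C))$-comodule via $v_j\mapsto\sum_i v_i\otimes X_{ij}$, hence a right $\mcO_\mbq(GL(N,\C))$-comodule, since the $X_{ij}$ lie in $\mcO_\mbq(GL(N,\C))$. On the tensor powers $(k^N)^{\otimes d}$ the Hecke algebra $H_d(\mbq)$ over $k$ acts by $\mcO_\mbq(GL(N,\C))$-comodule endomorphisms, the braiding being implemented by the $R$-matrix $\mbR$, whose entries lie in $\Z[\mbq,\mbq^{-1}]\subseteq k$ (quantum Schur--Weyl duality). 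Because $k$ contains all $[n]_\mbq^{-1}$, the algebra $H_d(\mbq)$ is split semisimple over $k$ and its primitive idempotents are defined over $k$; applying the central idempotent attached to a partition $\lambda$ of $d$ with at most $N$ rows cuts out a direct summand $V_\mbq(\lambda)$ of $(k^N)^{\otimes d}$. Since $k$ is a localisation of $\C[\mbq,\mbq^{-1}]$, hence a principal ideal domain, a direct summand of a finitely generated free $k$-module is free, so $V_\mbq(\lambda)$ is a free $k$-module carrying a right $\mcO_\mbq(GL(N,\C))$-comodule structure. For a general $\mbl\in P^+$ one twists by a suitable power of the invertible grouplike $\Det_\mbq(X)$ to reduce to the polynomial case; this produces $V_\mbq(\hat{\mbl})$ with the prescribed weight and, applying $\kc\otimes_k(-)$ to the construction and invoking Theorem~\ref{TheoRepT}, the comodule $\overline{V}_\mbq(\hat{\mbl})$ of Lemma~\ref{LemCosemi} after extension of scalars.

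It remains to see that \eqref{EqPW} is a $k$-coalgebra isomorphism. It is manifestly a homomorphism of $k$-coalgebras between free $k$-modules, and by Lemma~\ref{LemCosemi} it becomes an isomorphism after $\kc\otimes_k(-)$; injectivity then follows from torsion-freeness over the domain $k$. The content is surjectivity, i.e.\ that the integral matrix coefficients of the $V_\mbq(\hat{\mbl})$ already span $\mcO_\mbq(GL(N,\C))$ over $k$, and not merely after inverting further elements of $\kc$. I would check this first on $\mcO_\mbq(M_N(\C))$, which is $\N$-graded by total degree in the $X_{ij}$ with free $k$-module homogeneous components, the map \eqref{EqPW} being graded. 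In degree $d$ the image of the coefficient map is the span of the matrix coefficients of $(k^N)^{\otimes d}$, which by the $k$-form of quantum Schur--Weyl duality (again available because the relevant $[n]_\mbq$ are units in $k$, so the $q$-Schur algebra $\End_{H_d(\mbq)}((k^N)^{\otimes d})$ is split semisimple over $k$) is all of $\mcO_\mbq(M_N(\C))_d$. Localising at the central non-zero-divisor $\Det_\mbq(X)$ gives the claim for $\mcO_\mbq(GL(N,\C))$. This degree-wise $q$-Schur--Weyl argument is the step I expect to be the main obstacle, since it is exactly where the choice of $k$ is forced.

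For uniqueness, suppose $(V'_\mbq(\hat{\mbl}))_{\mbl\in P^+}$ is another family of free $k$-module right $\mcO_\mbq(GL(N,\C))$-comodules whose scalar extensions are the $\overline{V}_\mbq(\hat{\mbl})$ and for which \eqref{EqPW} is an isomorphism. Via the coaction, the matrix coefficients of $V'_\mbq(\hat{\mbl})$ lie in $\mcO_\mbq(GL(N,\C))$, and inside $\overline{\mcO}_\mbq(GL(N,\C))=\bigoplus_{\mbl}\End(\overline{V}_\mbq(\hat{\mbl}))^{\circ}$ (Lemma~\ref{LemCosemi}) the image of $\End(V'_\mbq(\hat{\mbl}))^{\circ}$ is a finitely generated $k$-submodule of the $\mbl$-th simple summand, spanning it over $\kc$ by cosemisimplicity and irreducibility of $\overline{V}_\mbq(\hat{\mbl})$. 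Since \eqref{EqPW} is assumed to be an isomorphism for the primed family, these images sum to all of $\mcO_\mbq(GL(N,\C))$; as their $\kc$-spans are the distinct simple summands, each image must equal $\mcO_\mbq(GL(N,\C))\cap\End(\overline{V}_\mbq(\hat{\mbl}))^{\circ}$, which by the same reasoning is also the image of the unprimed $\End(V_\mbq(\hat{\mbl}))^{\circ}$. Hence $\End(V'_\mbq(\hat{\mbl}))^{\circ}\cong\End(V_\mbq(\hat{\mbl}))^{\circ}$ as $k$-coalgebras, and since a finite matrix coalgebra over the PID $k$ admits, up to isomorphism, a unique comodule that is free over $k$ of a prescribed rank (Morita equivalence of $M_d(k)$ with $k$), we conclude $V'_\mbq(\hat{\mbl})\cong V_\mbq(\hat{\mbl})$.
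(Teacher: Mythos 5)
Your construction is essentially the paper's: decompose tensor powers of the vector comodule $V=k^N$ via Hecke algebra idempotents, using the split semisimplicity of $H(n)$ over $k$ (which is exactly why the $[n]_\mbq^{-1}$ are adjoined), and twist by powers of $\Det_\mbq(X)$ to reach all of $P^+$. The paper builds the determinant twist in from the start, working with $\nabla_{m,n}=\nabla_d^{\otimes m}\otimes\nabla_V^{\otimes n}$ on $V^{\otimes n}$, rather than twisting at the end, but this is cosmetic. Your injectivity and surjectivity arguments also coincide in substance with the paper's (which simply notes that injectivity passes from $\kc$ and that surjectivity is immediate because the $X_{ij}$ and $\Det_\mbq(X)^{\pm 1}$ are matrix coefficients), though your degree-wise discussion through the $q$-Schur algebra spells out more of what the paper leaves implicit. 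The one substantive difference is the uniqueness argument: the paper observes that in each $V_\mbq(\hat{\mbl})$ the space of $\mcO_\mbq(T(N))$-highest weight vectors is a free rank-one $k$-submodule which determines the $\mcO_\mbq(T(N))$-module, hence the comodule, up to isomorphism. Your argument goes instead through the coalgebra decomposition and a Morita-type observation, but it hypothesises that the competing family $(V'_\mbq(\hat{\mbl}))$ also makes \eqref{EqPW} an isomorphism — an assumption not in the lemma's uniqueness claim. Without it, the step identifying the image of $\End(V'_\mbq(\hat{\mbl}))^{\circ}$ with $\mcO_\mbq(GL(N,\C))\cap\End(\overline{V}_\mbq(\hat{\mbl}))^{\circ}$ only gives an inclusion, so as written your uniqueness statement is weaker than the one claimed.
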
 
We can also endow $V_\mbq(\hat{\mbl})$ with a $U_\mbq(T(N))$-module structure by means of the pairing in Lemma \ref{LemNonDegk}.
\begin{proof}
We can endow $V = k^N$ with its natural right $\mcO_\mbq(GL(N,\C))$-comodule structure by 
\[
\nabla_V(e_i) = \sum_j e_j \otimes X_{ji}.
\]
Let $(k_d,\nabla_d)$ be the one-dimensional comodule $\nabla_d(1) = 1\otimes \Det_\mbq(X)$, and consider for $m\in \Z$ and $n\in \N$ the tensor product comodule $\nabla_{m,n}:= \nabla_d^{\otimes m} \otimes \nabla_V^{\otimes n}$ on $V^{\otimes n}$. Let $H(n)$ be the Hecke algebra of type $A_{n-1}$ over $k$ with generators $\sigma_i$, so 
\[
\sigma_i^2 + (\mbq^{-1} - \mbq) \sigma_i -1 = 0,\quad \sigma_i \sigma_{i+1} \sigma_i = \sigma_{i+1}\sigma_i \sigma_{i+1},\quad \sigma_i\sigma_j = \sigma_j\sigma_i \textrm{ for }|i-j|\geq 2.
\]
Then it is well-known that $H(n)$ is absolutely semisimple over $k$ \cite[Theorem 1.1]{GS97}. We can represent $H(n)$ on $V^{\otimes n}$ by $\sigma_i \mapsto \hat{\mbR}_{i,i+1}$, where $\hat{\mbR} = \Sigma \mbR$ with $\Sigma$ the flip map. Since the $\hat{\mbR}_{i,i+1}$ commute with the comodule structure $\nabla_{m,n}$ on $V^{\otimes n}$, it follows that for each simple idempotent $p\in H(n)$ we obtain a $\mcO_\mbq(GL(N,\C))$-comodule $(V_p,\nabla_p)  \subseteq (V^{\otimes n},\nabla_{m,n})$, with $V_p$ a free $k$-module. By the quantum Brauer-Schur-Weyl duality \cite[Theorem 4.3]{Hay92} we know that $\kc \msktimes V_p$ is a simple $\overline{\mcO}_\mbq(GL(N,\C))$-comodule, and hence of the form $\overline{V}_\mbq(\hat{\mbl})$. If we define $V_\mbq(\hat{\mbl}) := V_p$ and $\nabla_{q,\mbl} = \nabla_p$, then $\nabla_{q,\mbl}$ does not depend on $p$ up to isomorphism, for example since as an $\mcO_\mbq(T(N))$-module the associated space of highest weight vectors (in the obvious sense) is a free one-dimensional $k$-module, completely determining the $\mcO_\mbq(T(N))$-module. Again by the quantum Brauer-Schur-Weyl duality, we obtain in this way $(V_\mbq(\hat{\mbl}),\pi_{\mbl})$ for all $\mbl \in P^+$.  

The map \eqref{EqPW} is obviously surjective since the $X_{ij}$ and $\Det_\mbq(X)^{\pm 1}$ generate $\mcO_\mbq(GL(N,\C))$. It is injective as this is true after extension of scalars to $\kc$ by Lemma \ref{LemCosemi}.
\end{proof}

\begin{Cor}\label{CorExInv}
There exists a unique unit-preserving $k$-linear map
\[
\Phi: \mcO_\mbq(U(N)) \rightarrow k
\]
which is invariant, i.e.~ satisfies \eqref{EqInv}. Moreover, $\Phi$ commutes with $*$ and is positive. 
\end{Cor}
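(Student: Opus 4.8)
The plan is to construct $\Phi$ from the Peter--Weyl type decomposition of Lemma~\ref{LemCosemik}, using that $\mcO_\mbq(U(N))$ and $\mcO_\mbq(GL(N,\C))$ coincide as $k$-coalgebras (they differ only by the $*$-structure), then to get uniqueness and $*$-compatibility by a purely formal argument and positivity by specialising to concrete $q>0$. Under the coalgebra isomorphism $\oplus_{\mbl\in P^+}\End(V_\mbq(\hat{\mbl}))^{\circ}\cong\mcO_\mbq(GL(N,\C))$ of Lemma~\ref{LemCosemik}, the coproduct restricts on each summand to the standard matrix-coalgebra comultiplication, and the summand indexed by the zero weight $\mbl=(0,\dots,0)\in P^+$ is the free rank-one $k$-module spanned by the unit $1$ (the matrix coefficient of the trivial one-dimensional comodule). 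Define $\Phi$ to vanish on $\End(V_\mbq(\hat{\mbl}))^{\circ}$ for $\mbl\neq 0$ and to send the basis element $1$ of the $\mbl=0$ summand to $1\in k$; this is evidently $k$-linear and unit-preserving. For $\mbl\neq 0$ and a matrix unit $a$ in the corresponding block, $(\Phi\otimes\id)\Delta(a)$ again lies in that block while $\Phi(a)1$ lies in the $\mbl=0$ block, so both vanish, and on the $\mbl=0$ block \eqref{EqInv} is immediate; by symmetry of the construction the same holds with $\Phi$ applied to the other leg, so $\Phi$ satisfies \eqref{EqInv}.

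Uniqueness is formal: if $\Phi'$ is another $k$-linear unit-preserving functional satisfying \eqref{EqInv}, then writing $\Delta(a)=\sum a_{(1)}\otimes a_{(2)}$ and applying $\Phi'$ to $(\Phi\otimes\id)\Delta(a)=\Phi(a)1$ and $\Phi$ to $(\id\otimes\Phi')\Delta(a)=\Phi'(a)1$ gives $\Phi(a)=\sum\Phi(a_{(1)})\Phi'(a_{(2)})=\Phi'(a)$. For compatibility with $*$: since $\Delta$ is a $*$-homomorphism and the involution on $k_*$ fixes $\mbq$, the functional $a\mapsto\Phi(a^*)^*$ is again $k$-linear and unit-preserving, and applying $*$ to the two identities in \eqref{EqInv} shows that it too satisfies \eqref{EqInv}; by uniqueness it equals $\Phi$, i.e.\ $\Phi(a^*)=\Phi(a)^*$.

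For positivity, fix $a\in\mcO_\mbq(U(N))$; then $\Phi(a^*a)\in k$ is a rational function of $\mbq$ with no poles on $(0,\infty)$, so it suffices to show $\Phi(a^*a)(q)\geq 0$ for every $q>0$. Specialising at $\mbq=q$, the $*$-structure and coproduct pass to $\mcO_q(U(N))$ and $\Phi$ specialises to a unit-preserving functional $\Phi_q$ obeying the invariance conditions \eqref{EqInv} over $\C$, with $\Phi_q(a_q^*a_q)=\Phi(a^*a)(q)$. By Theorem~\ref{TheoRepTConc} the simple comodules $V_q(\hat{\mbl})$ are unitarisable, so $\mcO_q(U(N))$ is a compact quantum group algebra; its Haar state is a unit-preserving functional obeying \eqref{EqInv}, hence (by the formal uniqueness just proved, now over $\C$) equals $\Phi_q$, and is in particular positive. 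Thus $\Phi(a^*a)(q)\geq 0$ for all $q>0$, which is exactly the positivity of $\Phi$ required for compact type. The only non-formal ingredient is this last step, where the positivity of the Haar state of $\mcO_q(U(N))$ --- equivalently, the unitarisability of its simple comodules --- is imported from compact quantum group theory; everything else is bookkeeping inside the Peter--Weyl decomposition.
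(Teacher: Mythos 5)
Your proof is correct and follows essentially the same route as the paper: existence via the trivial component of the Peter–Weyl decomposition from Lemma~\ref{LemCosemik}, uniqueness by the standard formal Hopf-algebraic argument, $*$-compatibility reduced to uniqueness (the paper uses the averaged functional $a\mapsto\tfrac12(\Phi(a)+\Phi(a^*)^*)$, a cosmetic variant of your $a\mapsto\Phi(a^*)^*$), and positivity by specialising at each $q>0$ and matching $\Phi_q$ with the positive Haar state of the compact quantum group $\mcO_q(U(N))$ via uniqueness over $\C$. No gap.
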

\begin{proof}
Existence follows by taking $\Phi$ to be the component of $\mcO_\mbq(GL(N))$ corresponding to the trivial comodule under \eqref{EqPW}. Uniqueness is clear. The map $\Phi$ must necessarily be $*$-compatible as 
\[
a\mapsto \frac{1}{2}(\Phi(a) + \Phi(a^*)^*)
\]
satisfies the same conditions. Positivity follows since each $\mcO_q(U(N))$ is a Hopf $*$-algebra generated by a unitary corepresentation, and hence has a unique positive invariant functional which must coincide with $\Phi_q$.  
\end{proof}

\subsection{Reflection equation algebra and quantum Cholesky map}

\begin{Def}
We define the RE $k$-algebra $\mcO_{\mbq}^{\ad}(M_N(\C))$ to be the universal $k$-algebra generated by the matrix entries of $Z = \sum_{ij} e_{ij} \otimes Z_{ij}$ with universal relations\footnote{If we were to have used the inverse braiding $(\Sigma \mbR)^{-1} = \Sigma \mbR_{21}^{-1}$, this causes a change $\mbq \leftrightarrow \mbq^{-1}$.}
\[
\mbR_{21}Z_{13}\mbR_{12}Z_{23} = Z_{23}\mbR_{21}Z_{13}\mbR_{12}.
\]
We define the RE $k_*$-algebra $\mcO_{\mbq}(H(N))$ to be $\mcO_{\mbq}^{\ad}(M_N(\C))$ endowed with the $*$-structure $Z^* = Z$. 
\end{Def}

We can view $\mcO_\mbq^{\ad}(M_N(\C))$ as the covariantization of $\mcO_\mbq(M_N(\C))$ with respect to $\mbr$, see \cite{Maj91} and \cite[Example 10.18]{K-S97}; see also \cite[Appendix A]{Mud07b}. In particular, $\mcO_\mbq^{\ad}(M_N(\C))$ is again free over $k$. For $q$ non-zero we have the specialisation $\mcO_q^{\ad}(M_N(\C))$, and for $0<q$ the specialisation $\mcO_q(H(N))$. For $q=1$ these become the algebra of regular functions on $M_N(\C)$ (as a complex affine variety) and the $*$-algebra of regular functions on $H(N)$, the space of hermitian matrices as a real affine variety.

\begin{Lem}
There is a $k_*$-morphism $\chi_X^\mbq: \mcO_\mbq(H(N)) \rightarrow \mcO_\mbq^{\R}(GL(N,\C))$ such that $\chi_X^\mbq(Z) = X^*X$.
\end{Lem}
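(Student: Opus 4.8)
The plan is to invoke the universal property of the RE $k$-algebra. Recall that $\mcO_\mbq(H(N))$ is $\mcO_\mbq^{\ad}(M_N(\C))$ endowed with the $*$-structure $Z^*=Z$, and that $\mcO_\mbq^{\ad}(M_N(\C))$ is the universal $k$-algebra on the entries of a matrix $Z$ subject only to the reflection equation $\mbR_{21}Z_{13}\mbR_{12}Z_{23}=Z_{23}\mbR_{21}Z_{13}\mbR_{12}$. Hence, to produce a $k_*$-morphism $\chi_X^\mbq$ with $\chi_X^\mbq(Z)=W$ for $W:=X^*X\in M_N(\mcO_\mbq^{\R}(GL(N,\C)))$, it suffices to check two things: that $W$ is a self-adjoint matrix, $W^*=W$ (which guarantees that the induced $k$-algebra homomorphism is $*$-preserving, given $Z^*=Z$), and that $W$ satisfies the reflection equation. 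The first point is immediate, since taking matrix-adjoints reverses products: $W^*=(X^*X)^*=X^*(X^*)^*=X^*X=W$.

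It remains to verify the reflection equation for $W$. Using $X^*=Y^{-1}$ (which makes sense because $X$ and $Y$ are invertible as matrices over $\mcO_\mbq^{\R}(GL(N,\C))$) we may rewrite $W=Y^{-1}X$, so that $W_{13}=Y_{13}^{-1}X_{13}$ and $W_{23}=Y_{23}^{-1}X_{23}$, and the identity to be proved becomes
\[
\mbR_{21}\,Y_{13}^{-1}X_{13}\,\mbR_{12}\,Y_{23}^{-1}X_{23} \;=\; Y_{23}^{-1}X_{23}\,\mbR_{21}\,Y_{13}^{-1}X_{13}\,\mbR_{12}
\]
in $M_N(\C)\kstartimes M_N(\C)\kstartimes \mcO_\mbq^{\R}(GL(N,\C))$. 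I would establish this by commuting the factors across one another, using the defining relations of $\mcO_\mbq^{\R}(M_N(\C))$ and their consequences: the FRT relations $\mbR_{12}X_{13}X_{23}=X_{23}X_{13}\mbR_{12}$ and $\mbR_{12}Y_{13}Y_{23}=Y_{23}Y_{13}\mbR_{12}$ (together with their inverted forms, e.g.\ $\mbR_{12}Y_{23}^{-1}Y_{13}^{-1}=Y_{13}^{-1}Y_{23}^{-1}\mbR_{12}$), the interchange relation $\mbR_{12}X_{13}Y_{23}=Y_{23}X_{13}\mbR_{12}$ (together with consequences such as $Y_{23}^{-1}\mbR_{12}X_{13}=X_{13}\mbR_{12}Y_{23}^{-1}$ and the relabelled version $\mbR_{21}X_{23}Y_{13}=Y_{13}X_{23}\mbR_{21}$), and the $R$-matrix identity $\mbR_{12}\mbR_{13}\mbR_{23}=\mbR_{23}\mbR_{13}\mbR_{12}$. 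Conceptually this is nothing but the realisation of $\mcO_\mbq^{\ad}(M_N(\C))$ as the covariantization (transmutation) of $\mcO_\mbq(M_N(\C))$ sitting inside the quantum double $\mcO_\mbq^{\R}(M_N(\C))$ after localisation, cf.\ \cite{Maj91} and \cite[Example 10.18]{K-S97}; see also \cite[Appendix A]{Mud07b}, from which the reflection equation for $X^*X$ may also be quoted directly.

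The only real obstacle is the $R$-matrix bookkeeping. Since the reflection equation is sensitive to the placement of $\mbR_{12}$ versus $\mbR_{21}$ and to replacing $\mbR$ by $\mbR^{-1}$, some care is needed to confirm that $W=X^*X$ satisfies the reflection equation in exactly the normalisation used to define $\mcO_\mbq^{\ad}(M_N(\C))$, rather than the $\mbq\leftrightarrow\mbq^{-1}$ variant flagged in the footnote to that definition; should the computation land on the other convention, one would instead send $Z$ into the RE algebra built from the inverse braiding $(\Sigma\mbR)^{-1}$, or equivalently precompose with the evident $\mbq\leftrightarrow\mbq^{-1}$ isomorphism. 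Beyond this there is no conceptual difficulty.
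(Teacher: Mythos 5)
Your approach coincides with the paper's: both reduce the lemma to the universal property of $\mcO_\mbq^{\ad}(M_N(\C))$ plus a direct verification that $W:=X^*X=Y^{-1}X$ is selfadjoint and satisfies the reflection equation, and the paper likewise dismisses the latter as ``an easy computation''. The one weak point in your write-up is that you hedge on precisely the nontrivial content of the lemma: you leave open whether $W$ satisfies the stated relation $\mbR_{21}Z_{13}\mbR_{12}Z_{23}=Z_{23}\mbR_{21}Z_{13}\mbR_{12}$ or only the $\mbq\leftrightarrow\mbq^{-1}$ variant, and propose to ``fix'' a mismatch by changing the target, which would change the statement being proved. In fact the convention does match, and the check is short enough that you should carry it out rather than hedge. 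Using the interchange relation in the form $X_{13}\mbR_{12}Y_{23}^{-1}=Y_{23}^{-1}\mbR_{12}X_{13}$ and its $(1\leftrightarrow2)$-swapped version $Y_{13}^{-1}\mbR_{21}X_{23}=X_{23}\mbR_{21}Y_{13}^{-1}$, together with $\mbR_{21}Y_{13}^{-1}Y_{23}^{-1}=Y_{23}^{-1}Y_{13}^{-1}\mbR_{21}$ (from inverting the $YY$-relation) and $\mbR_{12}X_{13}X_{23}=X_{23}X_{13}\mbR_{12}$, one gets
\[
\mbR_{21}Y_{13}^{-1}X_{13}\mbR_{12}Y_{23}^{-1}X_{23}
=\mbR_{21}Y_{13}^{-1}Y_{23}^{-1}\mbR_{12}X_{13}X_{23}
=Y_{23}^{-1}Y_{13}^{-1}\mbR_{21}X_{23}X_{13}\mbR_{12}
=Y_{23}^{-1}X_{23}\mbR_{21}Y_{13}^{-1}X_{13}\mbR_{12},
\]
which is exactly the required identity; note that the braid relation for $\mbR$ is not even needed. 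With this computation supplied, your argument is complete and agrees with the paper's.
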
 
\begin{proof}
It follows from an easy computation that $X^*X = Y^{-1}X$ satisfies the defining relation of the reflection equation algebra.
\end{proof}

\begin{Def}
Let $\chi_T^{\mbq}$ be the composition of $\chi_X^{\mbq}$ and $\pi_T$, 
\[
\chi_T^\mbq: \mcO_\mbq(H(N)) \rightarrow \mcO_\mbq(T(N)),\quad Z \mapsto T^*T.
\]
We call $\chi_T^{\mbq}$ the \emph{quantum Cholesky map}.
\end{Def}

Note now that we have available the unitary conjugation of `quantum Hermitian matrices'.

\begin{Lem}\label{LemDefCoad}
There is a $k_*$-coaction 
\[
\Ad_\mbq: \mcO_\mbq(H(N)) \rightarrow \mcO_\mbq(H(N)) \kstartimes \mcO_\mbq(U(N)),\quad Z \mapsto U_{13}^*Z_{12}U_{13}.
\]
\end{Lem}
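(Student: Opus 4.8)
The plan is to construct $\Ad_\mbq$ concretely on generators and then verify (a) that it respects the defining relation of the reflection equation algebra $\mcO_\mbq(H(N))$, (b) that it is $*$-preserving, (c) that it satisfies the two coaction identities, and (d) that it is $k_*$-linear (which is automatic since $k_*$ is central and the formula on generators is $k_*$-linear). The formula $Z \mapsto U_{13}^*Z_{12}U_{13}$ means explicitly $Z_{ij} \mapsto \sum_{k,l} Z_{kl} \otimes U_{ki}^* U_{lj}$; since $\mcO_\mbq(H(N))$ is the universal $k$-algebra on the entries of $Z$ subject to the single matrix relation $\mbR_{21}Z_{13}\mbR_{12}Z_{23} = Z_{23}\mbR_{21}Z_{13}\mbR_{12}$, to get a well-defined $*$-homomorphism it suffices to check that the matrix $W := U_{13}^*Z_{12}U_{13}$ over $\mcO_\mbq(H(N))\kstartimes\mcO_\mbq(U(N))$ satisfies the same reflection equation, i.e.
\[
\mbR_{21}W_{13}\mbR_{12}W_{23} = W_{23}\mbR_{21}W_{13}\mbR_{12}
\]
in $M_N(k)\msktimes M_N(k)\msktimes(\mcO_\mbq(H(N))\kstartimes\mcO_\mbq(U(N)))$, using the labels $1,2$ for the two auxiliary $M_N$-legs, leg $3$ for $\mcO_\mbq(H(N))$ and leg $4$ for $\mcO_\mbq(U(N))$.

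First I would record the two structural facts about $U$ that make the computation work. Since $U$ is a corepresentation matrix for $\mcO_\mbq(U(N))$ coming from the FRT construction, it satisfies the RTT-type relation $\mbR_{12}U_{13}U_{23} = U_{23}U_{13}\mbR_{12}$, and $U$ is unitary in the sense $U^*U = UU^* = I_N$ (here $U^* = (U_{ji}^*)_{ij}$; this uses the $*$-structure of $\mcO_\mbq(U(N))$). From unitarity of $U$ together with the RTT relation one also deduces the "reversed" relation needed to commute $U^*$ past $\mbR$ and past the other legs — concretely, identities of the form $\mbR_{12}U_{13}^*U_{23}^* = U_{23}^*U_{13}^*\mbR_{12}$ obtained by applying $*$ and $\Sigma$ to the RTT relation (using $\mbR^* = \mbR_{21}$), and mixed commutation relations between $U_{i3}$ in one leg and $U_{j3}^*$ in another leg, which follow by combining RTT, its $*$-conjugate, and unitarity. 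These are exactly the identities that express that conjugation $Z\mapsto U^*ZU$ is an algebra coaction on the reflection equation algebra — this is the well-known "covariance of the RE algebra under the quantum $U(N)$" statement.

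The core computation is then: substitute $W_{i3} = U^*_{(i)}Z_{(3)}U_{(i)}$ (with appropriate leg labels) into the left-hand side, and repeatedly use (i) the RTT relation and its conjugate to move the $U$'s and $U^*$'s through the $\mbR$-matrices, (ii) unitarity $U^*U=I$ to cancel adjacent pairs, and (iii) the reflection equation for $Z$ itself once all the $U$-factors have been collected to the outside; then reverse the process on the right-hand side. This is the step I expect to be the main obstacle: it is a somewhat intricate braided-diagram manipulation, and one must be careful about the leg-numbering conventions and the fact that the entries of $Z$ do not commute with each other while the entries of $Z$ in leg $3$ do commute with the entries of $U$ in leg $4$. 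A cleaner way to organize it, which I would use, is to invoke the covariantization/transmutation picture: $\mcO_\mbq(H(N))$ is the braided-covariantized version of $\mcO_\mbq(M_N(\C))$ with respect to the $\mbr$-functional, and $\Ad_\mbq$ is precisely the restriction of the natural (right) adjoint-type coaction $Z\mapsto U^*ZU$ — this is recorded in \cite{Maj91} and \cite[Example 10.18]{K-S97}, so once one identifies $X^*X$ with the RE-generator (as in the preceding lemma) the covariance is inherited.

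Finally, the two coaction axioms are checked on generators. Counitality $(\id\otimes\varepsilon)\Ad_\mbq = \id$ follows from $(\id\otimes\varepsilon)U = I_N$, hence $(\id\otimes\varepsilon)(U_{13}^*Z_{12}U_{13}) = Z_{12}$. Coassociativity $(\Ad_\mbq\otimes\id)\Ad_\mbq = (\id\otimes\Delta)\Ad_\mbq$ follows from the comultiplicativity $(\id\otimes\Delta)U = U_{12}U_{13}$ of the corepresentation $U$ (and the compatibility of $\Delta$ with $*$), which gives on both sides $Z\mapsto U_{14}^*U_{13}^*Z_{12}U_{13}U_{14}$. That $\Ad_\mbq$ is $*$-preserving is clear since $W^* = U_{13}^*Z_{12}^*U_{13} = U_{13}^*Z_{12}U_{13} = W$ using $Z^*=Z$ and the way $*$ acts on the legs. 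Since all these formulas are manifestly defined over $k_*$ and $k_*$ is central, $\Ad_\mbq$ is a $k_*$-morphism, and it lands in $\mcO_\mbq(H(N))\kstartimes\mcO_\mbq(U(N))$ by construction; uniqueness is immediate from the fact that $\mcO_\mbq(H(N))$ is generated by the entries of $Z$.
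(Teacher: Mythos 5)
Your proposal takes essentially the same route as the paper, which disposes of the lemma in a single sentence ("This follows immediately from the universal relations") and, in the text immediately before the lemma, already points to the covariantization picture in \cite{Maj91} and \cite[Example 10.18]{K-S97} that you also cite. You spell out the skeleton of the verification (checking the reflection equation for $U_{13}^*Z_{12}U_{13}$ via the RTT relation for $U$, its $*$-conjugate, and unitarity, then checking $*$-preservation, counitality and coassociativity) without carrying out the braid computation, which is consistent with the level of detail the paper itself gives. One small imprecision: the $*$-conjugate of $\mbR_{12}U_{13}U_{23}=U_{23}U_{13}\mbR_{12}$ together with $\mbR^*=\mbR_{21}$ and a leg swap yields $\mbR_{12}U_{23}^*U_{13}^*=U_{13}^*U_{23}^*\mbR_{12}$, not the relation $\mbR_{12}U_{13}^*U_{23}^*=U_{23}^*U_{13}^*\mbR_{12}$ you display; since the entries of $U$ do not commute, the order matters, but this does not affect the validity of the overall argument, which you correctly describe in qualitative terms.
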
 
\begin{proof}
This follows immediately from the universal relations. 
\end{proof}

By the pairing of $\mcO_\mbq(U(N))$ and $\mcO_\mbq(T(N))^{\cop}$, we obtain a left $\mcO_\mbq(T(N))^{\cop}$-module $k_*$-algebra structure $\rhd$ on $\mcO_\mbq(H(N))$. On the other hand, we can consider on $\mcO_\mbq(T(N))$ the left adjoint $\mcO_\mbq(T(N))^{\cop}$-module $k_*$-algebra structure 
\begin{equation}\label{EqAdjAct}
a \rhd b = a_{(2)}bS^{-1}(a_{(1)}),\qquad a,b\in \mcO_\mbq(T(N)).
\end{equation}

\begin{Lem}\label{LemCholEqui}
The quantum Cholesky map is $\mcO_\mbq(T(N))$-equivariant.
\end{Lem}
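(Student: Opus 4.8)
The plan is to realise both module structures as restrictions of a single coaction living on $\mcO_\mbq^{\R}(GL(N,\C))$. Define $\gamma\colon\mcO_\mbq^{\R}(GL(N,\C))\to\mcO_\mbq^{\R}(GL(N,\C))\kstartimes\mcO_\mbq(U(N))$ on generators by $\gamma(X)=X_{12}U_{13}$ and $\gamma(Y)=Y_{12}U_{13}$. One checks that $\gamma$ is a well-defined unital $*$-homomorphism and a right $\mcO_\mbq(U(N))$-coaction: the FRT relations for $X$ and for $Y$, the interchange relation between $X$ and $Y$, the centrality and invertibility of $\Det_\mbq(X)$ and $\Det_\mbq(Y)$, and the relation $X^*=Y^{-1}$ are all preserved, using only that $X,U$ (resp.\ $Y,U$) live in tensorially commuting copies and that $U$ is a unitary corepresentation — the same manipulations that show $\Det_\mbq(X)$ is grouplike. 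Since then $\gamma(X^*X)=\gamma(X^*)\gamma(X)=U_{13}^{*}(X^*X)_{12}U_{13}$, and $\mcO_\mbq(H(N))$ is generated by the $Z_{ij}$ while $\chi_X^\mbq$ and $\gamma$ are algebra maps, $\chi_X^\mbq$ intertwines $\Ad_\mbq$ with $\gamma$, hence (composing with the pairing $p$) is a morphism of $\mcO_\mbq(T(N))^{\cop}$-modules. As $\chi_T^\mbq=\pi_T\circ\chi_X^\mbq$, the lemma reduces to showing that $\pi_T$ carries the $\mcO_\mbq(T(N))^{\cop}$-action $\rhd_\gamma$ induced by $\gamma$ to the adjoint action \eqref{EqAdjAct} $\rhd_{\mathrm{ad}}$ on $\mcO_\mbq(T(N))$.

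For this last point, note that the set of $w\in\mcO_\mbq^{\R}(GL(N,\C))$ with $\pi_T(a\rhd_\gamma w)=a\rhd_{\mathrm{ad}}\pi_T(w)$ for all $a$ is a subalgebra (both $\rhd_\gamma$ and $\rhd_{\mathrm{ad}}$ being module-algebra structures and $\pi_T$ an algebra map), and likewise the set of $a$ for which this holds on all the $X_{ij}$ is a subalgebra of $\mcO_\mbq(T(N))^{\cop}$; so it suffices to take $w=X_{ij}$ and $a=T^+_{kl},T^-_{kl}$. Unwinding the definitions, $a\rhd_\gamma X_{ij}=\sum_k X_{ik}\,p(a,U_{kj})$, which $\pi_T$ sends to $\sum_k T^+_{ik}\,p(a,U_{kj})$, an explicit combination of the $T^+_{ik}$ with $\mbR^{\pm1}$-entries as coefficients by \eqref{EqPairUT}; on the other side $a\rhd_{\mathrm{ad}}T^+_{ij}=\sum a_{(2)}\,T^+_{ij}\,S^{-1}(a_{(1)})$, which — using $\Delta(T^+)$ and $S(T^+)=(T^+)^{-1}$ inside $\mcO_\mbq^{\R}(GL(N,\C))$ together with $\mbR_{12}T^+_{13}T^-_{23}=T^-_{23}T^+_{13}\mbR_{12}$ and the FRT relation for $T^+$ — can be brought to the same $\mbR$-matrix expression. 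The required equality is then a consequence of the Yang--Baxter equation $\mbR_{12}\mbR_{13}\mbR_{23}=\mbR_{23}\mbR_{13}\mbR_{12}$ and the explicit form of $\mbR$; since everything takes place inside $\mcO_\mbq^{\R}(GL(N,\C))$ (indeed inside $\mcO_\mbq(GL(N,\C))$), no relations of the quotient $\mcO_\mbq(T(N))$ are needed.

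I expect the main obstacle to be purely one of bookkeeping: keeping the conventions for $\mcO_\mbq(T(N))^{\cop}$ straight — which Sweedler factor pairs against which, the appearance of the inverse antipode $S^{-1}$ rather than $S$, and the antipode rule $p(a^*,b)=p(a,S(b)^*)^*$ — so that the two $\mbR$-matrix sides coincide on the nose rather than up to an apparent twist. One could instead verify the identity $\chi_T^\mbq(a\rhd Z_{ij})=a\rhd\chi_T^\mbq(Z_{ij})$ directly on the generators $Z_{ij}$ and $a=T^{\pm}_{kl}$ by the same type of computation, at the cost of working modulo the defining ideal of $\mcO_\mbq(T(N))$; alternatively, one may appeal to the standard identification (cf.\ Lemma \ref{LemIsoUT} and the references in the introduction) of $\mcO_\mbq(H(N))$ with the reflection equation subalgebra of the quantized enveloping algebra and of $\mcO_\mbq(T(N))^{\cop}$ with its Borel part, under which $\chi_T^\mbq$ becomes an intertwiner of adjoint actions essentially by construction.
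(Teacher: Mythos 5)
The first half of your argument is sound: $\gamma(X)=X_{12}U_{13}$, $\gamma(Y)=Y_{12}U_{13}$ does define a right $\mcO_\mbq(U(N))$-coaction on $\mcO_\mbq^{\R}(GL(N,\C))$, and since $\gamma(X^*X)=U_{13}^*(X^*X)_{12}U_{13}$, the map $\chi_X^\mbq$ does intertwine $\Ad_\mbq$ with $\gamma$. The problem is the reduction step: you claim that because the set of $w$ with $\pi_T(a\rhd_\gamma w)=a\rhd_{\mathrm{ad}}\pi_T(w)$ is a subalgebra, it suffices to verify the identity on $w=X_{ij}$. But the $X_{ij}$ do \emph{not} lie in this subalgebra, so the identity you propose to check is actually false. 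Already for $N=1$: with $a=T_1$ and $w=X_{11}$, one has $T_1\rhd_\gamma X_{11}=X_{11}\,p(T_1,U_{11})=\mbq^{-1}X_{11}$, so $\pi_T(T_1\rhd_\gamma X_{11})=\mbq^{-1}T_1$; whereas $T_1\rhd_{\mathrm{ad}}T_1=T_1T_1T_1^{-1}=T_1$. These disagree unless $\mbq=1$. What the lemma needs is equivariance of $\pi_T$ only on the subalgebra $\chi_X^\mbq(\mcO_\mbq(H(N)))$, i.e.\ on the subalgebra generated by the entries of $X^*X$ — and that subalgebra does \emph{not} contain the $X_{ij}$, so there is no valid reduction of the verification to $w=X_{ij}$. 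You would have to check on $w=(X^*X)_{ij}$ directly.

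Your ``one could instead'' fallback — verifying $\chi_T^\mbq(a\rhd Z_{ij})=a\rhd\chi_T^\mbq(Z_{ij})$ directly on the generators $Z_{ij}$, for a generating set of $a$'s — is in fact the approach of the paper: the paper computes, via the pairing, $T_{13}^{-1}\rhd Z_{23}=\mbR_{12}Z_{23}\mbR_{12}^{-1}$, computes $T^{-1}\rhd a=T^{-1}(1\otimes a)T$ from the adjoint action, and then reduces equivariance to the $RTT$-identity $\mbR_{12}T_{23}^*T_{23}\mbR_{12}^{-1}=T_{13}^{-1}T_{23}^*T_{23}T_{13}$. If you want to keep the lift through $\mcO_\mbq^{\R}(GL(N,\C))$ as an organizing idea, you must carry out the comparison on the $(X^*X)_{ij}$ (not the $X_{ij}$), at which point you will land on essentially the same $RTT$ computation; the detour through $\gamma$ then adds little.
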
 
\begin{proof}
Since we are dealing with module $k_*$-algebras, it is sufficient to check equivariance on generators. Now an easy computation using the definition \eqref{EqPairUT} of the pairing shows
\[
T_{13}^{-1} \rhd Z_{23} = \mbR_{12}Z_{23}\mbR_{12}^{-1}.
\]
On the other hand, the definition of the adjoint action gives
\[
T^{-1}\rhd a =T^{-1}(1\otimes a)T,\qquad a \in \mcO_\mbq(T(N)).
\]
The equivariance of the quantum Cholesky map is now equivalent to the identity
\[
\mbR_{12}T_{23}^*T_{23}\mbR_{12}^{-1} =  T_{13}^{-1}T_{23}^*T_{23}T_{13},
\]
which follows straightforwardly from the defining $RTT$-commutation relations of $\mcO_\mbq(T(N))$.  
\end{proof}

\begin{Lem}\label{LemCholInj}
The quantum Cholesky map is injective.
\end{Lem}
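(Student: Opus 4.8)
\textit{Plan.} The strategy is to reduce everything to the classical point $\mbq=1$ by a Hilbert–function comparison. Since $\mcO_\mbq(H(N))=\mcO_\mbq^{\ad}(M_N(\C))$ is free as a $k$-module on the ordered monomials in the generators $Z_{ij}$ (being the $\mbr$-covariantization of $\mcO_\mbq(M_N(\C))$, a flat deformation of $\mcO(M_N(\C))$), it embeds into $\overline{\mcO}_\mbq(H(N)):=\kc\msktimes\mcO_\mbq(H(N))$, and $\chi_T^\mbq$ is the restriction of $\overline{\chi}_T^\mbq:=\id_\kc\otimes\chi_T^\mbq\colon\overline{\mcO}_\mbq(H(N))\to\overline{\mcO}_\mbq(T(N))$; so it suffices to prove $\overline{\chi}_T^\mbq$ injective. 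Filter $\mcO_\mbq(H(N))$ by total degree in the $Z_{ij}$ and set $F_d$ for the span of products of at most $d$ of the $Z_{ij}$. The defining reflection equation relations are homogeneous quadratic, so freeness gives $\dim_\kc\big((\overline{\mcO}_\mbq(H(N)))_{\le d}\big)=\binom{N^2+d}{d}$ for all $d$, and $\overline{\chi}_T^\mbq$ is injective as soon as $\dim_\kc\big(\overline{\chi}_T^\mbq(F_d\otimes\kc)\big)=\binom{N^2+d}{d}$ for every $d$.

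Next I would handle $\mbq=1$. There the reflection equation relations degenerate to commutativity, so $\mcO_1(H(N))$ is the polynomial algebra $\C[Z_{ij}]$ (regular complex-valued functions on the real variety $H(N)\cong\R^{N^2}$), with $\dim_\C\big(\mcO_1(H(N))_{\le d}\big)=\binom{N^2+d}{d}$, and $\chi_T^1$ is the comorphism of the morphism of real affine varieties $\phi\colon T(N)\to H(N)$, $T\mapsto T^*T$. Its differential at $I$ is $A\mapsto A+A^*$ from upper triangular matrices with real diagonal to Hermitian matrices: the $(i,i)$-entry recovers $2A_{ii}\in\R$, the $(i,j)$-entry with $i<j$ recovers $A_{ij}\in\C$, and the remaining entries are determined by hermiticity, so this is a linear isomorphism. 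Hence $\phi$ is a local diffeomorphism near $I$, its image contains a nonempty Euclidean-open subset of $H(N)$, and a complex polynomial vanishing there vanishes identically; therefore $\chi_T^1$ is injective and $\dim_\C\big(\chi_T^1(\mcO_1(H(N)))_{\le d}\big)=\binom{N^2+d}{d}$.

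Finally I would transport this lower bound to $\kc=\C(\mbq)$ by semicontinuity of rank. Fix $d$, list ordered monomials $m_1,\dots,m_r$ ($r=\binom{N^2+d}{d}$) in the $Z_{ij}$ of degree $\le d$, and expand the $\chi_T^\mbq(m_\ell)$ in the PBW $k$-basis of $\mcO_\mbq(T(N))$ of Lemma \ref{LemPBWUq}; this gives a coefficient matrix $M(\mbq)$ over $k$ whose specialisation $M(1)$ has rank $r$ by the previous paragraph. Since $k$ consists of Laurent polynomials in $\mbq$ multiplied by inverses of the $[n]_\mbq$, and $[n]_\mbq$ is nonzero at $\mbq=1$, evaluation at $\mbq=1$ is a well-defined ring homomorphism $k\to\C$; hence some $r\times r$ minor of $M(\mbq)$ is nonzero at $\mbq=1$, thus a nonzero element of $k$, so $M(\mbq)$ has rank $r$ over $\kc$ and the $\overline{\chi}_T^\mbq(m_\ell)$ are $\kc$-linearly independent. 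This forces $\dim_\kc\big(\overline{\chi}_T^\mbq(F_d\otimes\kc)\big)\geq r=\binom{N^2+d}{d}$, hence equality for every $d$, and $\overline{\chi}_T^\mbq$ — and so $\chi_T^\mbq$ — is injective.

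The main obstacle is concentrated in the $\mbq=1$ step: one must know that the free $k$-basis of $\mcO_\mbq^{\ad}(M_N(\C))$ is given by ordered monomials in the $Z_{ij}$ (so that the two filtered dimensions both equal $\binom{N^2+d}{d}$) and one must carry out the elementary but essential computation that the classical map $T\mapsto T^*T$ is submersive at the identity; the passage from $\C$ to $\kc$ is routine. A more self-contained but heavier alternative would bypass the deformation argument by a direct quantum Cholesky/Gauss elimination — showing that after inverting the quantum leading principal minors of $Z$ (whose images under $\chi_T^\mbq$ are, up to units, the $(T_1\cdots T_m)^2$) the map $\overline{\chi}_T^\mbq$ acquires a one-sided inverse — but this requires sustained manipulation of the $RTT$-relations of $\mcO_\mbq(T(N))$.
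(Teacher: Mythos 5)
Your proof is correct, and it takes a genuinely different route from the paper. The paper reduces the problem to a known deep theorem: it rewrites the covariantized quantum Cholesky map as $a\mapsto l^-(S(a_{(1)}))l^+(a_{(2)})$ on $\mcO_\mbq(GL(N,\C))$, extends scalars to $\kc$, and invokes the Joseph--Letzter injectivity theorem \cite[Corollary 8.4]{JL92} for the $SL_N$-part together with a commutative-diagram chase through the decomposition $\overline{\mcO}_\mbq(GL(N,\C))\hookrightarrow\overline{\mcO}_\mbq(SL_N(\C))\mskctimes\kc[D,D^{-1}]$. You instead run a Hilbert-series/specialization argument: use the PBW freeness of $\mcO_\mbq^{\ad}(M_N(\C))$ to control the filtered dimensions, establish injectivity at the classical point $\mbq=1$ by the elementary observation that $T\mapsto T^*T$ is a submersion of $T(N)$ onto $H(N)$ at the identity, and transfer the rank bound to $\kc$ via nonvanishing of a minor under the well-defined evaluation $k\to\C$, $\mbq\mapsto 1$. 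Your approach is more elementary and self-contained (it bypasses Joseph--Letzter entirely) but leans on the PBW statement for the reflection equation algebra, which the paper records without proof (citing the covariantization description); it is also purely existence-of-injectivity and doesn't naturally tie into the Harish--Chandra machinery the paper develops around the same structures. The main thing to be careful about in your writeup — which you do flag — is that the ordered $Z_{ij}$-monomials are indeed a $k$-basis of $\mcO_\mbq^{\ad}(M_N(\C))$, so that both filtered dimensions equal $\binom{N^2+d}{d}$; this follows because covariantization by $\mbr$ twists only the multiplication while preserving the underlying coalgebra grading, and the FRT algebra has the usual PBW basis.
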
 
\begin{proof}
This is again a slight variation on known results \cite[Theorem 3]{Bau98},\cite[Proposition 1.7]{K-S09}. First of all, since $\mcO_\mbq^{\ad}(M_N(\C))$ is the covariantization of $\mcO_\mbq(M_N(\C))$ by $\mbr$, we can view $\chi_T^{\mbq}$ as a map on $\mcO_\mbq(M_N(\C))$. If we write $l^+(a)  = \mbr(a,-)$ and $l^-(a)=\mbr(-,S^{-1}(a))$ for the maps of $\mcO_\mbq(GL(N,\C))$ into the co-opposite of its dual, then under the natural non-degenerate pairing of $\mcO_\mbq(T(N))^{\cop}$ and $\mcO_\mbq(GL(N,\C))$ these turn into the Hopf algebra homomorphisms
\[
l^{\pm}: \mcO_{\mbq}(GL(N,\C)) \rightarrow \mcO_\mbq(T(N)),\qquad (\id\otimes l^{\pm})(X)= T^{\pm}.
\]
The covariantized version of $\chi_T^{\mbq}$ is given by restriction to $\mcO_\mbq(M_N(\C))$ of the map 
\[
I:\mcO_\mbq(GL(N,\C)) \rightarrow \mcO_\mbq(T(N)),\qquad a \mapsto l^-(S(a_{(1)}))l^+(a_{(2)}).
\]
We are to show that $I$ is injective. As the above are free $k$-modules, we may extend scalars from $k$ to $\kc$, and by Lemma \ref{LemIsoUT} view $I$ as a map 
\[
I:\overline{\mcO}_\mbq(GL(N,\C)) \rightarrow U_\mbq(\mfu(N)).
\]
Let now $\overline{\mcO}_\mbq(SL_N(\C))$ be the quotient of $\overline{\mcO}_\mbq(GL(N,\C))$ by $\Det_\mbq(X) = 1$. Then $l^{\pm}$ factorize to Hopf algebra homomorphisms $\overline{\mcO}_\mbq(SL_N(\C)) \rightarrow U_\mbq^{\ext}(\mfsl(N))$, leading to the map 
\[
\widetilde{I}: \overline{\mcO}_\mbq(SL_N(\C)) \rightarrow U_\mbq^{\ext}(\mfsl(N)),\qquad a \mapsto l^-(S(a_{(1)}))l^+(a_{(2)})
\]
which is injective by \cite[Corollary 8.4]{JL92}. On the other hand, we also have projections
\[
\overline{\mcO}_\mbq(GL(N,\C)) \rightarrow \kc[D,D^{-1}],\quad U_{ij} \mapsto \delta_{ij} D^2,\qquad U_\mbq(\mfgl(N)) \rightarrow \kc[D,D^{-1}],\quad T^{\pm}_{ij} \mapsto \delta_{ij}D^{\pm 1}.
\]
It is now clear that the following diagram is commutative, where we omit the obvious projection maps:
\[
\xymatrix{  \overline{\mcO}_\mbq(GL(N,\C)) \ar[d]^{I}\ar[r]^{\Delta\hspace{0.8cm}}  & \overline{\mcO}_\mbq(SL_N(\C)) \mskctimes \kc[D,D^{-1}] \ar[d]^{\widetilde{I}\otimes \id}    \\    U_{\mbq}(\mfu(N)) \ar[r]^{\Delta\hspace{0.9cm}} & U_\mbq^{\ext}(\mfsl(N)) \mskctimes \kc[D,D^{-1}].}
\]
Since the top arrows and the right arrow are injective, also the left arrow is injective.
\end{proof}

We will in the following identify $\mcO_\mbq(H(N))$ as a $k_*$-subalgebra of $\mcO_\mbq(T(N))$. By specialisation at $\mbq = q$, we obtain an embedding $\chi_T^q$ of $\mcO_q(H(N))$ into $\mcO_q(T(N))$. 

We will now `complete' the quantum Cholesky map, following the discussion in \cite[Section 1.1 and Section 1.4]{JW17}.\footnote{By our difference of convention, we will work later on with principal blocks  in stead of the right-lower blocks in \cite{JW17}. One can easily change to the conventions of \cite{JW17} upon changing $\mbq \leftrightarrow \mbq^{-1}$ and $e_{i} \leftrightarrow e_{N+1-i}$. Note however that  \cite[Equation (1.9)]{JW17} differs with a factor from our result in Lemma \ref{LemElB}, but it is not clear which identification map $\mcO_q(T(N)) \cong U_q(\mfgl(N))$ is used precisely in \cite[Section 1.4]{JW17}.}

\begin{Def}
We define $B_k \in \mcO_\mbq(H(N))$ to be the element
\[
B_k =  \sum_{\sigma \in S_k} (-\mbq)^{-l(\sigma)} \mbq^{-e(\sigma)} Z_{k,\sigma(k)}\ldots Z_{1,\sigma(1)}, 
\]
where $S_k$ is the symmetric group of $\{1,\ldots,k\}$, $l(\sigma)$ is the length of $\sigma$, and $e(\sigma)$ is the number of $1\leq i\leq k$ with $\sigma(i)<i$. 
\end{Def}

\begin{Lem}\label{LemElB}
Under the quantum Cholesky map, we have $\chi_{T}^{\mbq}(B_k) = T_{1}^2\ldots T_{k}^{2}$. 
\end{Lem}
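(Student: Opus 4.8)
The plan is to substitute the quantum Gauss (Cholesky) decomposition of $Z$ directly into the definition of $B_k$ and to check that, after expanding, every ``off-diagonal'' contribution cancels against the $\mbq$-commutation relations of $\mcO_\mbq(T(N))$, leaving exactly $T_1^2\cdots T_k^2$.

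Concretely, recall from the construction of the Cholesky map that $\chi_X^\mbq(Z)=X^*X=Y^{-1}X$, so that $\chi_T^\mbq(Z)=\pi_T(Y^{-1}X)=(T^-)^{-1}T^+$ inside $M_N(\mcO_\mbq(T(N)))$. Write $\hat T=(T^-)^{-1}$. Since $T^-$ is lower triangular with diagonal entries $T_i^{-1}$, its inverse $\hat T$ is again lower triangular, with $\hat T_{ii}=T_i$, while $T^+$ is upper triangular with $T^+_{ii}=T_i$. Hence
\[
\chi_T^\mbq(Z_{ij})=\sum_{l\le\min(i,j)}\hat T_{il}\,T^+_{lj},
\]
and only the generators $Z_{ij}$ with $i,j\in\{1,\dots,k\}$ occur in $B_k$. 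Recall also that the diagonal entries $T_1,\dots,T_N$ commute with one another (via Lemma~\ref{LemIsoUT} they correspond to the $K_i$, which generate the commutative subalgebra $U_\mbq(\mfh)$), so the claimed right-hand side $T_1^2\cdots T_k^2$ is unambiguous.

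I would then argue by induction on $k$. The base case $k=1$ is immediate, $\chi_T^\mbq(B_1)=\chi_T^\mbq(Z_{11})=\hat T_{11}T^+_{11}=T_1^2$. For the inductive step one substitutes $\chi_T^\mbq(Z_{i,\sigma(i)})=\sum_{l_i\le\min(i,\sigma(i))}\hat T_{i,l_i}T^+_{l_i,\sigma(i)}$ into
\[
\chi_T^\mbq(B_k)=\sum_{\sigma\in S_k}(-\mbq)^{-l(\sigma)}\mbq^{-e(\sigma)}\,\chi_T^\mbq(Z_{k,\sigma(k)})\cdots\chi_T^\mbq(Z_{1,\sigma(1)}),
\]
and uses the $\mbq$-commutation relations for the $T^\pm_{ij}$ (the $RTT$-relations $\mbR_{12}T^\pm_{13}T^\pm_{23}=T^\pm_{23}T^\pm_{13}\mbR_{12}$ and $\mbR_{12}T^+_{13}T^-_{23}=T^-_{23}T^+_{13}\mbR_{12}$, together with the relations they imply for $\hat T=(T^-)^{-1}$ against $T^+$) to move all factors $\hat T_{\bullet\bullet}$ to the left of all factors $T^+_{\bullet\bullet}$. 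The decisive point is that the weight $(-\mbq)^{-l(\sigma)}\mbq^{-e(\sigma)}$ is tuned precisely so that the $\mbq$-powers produced during this reordering make every term containing at least one off-diagonal generator (equivalently, with $l_i<i$ for some $i$) cancel in pairs; only the term with $\sigma=\mathrm{id}$ and $l_i=i$ for all $i$ survives, contributing $\hat T_{kk}T^+_{kk}\cdots\hat T_{11}T^+_{11}=T_k^2\cdots T_1^2=T_1^2\cdots T_k^2$. Already $k=2$ illustrates the mechanism: $B_2=Z_{22}Z_{11}-\mbq^{-2}Z_{21}Z_{12}$ maps under $\chi_T^\mbq$ to $\hat T_{21}T^+_{12}T_1^2+T_2^2T_1^2-\mbq^{-2}\hat T_{21}T_1^2T^+_{12}$, and the first and third terms cancel because $T^+_{12}T_1^2=\mbq^{-2}T_1^2T^+_{12}$.

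An equivalent, more structural formulation: one identifies $B_k$ with the RE-twisted quantum determinant of the leading principal $k\times k$ submatrix $Z^{[k]}$ of $Z$, observes that $Z^{[k]}=\hat T^{[k]}(T^+)^{[k]}$ because $\hat T$ is lower and $T^+$ upper triangular (so that $(AB)^{[k]}=A^{[k]}B^{[k]}$), and invokes multiplicativity of this twisted determinant along the triangular factorization together with the fact that the quantum determinant of a triangular matrix is the product of its diagonal entries; this yields $(T_1\cdots T_k)(T_1\cdots T_k)=T_1^2\cdots T_k^2$. Either way, \textbf{the main obstacle is the combinatorial bookkeeping of the previous paragraph}: verifying that the weight $(-\mbq)^{-l(\sigma)}\mbq^{-e(\sigma)}$ matches exactly the $\mbq$-powers coming from the $RTT$/Gauss commutation relations, so that all off-diagonal contributions cancel. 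This is a finite but somewhat delicate computation; it is essentially carried out, in the conventions of \cite{JW17}, in the references cited when $B_k$ was introduced, and here one only needs to transport it through the indicated change of conventions ($\mbq\leftrightarrow\mbq^{-1}$, $e_i\leftrightarrow e_{N+1-i}$).
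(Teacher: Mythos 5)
Your primary argument does not actually prove the lemma: you yourself flag the verification that the weight $(-\mbq)^{-l(\sigma)}\mbq^{-e(\sigma)}$ cancels all off-diagonal contributions as ``the main obstacle,'' check it only for $k=2$, and then defer the general case to the literature. Since that cancellation \emph{is} the content of the lemma, what you have is a plan rather than a proof. Your alternative, ``more structural'' formulation also smuggles in two unproved ingredients: the identification of $B_k$ with a twisted quantum determinant of the principal block, and a multiplicativity statement for that twisted determinant along the Gauss factorisation -- the second of which is not an obvious fact in the RE/RTT setting.

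The paper avoids the $q$-bookkeeping altogether, by a route close in spirit to your second formulation but closed off differently. It first reduces to $k=N$ by embedding $\mcO_q(H(K))$ into $\mcO_q(H(N))$ as a principal block, compatibly with $\mcO_\mbq(T(K))\subseteq\mcO_\mbq(T(N))$. It then cites \cite[Corollary 6.4]{JW17} to identify $B_N$ with $\Det_\mbq(X)$ under the linear (covariantisation) isomorphism $\mcO_\mbq^{\ad}(M_N(\C))\cong\mcO_\mbq(M_N(\C))$, using \eqref{EqDet3}; this single citation packages exactly the combinatorial cancellation you were struggling with. Finally, instead of a multiplicativity argument, it uses that the covariantised Cholesky map is $I(a)=l^-(S(a_{(1)}))l^+(a_{(2)})$ from Lemma~\ref{LemCholInj} and that $\Det_\mbq(X)$ is \emph{grouplike}, so $I(\Det_\mbq(X))=l^-(\Det_\mbq(X))^{-1}\,l^+(\Det_\mbq(X))=(T_1\cdots T_N)(T_1\cdots T_N)=T_1^2\cdots T_N^2$, with the two factors read off via \eqref{EqDetQ}. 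If you want to rescue your approach, the missing piece is precisely the identification $B_N\leftrightarrow\Det_\mbq(X)$; once that is imported, arguing via grouplikeness of $\Det_\mbq(X)$ rather than via a multiplicativity claim (which would itself need proof) gives the cleanest finish.
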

\begin{proof}
For $K\leq N$, we can embed $\mcO_q(H(K))$ inside $\mcO_q(H(N))$ by considering the principal $K\times K$-block of $Z$. As this inclusion is compatible with the inclusion $U_q(T(K)) \subseteq U_q(T(N))$ sending $T_{ij}^{\pm}$ to $T_{ij}^{\pm}$, it is sufficient to prove the lemma for $k = N$. However, by \cite[Corollary 6.4]{JW17}, under the vector isomorphism $\mcO_\mbq^{\ad}(M_N(\C)) \cong \mcO_\mbq(M_N(\C))$, we have that $B_N$ corresponds to $\Det_{\mbq}(X)$, using \eqref{EqDet3}. But it is easily seen that in this case the map $I$ from Lemma \ref{LemCholInj} sends $\Det_\mbq(X)$ to $B_N$, using \eqref{EqDetQ}.
\end{proof}

We easily see that 
\[
\left\{\begin{array}{lll} B_{i} Z_{kl} = Z_{kl}B_{i},&& i \notin [\min\{k,l\},\max\{k,l\}),\\
B_{i} Z_{kl} = q^2 Z_{kl} B_{i},&& k\leq i < l ,\\
B_{i}Z_{kl} = q^{-2} Z_{kl} B_{i}, && l\leq i < k.\end{array}\right.
\]
We can hence form the localisation $\mcO_\mbq(H(N))[B_1^{-1},\ldots, B_N^{-1}]$. We can add to this algebra unique self-adjoint invertible elements $A_i$ such that 
\[
\left\{\begin{array}{lll} A_{i} Z_{kl} = Z_{kl}A_{i},&& i \notin [\min\{k,l\},\max\{k,l\}),\\
A_{i} Z_{kl} = q Z_{kl} A_{i},&& k\leq i < l ,\\
A_{i}Z_{kl} = q^{-1} Z_{kl} A_{i}, && l\leq i < k\end{array}\right.
\]
and $A_i^2 = B_i$.

\begin{Prop}\cite[Corollary 1.11]{JW17}\label{PropIdHT} 
For $0<q$ and $q\neq 1$, the natural extension of $\chi_T^q$ by 
\[
\chi_T^q:\mcO_q(H(N))[A_1^{-1},\ldots, A_N^{-1}]\rightarrow \mcO_q(T(N)),\quad A_i \mapsto T_1\ldots T_i
\]
is an isomorphism of $\C$-algebras. 
\end{Prop}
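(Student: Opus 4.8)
The plan is to exhibit $\chi_T^q$ together with an explicit candidate inverse and prove that both are well defined and mutually inverse. Write $D:=\mcO_q(H(N))[A_1^{-1},\ldots,A_N^{-1}]$ for the source algebra and set $A_0:=1$.

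\emph{Step 1: $\chi_T^q$ is well defined.} The algebra $D$ is presented over $\mcO_q(H(N))$ by the relations $A_i^2=B_i$, $A_iA_j=A_jA_i$, $A_i^*=A_i$ and the displayed $\mbq$-commutation relations between the $A_i$ and the $Z_{kl}$. For the proposed images $\chi_T^q(Z)=T^*T$ and $\chi_T^q(A_i)=T_1\cdots T_i$ one has $(T_1\cdots T_i)^2=T_1^2\cdots T_i^2=\chi_T^q(B_i)$ by Lemma~\ref{LemElB} and commutativity of the $T_j$; the elements $T_1\cdots T_i$ are selfadjoint and pairwise commute; and the $\mbq$-commutation of $T_1\cdots T_i$ with the entries of $T^*T$ is the ``square root'' of that of $B_i$ with the $Z_{kl}$, which holds because $\chi_T^q$ is already an algebra homomorphism on $\mcO_q(H(N))$. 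Hence $\chi_T^q$ extends to a $*$-algebra homomorphism $D\to\mcO_q(T(N))$.

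\emph{Step 2: the inverse, via a quantum Gram--Schmidt recursion.} The algebra $\mcO_q(T(N))$ is generated by the $T^+_{ij}$ ($i\le j$), the $T^-_{ij}$ ($i\ge j$) and the $T_i^{-1}$. Define $\psi\colon\mcO_q(T(N))\to D$ on generators as follows: put $\psi(T_i^{-1})=A_i^{-1}A_{i-1}$; writing $Z=T^*T=(T^-)^{-1}T^+$ with $T^+$ upper triangular, $(T^-)^{-1}$ lower triangular, both of diagonal $(T_1,\ldots,T_N)$, the relations $Z_{kl}=\sum_{m\le\min(k,l)}((T^-)^{-1})_{km}(T^+)_{ml}$ can be solved inside $D$ by back-substitution starting from the diagonal $A_{i-1}^{-1}A_i$, yielding elements $\widetilde T^{+}_{kl}$ and a lower triangular $\widetilde M=(\widetilde M_{kl})$ with invertible diagonal; set $\psi(T^+_{kl})=\widetilde T^{+}_{kl}$ and $\psi(T^-_{kl})=(\widetilde M^{-1})_{kl}$ (another triangular back-substitution). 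One then checks that these formulas respect the $RTT$-relations defining $\mcO_q(T(N))$, so $\psi$ is an algebra homomorphism. By construction $\psi\circ\chi_T^q$ fixes the generators $Z_{kl},A_i$ of $D$ (since $Z=\widetilde M\widetilde T^{+}$ by the defining Cholesky equations, and $\psi(T_1\cdots T_i)=(A_0^{-1}A_1)\cdots(A_{i-1}^{-1}A_i)=A_i$), while $\chi_T^q\circ\psi$ fixes the generators of $\mcO_q(T(N))$ by uniqueness of the Cholesky decomposition of $T^*T$ with prescribed diagonal. Hence $\psi=(\chi_T^q)^{-1}$ and $\chi_T^q$ is an isomorphism of $\C$-algebras.

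I expect the verification in Step 2 that the quantum Gram--Schmidt formulas satisfy the reflection equation/$RTT$-relations to be the main obstacle; this is, in essence, the computation carried out in \cite[Section~1.4]{JW17}. A route to injectivity that avoids it is structural: $\chi_T^q$ is injective on $\mcO_q(H(N))$ by Lemma~\ref{LemCholInj}, hence on the Ore localisation $D_0:=\mcO_q(H(N))[B_1^{-1},\ldots,B_N^{-1}]$ (the $B_i$ pairwise commute, as $\chi_T^q$ is injective on $\mcO_q(H(N))$ and their images commute, and they $\mbq$-commute with the $Z_{kl}$); moreover $D=\bigoplus_{\varepsilon\in\{0,1\}^N}D_0\,A_1^{\varepsilon_1}\cdots A_N^{\varepsilon_N}$ is a crossed product of the domain $D_0$ (a localisation of the connected $\N$-graded domain $\mcO_q(H(N))$, so with unit group $\C^\times\times\langle B_1,\ldots,B_N\rangle$) by $(\Z/2)^N$, and since $q\neq 1$ the conjugation action of each $A_1^{\varepsilon_1}\cdots A_N^{\varepsilon_N}$ with $\varepsilon\neq 0$ is $X$-outer --- it scales some $Z_{kl}$ by an odd power of $\mbq$, whereas conjugation by a unit of $D_0$ only produces even powers (the one direction on which $A_N$ acts trivially being handled separately, using that $\chi_T^q(A_N)=T_1\cdots T_N$ is a genuinely new square root inside the domain $\mcO_q(T(N))$). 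Then every nonzero two-sided ideal of $D$ meets $D_0$, so $\ker\chi_T^q=0$; combined with surjectivity (Step 2) this again yields the isomorphism.
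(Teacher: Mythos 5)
Your proof takes a genuinely different route from the paper's. The paper identifies $\mcO_q(T(N))$ with $U_q(\mfu(N))$ via Lemma~\ref{LemIsoUT} and invokes Joseph--Letzter's description of the locally finite part \cite[Theorem 6.4]{JL92} to recognize $\chi_T^q\bigl(\mcO_q(H(N))[B_1^{-1},\ldots,B_N^{-1}]\bigr)$ as the subalgebra generated by $K_i^{\pm 2}$, $E_i$, $F_i\hat K_i$; a PBW argument then shows that adjoining self-adjoint invertible square roots of the $K_i^2$ is unique up to isomorphism and produces $U_q(\mfu)$, giving injectivity and surjectivity simultaneously. Your Step~2 instead reconstructs an explicit inverse by quantum Gram--Schmidt and defers the $RTT$-verification to \cite[Section 1.4]{JW17}; since the statement is itself attributed to \cite[Corollary 1.11]{JW17} this outsourcing is legitimate, but it makes Step~2 essentially a restatement of the cited computation rather than an independent argument. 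What you gain is an isomorphism without passing through the Joseph--Letzter theory; what the paper gains is avoiding the back-substitution once the \cite{JL92} input is accepted.

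Your alternative injectivity route via $X$-outer crossed products has a genuine gap at the $A_N$-direction, and the gap is not merely notational. Because $A_N$ is central in $D$, conjugation by $A_N$ is trivial, hence inner, so the $X$-outerness hypothesis fails for $\varepsilon=(0,\ldots,0,1)$ (and conjugation by $A_1^{\varepsilon_1}\cdots A_N^{\varepsilon_N}$ coincides with conjugation by $A_1^{\varepsilon_1}\cdots A_{N-1}^{\varepsilon_{N-1}}$, so the group action on $D_0$ is not faithful). The standard crossed-product ideal lemma therefore does not apply to the full $(\Z/2)^N$. You flag this, but ``genuinely new square root'' is not a proof: one must show that no nonzero element of $\chi_T^q(D')\cdot T_1\cdots T_N$ lands in $\chi_T^q(D')$ inside $U_q(\mfu)$, and the obvious mod-$2$ $K$-degree gradings do not achieve this since $\chi_T^q(D_0)$ already contains odd-degree elements such as $F_{N-1}\hat K_{N-1}=F_{N-1}K_{N-1}K_N^{-1}$ (and $\chi_T^q(A_i)=(K_1\cdots K_i)^{-1}$ for $i<N$ also carries odd degree when $i$ is odd). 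Closing this essentially amounts to the PBW argument the paper uses. Finally, note that your surjectivity still rests on Step~2, so the alternative route does not by itself avoid the quantum Gram--Schmidt verification.
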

We refrain from formulating a corresponding statement for $q= 1$ since it will not be needed later on, and requires a different proof.
\begin{proof}
For $q\neq 1$, we may identify $\mcO_q(T(N))$ with $U_q(\mfu)$. Applying $E_i\rhd$ or $F_i\rhd$ to $B_i$, we find that the image of $\mcO_q(H(N))[B_1^{-1},\ldots,B_N^{-1}]$ contains all $K_i^{\pm 2}$, $E_i$ and $F_i\hat{K}_i$ and is generated by these elements \cite[Theorem 6.4]{JL92}. Since the latter algebra clearly has  a PBW-decomposition with respect to these generators, there is up to isomorphism a unique way to add self-adjoint invertible roots of the $K_i^2$, the resulting extension being $U_q(\mfu)$. This shows that $\chi_T^q$ is both injective and surjective on $\mcO_q(H(N))[A_1^{-1},\ldots, A_N^{-1}]$.
\end{proof}

Consider now  the unit-preserving invariant $k_*$-linear functional $\Phi: \mcO_\mbq(U(N)) \rightarrow k_*$  from Corollary \ref{CorExInv}, and define $E$ as the associated conditional expectation
\[
E: \mcO_\mbq(H(N)) \rightarrow \mcO_\mbq(H(N)),\quad a\mapsto (\id\otimes \Phi)\Ad_\mbq(a).
\]

\begin{Lem}
We have $E(\mcO_\mbq(H(N))) = \msZ(\mcO_\mbq(H(N)))$, the center of $\mcO_\mbq(H(N))$. 
\end{Lem}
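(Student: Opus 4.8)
The plan is to identify the image of $E$ with the $\Ad_\mbq$-coinvariant subalgebra of $\mcO_\mbq(H(N))$, and then to identify that subalgebra with the centre by pushing the problem, through the quantum Cholesky map, to the adjoint action of $\mcO_\mbq(T(N))^{\cop}$ on itself. First I would note that, since $\Phi$ is unit-preserving and invariant (Corollary~\ref{CorExInv}), the map $E=(\id\otimes\Phi)\Ad_\mbq$ is idempotent with image exactly $\mcO_\mbq(H(N))^{\mcO_\mbq(U(N))}:=\{a\mid\Ad_\mbq(a)=a\otimes 1\}$, by the elementary Sweedler computation already used in Section~2. So it remains to prove $\mcO_\mbq(H(N))^{\mcO_\mbq(U(N))}=\msZ(\mcO_\mbq(H(N)))$.

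For the inclusion ``$\subseteq$'' I would dualise the coaction. By the non-degenerate Hopf pairing of $\mcO_\mbq(T(N))^{\cop}$ with $\mcO_\mbq(U(N))$ (Lemma~\ref{LemNonDegk}), an element $a$ is $\Ad_\mbq$-coinvariant iff it is invariant for the induced left $\mcO_\mbq(T(N))^{\cop}$-action $\rhd$, i.e.\ $b\rhd a=\varepsilon(b)a$ for all $b$ (forward: $\langle b,1\rangle=\varepsilon(b)$; converse: non-degeneracy of the pairing plus linear independence of the legs of $\Ad_\mbq(a)$). By Lemma~\ref{LemCholEqui} the map $\chi_T^\mbq$, injective by Lemma~\ref{LemCholInj}, intertwines $\rhd$ with the left adjoint action $b\rhd c=b_{(2)}cS^{-1}(b_{(1)})$ on $\mcO_\mbq(T(N))$, and (the antipode being bijective) the invariants of that adjoint action are exactly $\msZ(\mcO_\mbq(T(N)))$. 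Identifying $\mcO_\mbq(H(N))$ with its image, this yields the equality $\mcO_\mbq(H(N))^{\mcO_\mbq(U(N))}=\mcO_\mbq(H(N))\cap\msZ(\mcO_\mbq(T(N)))$, which is obviously contained in $\msZ(\mcO_\mbq(H(N)))$.

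For ``$\supseteq$'' I must show that a central element of $\mcO_\mbq(H(N))$ is already central in $\mcO_\mbq(T(N))$. By (the proof of) Proposition~\ref{PropIdHT}, over $k$ the algebra $\mcO_\mbq(T(N))$ is generated by $\mcO_\mbq(H(N))$ together with the self-adjoint units $A_1,\dots,A_N$, with $A_i^2=B_i\in\mcO_\mbq(H(N))$ and $A_iZ_{kl}=\mbq^{c(i;k,l)}Z_{kl}A_i$, $c(i;k,l)=[k\le i]-[l\le i]$. Hence conjugation by $A_i$ restricts to an automorphism $\gamma_i$ of $\mcO_\mbq(H(N))$ scaling $Z_{kl}$ by $\mbq^{c(i;k,l)}$; as these exponents are additive in the degree, $\gamma_i$ acts by the single scalar $\mbq^{\,\mu_1+\dots+\mu_i}$ on the degree-$\mu$ piece of the $\Z^N$-grading in which $Z_{kl}$ has degree $\mbe_k-\mbe_l$ ($\mbe_k$ the standard basis vectors), a grading for which the RE-relations are homogeneous; and the partial-sum functionals $\mu\mapsto\mu_1+\dots+\mu_i$, $i=1,\dots,N-1$, separate the degrees occurring (all lie in $\{\mu:\sum\mu_i=0\}$). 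Now let $z=\sum_\mu z_\mu$ be central in $\mcO_\mbq(H(N))$. Then $z$ commutes with each $B_i=A_i^2$, so $\gamma_i^{2}(z)=z$, forcing $\mbq^{2(\mu_1+\dots+\mu_i)}=1$ for every $\mu$ with $z_\mu\neq0$; since $\mbq$ is transcendental this forces all these partial sums to vanish, so $\mu=0$. Thus $z=z_0$ has degree $0$, hence $\gamma_i(z)=z$, i.e.\ $z$ commutes with every $A_i$; together with $z$ commuting with $\mcO_\mbq(H(N))$ this gives $z\in\msZ(\mcO_\mbq(T(N)))$, so $z\in\mcO_\mbq(H(N))\cap\msZ(\mcO_\mbq(T(N)))=\mcO_\mbq(H(N))^{\mcO_\mbq(U(N))}$. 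Both inclusions together with the identification of $E(\mcO_\mbq(H(N)))$ give the lemma.

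The main obstacle is this last inclusion: one has to promote centrality in the small algebra $\mcO_\mbq(H(N))$ to centrality in $\mcO_\mbq(T(N))$, which is built from it only after inverting the $B_i$ and adjoining the non-central roots $A_i$; the grading and the genericity of $\mbq$ are exactly what is needed here (alternatively one may invoke that $\msZ(\mcO_\mbq(H(N)))$ is spanned by the manifestly degree-$0$ quantum traces of the quantum Cayley--Hamilton identity, cf.\ \cite{JW17}). Beyond this, only routine checks remain: that $E$ is idempotent with image the coinvariants, that $\Ad_\mbq$-coinvariance is equivalent to $\rhd$-invariance via Lemma~\ref{LemNonDegk}, and that the invariants of the adjoint action of a Hopf algebra with bijective antipode on itself form its centre.
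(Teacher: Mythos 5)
Your proof is correct and follows essentially the same route as the paper: identify $E(\mcO_\mbq(H(N)))$ with the $\Ad_\mbq$-coinvariants, dualise through the pairing with $\mcO_\mbq(T(N))^{\cop}$ and the quantum Cholesky map (Lemmas~\ref{LemNonDegk}, \ref{LemCholEqui}, \ref{LemCholInj}) to reduce to the left adjoint action $\rhd$ on $\mcO_\mbq(T(N))$, and use the standard fact that $\rhd$-invariants are exactly the centre. The genuine divergence is in the ``$\supseteq$'' direction. The paper specialises to each $q>0$, $q\neq 1$, cites Proposition~\ref{PropIdHT}, and asserts that centrality of $a_q$ in $\mcO_q(H(N))$ gives adjoint-invariance, leaving implicit why centrality passes to the extension $\mcO_q(T(N))$ obtained by adjoining the non-central roots $A_i$; it then lifts to $k$ by freeness. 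You make that passage explicit via the $\Z^N$-grading $\deg Z_{kl}=\mbe_k-\mbe_l$: a central element commutes with $B_i = A_i^2$, hence is fixed by $\gamma_i^2=\Ad(B_i)$, and transcendence of $\mbq$ forces it into degree $0$, where it also commutes with each $A_i$. This is a worthwhile clarification, and it buys you an argument over $k$ rather than fibrewise. One small caveat: you invoke that $\mcO_\mbq(T(N))$ is generated over $k$ by $\chi_T^\mbq(\mcO_\mbq(H(N)))$ and the $A_i^{\pm 1}$, which the paper only proves after specialising to $q>0$, $q\neq 1$ (Proposition~\ref{PropIdHT}); this is harmless, since you can run the same grading argument fibrewise (any $q>0$, $q\neq 1$ is not a root of unity) and lift back to $k$ by freeness, exactly as the paper does.
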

\begin{proof}
Let $a$ be in the range of $E$. Then $a$ is $\Ad_\mbq$-coinvariant. It follows by Lemma \ref{LemCholEqui} that for each $0<q<1$ we have $\chi_T^q(a_q) \in \mcO_q(T(N))$  invariant for the adjoint action \eqref{EqAdjAct}. It is however well-known and easy to see that this implies $\chi_T^q(a_q)$ is central in $\mcO_q(T(N))$, and a fortiori $a_q$ central in $\mcO_q(H(N))$. As $\mcO_\mbq(H(N))$ is free over $k$, this implies that $a$ itself lies in the center.

Conversely, if $a$ lies in the center of $\mcO_\mbq(H(N))$, each $a_q$ lies in the center of $\mcO_q(H(N))$ by Proposition \ref{PropIdHT}. Since this implies that $a_q$ is invariant with respect to the adjoint action for $q\neq 1$, we obtain $E_q(a_q) = a_q$ for all $q\neq 1$. By freeness, this means $E(a) = a$.
\end{proof}

We can also introduce a Harish-Chandra homomorphism in the $k_*$-setting, cf. \cite[Section 8]{JL92}. We will use the PBW-decomposition from Lemma \ref{LemPBWUq}.

\begin{Def}
Let 
\[
P: \mcO_\mbq(T(N)) \cong \mcO_\mbq(M_N^{<}(\C)) \msktimes \mcO_\mbq(D(N))\msktimes \mcO_\mbq(M_N^{>}(\C)) \rightarrow \mcO_\mbq(D_N),\quad x\otimes y \otimes z \mapsto \varepsilon(x)y\varepsilon(z).
\]
We call the map
\[
\chi_{HC}: \msZ(\mcO_\mbq^{\ad}(M_N(\C))) \rightarrow \mcO_\mbq(D(N)),\quad c \mapsto P(\chi_T^{\mbq}(c))
\]
the \emph{Harish-Chandra map}.
\end{Def}

\begin{Lem}
The map $\chi_{HC}$ is a homomorphism with range in $k[T_1^{\pm 2},\ldots,T_N^{\pm 2}]$.
\end{Lem}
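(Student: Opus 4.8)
The plan is to transport the whole statement, via the quantum Cholesky map and the isomorphism $\overline{\mcO}_\mbq(T(N))^{\cop}\cong U_\mbq(\mfu(N))$ of Lemma~\ref{LemIsoUT}, into the familiar Harish--Chandra homomorphism for $U_\mbq(\mfu(N))$, and then descend from $\kc$ to $k$. Since $\mcO_\mbq^{\ad}(M_N(\C))=\mcO_\mbq(H(N))$ and all algebras in sight are free over $k$, and the PBW decomposition of Lemma~\ref{LemPBWUq} is compatible with extension of scalars, the maps $\chi_T^\mbq$ and $P$ commute with $-\otimes_k\kc$; hence it suffices to prove that $\overline{\chi}_{HC}=\overline{P}\circ\overline{\chi}_T^\mbq$ is an algebra homomorphism $\msZ(\overline{\mcO}_\mbq^{\ad}(M_N(\C)))\to\kc[T_1^{\pm1},\dots,T_N^{\pm1}]$ with range in $\kc[T_1^{\pm2},\dots,T_N^{\pm2}]$: multiplicativity then descends because $\mcO_\mbq(D(N))\hookrightarrow\mcO_\mbq(D(N))\otimes_k\kc$ is injective, and the range statement descends because $k[T_j^{\pm1}]\cap\kc[T_j^{\pm2}]=k[T_j^{\pm2}]$.

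\emph{Multiplicativity.} For $c\in\msZ(\mcO_\mbq(H(N)))$ the preceding lemma (and its proof) shows that $c$ is $\Ad_\mbq$-coinvariant, hence so is $c\otimes1$; by the $\mcO_\mbq(T(N))$-equivariance of the quantum Cholesky map (Lemma~\ref{LemCholEqui}) the element $\overline{\chi}_T^\mbq(c\otimes1)\in U_\mbq(\mfu(N))$ is therefore invariant for the adjoint action \eqref{EqAdjAct}. Now I invoke the classical Harish--Chandra argument, exactly as in \cite[Section~8]{JL92}: an adjoint-invariant element is of weight $0$ (invariance under conjugation by the grouplikes $K_i$), and for weight-$0$ elements $z,z'$ one has, using the triangular decomposition $U_\mbq(\mfu(N))=U_\mbq(\mfn^-)\,U_\mbq(\mfh)\,U_\mbq(\mfn^+)$, that $z-\overline{P}(z)\in U_\mbq(\mfn^-)^+\,U_\mbq(\mfu(N))$ and $z'-\overline{P}(z')\in U_\mbq(\mfu(N))\,U_\mbq(\mfn^+)^+$; since $\overline{P}$ kills both of these two-sided pieces, $\overline{P}(zz')=\overline{P}(z)\overline{P}(z')$. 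I would simply cite \cite[Section~8]{JL92} for this computation.

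\emph{Range.} The key input is that $\overline{\chi}_T^\mbq(\overline{\mcO}_\mbq(H(N)))$ lies in the $\kc$-subalgebra $\mcU'\subseteq U_\mbq(\mfu(N))$ generated by $K_j^{\pm2}$ ($1\le j\le N$) and $E_i,\ F_i\hat K_i$ ($1\le i\le N-1$): this is established, in the course of the proof of Proposition~\ref{PropIdHT}, from \cite[Theorem~6.4]{JL92}. By \cite[Theorem~6.4]{JL92} the algebra $\mcU'$ has a PBW basis consisting of monomials $f\cdot K^{2\mathbf m}\cdot e$, where $f$ is an ordered product of the twisted negative root vectors $F_\alpha\hat K_\alpha$, $K^{2\mathbf m}$ a Laurent monomial in the $K_j^2$, and $e$ an ordered product of positive root vectors $E_\alpha$. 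One then checks that, under Lemma~\ref{LemIsoUT}, $P$ corresponds to the standard Harish--Chandra projection of $U_\mbq(\mfu(N))$: writing each $F_\alpha\hat K_\alpha$ and $E_\alpha$ in the form $A_\alpha K^{\nu_\alpha}$ resp.\ $K^{\kappa_\alpha}B_\alpha$ with $A_\alpha\in\mcO_\mbq(M_N^{<}(\C))$, $B_\alpha\in\mcO_\mbq(M_N^{>}(\C))$ and commuting the grouplikes to the middle, a PBW monomial $f\cdot K^{2\mathbf m}\cdot e$ of $\mcU'$ maps to a scalar times $a_{<}\,K^{\mu}\,a_{>}$ with $a_{<}\in\mcO_\mbq(M_N^{<}(\C))$, $a_{>}\in\mcO_\mbq(M_N^{>}(\C))$; applying $\varepsilon$ to the outer factors gives $0$ unless $f=e=1$, in which case $\mu=2\mathbf m$. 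Hence $\overline{P}(\mcU')\subseteq\kc[K_1^{\pm2},\dots,K_N^{\pm2}]$, and since $K_j=T_j^{-1}$ this says $\overline{\chi}_{HC}(c\otimes1)\in\kc[T_1^{\pm2},\dots,T_N^{\pm2}]$. Descending to $k$ as above completes the proof.

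I expect the main obstacle to be the bookkeeping in identifying $P$ with the Harish--Chandra projection of $U_\mbq(\mfu(N))$, i.e.\ matching the PBW decomposition of $\mcO_\mbq(T(N))$ from Lemma~\ref{LemPBWUq} with the triangular decomposition $U_\mbq(\mfn^-)\,U_\mbq(\mfh)\,U_\mbq(\mfn^+)$ under the co-opposite isomorphism of Lemma~\ref{LemIsoUT} and keeping track of the grouplike twists $\hat K_i$ absorbed into the generators $F_i\hat K_i,\ E_i$ (the point being that these twists contribute only Cartan factors $K^\mu$, which are harmless to $\varepsilon$ and vanish precisely on the PBW monomials that $P$ retains). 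A secondary, routine point is the persistence over $k$ (not just at numerical $q$) of the inclusion $\chi_T^\mbq(\mcO_\mbq(H(N)))\subseteq\mcU'$ and of the PBW basis of $\mcU'$, which follows from freeness of the relevant $k$-modules exactly as in the proofs of Lemmas~\ref{LemCosemik} and \ref{LemPBWUq}.
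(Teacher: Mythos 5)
Your proof is correct and follows the same essential route as the paper's: multiplicativity comes from the fact that $\chi_T^{\mbq}$ sends the center into weight-zero (equivalently, $\mcO_\mbq(D(N))$-commuting) elements, on which the Harish--Chandra projection is a homomorphism, and the range statement is extracted from the PBW description of the subalgebra generated by $K_j^{\pm 2}$, $E_i$, $F_i\hat K_i$ used in the proof of Proposition~\ref{PropIdHT}, followed by descent to $k$ via freeness. The only cosmetic difference is that you descend from the generic fiber $\kc$ while the paper descends from the numerical specializations at $q>0$, $q\neq 1$; both are the same freeness argument, and your more explicit unpacking of the HC bookkeeping is a reasonable expansion of what the paper compresses into the single phrase ``the homomorphism property \dots is clear, as this holds already for the relative commutant of $\mcO_\mbq(D(N))$.''
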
 
\begin{proof}
The homomorphism property of $\chi_{HC}$ is clear, as this holds already for the relative commutant of $\mcO_\mbq(D(N))$. Since for each $q>0$ and $q\neq 1$ the range lies in $\C[T_1^{\pm 2},\ldots,T_N^{\pm 2}]$ by the proof of Proposition \ref{PropIdHT}, the same holds over $k$ by freeness. 
\end{proof}

We want to compute the composition 
\[
E_{HC}: k[B_1,\ldots,B_N] \rightarrow k[T_1^{\pm2},\ldots,T_N^{\pm 2}],\quad x \mapsto \chi_{HC}(E(x)).
\]

For this, we simply repeat the argument in \cite[Theorem 8.6]{JL92}, paying a little more detail to coefficients. For $\mbn \in \N^{N}$, let us write 
\[\check{n}_i = \sum_{j=i}^{N} n_j,
\]
so $\check{\mbn} \in P^+$. Further write for $\mbl \in \widetilde{P}^+$ and $\nu \in \widetilde{P}$
\[
d_{\mbl,\nu} = \dim(V(\mbl)_{\nu})
\]
for the $\nu$-weight space of the $\mfgl(N)$-module $V(\mbl)$ with highest weight $\mbl$. Then $d_{\mbl,\nu}$ is also the dimension of the $K$-weight space at $\nu$ of $V_q(\mbl)$. Finally, write 
\[
d_{\mbl}(\mbq) = \sum_{\nu \in \widetilde{P}}  \mbq^{-2\sum_{i=1}^N (N-i+1)\nu_i} d_{\mbl,\nu},
\]
which we call the \emph{quantum dimension} of $V_q(\mbl)$. 

\begin{Prop}\label{PropHC}
Let $\mbl \in \widetilde{P}^+$ be a positive weakly integral and dominant $K$-weight. Then 
\[
E_{HC}(B_1^{n_1}\ldots B_N^{n_N}) = \frac{\mbq^{-2 \sum_i \check{n}_i}}{d_{\check{\mbn}}(\mbq)} \sum_{\nu \in \Z^N} d_{\check{\mbn},\nu} \prod_{i=1}^{N} (\mbq^{-(N-i)} T_i)^{2\nu_i}.
\]
\end{Prop}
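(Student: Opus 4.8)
The plan is to reduce to a concrete specialisation $\mbq=q$ and then transport the computation into $U_q(\mfgl_N(\C))$, where it becomes the quantum Harish--Chandra computation of Joseph--Letzter \cite[Theorem 8.6]{JL92}. First I would observe that both sides of the asserted identity lie in $k[T_1^{\pm 2},\ldots,T_N^{\pm 2}]$: for the left side this is the preceding lemma, and for the right side it follows after noting that $\sum_i\nu_i=\sum_i\check{n}_i$ for every weight $\nu$ of $V(\check{\mbn})$, whence
\[
d_{\check{\mbn}}(\mbq)=\mbq^{-2\sum_i\check{n}_i}\sum_{\nu}d_{\check{\mbn},\nu}\,\mbq^{-2\sum_i(N-i)\nu_i},
\]
so that the prefactor $\mbq^{-2\sum_i\check{n}_i}/d_{\check{\mbn}}(\mbq)$ cancels the denominator. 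Since $k[T_1^{\pm 2},\ldots,T_N^{\pm 2}]$ is free over $k\subseteq\C(\mbq)$ and the coefficients on both sides are rational functions of $\mbq$, it will suffice to verify the identity after specialising $\mbq\mapsto q$ for all $q\in(0,1)$ (for such $q$, $d_{\check{\mbn}}(q)$ is a finite sum of positive reals, hence nonzero).

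Next, for a fixed $q\in(0,1)$, I would transport everything to $U_q(\mfgl_N(\C))$ using the embedding $\chi_T^q$ (Lemma \ref{LemCholInj}), the isomorphism $\mcO_q(T(N))\cong U_q(\mfu(N))^{\cop}$ (Lemma \ref{LemIsoUT}), and the identification, in the proof of Proposition \ref{PropIdHT}, of the image of $\mcO_q(H(N))[B_1^{-1},\ldots,B_N^{-1}]$ with the subalgebra of $U_q(\mfgl_N(\C))$ generated by the $K_i^{\pm 2}$, $E_i$ and $F_i\hat{K}_i$. The dictionary is: by Lemma \ref{LemElB} and the correspondence $T_i\leftrightarrow K_i^{-1}$, the element $B_1^{n_1}\cdots B_N^{n_N}$ corresponds to the Cartan element $\prod_i K_i^{-2\check{n}_i}$, which has $\ad$-weight zero; by Lemma \ref{LemCholEqui} and the defining property of the invariant functional $\Phi$ (Corollary \ref{CorExInv}), the conditional expectation $E=(\id\otimes\Phi)\Ad_\mbq$ corresponds to the $\ad$-invariant averaging $\pi_{\ad}$, i.e.\ the projection of the locally finite part onto the centre; and by Lemmas \ref{LemIsoUT}--\ref{LemPBWUq} the map $\chi_{HC}=P\circ\chi_T^{\mbq}$ corresponds, on the centre, to the (unshifted) Harish--Chandra projection $\zeta$ onto the Cartan part $U_q(\mfh)$ of the triangular decomposition. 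Thus $\chi_{HC}(E(B_1^{n_1}\cdots B_N^{n_N}))$ becomes $\zeta\bigl(\pi_{\ad}(\prod_i K_i^{-2\check{n}_i})\bigr)$.

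The core step then reproduces \cite[Theorem 8.6]{JL92}. The finite-dimensional $\ad$-submodule $(\ad U_q)(\prod_i K_i^{-2\check{n}_i})$ is isomorphic to $\End(V_q(\check{\mbn}))$, whose $\ad$-invariants are one-dimensional and spanned by the central element $z_{\check{\mbn}}$ associated with $V_q(\check{\mbn})$; hence $\pi_{\ad}(\prod_i K_i^{-2\check{n}_i})=c_{\mbn}\,z_{\check{\mbn}}$ for some scalar $c_{\mbn}$, and the Harish--Chandra image of $z_{\check{\mbn}}$ is the $\rho$-shifted formal character $\sum_{\nu}d_{\check{\mbn},\nu}\prod_i(\mbq^{-(N-i)}T_i)^{2\nu_i}$ of $V_q(\check{\mbn})$ --- here $(N-1,N-2,\ldots,0)$ is the half-sum of positive roots of $\mfgl(N)$ in the present conventions, producing the shift $T_i\rightsquigarrow\mbq^{-(N-i)}T_i$, and the discrepancy with the exponent $(N-i+1)$ in $d_{\check{\mbn}}(\mbq)$ is the constant weight-shift noted above. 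I would then fix $c_{\mbn}$ by evaluating at $T_i=1$, i.e.\ applying the counit: since $\varepsilon(Z_{ij})=\delta_{ij}$ forces $\varepsilon(B_k)=1$, and since $\chi_T^{\mbq}$, $P$ and $(\id\otimes\Phi)\Ad_\mbq$ all intertwine counits (for the last, $(\varepsilon\otimes\id)\Ad_\mbq(a)=\varepsilon(a)1$ by unitarity of $U$, whence $\varepsilon\circ E=\varepsilon$ on the subalgebra at hand), one obtains $\chi_{HC}(E(B_1^{n_1}\cdots B_N^{n_N}))|_{T=1}=\varepsilon(B_1^{n_1}\cdots B_N^{n_N})=1$; comparing with the value at $T_i=1$ of the proposed right-hand side, which equals $1$ precisely by the displayed identity for $d_{\check{\mbn}}(\mbq)$, gives $c_{\mbn}=\mbq^{-2\sum_i\check{n}_i}/d_{\check{\mbn}}(\mbq)$, and the formula follows.

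The hard part will be purely the bookkeeping of the convention-dependent constants: reconciling the normalisation of $z_{\check{\mbn}}$ implicit in the Haar averaging $E$ with that of Joseph--Letzter, placing the $\rho$-shift so that exactly $(N-i)$ --- and not $(N-i\pm 1)$ --- appears, and checking that passing through Lemma \ref{LemIsoUT}, the co-opposite, and the projection $P$ introduces no spurious power of $\mbq$; the footnote on \cite[Equation (1.9)]{JW17} already signals how delicate these factors are. Everything else --- the freeness reduction, the identification of $E$ with the central projection, and the counit normalisation --- should be routine given the results already established.
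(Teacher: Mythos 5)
Your proposal follows the same overall route as the paper — reduce to a fixed fibre $q\in(0,1)$ by freeness, transport via the Cholesky map and Lemma \ref{LemIsoUT} into $U_q(\mfgl_N(\C))$, and rest the computation on the Joseph--Letzter adjoint-module machinery — but the method of pinning down the scalar is genuinely different. The paper identifies the unique normalised $\rhd$-invariant functional on $\End(V_q(\mbl))$ with the quantum trace $\Tr(\pi(B)-)/\Tr(\pi(B))$, where $B=B_1\cdots B_N$ implements $S^2$, then computes $\omega_\pi(B_{\mbn})$ for \emph{every} highest weight $\mbl$ directly from the Weyl character formula and reads off $\chi_{HC}(E(B_{\mbn}))$ as the resulting central-character Laurent polynomial. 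You instead observe that $E(B_{\mbn})$ must be a scalar multiple $c_{\mbn}\,z_{\check{\mbn}}$ of the Joseph--Letzter central element (since the $\ad$-invariants of $(\ad U_q)(\prod_i K_i^{-2\check{n}_i})\cong\End(V_q(\check{\mbn}))$ are one-dimensional), import the Harish--Chandra image of $z_{\check{\mbn}}$ from \cite[Theorem 8.6]{JL92}, and fix $c_{\mbn}$ by evaluating at $T_i=1$; your verification that $(\varepsilon\otimes\id)\Ad_\mbq=\varepsilon(\cdot)1$ and hence $\varepsilon\circ E=\varepsilon$ is correct, and your check that the displayed prefactor is exactly what makes the right-hand side evaluate to $1$ at $T=1$ is right (using $\sum_i\nu_i=\sum_i\check{n}_i$ on all weights). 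This normalisation trick is shorter and elegantly avoids the Weyl-character manipulation, but it shifts the burden onto trusting the precise $\rho$-shift $(N-i)$ in \cite{JL92} under the present conventions — which is exactly what the paper's direct Weyl-character computation is designed to sidestep; the footnote warning about \cite[Equation (1.9)]{JW17} is evidence that this trust is nontrivial. Both arguments are sound; yours is the more economical modulo the convention bookkeeping you correctly flag as the hard part, while the paper's is more self-contained.
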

Note that it is not clear a priori that $d_{\check{\mbn}}(\mbq)^{-1}$ lies in $k$, but this will be a consequence of the proof.
\begin{proof}
By freeness over $k$, it is sufficient to verify this formula for fixed $q>0$ with $q\neq 1$. 

Let $U_{\pi}$ be the unitary corepresentation of $\mcO_q(U(N))$ associated to $\pi = \pi_{\mbl}$, so if $e_i$ is an orthonormal basis of $V_q(\mbl)$ then $\nabla_{q,\mbl}(e_i) = \sum_j e_j\otimes U_{\pi,ji}$. Consider $\End(V_q(\mbl))$ with the right $\mcO_q(U(N))$-coaction
\[
\Ad_{\pi}: \End(V_q(\mbl))\rightarrow \End(V_q(\mbl))\otimes \mcO_q(U(N)),\quad x\mapsto U_\pi(x\otimes 1)U_\pi^*.
\]
Then the $\mcO_q(T(N))$-module dual to $\Ad_{\pi}$ is easily seen to be given by 
\[
a \rhd x = \pi(a_{(2)})x\pi(S^{-1}(a_{(1)})).
\]
It follows that 
\[
(\pi\otimes \id)(\Ad_q(a)) = \Ad_\pi(\pi(a)),\qquad a\in \mcO_q(T(N)).
\]
As $x\in \End(V_q(\mbl))$ is invariant under $\rhd$ if and only if $x$ commutes with the $\pi(a)$ for $a\in \mcO_q(T(N))$, it follows from irreducibility of $V_q(\mbl)$ that we obtain a functional
\[
\omega_\pi: \End(V_q(\mbl))\rightarrow \C,\quad (\id\otimes \Phi_q)\Ad_\pi(x) = \omega_\pi(x)\id.
\]
Since  
\[
(\omega_\pi\otimes \id)\Ad_\pi(x) = \omega_{\pi}(x)1,
\]
it follows that $\omega_\pi$ is invariant under the right $\mcO_q(T(N))$-module structure on $\End(V_q(\mbl))^*$ dual to $\rhd$. But as the space of $\rhd$-invariants is one-dimensional, the same holds for the dual module structure, hence $\omega_{\pi}$ is the unique $\mcO_q(T(N))$-invariant functional satisfying $\omega_\pi(1) = 1$. But it is easily seen from the isomorphism $\mcO_q(T(N)) \cong U_q(\mfu(N))$ that the antipode $S^2$ of $\mcO_q(T(N))$ is implemented by $B =B_1\ldots B_N$, 
\[
S^2(a) = B^{-1}aB,\qquad a\in \mcO_q(T(N)).
\]
Hence $\Tr(\pi(B)-)$ is $\rhd$-invariant, and $\frac{\Tr(\pi(B)-)}{\Tr(\pi(B))}$ is the unique $\rhd$-invariant unit-preserving functional.

Consider now $B_{\mbn} = B_1^{n_1}\ldots B_N^{n_N}$. For $z\in \msZ(\mcO_q(T(N))$, let $z(\mbl)$ be the value in $V_q(\mbl)$. 
Then it follows from the above that
\[
E(B_{\mbn})(\mbl) = \omega_{\pi}(E(B_{\mbn})) = \omega_{\pi}(B_{\mbn}).
\]
However, from the Weyl character formula 
\[
\sum_{\nu \in \widetilde{P}} e^{\langle \xi,\nu\rangle} d_{\mbl,\nu} = \frac{\det(e^{\xi_i(\lambda_j + N-j)})}{\det(e^{\xi_i(N-j)})} 
\]
it follows straightforwardly by symmetry, rearranging factors and bringing out appropriate powers, that
\[
 \omega_{\pi}(B_{\mbn}) = \frac{\mbq^{-2\sum_i \check{n}_i}}{d_{\check{\mbn}}(\mbq)} \sum_{\nu \in \Z^N} d_{\check{\mbn},\nu} \prod_{i=1}^{N} (\mbq^{-(\lambda_i + N-i)})^{2\nu_i}.
\]
Since also 
\[
\chi_{HC}(E(B_{\mbn})) \xi_{\hat{\mbl}} = E(B_{\mbn})\xi_{\hat{\mbl}} = E(B_{\mbn})(\mbl)\xi_{\hat{\mbl}},
\]
the proposition follows. 
\end{proof}
\begin{Cor}\label{CorHC}
For $1\leq k\leq N$ 
\begin{equation}\label{EqHCB}
E_{HC}(B_k) = \mbq^{-(N+1)k}\binom{N}{k}_{\mbq}^{-1} e_k(\mbq^2 T_{1}^2,\mbq^4T_2^2,\ldots,\mbq^{2N}T_N^2),
\end{equation}
where $e_k$ is  the $k$th elementary symmetric polynomial $e_k(x_1,\ldots,x_N) = \sum_{1\leq i_1<\ldots<i_k\leq N}x_{i_1}\ldots x_{i_k}$.
\end{Cor}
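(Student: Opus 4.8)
The plan is to derive Corollary \ref{CorHC} directly from Proposition \ref{PropHC} by specializing the latter to $\mbn = \mbe_k$, i.e.\ $n_j = \delta_{jk}$, so that $B_1^{n_1}\cdots B_N^{n_N} = B_k$. The first step is purely combinatorial: for this $\mbn$ one has $\check n_i = \sum_{j\ge i} n_j = 1$ when $i\le k$ and $\check n_i = 0$ when $i>k$, so $\check{\mbn} = (1,\dots,1,0,\dots,0)\in\widetilde{P}^+$ with $k$ ones, $\sum_i\check n_i = k$, and the prefactor $\mbq^{-2\sum_i\check n_i}$ in Proposition \ref{PropHC} equals $\mbq^{-2k}$. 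Moreover the $\mfgl(N)$-module $V(\check{\mbn})$ is the exterior power $\Lambda^k(\C^N)$, whose weights are precisely the $\nu = \mbe_{i_1}+\dots+\mbe_{i_k}$ with $1\le i_1<\dots<i_k\le N$, each occurring with multiplicity $d_{\check{\mbn},\nu} = 1$.

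Next I would evaluate the two remaining ingredients of the formula. For the numerator sum, since each such $\nu$ has $\nu_i\in\{0,1\}$ one gets $\prod_{i=1}^N(\mbq^{-(N-i)}T_i)^{2\nu_i} = \prod_{j=1}^k\mbq^{-2(N-i_j)}T_{i_j}^2 = \mbq^{-2Nk}\prod_{j=1}^k\mbq^{2i_j}T_{i_j}^2$, and summing over all increasing $k$-tuples, using that the $T_i^2$ commute pairwise (the diagonal subalgebra $\mcO_\mbq(D(N))$ is commutative), yields $\mbq^{-2Nk}\,e_k(\mbq^2T_1^2,\dots,\mbq^{2N}T_N^2)$. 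For the denominator I would compute the quantum dimension by hand: $d_{\check{\mbn}}(\mbq) = \sum_{1\le i_1<\dots<i_k\le N}\mbq^{-2\sum_{j}(N+1-i_j)}$, and the substitution $m_j = N+1-i_j$ turns this into $\sum_{1\le m_1<\dots<m_k\le N}\mbq^{-2(m_1+\dots+m_k)}$. Writing $m_j = j + p_j$ with $0\le p_1\le\dots\le p_k\le N-k$ identifies this with $\mbq^{-2\binom{k+1}{2}}\binom{N}{k}_{\mbq^{-2}}^{\mathrm{ord}}$, where $\binom{N}{k}_t^{\mathrm{ord}} = \prod_{j=1}^k\frac{1-t^{N-k+j}}{1-t^j}$ is the ordinary Gaussian binomial; pulling half-powers out of each factor turns $\binom{N}{k}_{\mbq^{-2}}^{\mathrm{ord}}$ into $\mbq^{-k(N-k)}\binom{N}{k}_\mbq$ with $\binom{N}{k}_\mbq = [N]_\mbq!\,/([k]_\mbq!\,[N-k]_\mbq!)$ the balanced version appearing in the statement. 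Collecting exponents, $-2\binom{k+1}{2}-k(N-k) = -k(N+1)$, so $d_{\check{\mbn}}(\mbq) = \mbq^{-k(N+1)}\binom{N}{k}_\mbq$; in particular $d_{\check{\mbn}}(\mbq)^{-1}\in k$, as was noted before the statement.

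Finally I would substitute into Proposition \ref{PropHC} and simplify:
\[
E_{HC}(B_k) = \frac{\mbq^{-2k}}{\mbq^{-k(N+1)}\binom{N}{k}_\mbq}\,\mbq^{-2Nk}\,e_k(\mbq^2T_1^2,\dots,\mbq^{2N}T_N^2),
\]
and since $-2k + k(N+1) - 2Nk = -k(N+1)$, this is exactly \eqref{EqHCB}. I do not expect any conceptual difficulty here; the only delicate point is the $\mbq$-power bookkeeping in the identification $d_{\check{\mbn}}(\mbq) = \mbq^{-k(N+1)}\binom{N}{k}_\mbq$, where one must be careful to use the balanced $q$-integers $[n]_\mbq$ of the paper rather than ordinary ones. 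Alternatively one could simply observe that $d_{\check{\mbn}}(\mbq)$ is the quantum dimension of $\Lambda^k$ of the vector representation and quote its standard value, but the direct computation is short and keeps the normalization transparent.
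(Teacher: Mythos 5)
Your proposal is correct and follows exactly the route the paper intends: the corollary is stated without proof immediately after Proposition \ref{PropHC}, and the expected argument is precisely the specialization to $\mbn = \mbe_k$, identification of $V(\check{\mbn})$ with $\Lambda^k(\C^N)$, and the $\mbq$-power bookkeeping for the quantum dimension $d_{\check{\mbn}}(\mbq) = \mbq^{-k(N+1)}\binom{N}{k}_\mbq$, all of which you carry out correctly.
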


\subsection{Quantum $QR$-decomposition}

The following is a quantum analogue of the QR-decomposition (or $KAN$-decomposition) of $GL(N,\C)$. 

\begin{Lem}\label{LemMorQR}
The $k_*$-morphism 
\[
\pi_{UT}^\mbq: \mcO_\mbq^{\R}(GL(N,\C)) \rightarrow \mcO_\mbq(U(N))\kstartimes \mcO_\mbq(T(N)),\quad x\mapsto (\pi_U\otimes \pi_T)\Delta(x)
\]
is injective.
\end{Lem}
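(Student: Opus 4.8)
The plan has three parts. \emph{First}, I would reduce to the field of fractions $\kc=\C(\mbq)$ of $k$. Both $\mcO_\mbq^{\R}(GL(N,\C))$ and $\mcO_\mbq(U(N))\kstartimes\mcO_\mbq(T(N))$ are free $k$-modules, hence torsion-free, hence embed into their extensions of scalars to $\kc$; so it suffices to prove that the $\kc$-linear extension
\[
\overline{\pi}_{UT}^{\mbq}\colon \overline{\mcO}_\mbq^{\R}(GL(N,\C)) \longrightarrow \overline{\mcO}_\mbq(U(N))\mskctimes\overline{\mcO}_\mbq(T(N))
\]
is injective.

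\emph{Second}, I would use the realification structure. By the isomorphism \eqref{EqMultMN} and its extension to $GL(N,\C)$, multiplication gives a $\kc$-linear isomorphism $\mcO^X\mskctimes\mcO^Y\to\overline{\mcO}_\mbq^{\R}(GL(N,\C))$, where $\mcO^X$, resp.~$\mcO^Y$, is the Hopf $\kc$-subalgebra generated by the entries of $X$, resp.~$Y$, each a copy of $\overline{\mcO}_\mbq(GL(N,\C))$. The crucial observation is that $\pi_U$ restricts to \emph{Hopf algebra isomorphisms}
\[
\pi_U^X\colon \mcO^X\xrightarrow{\;\sim\;}\overline{\mcO}_\mbq(U(N)),\qquad \pi_U^Y\colon \mcO^Y\xrightarrow{\;\sim\;}\overline{\mcO}_\mbq(U(N)):
\]
indeed, modulo the ideal generated by $X-Y$ the interchange relation $\mbR_{12}X_{13}Y_{23}=Y_{23}X_{13}\mbR_{12}$ becomes the FRT relation, $\pi_U$ matches the generators $X,Y\mapsto U$, and $\overline{\mcO}_\mbq(U(N))$ is $\overline{\mcO}_\mbq(GL(N,\C))$ with a $*$-structure. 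Since the counit $\varepsilon_T$ of $\mcO_\mbq(T(N))$ satisfies $\varepsilon_T\circ\pi_T=\varepsilon$, for $a\in\mcO^X$ one computes $(\id\otimes\varepsilon_T)\bigl((\pi_U^X)^{-1}\otimes\id\bigr)\pi_{UT}^{\mbq}(a)=(\id\otimes\varepsilon)\Delta(a)=a$; hence $\pi_{UT}^{\mbq}$ is already injective on $\mcO^X$, and likewise on $\mcO^Y$.

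\emph{Third} — and this is the heart of the matter — I would globalise this to the product $\mcO^X\cdot\mcO^Y=\overline{\mcO}_\mbq^{\R}(GL(N,\C))$. Applying the algebra isomorphism $(\pi_U^X)^{-1}\otimes\id$, injectivity of $\overline{\pi}_{UT}^{\mbq}$ becomes equivalent to injectivity of the $\kc$-linear map
\[
\mcO^X\mskctimes\mcO^Y\longrightarrow \mcO^X\mskctimes\overline{\mcO}_\mbq(T(N)),\qquad x\otimes y\longmapsto \bigl((\id\otimes\pi_T^X)\Delta(x)\bigr)\cdot\bigl((\Theta\otimes\pi_T^Y)\Delta(y)\bigr),
\]
where $\pi_T^X,\pi_T^Y$ are the restrictions of $\pi_T$ and $\Theta=(\pi_U^X)^{-1}\circ\pi_U^Y\colon\mcO^Y\to\mcO^X$. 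Here $\pi_T^X$ maps $\mcO^X$ \emph{onto} the ``upper'' part $\mcO_\mbq(M_N^{<}(\C))\cdot\mcO_\mbq(D(N))$ and $\pi_T^Y$ maps $\mcO^Y$ onto the ``lower'' part $\mcO_\mbq(D(N))\cdot\mcO_\mbq(M_N^{>}(\C))$ of $\overline{\mcO}_\mbq(T(N))$, and by the PBW decomposition of Lemma \ref{LemPBWUq} these two subalgebras generate $\overline{\mcO}_\mbq(T(N))$ and overlap exactly in $\mcO_\mbq(D(N))$. The key point to exploit is that this overlap is resolved by the first ($\mcO^X$) tensor leg: after grading $\mcO^X\cong\overline{\mcO}_\mbq(GL(N,\C))$ and $\overline{\mcO}_\mbq(T(N))$ by the weight lattice $\Z^N$ of the torus $D(N)$, the map above is homogeneous, and on each weight component the PBW bases of $\mcO_\mbq(M_N^{<}(\C))$, $\mcO_\mbq(D(N))$ and $\mcO_\mbq(M_N^{>}(\C))$ reduce injectivity to a finite linear-algebra check in which the first leg records precisely the diagonal data that multiplication in $\overline{\mcO}_\mbq(T(N))$ forgets, so that $x$ and $y$ can be recovered from the image. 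I expect this third step — which is nothing but the algebraic incarnation of the Gram--Schmidt ($QR$) decomposition $GL(N,\C)=U(N)\cdot T(N)$ — to be the main obstacle. A workable alternative is to carry out this last argument at each specialisation $0<q\neq 1$, where Proposition \ref{PropIdHT} and the representation theory of $U_q(\mfu(N))$ give additional leverage, and then to conclude over $k$ by freeness, since injectivity at infinitely many values of $q$ forces injectivity over $k$.
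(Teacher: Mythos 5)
Your first two steps are sound: reduction to $\kc=\C(\mbq)$ by freeness is valid, and $\pi_U$ does restrict to algebra isomorphisms $\mcO^X\cong\overline{\mcO}_\mbq(U(N))\cong\mcO^Y$, giving injectivity on each factor of the realification separately.

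The genuine gap is in the third step, which you yourself flag as ``the main obstacle.'' As written it is a heuristic, not a proof. After forming the product $\bigl((\id\otimes\pi_T^X)\Delta(x)\bigr)\cdot\bigl(((\pi_U^X)^{-1}\pi_U^Y\otimes\pi_T^Y)\Delta(y)\bigr)$ in $\mcO^X\mskctimes\overline{\mcO}_\mbq(T(N))$, the images $\mcO_\mbq(M_N^{<}(\C))\cdot\mcO_\mbq(D(N))$ and $\mcO_\mbq(D(N))\cdot\mcO_\mbq(M_N^{>}(\C))$ of $\pi_T^X$ and $\pi_T^Y$ overlap in the torus algebra $\mcO_\mbq(D(N))$, and you never establish that the first (infinite-dimensional) tensor leg actually lets one untangle this overlap and recover $x$ and $y$. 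The homogeneity claim for the $\Z^N$-grading is asserted but not verified for this map, and the ``finite linear-algebra check'' is not supplied. The fallback of specialising at $0<q\neq 1$ and invoking representation theory is likewise a pointer, not an argument.

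The paper's proof sidesteps exactly the difficulty you ran into, by \emph{not} going straight to $\mcO_\mbq(T(N))$. It first works with an auxiliary Hopf algebra $\mcO_\mbq(T'(N))$ in which the diagonal generators $T_{ii}^{+}$ and $T_{ii}^{-}$ are kept independent; then multiplication gives an honest isomorphism $\mcO_\mbq(GL^{\leq}(N,\C))\msktimes\mcO_\mbq(GL^{\geq}(N,\C))\cong\mcO_\mbq(T'(N))$ with no overlap, and the injectivity of $(\pi_U\otimes\pi_{T'})\Delta$ on $\mcO^{\R}_\mbq(GL(N,\C))$ is reduced, by a short Hopf-algebraic manipulation (expanding one leg, applying an antipode, recombining), to the injectivity of the Galois map $a\otimes b\mapsto ab_{(1)}\otimes b_{(2)}$ of $\mcO_\mbq(M_N(\C))$. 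Only afterwards does one descend from $T'(N)$ to $T(N)$, by compensating the lost diagonal information with a tensor factor $k_*[\Z^N]$ and invoking coassociativity. Your proposal lacks both the $T'(N)$ detour and any analogue of the Galois-map argument, which are precisely what make the overlap in $\mcO_\mbq(D(N))$ manageable.
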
 

We will call $\pi_{UT}^{\mbq}$ the \emph{quantum $QR$-decomposition}. 

\begin{proof}
Let us introduce the Hopf $k$-algebras $\mcO_\mbq(GL^{\leq}(N,\C))$ and $\mcO_\mbq(GL^{\geq}(N,\C))$ obtained by dividing out $\mcO_\mbq(GL(N,\C))$ with respect to all $X_{ij} = 0$ for $i>j$, resp.~ $X_{ij}= 0$ for all $i<j$, and write the respective generators as $T_{ij}^+$ and $T_{ij}^-$. Let $\mcO_\mbq(T'(N))$ be defined as $\mcO_\mbq(T(N))$, without however asking that the $T_{ii}^+$ or $T_{ii}^-$ are selfadjoint. Let $\pi_{\pm}$ be the respective quotient maps from $\mcO_\mbq(GL(N,\C))$ to $\mcO_\mbq(GL^{\leq}{N,\C})$ and $\mcO_\mbq(GL^{\geq}{N,\C})$. Then the following two facts are easily verified:
\begin{itemize}
\item The multiplication map 
\begin{equation}\label{EqMult1}
\mcO_\mbq(GL^{\leq}(N,\C)) \msktimes \mcO_\mbq(GL^{\geq}(N,\C)) \rightarrow \mcO_\mbq(T'(N))
\end{equation}
 is an isomorphism.
\item The homomorphism 
\begin{equation}\label{EqMult2}
(\pi_+\otimes \pi_-)\Delta: \mcO_\mbq(GL(N,\C)) \rightarrow  \mcO_\mbq(GL^{\leq}(N,\C)) \msktimes \mcO_\mbq(GL^{\geq}(N,\C))
\end{equation}
is injective. 
\end{itemize}

We now claim that the map 
\[
\mcO_\mbq(M_N(\C))\msktimes \mcO_\mbq(M_N(\C)) \rightarrow \mcO_\mbq(M_N(\C))\msktimes \mcO_\mbq(GL^{\leq}(N,\C)) \msktimes \mcO_\mbq(GL^{\geq}(N,\C)),
\]
\[
a\otimes b \mapsto a_{(1)}b_{(1)}\otimes \pi_+(a_{(2)})\otimes \pi_-(b_{(2)}) 
\]
is injective. Indeed, if this were not injective, by expanding the first leg, applying the antipode to the second leg and multiplying with the third leg, we obtain that also 
\[
a\otimes b \mapsto ab_{(1)}\otimes \pi_+(b_{(2)})\otimes \pi_-(b_{(3)})
\]
is not injective. By \eqref{EqMult2}, this implies the Galois map 
\[
a \otimes b \mapsto ab_{(1)}\otimes b_{(2)}
\]
on $\mcO_\mbq(M_N(\C))$ is not injective, which is a contradiction. 

From \eqref{EqMultMN} and \eqref{EqMult1}, it now follows that, with 
\[
\pi_T': \mcO_\mbq^{\R}(M_N(\C)) \rightarrow \mcO_\mbq(T'(N)),\qquad X \mapsto T^+,\quad Y\mapsto T^-,
\]
the $*$-homomorphism
\[
\mcO_\mbq^{\R}(GL(N,\C)) \rightarrow \mcO_\mbq(U(N)) \kstartimes \mcO_\mbq(T'(N)),\quad x \mapsto (\pi_U\otimes \pi_T')\Delta(x) 
\]
is injective (where the $*$-structure on $\mcO_\mbq(T'(N))$ is given by $(T^-)^* = (T^+)^{-1}$). 

Finally, let us write $k_*[\Z^N] = k_*[L_i^{\pm 1}]$ for the Laurent polynomial algebra over $k_*$ with $*$-structure $L_i^* = L_i^{-1}$. Then we have $*$-homomorphisms 
\[
\pi_D^{T'}:\mcO_\mbq(T'(N)) \rightarrow k_*[\Z^N],\qquad T_{ij}^+ \mapsto \delta_{ij} L_i,\quad T_{ij}^- \mapsto \delta_{ij} L_i,
\]
\[
\pi_D^U:\mcO_\mbq(U(N)) \rightarrow k_*[\Z^N],\qquad U_{ij} \mapsto \delta_{ij} L_i,
\]
and put $\pi_D = \pi_D^{T'} \circ \pi_T'$. Let $\pi_T^{T'}$ be the natural projection map $\mcO_\mbq(T'(N)) \rightarrow  \mcO_\mbq(T(N))$. Then it is easily seen that the $*$-homomorphism
\[
\mcO_\mbq(T'(N)) \rightarrow k_*[\Z^N]\kstartimes \mcO_\mbq(T(N)),\quad x\mapsto (\pi_D^{T'}\otimes \pi_T^{T'})\Delta(x)
\]
is injective, and so the $*$-homomorphism
\[
\mcO_\mbq^{\R}(GL(N,\C)) \rightarrow \mcO_\mbq(U(N)) \kstartimes k_*[\Z^N]\kstartimes \mcO_\mbq(T(N)),\quad x \mapsto (\pi_U\otimes \pi_D\otimes \pi_T)((\id\otimes \Delta)\Delta(x).
\]
is injective. Clearly $(\pi_U\otimes \pi_D^U)\Delta$ is injective on $\mcO_\mbq(U(N))$, so by coassociativity we finally obtain injectivity of
\[
\mcO_\mbq^{\R}(GL(N,\C)) \rightarrow \mcO_\mbq(U(N)) \kstartimes \mcO_\mbq(T(N)),\quad x \mapsto (\pi_U\otimes \pi_T)\Delta(x). 
\]
\end{proof}

Note that both $\mcO_\mbq^{\R}(GL(N,\C))$ and $\mcO_\mbq(U(N))\kstartimes \mcO_\mbq(T(N))$ can be endowed with left coactions $\gamma_U$ by $\mcO_\mbq(U(N))$, resp.~ by 
\begin{equation}\label{EqLeftTrans}
(\id\otimes \gamma_U)X = U_{12}X_{13},\qquad \gamma_U(u\otimes x) = \Delta(u)\otimes x,
\end{equation}
and the map $\pi_{UT}^{\mbq}$ is $\gamma_U$-equivariant.

Let us write 
\[
E = (\Phi\otimes \id)\gamma_U,
\]
where $\Phi$ is the unique unit-preserving invariant functional on $\mcO_\mbq(U(N))$, see Corollary \ref{CorExInv}. Recall that the image of $E$ is precisely the $k_*$-algebra of $\gamma_U$-coinvariant elements. Since the $k_*$-algebra of coinvariants in $\mcO_\mbq(U(N))\kstartimes \mcO_\mbq(T(N))$ may be identified with $\mcO_\mbq(T(N))$, we hence see that $\chi_T^{\mbq}$ restricts to an embedding of 
\[
\msB = E(\mcO_\mbq^{\R}(GL(N,\C))) = \mcO_\mbq^{\R}(GL(N,\C))^{\mcO_\mbq(U(N))}
\]
into $\mcO_\mbq(T(N))$. This also applies for $\mbq$ specialized at $q>0$. 

\begin{Lem}\label{LemMSB}
For $0<q$ and $q\neq 1$, we have $\mcO_q(H(N))[B_N^{-1}] \subseteq \pi_{UT}^{\mbq}(\msB)\subseteq \mcO_q(H(N))[B_1^{-1},\ldots, B_N^{-1}]$. 
\end{Lem}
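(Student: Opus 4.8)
The plan is to fix $q\in(0,\infty)$ with $q\neq 1$, write $\msB=\mcO_q^{\R}(GL(N,\C))^{\mcO_q(U(N))}$ for the coinvariant subalgebra, and identify $\mcO_q(H(N))$ together with its localisations with subalgebras of $\mcO_q(T(N))$ through the injective map $\chi_T^q$, so that $B_i$ becomes the invertible element $T_1^2\cdots T_i^2$ (Lemma \ref{LemElB}). The first reduction I would make is: since $\gamma_U=(\pi_U\otimes\id)\Delta$, coinvariance of $b\in\msB$ forces $\pi_{UT}^q(b)=(\pi_U\otimes\pi_T)\Delta(b)=1\otimes\pi_T(b)$, so under the identification of the $\gamma_U$-coinvariants of $\mcO_q(U(N))\kstartimes\mcO_q(T(N))$ with $\mcO_q(T(N))$ the object $\pi_{UT}^q(\msB)$ corresponds to the subalgebra $\pi_T(\msB)\subseteq\mcO_q(T(N))$, and the claim becomes $\mcO_q(H(N))[B_N^{-1}]\subseteq\pi_T(\msB)\subseteq\mcO_q(H(N))[B_1^{-1},\ldots,B_N^{-1}]$.

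For the lower bound I would exhibit explicit coinvariants. Using $U^*U=I$ in $\mcO_q(U(N))$ one computes $\gamma_U(X^*X)=1\otimes X^*X$ entrywise, so the entries of $X^*X=\chi_X^q(Z)$ lie in $\msB$ and hence $\chi_X^q(\mcO_q(H(N)))\subseteq\msB$; since $\pi_T\chi_X^q=\chi_T^q$ this places $\mcO_q(H(N))$ inside $\pi_T(\msB)$. Next, as $\Det_q(X)$ is central and grouplike and $\Det_q(U)$ is unitary in $\mcO_q(U(N))$, the central element $\Det_q(X)^{-1}(\Det_q(X)^*)^{-1}$ is coinvariant and its $\pi_T$-image is $(T_1\cdots T_N)^{-2}=B_N^{-1}$; since $\pi_T(\msB)$ is a subalgebra this gives $\mcO_q(H(N))[B_N^{-1}]\subseteq\pi_T(\msB)$. (Incidentally $\Det_q(X)^*\Det_q(X)=\chi_X^q(B_N)$, by injectivity of $\pi_{UT}^q$.)

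The real work is the upper bound, for which I would use the residual sign-torus symmetry of the quantum $QR$-decomposition (Lemma \ref{LemMorQR}), i.e. the fact that classically $GL(N,\C)\cong(U(N)\times T(N))/M$ with $M=\{\mathrm{diag}(\pm1,\ldots,\pm1)\}$. For $\vec\epsilon\in\{\pm1\}^N$, evaluation at $m=\mathrm{diag}(\epsilon_1,\ldots,\epsilon_N)$ gives $*$-characters $U_{ij}\mapsto\epsilon_i\delta_{ij}$ of $\mcO_q(U(N))$ and $T^{\pm}_{ij}\mapsto\epsilon_i\delta_{ij}$ of $\mcO_q(T(N))$, hence $*$-automorphisms $\alpha_{\vec\epsilon}$ (right translation by $m$: $U_{ij}\mapsto\epsilon_jU_{ij}$) and $\psi_{\vec\epsilon}$ (left translation by $m^{-1}$: $T^{\pm}_{ij}\mapsto\epsilon_iT^{\pm}_{ij}$); set $\widehat\psi_{\vec\epsilon}=\alpha_{\vec\epsilon}\otimes\psi_{\vec\epsilon}$, a $*$-automorphism of $\mcO_q(U(N))\kstartimes\mcO_q(T(N))$. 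I would then prove two things. \emph{(i)} $\widehat\psi_{\vec\epsilon}$ fixes $\pi_{UT}^q(\mcO_q^{\R}(GL(N,\C)))$ pointwise: being a $*$-algebra automorphism it is enough to check fixity on the algebra generators $\pi_{UT}^q(X_{ij})=\sum_k U_{ik}\otimes T^+_{kj}$ — where the factors $\epsilon_k$ coming from the second index of $U_{ik}$ and the first index of $T^+_{kj}$ cancel — and on $\pi_{UT}^q(\Det_q(X))=\Det_q(U)\otimes T_1\cdots T_N$ — where both legs acquire the factor $\prod_i\epsilon_i$, squaring to $1$; the remaining generators $X_{kl}^*$, $\Det_q(X)^{\pm1}$, $(\Det_q(X)^*)^{-1}$ are then handled automatically because $\widehat\psi_{\vec\epsilon}$ is a $*$-automorphism. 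Hence $\widehat\psi_{\vec\epsilon}$ fixes $\pi_{UT}^q(\msB)=1\otimes\pi_T(\msB)$, i.e. every $\psi_{\vec\epsilon}$ fixes $\pi_T(\msB)$ pointwise. \emph{(ii)} The joint fixed subalgebra $\bigcap_{\vec\epsilon}\mcO_q(T(N))^{\psi_{\vec\epsilon}}$ equals $R:=\mcO_q(H(N))[B_1^{-1},\ldots,B_N^{-1}]$: each $\psi_{\vec\epsilon}$ fixes $(T^*T)_{kl}=\sum_m(T^+_{mk})^*T^+_{ml}$ and the $B_i$, hence fixes $R$ pointwise; conversely, by Proposition \ref{PropIdHT} one writes $\mcO_q(T(N))=\sum_{\vec\delta\in\{0,1\}^N}R\cdot A_1^{\delta_1}\cdots A_N^{\delta_N}$ (with $A_i$ the self-adjoint invertible square roots of $B_i$, $\chi_T^q(A_i)=T_1\cdots T_i$), and since $\psi_{\vec\epsilon}(A_i)=(\epsilon_1\cdots\epsilon_i)A_i$ the subspace $R\cdot A_1^{\delta_1}\cdots A_N^{\delta_N}$ sits in the $(\Z/2)^N$-isotypic component for the character $\vec\epsilon\mapsto\prod_m\epsilon_m^{\sum_{i\ge m}\delta_i}$; these characters are pairwise distinct, so the sum is direct and the joint fixed part is exactly the $\vec\delta=0$ summand $R$. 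Combining (i) and (ii) gives $\pi_T(\msB)\subseteq R$.

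I expect the main obstacle to be precisely this upper bound: the point is to notice that the gauge group $M=(\Z/2)^N$ of the quantum $QR$-decomposition simultaneously fixes $\pi_{UT}^q(\mcO_q^{\R}(GL(N,\C)))$ (hence the coinvariants) and cuts out $\mcO_q(H(N))[B_1^{-1},\ldots,B_N^{-1}]$ as the $M$-fixed subalgebra of $\mcO_q(T(N))$. Once this is set up, steps (i) and (ii) reduce to short index bookkeeping together with the structural description of $\mcO_q(T(N))$ over $\mcO_q(H(N))[B_1^{-1},\ldots,B_N^{-1}]$ furnished by Proposition \ref{PropIdHT}.
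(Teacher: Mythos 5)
Your proof is correct and takes essentially the same approach as the paper: the paper's $k_*[C_2^N]$-coaction on $\mcO_\mbq(U(N))\kstartimes\mcO_\mbq(T(N))$ is just the dual form of your $(\Z/2)^N$-action by the automorphisms $\widehat{\psi}_{\vec{\epsilon}}$, and the paper identifies the coinvariants of $\mcO_\mbq(T(N))$ by a PBW argument with the $U_\mbq(\mfu(N))$-generators $K_i^{\pm2},E_i,F_i\hat{K}_i$, which is the same content as your eigenspace decomposition via the $A_i$ of Proposition \ref{PropIdHT}. Your write-up merely spells out more of the details, such as the coinvariant $\Det_q(X)^{-1}(\Det_q(X)^*)^{-1}$ providing $B_N^{-1}$, the check on the $*$-algebra generators, and the pairwise distinctness of the $(\Z/2)^N$-characters attached to the $A_1^{\delta_1}\cdots A_N^{\delta_N}$.
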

Again, the result for $q= 1$ will not be needed.
\begin{proof}
Clearly the entries of $X^*X$ are $\gamma_U$-coinvariant, hence $\mcO_\mbq(H(N))[B_N^{-1}] \subseteq \pi_{UT}^{\mbq}(\msB)$. On the other hand, let $C_2^N$ be the $N$-fold direct product of the cyclic group of order 2, and let $k_*[C_2^{N}]$ be the group $k_*$-algebra of $C_2^{N}$ with generators $\epsilon_i$. Consider on $\mcO_\mbq(U(N)) \kstartimes \mcO_\mbq(T(N))$ the $k_*[C_2^N]$-coaction 
\[
u_{ij}\otimes T^{\pm}_{kl} \mapsto u_{ij}\otimes T^{\pm}_{kl}\otimes \epsilon_j\epsilon_k. 
\]
Then $\pi_{UT}^{\mbq}(\mcO_\mbq^{\R}(GL(N,\C))$ and hence $\pi_{UT}^{\mbq}(\msB)$ lies in the $k_*$-algebra of $k_*[C_2^{N}]$-coinvariants. But $\mcO_q(T(N)) \cong U_q(\mfu(N))$ has generators 
\[
T_i^{-1}\cong K_i,\quad T_{i+1}T_{i+1,i}^-  \cong (q-q^{-1})E_i,\quad T_{i,i+1}^+ T_{i}^{-1} \cong (q^{-1}-q)F_i\hat{K}_i, 
\]
and by a PBW-argument we see that the $k_*[C_2^N]$-coinvariant elements of $\mcO_\mbq(T(N))$ must lie in the $k_*$-algebra generated by the $K_i^{\pm 2}, E_i, F_i\hat{K}_i$. By the proof of Proposition \ref{PropIdHT}, the latter $k_*$-algebra is precisely $\mcO_\mbq(H(N))[B_1^{-1},\ldots, B_N^{-1}]$.
\end{proof}

\subsection{Quantum Cayley-Hamilton identities}

Put
\[
\mbQ = \Diag(\mbq^{N-i})_{1\leq i \leq N} \in M_N(k_*),
\]
and write 
\[
\Tr_{\mbQ^2}: M_N(\msA) \rightarrow \msA,\quad W \mapsto (\Tr \otimes \id)(\mbQ^2 W) = (\Tr\otimes \id)(W\mbQ^2) = (\Tr\otimes \id)(\mbQ W \mbQ).
\]

From \cite{P-S95} we have the following theorem, see also \cite{JW17}.

\begin{Theorem}\label{TheoQCHRE}
The element $Z$ of $\mcO_\mbq(H(N))$ satisfies a CH-identity 
\[
F(Z)= Z^N - C_1Z^{N-1} + \ldots + (-1)^{N-1}C_{N-1}Z + (-1)^{N}C_N =  0,
\]
with all coefficients $C_i$ in the $k$-algebra generated by the $\Tr_{\mbQ^2}(Z^k)$, and $C_1 = \Tr_{\mbQ^2}(Z)$. 
\end{Theorem}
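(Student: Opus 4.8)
The plan is to run the reflection-equation form of the quantum Cayley--Hamilton argument of Pyatov--Saponov \cite{P-S95} (see also \cite{JW17}), working with the braiding $\hat{\mbR} = \Sigma\mbR$ and the Hecke algebra $H(n)$ of type $A_{n-1}$ represented on $(\C^N)^{\otimes n}$ by $\sigma_i\mapsto\hat{\mbR}_{i,i+1}$, as in the proof of Lemma~\ref{LemCosemik}. The first step is to introduce the \emph{matrix copies} of $Z$: set $Z_{\overline 1} = Z_1$, i.e.\ $Z$ placed in the first tensor leg of $(\C^N)^{\otimes n}\otimes\msA$, and $Z_{\overline{j+1}} = \hat{\mbR}_j Z_{\overline j}\hat{\mbR}_j^{-1}$. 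A direct computation shows that the defining relation $\mbR_{21}Z_{13}\mbR_{12}Z_{23} = Z_{23}\mbR_{21}Z_{13}\mbR_{12}$ is equivalent to the pair of statements that the $Z_{\overline j}$ pairwise commute --- the key structural feature of reflection-equation algebras --- and that each $\hat{\mbR}_i$ intertwines $Z_{\overline i}$ with $Z_{\overline{i+1}}$ while commuting with every other $Z_{\overline j}$. Consequently, for the $\mbq$-antisymmetrizer $A^{(k)}\in H(k)$, the products $A^{(k)}Z_{\overline 1}\cdots Z_{\overline k}$ behave like ordered products on the antisymmetrized legs.

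The second step is to define the candidate coefficients as iterated quantum traces,
\[
C_k := (\Tr_{\mbQ^2}\otimes\cdots\otimes\Tr_{\mbQ^2})\big(A^{(k)}Z_{\overline 1}\cdots Z_{\overline k}\big)\in\msA,
\]
where $\Tr_{\mbQ^2}$ is applied in each of the $k$ antisymmetrized matrix legs; since $A^{(1)}=\id$ one gets $C_1 = \Tr_{\mbQ^2}(Z)$, as required. Two facts then remain. \emph{(i)} Using the standard recursion expressing $A^{(k)}$ through $A^{(k-1)}$ and $\hat{\mbR}_{k-1}$, together with the property that the $\mbQ^2$-trace over a single leg sends $\hat{\mbR}^{\pm1}$ and $\id$ to explicit scalars (powers of $\mbq$ and $[N]_\mbq$) --- which is exactly why the weight $\mbQ = \Diag(\mbq^{N-i})$ is chosen --- one peels off the last leg and obtains $\mbq$-Newton identities expressing $C_k$ through $C_{k-1}$ and the power sums $p_j := \Tr_{\mbQ^2}(Z^j)$, with denominators only among the $[n]_\mbq$. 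Inductively, $C_k$ then lies in the $k$-subalgebra generated by $p_1,\dots,p_k$, which is the asserted property of the coefficients. \emph{(ii)} Since $\C^N$ has dimension $N$ and $\mbq$ is not a root of unity (we work over $k\subseteq\C(\mbq)$), the $\mbq$-antisymmetrizer of order $N+1$ vanishes: $A^{(N+1)} = 0$ on $(\C^N)^{\otimes(N+1)}$. Expanding $A^{(N+1)}Z_{\overline 1}\cdots Z_{\overline{N+1}}$ via the recursion for $A^{(N+1)}$ and tracing out legs $2,\dots,N+1$ with weight $\mbQ^2$, the left-hand side is $0$ while the right-hand side telescopes into $\sum_{k=0}^{N}(-1)^{N-k}C_{N-k}Z^k$ in the remaining leg; reading off that leg yields $F(Z) = 0$ in $\msA$.

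The main obstacle is exactly the bookkeeping in steps (i) and (ii): fixing the $\mbQ$-weight conventions so that the single-leg partial traces come out as the claimed scalars, putting the recursion for $A^{(k)}$ into usable form, and unwinding $A^{(N+1)}Z_{\overline 1}\cdots Z_{\overline{N+1}}$ so that the Cayley--Hamilton polynomial appears with the correct alternating signs. If one prefers to avoid re-deriving all of this, an alternative is to \emph{define} the $C_k$ by the universal formulas above (so that the statement about the coefficients holds by construction) and then prove $F(Z) = 0$ after specialising $\mbq\mapsto q$ for each $q>0$ with $q\neq 1$: by Lemmas~\ref{LemCholInj} and~\ref{LemIsoUT} the identity may be transported into $\mcO_q(T(N))\cong U_q(\mfgl_N(\C))$, where $Z = T^*T$ becomes $(L^-)^{-1}L^+$ for the FRT $L$-operators and the relevant Cayley--Hamilton identity is known; freeness of $\mcO_\mbq(H(N))$ over $k$ then transfers the identity back.
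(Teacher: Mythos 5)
The paper does not give a proof of Theorem~\ref{TheoQCHRE}: it is cited from Pyatov--Saponov \cite{P-S95} and Jordan--White \cite{JW17}, followed only by a remark explaining why the second half of the statement (about the coefficients) requires inverting the quantum integers $[n]_\mbq$. Your plan reconstructs exactly the Pyatov--Saponov antisymmetrizer argument from those sources, so you are filling in a proof the paper chose to outsource rather than taking a different route.

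That said, there is a concrete slip in your set-up of the matrix copies. With your definition $Z_{\overline{j+1}} = \hat{\mbR}_j Z_{\overline j}\hat{\mbR}_j^{-1}$ the copies do \emph{not} commute. In the paper's conventions the reflection equation, rewritten in braided form, is $\hat{\mbR}_1 Z_1 \hat{\mbR}_1 Z_1 = Z_1 \hat{\mbR}_1 Z_1 \hat{\mbR}_1$ (acting on legs $1,2$), and this is precisely the statement $Z_{\overline 1}Z_{\overline 2} = Z_{\overline 2}Z_{\overline 1}$ once you set $Z_{\overline 2} := \hat{\mbR}_1 Z_{\overline 1}\hat{\mbR}_1$ --- with $\hat{\mbR}$ on \emph{both} sides, not $\hat{\mbR}$ and $\hat{\mbR}^{-1}$. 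With your conjugate the relation the RE actually gives is $Z_1(\hat{\mbR}_1 Z_1\hat{\mbR}_1^{-1}) = (\hat{\mbR}_1^{-1}Z_1\hat{\mbR}_1)Z_1$, which is not commutation. Relatedly, the ``intertwining'' statement for $\hat{\mbR}_i$ must be read in the braided sense $\hat{\mbR}_i Z_{\overline i}\hat{\mbR}_i = Z_{\overline{i+1}}$: the naive $\hat{\mbR}_i Z_{\overline i} = Z_{\overline{i+1}}\hat{\mbR}_i$ would force $\hat{\mbR}_i^2 = 1$, contradicting the Hecke relation. These conventions feed directly into the powers of $\mbq$ produced when tracing out legs, so the ``bookkeeping'' you flag as the main obstacle would be derailed by them; the correct choice is also visible in the paper's Lemma~\ref{LemIdBN}, which uses products of $\hat{\mbR}$'s without inverses.

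Your alternative route at the end --- transport to $\mcO_q(T(N))\cong U_q(\mfgl_N(\C))$ via Lemmas~\ref{LemCholInj} and \ref{LemIsoUT} and invoke a ``known'' Cayley--Hamilton identity --- risks circularity as phrased, since in the paper's development Corollary~\ref{CorCHGLT} \emph{derives} the CH identity for $T^*T$ from Theorem~\ref{TheoQCHRE}, not the other way around. To make that route non-circular you would need an independent source for the Cayley--Hamilton identity of the split Casimir $(L^-)^{-1}L^+$ in $U_q(\mfgl_N(\C))$; such sources exist but are essentially the same references [P-S95, JW17], so the alternative does not produce a genuinely different proof.
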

\begin{Rem}
The first part of the theorem holds in fact over $\C[\mbq,\mbq^{-1}]$ by the concrete formulas for the $C_i$ in \cite[Theorem 1.3]{JW17}. However,  the $[n]_\mbq^{-1}$ are needed in $k$ for the latter statement of the theorem to hold, since we need semisimplicity of the Hecke algebra (cf. the proof of Lemma \ref{LemCosemik}).
\end{Rem}

We again write $C = C_1$, and call it the leading Casimir. For $C_N$ we have the following identity from \cite[Theorem 1.3]{JW17}:
\begin{equation}\label{EqCNB}
C_N = \mbq^{N(N-1)}B_N.
\end{equation}

\begin{Rem}\label{RemMud}
Using the techniques of \cite[Proposition 4.1]{Mud07a}, we can deduce that the eigenvalues of $\pi_{\mbl}(T^*T)$ are $\mbq^{-2(\lambda_k-k-1)}$, hence $P(C_k) = \mbq^{2k} e_k(\mbq^2T_1^2,\mbq^4T_2^2,\ldots,\mbq^{2N}T_N^2)$ and
\[
C_k = \mbq^{(N-1)k} \binom{N}{k}_\mbq E(B_k).
\]
\end{Rem} 

We will also need the following expression for $C_N$ or, equivalently, $B_N$, which is \cite[Equation (2.7)]{P-S95} adapted to our conventions. We formulate the result for the case $\mbq = q$ with $0<q$. Recall that $\hat{R} = \Sigma R$, with $R$ the flip map.

\begin{Lem} \label{LemIdBN}
Up to a positive non-zero scalar, we have 
\[
B_N \sim (\omega_q\otimes \id)((Z_{N,N+1}\hat{R}_{N-1,N}\ldots\hat{R}_{23} \hat{R}_{12})^N),
\]
where $\omega_q$ is the vector state on $V^{\otimes N}$ associated to the normalized joint $-q$-eigenvector of the $\hat{R}_{i,i+1}$ for $1\leq i\leq N-1$. 
\end{Lem}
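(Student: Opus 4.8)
The plan is to derive this from \cite[Equation (2.7)]{P-S95}, so that the real content is a matching of conventions together with the identification of $B_N$ with a quantum determinant; I would proceed as follows.

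First I would reduce to a statement about $\Det_\mbq(X)$. Recall from the proof of Lemma~\ref{LemElB} that, under the covariantisation isomorphism $\mcO_\mbq^{\ad}(M_N(\C))\cong \mcO_\mbq(M_N(\C))$ of \cite{Maj91}, the element $B_N$ corresponds to $\Det_\mbq(X)$ written as in \eqref{EqDet3}. On the other hand $\Det_\mbq(X)$ is characterised by the quantum Laplace (antisymmetriser) identity $P_-\, X_{1,0}X_{2,0}\cdots X_{N,0} = \Det_\mbq(X)\,P_-$, where $P_-$ is the spectral projection of the operators $\hat R_{i,i+1}$ onto their common $-q$-eigenspace $\Lambda_q^N V\subseteq V^{\otimes N}$ — a line, since we are in the $GL(N)$ situation — and leg $0$ denotes the algebra. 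Since $\hat R$ is self-adjoint (using $\mbR^*=\mbR_{21}$), $P_-$ is the rank-one orthogonal projection $\|\xi\|^{-2}\,|\xi\rangle\langle\xi|$ onto the normalised joint $-q$-eigenvector $\xi$; hence $\omega_q=\langle\xi,\,\cdot\,\xi\rangle/\|\xi\|^2$ and the identity above reads $(\omega_q\otimes\id)(X_{1,0}\cdots X_{N,0})=\Det_\mbq(X)$.

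It then remains to rewrite $(\omega_q\otimes\id)(X_{1,0}\cdots X_{N,0})$ as $(\omega_q\otimes\id)\big((Z_{N,N+1}\hat R_{N-1,N}\cdots\hat R_{12})^N\big)$ up to a positive scalar. For this I would use that $S:=\hat R_{N-1,N}\cdots\hat R_{23}\hat R_{12}$ is a $q$-deformed cyclic shift on $V^{\otimes N}$ (it reduces to $\Sigma_{N-1,N}\cdots\Sigma_{12}$ at $q=1$ and fixes $\xi$ up to the scalar $(-q)^{N-1}$), and that the transmutation/covariantisation map of \cite{Maj91} expresses the RE generator $Z$ in terms of $X$ conjugated by the $\hat R_{i,i+1}$ in exactly the way that, after sandwiching $X_{1,0}\cdots X_{N,0}$ by $P_-$ and re-ordering the auxiliary spaces by $S$, produces $(Z_{N,N+1}S)^N=(Z_{N,N+1}\hat R_{N-1,N}\cdots\hat R_{12})^N$; the discarded scalar is a power of $-q$ coming from $S^N\xi$ together with $\|\xi\|^{-2}$. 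Expanding this operator and pairing it with $\xi$ in the standard basis, the $\hat R$-contributions collapse, on the antisymmetric vector, to the weights $(-\mbq)^{-l(\sigma)}\mbq^{-e(\sigma)}$ attached to $\sigma\in S_N$, which is precisely the combinatorics in the definition of $B_N$.

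The step I expect to be the real obstacle is the bookkeeping of conventions: matching the normalisation of $\mbR$, the chosen braiding $\hat R$ and its $-q$- versus $-q^{-1}$-eigenspace, the ascending versus descending order of the product defining $B_N$, and the principal-block conventions used here with those of \cite{P-S95} (possibly after the substitutions $\mbq\leftrightarrow\mbq^{-1}$ and $e_i\leftrightarrow e_{N+1-i}$ already used elsewhere in the paper), together with the combinatorial identity turning the product of $\hat R$-matrix entries along a permutation into the weight $(-\mbq)^{-l(\sigma)}\mbq^{-e(\sigma)}$. Since only equality up to a positive non-zero scalar is asserted, the precise value of $\|\xi\|^2$ and of the global power of $q$ never has to be tracked; alternatively one may bypass \cite{P-S95} altogether and carry out the basis expansion of $(Z_{N,N+1}\hat R_{N-1,N}\cdots\hat R_{12})^N$ against $\xi$ directly, the combinatorics being the only nontrivial point.
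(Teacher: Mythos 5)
The paper offers no proof for this lemma beyond the citation ``\cite[Equation (2.7)]{P-S95} adapted to our conventions,'' and your plan ultimately falls back on the same citation (or an equivalent direct basis expansion), so the two approaches coincide. Your additional gloss --- identifying $P_-$ as the rank-one orthogonal $q$-antisymmetriser $\|\xi\|^{-2}|\xi\rangle\langle\xi|$ (using self-adjointness of $\hat R$), invoking the quantum Laplace identity $P_-X_{1,0}\cdots X_{N,0}=\Det_\mbq(X)P_-$, and transporting through the covariantisation identification $B_N\leftrightarrow\Det_\mbq(X)$ from the proof of Lemma~\ref{LemElB} --- is a sound and helpful explanation of why the statement should hold, though the passage from the $X$-expression to $(Z_{N,N+1}\hat R_{N-1,N}\cdots\hat R_{12})^N$ (your ``re-ordering by $S$'' step) is left at the level of a sketch and, as you yourself flag, is precisely the convention-matching content that the cited reference supplies.
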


We now transport Theorem \ref{TheoQCHRE} to $\mcO_\mbq^{\R}(GL(N,\C))$ and $\mcO_\mbq(T(N))$. 

\begin{Lem}\label{LemNewt}
The elements $\chi_X^\mbq(\Tr_{\mbQ^2}(Z^k)) = \Tr_{\mbQ^2}((X^*X)^k)$ and $\Tr_{\mbQ^{-2}}((XX^*)^k)$  are central in $\mcO_\mbq^{\R}(GL(N,\C))$. 
\end{Lem}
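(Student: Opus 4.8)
The strategy is to reduce everything to the already-established centrality of $\Tr_{\mbQ^2}(Z^k)$ in $\mcO_\mbq(H(N))$ (Theorem \ref{TheoQCHRE}), transported along the $k_*$-morphisms $\chi_X^\mbq$ and a suitable analogue. For the first family, recall from Lemma \ref{LemElB}'s surrounding discussion that $\chi_X^\mbq: \mcO_\mbq(H(N)) \to \mcO_\mbq^{\R}(GL(N,\C))$ is a $k_*$-algebra homomorphism sending $Z \mapsto X^*X = Y^{-1}X$. Hence $\chi_X^\mbq(\Tr_{\mbQ^2}(Z^k)) = \Tr_{\mbQ^2}((X^*X)^k)$ as $\chi_X^\mbq$ intertwines the matrix trace against $\mbQ^2$ (which is a matrix of scalars). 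Since $\Tr_{\mbQ^2}(Z^k) \in \msZ(\mcO_\mbq(H(N)))$, its image commutes with everything in the subalgebra $\chi_X^\mbq(\mcO_\mbq(H(N)))$, i.e.\ with all entries of $X^*X$. It remains to show these images are actually central in the full algebra $\mcO_\mbq^{\R}(GL(N,\C))$, which is generated by the $X_{ij}$, the $X_{kl}^*$, and the central elements $\Det_\mbq(X)^{-1}, (\Det_\mbq(X)^*)^{-1}$. Automatically the last two commute with everything. So I must check that $\Tr_{\mbQ^2}((X^*X)^k)$ commutes with all $X_{ij}$ and all $Y_{kl} = (X^*)$-entries.

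For this I would use the bialgebra structure together with the grouplike/eigenvalue computations already available. The cleanest route: work at each specialisation $\mbq = q$ with $0<q$, $q\neq 1$ (legitimate since $\mcO_\mbq^{\R}(GL(N,\C))$ is free as a $k$-module, by the Remarks following Definition \ref{DefMainObj}, so an identity holds over $k$ iff it holds after every such specialisation). Via the faithful quantum $QR$-decomposition $\pi_{UT}^\mbq$ of Lemma \ref{LemMorQR} and the decomposition into matrix coefficients of the irreducibles $V_q(\hat{\mbl})$ from Lemma \ref{LemCosemik}, it suffices to check that $\Tr_{\mbQ^2}((X^*X)^k)$ acts as a scalar in every $\pi_{\mbl}$-type representation; but Remark \ref{RemMud} tells us precisely that the eigenvalues of $\pi_{\mbl}(T^*T)$ are $\mbq^{-2(\lambda_k-k-1)}$, so $\Tr_{\mbQ^2}((T^*T)^k)$ is scalar on each $V_q(\hat{\mbl})$, hence central in $\mcO_q(T(N))$; pulling back through the injective $\chi_T^\mbq$ gives centrality of $\Tr_{\mbQ^2}(Z^k)$ in $\mcO_q(H(N))$ and, via $\pi_{UT}^\mbq$-equivariance, centrality of $\Tr_{\mbQ^2}((X^*X)^k)$ in $\mcO_q^{\R}(GL(N,\C))$. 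Freeness then lifts this to $k$. (One can shortcut the "scalar on each irreducible" argument by noting the element lies in the image of $E$, the conditional expectation onto coinvariants, which is the center — but the eigenvalue computation is the transparent one.)

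For the second family, $\Tr_{\mbQ^{-2}}((XX^*)^k)$, the key observation is the symmetry exchanging $X \leftrightarrow X^*$ and $\mbq \leftrightarrow \mbq^{-1}$: the element $XX^* = XY^{-1}$ satisfies the reflection equation with $\mbR$ replaced by $\mbR_{21}^{-1}$, exactly the substitution already used (in the Lemma on centrality of $\Det_\mbq(Y)$, and in the footnote to the RE definition) which swaps $\mbq \leftrightarrow \mbq^{-1}$ and hence $\mbQ \leftrightarrow \mbQ^{-1}$. Thus there is a $k_*$-morphism from the $\mbq^{-1}$-version of $\mcO_\mbq(H(N))$ into $\mcO_\mbq^{\R}(GL(N,\C))$ sending the RE generator to $XX^*$, and Theorem \ref{TheoQCHRE} applied with $\mbq^{-1}$ shows $\Tr_{\mbQ^{-2}}((XX^*)^k)$ is central there by the identical argument. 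Concretely, one checks $\Tr_{\mbQ^{-2}}((XX^*)^k) = S^{-1}\big(\Tr_{\mbQ^2}((X^*X)^k)\big)$ or a similar antipode/$*$-image identity (using that $*$ is anti-multiplicative and $X^* = Y^{-1}$, $Y^* = X^{-1}$), reducing the second statement to the first; since $*$ is an algebra anti-automorphism, it preserves the center, which closes the argument. The main obstacle is bookkeeping: making sure the $\mbQ^2$ versus $\mbQ^{-2}$ weighting matches the $\mbq \leftrightarrow \mbq^{-1}$ convention switch correctly, and confirming that $\Tr_{\mbQ^2}$ is genuinely intertwined by the various $*$-morphisms (true because $\mbQ$ has scalar entries, so $\Tr_{\mbQ^2}$ is just a fixed $k$-linear combination of matrix entries and commutes with any $\id \otimes \varphi$). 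Everything else is routine transport along maps already proven injective or equivariant in earlier lemmas.
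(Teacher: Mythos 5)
The approach you take is genuinely different from the paper's: the paper gives a short direct Hopf-algebraic computation, using the vector representation $\wpi_V$, the commutation relation $X^*X\,(\wpi_V\otimes\id)\Delta^{\opp}(a) = (\wpi_V\otimes\id)\Delta^{\opp}(a)\,X^*X$ derived from the defining $RTT$-relations, and the formula $\wpi_V(S^2(a)) = \mbQ^2\wpi_V(a)\mbQ^{-2}$, to manipulate the $\mbQ^2$-weighted trace cyclically and conclude $\Tr_{\mbQ^2}((X^*X)^k)\,a = a\,\Tr_{\mbQ^2}((X^*X)^k)$. You instead propose to pass to the specialisations $\mbq=q$, establish centrality of $\Tr_{\mbQ^2}((T^*T)^k)$ inside $\mcO_q(T(N))$, transport it back through the injective quantum $QR$-decomposition $\pi_{UT}^q$, and lift over $k$ by freeness. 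This route is in principle viable, but as written it contains a genuine circularity.

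The gap is in the step ``Remark \ref{RemMud} tells us the eigenvalues of $\pi_\mbl(T^*T)$, so $\Tr_{\mbQ^2}((T^*T)^k)$ is scalar on each $V_q(\hat\mbl)$''. Knowing the spectrum of the operator $\pi_\mbl(T^*T)$ on $\C^N\otimes V_q(\hat\mbl)$ does not determine the partial trace $(\Tr\otimes\id)(\mbQ^2\,\pi_\mbl(T^*T)^k) \in \End(V_q(\hat\mbl))$, since $\mbQ^2$ and any change of basis diagonalising $\pi_\mbl(T^*T)$ need not commute. The assertion ``$\Tr_{\mbQ^2}((T^*T)^k)$ acts by a scalar'' is exactly what Schur's lemma yields once centrality is known, and the consequences of Remark \ref{RemMud} (the formula for $P(C_k)$, the identity $C_k = \mbq^{(N-1)k}\binom{N}{k}_\mbq E(B_k)$) are themselves obtained in the paper via the Harish-Chandra map applied to \emph{central} elements. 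So reading the Remark backwards to derive centrality is circular. Your parenthetical shortcut through $E$ has the same flavour: the direction ``central $\Rightarrow$ in the image of $E$'' in the lemma $E(\mcO_\mbq(H(N)))=\msZ(\mcO_\mbq(H(N)))$ is precisely the step that needs to be established independently at the $T(N)$-level.

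What would close the gap: you need an independent argument that $\chi_T^q(\Tr_{\mbQ^2}(Z^k))$ is central in $\mcO_q(T(N))$ for $q\neq 1$. One clean way is to observe (via Proposition \ref{PropIdHT}) that it suffices to check $\Tr_{\mbQ^2}(Z^k)$, already central in $\mcO_q(H(N))$, also commutes with the $A_i$; this follows from a cyclic-weight count, since each $(Z^k)_{jj}$ is a sum of words $Z_{jm_1}Z_{m_1m_2}\cdots Z_{m_{k-1}j}$ whose total $A_i$-weight telescopes to zero. Alternatively, establish directly that $\Tr_{\mbQ^2}(Z^k)$ is $\Ad_\mbq$-coinvariant and invoke Lemma \ref{LemCholEqui} together with the ``well-known'' fact that $\rhd$-invariance implies centrality in $U_q(\mfu(N))$. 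You would also want to spell out $\pi_{UT}^\mbq(\Tr_{\mbQ^2}((X^*X)^k)) = 1\otimes\Tr_{\mbQ^2}((T^*T)^k)$, which holds because the entries of $X^*X$ are $\gamma_U$-coinvariant (a consequence of $U^*U=1$), not via Lemma \ref{LemCosemik} as you suggest. For the second family, the $\mbq\leftrightarrow\mbq^{-1}$ (equivalently $S\leftrightarrow S^{-1}$, $\mbQ\leftrightarrow\mbQ^{-1}$) symmetry is the right idea and matches the paper's remark; but the proposed antipode/$*$-image identity would need to be written out and checked, since $\Tr_{\mbQ^2}((X^*X)^k)$ is self-adjoint and $*$ alone does not transform it into $\Tr_{\mbQ^{-2}}((XX^*)^k)$.
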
 

\begin{proof}
The proof technique for this is standard. Namely, note first that with $V = k^N$ we have the $*$-preserving \emph{vector representation}
\[
\pi_V: \mcO_\mbq(T(N)) \rightarrow  \End(V) = M_N(k_*),\qquad T^+ \mapsto \mbR,\quad T^- \mapsto \mbR_{21}^{-1}. 
\]
We denote the extension to $\mcO_\mbq^{\R}(GL(N,\C))$ as $\wpi_V = \pi_V \circ \pi_T$. Since the defining cross commutation between $X$ and $Y$ can be written as
\[
Y_{23}(\id\otimes (\wpi_V\otimes \id)\Delta^{\opp})(X)Y_{23}^{-1} = (\id\otimes (\wpi_V\otimes \id)\Delta)(X),
\]
and similarly with $X$ replaced by $Y$, it follows that 
\[
Y(\wpi_V\otimes \id)\Delta^{\opp}(a)Y^{-1} = (\wpi_V\otimes \id)\Delta(a),\qquad a\in \mcO_\mbq^{\R}(GL(N,\C)).
\]
Applying $*$, we see that 
\begin{equation}\label{EqCommDelt}
X^*X(\wpi_V\otimes \id)\Delta^{\opp}(a)= (\wpi_V\otimes \id)\Delta^{\opp}(a)X^*X,\qquad a\in \mcO_\mbq^{\R}(GL(N,\C)).
\end{equation}
Note now that the antipode $S$ is invertible by the general relation $S(a)^* = S^{-1}(a^*)$ in a Hopf $*$-algebra. Then since $Y^* = (S(X_{ij}))_{ij}$ and, with $\transp$ the transpose, $((Y^{\transp})^{-1})_{ij}= S^{-1}(Y_{ji})$, we find
\[
\wpi_V(S^2(X_{ij})) = \wpi_V(S(Y_{ji}^*)) = \wpi_V(S^{-1}(Y_{ji}))^* =  \wpi_V(((Y^{\transp})^{-1})_{ij})^*,
\]
and hence 
\[
(\id\otimes \wpi_V S^2)X = (((\mbR_{21}^{-1})^{\transp\otimes \id})^{-1})^{(\tr\otimes \id)*} =  (((\mbR^{-1})^{\transp\otimes \id})^{-1})^{\tr\otimes \id}.
\]
It is now easily calculated that 
\[
 (((\mbR^{-1})^{\transp\otimes \id})^{-1})^{\tr\otimes \id}= \mbq^{-1} \sum_i e_{ii}\otimes e_{ii} + \sum_{i\neq j} e_{ii}\otimes e_{jj} + (\mbq^{-1} -\mbq)\sum_{i<j} \mbq^{2(i-j)}e_{ij} \otimes e_{ji}.
\]
It follows that $\wpi_V(S^2(a)) = \mbQ^2\wpi_V(a)\mbQ^{-2}$ for $a\in \{X_{ij}\}$, and hence for all $a\in \mcO_\mbq^{\R}(GL(N,\C))$. Hence 
\begin{eqnarray*}
\Tr_{\mbQ^2}((X^*X)^k) a &=& (\Tr\otimes \id)((\mbQ^2\otimes 1)(X^*X)^k (\wpi_V(a_{(2)})\wpi_V(S(a_{(3)})) \otimes a_{(1)}))  \\ &=&  (\Tr\otimes \id)(\wpi_V(S(a_{(3)})\mbQ^2 \wpi_V(a_{(2)})\otimes a_{(1)})(X^*X)^k)  \\ &=&  a_{(1)} (\Tr\otimes \id)(\mbQ^2\wpi_V(S^{-1}(a_{(3)})\wpi_V(a_{(2)})\otimes 1)(X^*X)^k) \\ &=& a \Tr_{\mbQ^2}((X^*X)^k).
\end{eqnarray*}

To obtain the result for $XX^*$, we note that the equation \eqref{EqCommDelt} holds with $X^*X$ replaced with $XX^*$ and $\Delta^{\cop}$ replaced by $\Delta$. It follows that the rest of the proof holds with $S$ replaced by $S^{-1}$, i.e. with $\mbQ$ replaced by $\mbQ^{-1}$. 
\end{proof}

\begin{Rem}
It is not hard to verify that in fact 
\[
\mbq^{1-N} \Tr_{\mbQ^2}((XX^*)^k) = \mbQ^{N-1}\Tr_{\mbQ^{-2}}((X^*X)^k).
\]
\end{Rem}

Upon applying the surjective map $\pi_T$, we obtain the following corollary.

\begin{Cor}\label{CorNewt}
The elements $\Tr_{\mbQ^2}((T^*T)^k)$ and $\Tr_{\mbQ^{-2}}((TT^*)^k)$  are central in $\mcO_\mbq^{\R}(T(N))$. 
\end{Cor}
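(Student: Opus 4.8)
The plan is to deduce the statement by pushing Lemma~\ref{LemNewt} forward along the surjective unital $*$-homomorphism $\pi_T : \mcO_\mbq^{\R}(GL(N,\C)) \rightarrow \mcO_\mbq(T(N))$ of Definition~\ref{DefMainObj}. First I would record the compatibility of $\pi_T$ with the relevant matrix operations. Since $\pi_T$ is a unital $*$-homomorphism with $\pi_T(X) = T$, applying $\id\otimes\pi_T$ to the defining matrix yields $\pi_T(X^*) = T^*$, hence $(\id\otimes\pi_T)(X^*X) = T^*T$ and $(\id\otimes\pi_T)(XX^*) = TT^*$. Because $\Tr_{\mbQ^{\pm 2}}\colon M_N(\msA)\to\msA$ is built from the fixed scalar matrix $\mbQ^{\pm 2}\in M_N(k_*)$, matrix multiplication, and the ordinary trace, and $\pi_T$ is $k_*$-linear and multiplicative, it intertwines these maps: $\pi_T\bigl(\Tr_{\mbQ^2}((X^*X)^k)\bigr) = \Tr_{\mbQ^2}((T^*T)^k)$ and $\pi_T\bigl(\Tr_{\mbQ^{-2}}((XX^*)^k)\bigr) = \Tr_{\mbQ^{-2}}((TT^*)^k)$.

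With this in hand the conclusion is immediate. By Lemma~\ref{LemNewt} the elements $\Tr_{\mbQ^2}((X^*X)^k)$ and $\Tr_{\mbQ^{-2}}((XX^*)^k)$ are central in $\mcO_\mbq^{\R}(GL(N,\C))$, and a surjective algebra homomorphism carries central elements to central elements. Concretely, given $b\in\mcO_\mbq(T(N))$ choose $a$ with $\pi_T(a)=b$; then $\Tr_{\mbQ^2}((T^*T)^k)\,b = \pi_T\bigl(\Tr_{\mbQ^2}((X^*X)^k)\,a\bigr) = \pi_T\bigl(a\,\Tr_{\mbQ^2}((X^*X)^k)\bigr) = b\,\Tr_{\mbQ^2}((T^*T)^k)$, and symmetrically for $\Tr_{\mbQ^{-2}}((TT^*)^k)$.

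There is essentially no obstacle here; the proof is a one-line transport of structure, and the only point worth a sentence of care is the intertwining of $\pi_T$ with the twisted traces $\Tr_{\mbQ^{\pm 2}}$, which follows from the description of $\Tr_{\mbQ^{\pm 2}}$ recalled just before Theorem~\ref{TheoQCHRE} together with $k_*$-linearity and multiplicativity of $\pi_T$.
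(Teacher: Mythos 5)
Your proposal is correct and matches the paper's reasoning: the paper also obtains the corollary by applying the surjective map $\pi_T$ to the central elements from Lemma~\ref{LemNewt}. You have simply spelled out the routine compatibility of $\pi_T$ with the $\mbQ^{\pm 2}$-twisted traces and the fact that surjections preserve centrality, which the paper leaves implicit.
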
 

\begin{Cor}\label{CorCHGLT}
The absolute value squared $X^*X$ of the generator $X$ of $\mcO_\mbq^{\R}(GL(N,\C))$ satisfies a CH-identity. Similarly, the absolute value squared $T^*T$ of the generator $T$ of $\mcO_\mbq(T(N))$ satisfies a CH-identity.
\end{Cor}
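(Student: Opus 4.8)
The plan is to transport the quantum Cayley--Hamilton identity of Theorem \ref{TheoQCHRE} for the reflection equation algebra along the $k_*$-algebra homomorphisms $\chi_X^\mbq$ and $\chi_T^\mbq$. Concretely, I would first observe that $\chi_X^\mbq\colon\mcO_\mbq(H(N))\to\mcO_\mbq^{\R}(GL(N,\C))$ is a unital algebra homomorphism with $\chi_X^\mbq(Z)=X^*X$; being multiplicative it satisfies $\chi_X^\mbq\big((\Tr\otimes\id)(\mbQ^2Z^k)\big)=(\Tr\otimes\id)(\mbQ^2(X^*X)^k)$, i.e. $\chi_X^\mbq(\Tr_{\mbQ^2}(Z^k))=\Tr_{\mbQ^2}((X^*X)^k)$ for every $k$. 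Consequently $\chi_X^\mbq$ maps every coefficient $C_i$ of $F$, which by Theorem \ref{TheoQCHRE} lies in the $k$-subalgebra generated by the $\Tr_{\mbQ^2}(Z^k)$, to an element of the $k$-subalgebra of $\mcO_\mbq^{\R}(GL(N,\C))$ generated by the $\Tr_{\mbQ^2}((X^*X)^k)$, with $\chi_X^\mbq(C_1)=\Tr_{\mbQ^2}(X^*X)$.

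Applying $\chi_X^\mbq$ to the relation $F(Z)=0$ then gives
\[
(X^*X)^N-\chi_X^\mbq(C_1)(X^*X)^{N-1}+\ldots+(-1)^{N-1}\chi_X^\mbq(C_{N-1})(X^*X)+(-1)^N\chi_X^\mbq(C_N)=0,
\]
and by Lemma \ref{LemNewt} all the coefficients $\chi_X^\mbq(C_i)$ are central in $\mcO_\mbq^{\R}(GL(N,\C))$; this is the desired CH-identity for $X^*X$. For $T^*T$ I would apply the surjective $*$-homomorphism $\pi_T$ (so that $\chi_T^\mbq=\pi_T\circ\chi_X^\mbq$ and $\pi_T(X^*X)=T^*T$): the image of the previous identity is a CH-identity for $T^*T$ whose coefficients $\pi_T(\chi_X^\mbq(C_i))$ lie in the $k$-subalgebra generated by the $\Tr_{\mbQ^2}((T^*T)^k)$ and hence are central in $\mcO_\mbq(T(N))$ by Corollary \ref{CorNewt}. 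If desired one can also record that the constant term in the $T^*T$-identity equals $(-1)^N\mbq^{N(N-1)}\chi_T^\mbq(B_N)=(-1)^N\mbq^{N(N-1)}T_1^2\cdots T_N^2$ by \eqref{EqCNB} and Lemma \ref{LemElB}.

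I do not expect any real obstacle here: the substance is entirely contained in Theorem \ref{TheoQCHRE} (existence of the RE Cayley--Hamilton identity with trace coefficients), in Lemma \ref{LemNewt} and Corollary \ref{CorNewt} (centrality of the $\Tr_{\mbQ^2}$-traces of $X^*X$ and $T^*T$), and in the elementary fact that a unital algebra homomorphism carries a polynomial identity in $Z$ with coefficients built from the $\Tr_{\mbQ^2}(Z^k)$ to the corresponding identity in its image. The only minor point worth stating carefully is that $\chi_X^\mbq$ (resp. $\chi_T^\mbq$) really does intertwine $Z\mapsto\Tr_{\mbQ^2}(Z^k)$ with $X^*X\mapsto\Tr_{\mbQ^2}((X^*X)^k)$, which is immediate from $\chi_X^\mbq(Z)=X^*X$ together with $k_*$-linearity and multiplicativity.
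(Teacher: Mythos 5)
Your argument is correct and coincides with the paper's own proof: both apply the homomorphism $\chi_X^\mbq$ to the RE Cayley--Hamilton identity of Theorem \ref{TheoQCHRE}, use the last part of that theorem together with Lemma \ref{LemNewt} to conclude that the pushed-forward coefficients are central, and then push forward once more along $\pi_T$ using Corollary \ref{CorNewt}. You have merely spelled out the intermediate step $\chi_X^\mbq(\Tr_{\mbQ^2}(Z^k))=\Tr_{\mbQ^2}((X^*X)^k)$ and the extra observation about the constant term, both of which the paper leaves implicit.
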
 
\begin{proof}
If $F$ is the characteristic CH-polynomial for the generator $Z$ of the RE $k_*$-algebra, it follows from the last part of  Theorem \ref{TheoQCHRE} and Lemma \ref{LemNewt} that the $\chi_X^{\mbq}(C_i)$ are still in the center of $\mcO_\mbq^{\R}(GL(N,\C))$, and hence $\chi_X^\mbq(F)$ is a CH-polynomial for $X^*X$. The result for $\mcO_\mbq(T(N))$ follows by applying the surjective map $\pi_T$.
\end{proof}

\section{The continuous field of quantum $GL(N,\C)$, quantum $U(N)$ and quantum $T(N)$}

\subsection{Statement of the main theorem}

We keep the notation of the previous section.

We can put on $k_*$ the closed spectral condition $\mbq\geq 0$. Then it is clear that $[2]_{\mbq} = \mbq + \mbq^{-1}$ defines a central control with respect to this condition, since $[n]_\mbq^{-1} \leq 1$ in each admissible representation. We denote $\msR$ for the corresponding s$^*$-algebra. Then the associated bounded universal C$^*$-envelope of $\msR$ is $C_0(\Theta)$ where $\Theta = \R_0^+ = (0,+\infty)$. 

The $k_*$-algebras $\mcO_\mbq(U(N)),\mcO_\mbq(T(N))$ and $\mcO_\mbq^{\R}(GL(N,\C))$ can be given closed spectral conditions as follows. In each case, the closed spectral condition $0\leq \mbq$ is assumed by default. 

\begin{Def} 
We endow $\mcO_\mbq^{\R}(GL(N,\C))$ and $\mcO_\mbq(U(N))$ with no extra closed spectral conditions. We endow $\mcO_{\mbq}(T(N))$ with the extra closed spectral conditions $T_{ii}\geq 0$, in which case we write $\mcO_\mbq(T_+(N))$. 
\end{Def} 

We also need on the above $k_*$-algebras a quantum control.

\begin{Lem}
The element $[2]_\mbq \oplus X \oplus \Det_\mbq(X)^{-1}$ is a quantum control for $\mcO_\mbq^{\R}(GL(N,\C))$. 

Similarly, $[2]_\mbq \oplus T \oplus \Det_\mbq(T)^{-1}$ is a quantum control for $\mcO_\mbq(T_+(N))$, while $[2]_\mbq$ is a quantum control for $\mcO_\mbq(U(N))$. 
\end{Lem}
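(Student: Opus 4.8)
The plan is to prove the three assertions in one stroke by first establishing that $W:=[2]_\mbq \oplus X \oplus \Det_\mbq(X)^{-1}$ is a quantum control for $\mcO_\mbq^{\R}(GL(N,\C))$, and then transporting this to the quotients $\mcO_\mbq(T_+(N))$ and $\mcO_\mbq(U(N))$ along the surjections $\pi_T$ and $\pi_U$. A preliminary observation used throughout is the elementary estimate that for $q>0$ and $n\geq 1$,
\[
[n]_q = q^{n-1}+q^{n-3}+\cdots+q^{-n+1}\;\geq\; n\;\geq\;1,
\]
since the summands pair into expressions $q^j+q^{-j}\geq 2$ (with a single leftover $1$ when $n$ is odd). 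Consequently, in any admissible representation $\pi$ of any of the three algebras, $\pi(\mbq)$ is a positive invertible operator and $\pi([n]_\mbq)=[n]_{\pi(\mbq)}\geq 1$ by functional calculus, so $\|\pi([n]_\mbq^{-1})\|\leq 1$.

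For $\mcO_\mbq^{\R}(GL(N,\C))$ we verify the two conditions of Definition \ref{DefCfiltr} for the filtration $P^W_{\leq M}=\{\pi\in P\mid \|\pi(W)\|\leq M\}$. Recall that this algebra is generated, as a $\C$-algebra, by $\mbq^{\pm 1}$, the $[n]_\mbq^{-1}$, the $X_{ij}$ and $X_{ij}^*$, and $\Det_\mbq(X)^{-1}$ together with its adjoint. As $W$ is block diagonal, $\pi\in P^W_{\leq M}$ forces $\|\pi(X)\|\leq M$ and $\|\pi(\Det_\mbq(X)^{-1})\|\leq M$, while (since $\mbq$ is central, $\pi(\mbq)=q_\pi$ is a scalar) $q_\pi+q_\pi^{-1}=\|\pi([2]_\mbq)\|\leq M$ gives $q_\pi,q_\pi^{-1}\leq M$. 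Combining with $\|\pi([n]_\mbq^{-1})\|\leq 1$ and $\|\pi(X_{ij})\|=\|\pi(X_{ij}^*)\|\leq\|\pi(X)\|$, every generator has norm $\leq\max(M,1)$ on $P^W_{\leq M}$; writing an arbitrary $a$ as a finite sum of products of generators yields the first condition. For the second condition, let $\pi$ be any admissible representation; then $\pi(W)$ is a bounded operator matrix, say $\|\pi(W)\|\leq M$, and for any $\pi'\preceq\pi$, factoring as $\pi'=\rho\circ\pi$ with $\rho$ a $*$-homomorphism on $\pi(\msA)^{\nc}$, we get $\pi'(W)=(\id\otimes\rho)(\pi(W))$, hence $\|\pi'(W)\|\leq\|\pi(W)\|\leq M$ by contractivity of $\id\otimes\rho$; thus $P_\pi\subseteq P^W_{\leq M}$. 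So $\msP^W$ is a C$^*$-filtration.

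For the other two algebras we invoke the remark that a surjective admissible unital $*$-homomorphism transports C$^*$-filtrations via \eqref{EqNewFilt}. Both $\pi_T:\mcO_\mbq^{\R}(GL(N,\C))\to\mcO_\mbq(T_+(N))$ and $\pi_U:\mcO_\mbq^{\R}(GL(N,\C))\to\mcO_\mbq(U(N))$ are surjective and admissible, the latter being immediate because the source carries only the condition $\mbq\geq 0$, which is preserved. The filtration on $P^{\mcO_\mbq(T_+(N))}$ induced from $\msP^W$ is $\{\rho\mid \rho\circ\pi_T\in P^W_{\leq M}\}$; since $\pi_T(X)=T$ and $\pi_T(\Det_\mbq(X))=\Det_\mbq(T)$, one has $\pi_T(W)=[2]_\mbq\oplus T\oplus\Det_\mbq(T)^{-1}$, so the induced filtration is exactly $\msP^{[2]_\mbq\oplus T\oplus\Det_\mbq(T)^{-1}}$, which is therefore a C$^*$-filtration. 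Likewise, the induced filtration on $P^{\mcO_\mbq(U(N))}$ is $\{\rho\mid \|\rho([2]_\mbq)\|\leq M,\ \|\rho(U)\|\leq M,\ \|\rho(\Det_\mbq(U)^{-1})\|\leq M\}$; but $U$ and $\Det_\mbq(U)$ are unitary, so $\|\rho(U)\|=\|\rho(\Det_\mbq(U)^{-1})\|=1$ for every nonzero $\rho$, and this collapses to $P^{[2]_\mbq}_{\leq M}$ (both sides being empty for $M<1$). Hence $[2]_\mbq$ is a quantum control for $\mcO_\mbq(U(N))$.

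The only point demanding genuine care — not really an obstacle — is the bookkeeping in the reduction step: one must check that the filtration induced along $\pi_T$ (resp. $\pi_U$) really coincides with the one defined by the stated block-diagonal matrix, which hinges on $\pi_T$ sending $X,\Det_\mbq(X)$ to $T,\Det_\mbq(T)$ (resp. on $\pi_U$ sending $X,\Det_\mbq(X)$ to the \emph{unitaries} $U,\Det_\mbq(U)$). Everything else is soft: boundedness of generators on $P^W_{\leq M}$, the estimate $[n]_q\geq n$, and contractivity of amplified $*$-homomorphisms.
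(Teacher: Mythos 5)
Your proof is correct. For $\mcO_\mbq^{\R}(GL(N,\C))$ you argue essentially as the paper does: identify the generating set $\{\mbq^{\pm 1}, [n]_\mbq^{-1}, X_{ij}, \Det_\mbq(X)^{-1}\}$, note that the $[n]_\mbq^{-1}$ are uniformly bounded by $1$ in every admissible representation (your pairing $[n]_q=\sum q^j$ into $q^j+q^{-j}\geq 2$ is a clean way to see $[n]_q\geq n\geq 1$), and observe that the remaining generators are entries of the control; you additionally spell out the second condition of Definition \ref{DefCfiltr} via contractivity of $\id\otimes\rho$, which the paper leaves implicit (it is the same verification that underlies Example \ref{ExaMainExample}).

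Where you genuinely diverge from the paper is in the treatment of $\mcO_\mbq(T_+(N))$ and $\mcO_\mbq(U(N))$. The paper repeats the direct generating-set argument for each (with the observation that for $U(N)$ the unitarity of $U$ and $\Det_\mbq(U)$ makes those generators automatically bounded). You instead establish the $GL(N,\C)$ case once and transport the C$^*$-filtration along the surjective admissible morphisms $\pi_T$ and $\pi_U$ via \eqref{EqNewFilt}. This is a slicker reuse of the general machinery and reduces the verification to the bookkeeping you identify: $\pi_T(W)=[2]_\mbq\oplus T\oplus\Det_\mbq(T)^{-1}$, and $\pi_U(W)$ differs from $[2]_\mbq$ only by unitary blocks that cannot affect the filtration. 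Both routes are valid; yours avoids re-proving the filtration axioms, while the paper's direct route is self-contained and does not rely on the (unproved, stated) remark about transporting filtrations along surjective admissible morphisms.
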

\begin{proof}
The first statement follows immediately from the fact that $\mcO_\mbq^{\R}(GL(N,\C))$ is generated as a unital $*$-algebra by the set
\[
\mbq,\mbq^{-1},[n]_\mbq^{-1},X_{ij}, \Det_\mbq(X)^{-1},
\] 
where the $[n]_\mbq^{-1} \leq 1$ in all admissible representations. 

The same argument holds for $\mcO_\mbq(T_+(N))$. For $\mcO_\mbq(U(N))$, it follows from the fact that, since $U$ and $\Det_\mbq(U)$ are unitary, also all $\|U_{ij}\|\leq 1$ and $\|\Det_\mbq(U)\|\leq 1$ in any admissible representation.  
\end{proof}

Note that the specialisations  at $\mbq = 1$ have as spectrum respectively the locally compact spaces $GL(N,\C)$, $U(N)$ and $T_+(N)$, where $T_+(N)$ consists of uppertriangular matrices in $M_N(\C)$ with strictly positive elements on the diagonal.

Our aim is to prove the following theorem.

\begin{Theorem}\label{TheoMainMainExtra} 
The weak $\msR$-bialgebras $\mcO_\mbq(U(N))$, $\mcO_\mbq(T_+(N))$ and $\mcO_\mbq^{\R}(GL(N,\C))$ are strongly C$^*$-faithful, continuous, type $I$ Hopf $\msR$-algebras.
\end{Theorem}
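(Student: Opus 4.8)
The statement asserts, for each of the three algebras $\msA \in \{\mcO_\mbq(U(N)),\ \mcO_\mbq(T_+(N)),\ \mcO_\mbq^{\R}(GL(N,\C))\}$, four things: that $\msA$ is an honest (not merely weak) $\msR$-algebra; that it is type $I$; that it is strongly C$^*$-faithful; and that it is continuous. The Hopf structure and properness of $\varepsilon$ are already contained in the hypothesis that these are weak $\msR$-bialgebras, and each is a Hopf $k_*$-algebra, so the antipode is available. The plan is to treat the four assertions in turn, in every case reducing via the central element $\mbq$ to the fibers $\mcO_q(\cdot)$ over $q \in \Theta = (0,+\infty)$, and exploiting the structural tools of Section~3: the quantum Cayley--Hamilton identity for $X^*X$ (Corollary~\ref{CorCHGLT}), the quantum $QR$-decomposition $\pi_{UT}^{\mbq}$ (Lemma~\ref{LemMorQR}), and the quantum Cholesky isomorphism (Proposition~\ref{PropIdHT}) together with the coaction $\Ad_{\mbq}$ and its conditional expectation onto the center of the reflection equation algebra.

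For the $\msR$-algebra structure: in the case of $\mcO_\mbq(U(N))$ the quantum control $[2]_\mbq \oplus U$ is bounded equivalent to the central element $[2]_\mbq \in \msR$, since $U$ and $\Det_\mbq(U)$ are unitary, so it is already an $\msR$-algebra. For $\mcO_\mbq^{\R}(GL(N,\C))$ and $\mcO_\mbq(T_+(N))$ one must replace the quantum control $[2]_\mbq \oplus X \oplus \Det_\mbq(X)^{-1}$ (resp.\ its $T$-analogue) by a central one; here the Pimsner--Popa estimate (Lemma~\ref{LemPimPop}), applied to $X\mbQ$, shows that $X^*X$ is controlled by $\Tr_{\mbQ^2}(X^*X)$ up to a $\mbQ$-factor that is itself dominated by a power of $[2]_\mbq$, and combining this with the centrality of $\Tr_{\mbQ^2}(X^*X)$ (Lemma~\ref{LemNewt}, resp.\ Corollary~\ref{CorNewt}) and of the quantum determinant produces a positive central control of the shape $[2]_\mbq^{2(N-1)} + \Tr_{\mbQ^2}(X^*X) + \lvert \Det_\mbq(X)^{-1}\rvert^2$. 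The same elements allow one to invoke Proposition~\ref{PropFieldHopf}, taking $Y = [2]_\mbq$, replacing $X$ by the corepresentation $X' = X \oplus \Det_\mbq(X)^{-1}$ (a corepresentation since $\Det_\mbq(X)$ is grouplike, with $(\id\otimes\varepsilon)X' = I$), and $Q = \mbQ \oplus 1$, $\widetilde Q = \mbQ^{-1}\oplus 1$; the required centrality of $\Tr(Q^*(X')^*X'Q)$ and of $\Tr(\widetilde Q\, X'(X')^*\widetilde Q^*)$ is exactly what Lemma~\ref{LemNewt} supplies. This shows the $C_0$-hulls are fields of Hopf C$^*$-algebras over $\Theta$.

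Type $I$ and strong C$^*$-faithfulness are fiberwise. For $\mcO_q(U(N))$: for every $0<q$ it is the coordinate Hopf $*$-algebra of a compact quantum group (of the compact group $U(N)$ at $q=1$), its Haar state $\Phi_q$ from Corollary~\ref{CorExInv} is faithful (C$^*$-faithfulness), and its C$^*$-completion is type $I$ by the known structure of $C(U_q(N))$. For $\mcO_q(T_+(N))$: the PBW decomposition (Lemma~\ref{LemPBWUq}) together with the identification with the quantum Borel (Lemma~\ref{LemIsoUT}, Proposition~\ref{PropIdHT}) shows that every irreducible admissible $*$-representation is carried by an $\ell^2$-space on which the off-diagonal generators act as $\mbq$-weighted one-sided shifts over a character of the diagonal torus, so its image contains the compacts (type $I$) and the family separates points. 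For $\mcO_q^{\R}(GL(N,\C))$: strong C$^*$-faithfulness follows by pulling a separating family of admissible representations back along the injective $\msR$-morphism $\pi_{UT}^q$ into the (already C$^*$-faithful, type $I$) relative tensor product $\mcO_q(U(N)) \kstartimes \mcO_q(T_+(N))$; and for type $I$ one shows, using $\pi_{UT}^q$ and the quantum Cayley--Hamilton identity, that every irreducible admissible representation factors through an algebra that is an extension of $\mcO_q(U(N))$ by the commutative completion of the Casimir subalgebra generated by the $\Tr_{\mbQ^2}((X^*X)^k)$, so that --- $\mcO_q(U(N))$ being type $I$ --- its image contains the compact operators.

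For continuity: $\mcO_\mbq(U(N))$ is handled by Theorem~\ref{TheoFieldCont} with the coaction $\alpha = \Delta$ of $\mcO_\mbq(U(N))$ on itself, which is of compact type via $\Phi$, has coinvariants $\msB = k_*$ (central), induces the identity $\Spec(B_0) = \Theta \to \Theta$ (open), and has fiber maps $E_{0,x} = \Phi_q$ that are faithful (coamenability of $U_q(N)$; the classical case at $q=1$), recovering the known continuity of \cite{Nag00,NT11}. For $\mcO_\mbq(T_+(N))$ one transports via the quantum Cholesky isomorphism to the reflection equation algebra and applies Theorem~\ref{TheoFieldCont} with (a suitable localization of) the coaction $\Ad_{\mbq}$ of $\mcO_\mbq(U(N))$: its conditional expectation has image precisely the center of the reflection equation algebra (hence automatically central, which is the key hypothesis), the leading Casimir together with $C_N^{-1}$ gives a coinvariant central control, fiberwise faithfulness of the conditional expectation again reduces to faithfulness of the Haar states, and openness of $\Spec(B_0) \to \Theta$ is read off the explicit Harish--Chandra image of Corollary~\ref{CorHC}. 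Finally, for $\mcO_\mbq^{\R}(GL(N,\C))$ --- where the $\gamma_U$-coinvariants are \emph{not} central, so Theorem~\ref{TheoFieldCont} cannot be invoked directly --- one shows that the fiber maps $C_{0,q}(GL(N,\C)) \to C_{0,q}(U(N)) \otimes C_{0,q}(T_+(N))$ induced by $\pi_{UT}^q$ are injective (again using the description of irreducible representations from the previous step), so that $C_{0,\mbq}(GL(N,\C))$ sits inside the relative tensor product over $C_0(\Theta)$ of the two continuous fields just constructed with all fibers injective, whence it is a continuous field. The genuine obstacle throughout is obtaining sufficiently precise control of the irreducible bounded representations of $\mcO_\mbq^{\R}(GL(N,\C))$ itself --- it underlies both the type $I$ property and the fiberwise injectivity behind continuity --- and this is exactly where the interplay of the quantum Cayley--Hamilton identity, the reflection equation algebra, and the quantum $QR$/Cholesky decompositions is indispensable.
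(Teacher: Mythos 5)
Your proposal follows the same overall architecture as the paper: establish the $\msR$-algebra property via the quantum Cayley--Hamilton identity (Theorem~\ref{TheoQCHRE}, Lemma~\ref{LemNewt}, Corollary~\ref{CorNewt}), reduce $\mcO_\mbq(T_+(N))$ to $\mcO_\mbq(PD(N))$ via the quantum Cholesky map, use the adjoint coaction and its conditional expectation onto the center together with Theorem~\ref{TheoFieldCont} and the Harish--Chandra image (Corollary~\ref{CorHC}) for continuity of $T_+(N)$, and pass from $GL(N,\C)$ to the relative tensor product $\mcO_\mbq(U(N))\msRtimes\mcO_\mbq(T_+(N))$ via the quantum $QR$-map. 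That matches the paper's plan. However, there are two places where your account departs from what actually happens. First, your description of the irreducible admissible $*$-representations of $\mcO_q(T_+(N))$ as ``$\ell^2$-spaces on which the off-diagonal generators act as $\mbq$-weighted one-sided shifts over a character of the diagonal torus'' is incorrect: the extra closed spectral condition $T_{ii}\geq 0$ forces, under the identification with $U_q(\mfu(N))$, a basis of \emph{positive} weight vectors, and boundedness of the $E_i,F_i$ then yields both a highest and a lowest weight, so every irreducible admissible representation is \emph{finite-dimensional} (Lemma~\ref{LemAdmRepT}). The shift-operator picture you describe is the one from the Soibelman theory of $\mcO_q(U(N))$, or from $\mcO_q(T(N))$ without the positivity condition; it does not arise for $\mcO_q(T_+(N))$. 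Your conclusion (type $I$) happens to survive, but the route is wrong.

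Second, for $\mcO_\mbq^\R(GL(N,\C))$, the type~$I$ property and continuity are the hard points, and your proposal treats them with a hand-wave (``extension of $\mcO_q(U(N))$ by the commutative completion of the Casimir subalgebra,'' ``fiber maps are injective using the description of irreducible representations''). The paper's actual content here is Proposition~\ref{PropULDecomp}: every admissible representation of $\mcO_\mbq^\R(GL(N,\C))$ lifts \emph{uniquely} through $\pi_{UT}^\mbq$ to an admissible representation of $\mcO_\mbq(U(N))\msRtimes\mcO_\mbq(T_+(N))$, and the completion is a $C_0(\Theta)$-\emph{isomorphism} $C_{0,\mbq}(GL(N,\C))\cong C_{0,\mbq}(U(N))\underset{C_0(\Theta)}{\otimes}C_{0,\mbq}(T_+(N))$. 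Proving this requires the pointwise-faithful conditional expectation $E_0$ onto the $\gamma_U$-coinvariants $\msB$, an induction argument $\wpi\preceq\Ind_{B_0}^{A_0}(\wpi')$, and crucially the sandwich $\mcO_q(H(N))[B_N^{-1}]\subseteq\pi_{UT}^\mbq(\msB)\subseteq\mcO_q(H(N))[B_1^{-1},\ldots,B_N^{-1}]$ (Lemma~\ref{LemMSB}) to identify $\msB$-representations with $\mcO_\mbq(T_+(N))$-representations. Your observation that mere fiberwise injectivity into the continuous field $C_{0,\mbq}(U(N))\underset{C_0(\Theta)}{\otimes}C_{0,\mbq}(T_+(N))$ already yields continuity (since upper semicontinuity is automatic) is correct and slightly more economical than the paper's isomorphism, but the isomorphism is what makes the type~$I$ property and the one-to-one description of the spectrum transparent, and establishing even injectivity requires the same machinery; this is the genuine gap that your proposal does not fill.
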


The proof will be split over the sections below. For now, we note the following corollary of Proposition \ref{PropFieldHopf}, and Lemma \ref{LemNewt} and Corollary \ref{CorNewt}.

\begin{Cor} 
The completions $C_{0,\mbq}(U(N)),C_{0,\mbq}(T_+(N))$ and $C_{0,\mbq}(GL(N,\C))$ form fields of Hopf C$^*$-algebras.
\end{Cor}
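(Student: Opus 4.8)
The plan is to verify, for each of the three algebras, the hypotheses of Proposition \ref{PropFieldHopf}. Granting that $\mcO_\mbq(U(N))$, $\mcO_\mbq(T_+(N))$ and $\mcO_\mbq^{\R}(GL(N,\C))$ are Hopf $\msR$-algebras — which is part of Theorem \ref{TheoMainMainExtra}, the requisite central controls being produced via Lemma \ref{LemNewt}, Corollary \ref{CorNewt} and an argument as in Proposition \ref{PropDensr} — what remains is, in each case, to exhibit a quantum control of the form $Y\oplus X$ with $Y\in\msR$ and $X$ a matrix over the algebra satisfying $(\id\otimes\Delta)(X)=X_{12}X_{13}$ and $(\id\otimes\varepsilon)(X)=I$, together with invertible $Q,\widetilde{Q}$ over $k_*$ making $\Tr(Q^*X^*XQ)$ and $\Tr(\widetilde{Q}XX^*\widetilde{Q}^*)$ central.

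For $\mcO_\mbq^{\R}(GL(N,\C))$ the quantum control recorded above is $[2]_\mbq\oplus X\oplus\Det_\mbq(X)^{-1}$. Since $\Det_\mbq(X)$ is grouplike, I would fold its inverse into $X$ and set $X'=X\oplus(\Det_\mbq(X)^{-1})\in M_{N+1}(\mcO_\mbq^{\R}(GL(N,\C)))$; then $[2]_\mbq\oplus X'$ is literally the same quantum control, while $(\id\otimes\Delta)(X')=X'_{12}X'_{13}$ and $(\id\otimes\varepsilon)(X')=I_{N+1}$. Taking $\mbQ=\Diag(\mbq^{N-i})$ as before, put $Q=\mbQ\oplus 1$ and $\widetilde{Q}=\mbQ^{-1}\oplus 1$. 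Since $(\Det_\mbq(X)^{-1})^*=\Det_\mbq(Y)$ and $\Det_\mbq(X),\Det_\mbq(Y)$ are central, one computes
\[
\Tr(Q^*(X')^*X'Q)=\Tr_{\mbQ^2}(X^*X)+\Det_\mbq(Y)\Det_\mbq(X)^{-1},
\]
\[
\Tr(\widetilde{Q}X'(X')^*\widetilde{Q}^*)=\Tr_{\mbQ^{-2}}(XX^*)+\Det_\mbq(X)^{-1}\Det_\mbq(Y),
\]
and both right-hand sides are central by Lemma \ref{LemNewt}. Proposition \ref{PropFieldHopf} then gives that $C_{0,\mbq}(GL(N,\C))$ is a field of Hopf C$^*$-algebras.

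For $\mcO_\mbq(T_+(N))$ I would push everything through the Hopf surjection $\pi_T$: with $T'=T\oplus(\Det_\mbq(T)^{-1})$ the element $[2]_\mbq\oplus T'$ is a quantum control (regrouping $[2]_\mbq\oplus T\oplus\Det_\mbq(T)^{-1}$), the matrix $T'$ is comultiplicative and counital because $\pi_T$ is a Hopf morphism and $\Det_\mbq(T)=\pi_T(\Det_\mbq(X))$ is grouplike, invertible and central, and with the same $Q,\widetilde{Q}$ one finds $\Tr(Q^*(T')^*T'Q)=\Tr_{\mbQ^2}(T^*T)+(\text{central})$ and $\Tr(\widetilde{Q}T'(T')^*\widetilde{Q}^*)=\Tr_{\mbQ^{-2}}(TT^*)+(\text{central})$, central by Corollary \ref{CorNewt}. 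For $\mcO_\mbq(U(N))$ the situation is trivial: $U$ is unitary, so $[2]_\mbq\oplus U$ is a quantum control (bounded equivalent to $[2]_\mbq$, since $\|\pi(U)\|\le N$ in every admissible representation), $U$ is comultiplicative and counital, and taking $Q=\widetilde{Q}=I_N$ gives $\Tr(U^*U)=\Tr(UU^*)=N\in k_*\subseteq\msZ(\mcO_\mbq(U(N)))$. Proposition \ref{PropFieldHopf} applies in both remaining cases.

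The one step that needs a moment's thought — and the only real (if modest) obstacle — is this folding of the determinant generator into a single comultiplicative matrix: the quantum controls carry $\Det_\mbq(X)^{-1}$ (resp.~$\Det_\mbq(T)^{-1}$) as an independent summand, whereas Proposition \ref{PropFieldHopf} requires one matrix $X$ with $(\id\otimes\Delta)(X)=X_{12}X_{13}$. It goes through exactly because $\Det_\mbq(X)^{-1}$ is grouplike — hence extends $X$ to a comultiplicative matrix of size $N+1$ — and central — hence its diagonal block contributes only a central term to the two traces — so that the centrality of the genuine $X$-part, supplied by Lemma \ref{LemNewt} and Corollary \ref{CorNewt}, is all that is additionally needed.
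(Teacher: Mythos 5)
Your proof is correct and follows exactly the route the paper intends: the corollary is stated as an immediate consequence of Proposition \ref{PropFieldHopf} together with Lemma \ref{LemNewt} and Corollary \ref{CorNewt}, and your verification of the hypotheses — in particular the observation that $\Det_\mbq(X)^{-1}$ must be folded into a comultiplicative $(N+1)\times(N+1)$ block matrix $X'$, using grouplikeness for comultiplicativity and centrality so that the extra diagonal term $\Det_\mbq(Y)\Det_\mbq(X)^{-1}$ does not spoil centrality of the weighted traces — is precisely the bookkeeping the paper leaves implicit. One cosmetic slip: in the $\mcO_\mbq(U(N))$ case $U$ is unitary, so $\|\pi(U)\|=1$ rather than $\leq N$, but the bounded-equivalence conclusion is unaffected.
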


\subsection{Proof for $\mcO_\mbq(U(N))$}

\begin{Theorem}\label{TheoHopfRU}
Together with its natural Hopf $k_*$-algebra structure, $\mcO_\mbq(U(N))$ is a strongly C$^*$-faithful, continuous, type $I$ Hopf $\msR$-algebra.
\end{Theorem}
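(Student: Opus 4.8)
The plan is to verify the four properties---type $I$, $\msR$-algebra, strong C$^*$-faithfulness, continuity---in that order, exploiting the fact that $\mcO_\mbq(U(N))$ is the ``compact part'' of the theory and is governed by the representation theory of $U_\mbq(\mfu(N))$ developed in Section 3.

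\emph{Type $I$ and $\msR$-algebra structure.} First I would describe the irreducible admissible representations. Since $U$ is a unitary corepresentation, $\|\pi(U_{ij})\|\leq 1$ in every admissible representation, so $[2]_\mbq$ is a central control and $\msP = \msP^{[2]_\mbq}$; in particular $\mcO_\mbq(U(N))$ is a (weak) s$^*$-algebra, and since $\iota\colon\msR\to\mcO_\mbq(U(N))$ sends $[2]_\mbq$ to $[2]_\mbq$ it is tautologically admissible and proper, so we have an $\msR$-algebra. The fiber $\mcO_\mbq(U(N))_x$ at $x = q\in\Theta = (0,\infty)$ is $\mcO_q(U(N))$, a Hopf $*$-algebra generated by a unitary corepresentation, hence (by Woronowicz CQG theory, or directly via the semisimplicity statements of Theorem \ref{TheoRepT}/\ref{TheoRepTConc} together with Lemma \ref{LemNonDegForm}) its irreducible $*$-representations are finite-dimensional and indexed, via the pairing with $U_q(\mfu(N))$, by the weights arising from the $\overline{V}_\mbq(\hat{\mbl})$. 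Finite-dimensionality of all irreducibles gives type $I$ at once (the norm closure of $\pi(\mcO_q(U(N)))$ is $B(\Hsp_\pi)$, a matrix algebra, which contains compacts), and since $P^{\mcO_\mbq(U(N))} = \sqcup_x P_x$ this shows $\mcO_\mbq(U(N))$ itself is type $I$. By Proposition \ref{PropTypI} all $U_{\leq M}, U_{<M}, U_0$ are type $I$, hence nuclear.

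\emph{Strong C$^*$-faithfulness.} For each $q>0$, $\mcO_q(U(N))$ is a Hopf $*$-algebra with a faithful invariant state $\Phi_q$ (Corollary \ref{CorExInv} gives positivity; faithfulness of the Haar state on the underlying CQG algebra is \cite[Proposition 3.4]{VDae98}, already invoked in the ``compact type'' discussion), so $\mcO_q(U(N))$ is C$^*$-faithful. For $\mcO_\mbq(U(N))$ itself one argues by freeness over $k$: an element killed by all admissible representations is killed after every specialisation $\mbq = q$, $q>0$, and since $\mcO_\mbq(U(N))$ is free over $k\subseteq\C(\mbq)$ with $\Theta = (0,\infty)$ an infinite set, vanishing at all these points forces the element to be zero. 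Hence $\msA$ and all $\msA_x$ are C$^*$-faithful.

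\emph{Continuity.} This is the substantive point, and the plan is to apply Theorem \ref{TheoFieldCont}. Take the coaction $\alpha = \Delta\colon\mcO_\mbq(U(N))\to\mcO_\mbq(U(N))\msRtimes\mcO_\mbq(U(N))$ of the compact-type Hopf $\msR$-algebra $\msU = \mcO_\mbq(U(N))$ on itself (it is of compact type by Corollary \ref{CorExInv}, with invariant $k_*$-linear $\Phi$). The central control $[2]_\mbq\in\msR$ is coinvariant since it lies in $\msR$ and $\alpha$ is an $\msR$-morphism, so $\mcO_\mbq(U(N))$ has a coinvariant central control. The conditional expectation is $E = (\id\otimes\Phi)\Delta$, whose image $\msB = \mcO_\mbq(U(N))^{\mcO_\mbq(U(N))}$ under the regular coaction is just $k_*\cdot 1 \cong \msR$, which is trivially central in $\mcO_\mbq(U(N))$. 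Then $B_0 = R_0 = C_0(\Theta)$, so $\Spec(B_0) = \Theta$ and the map $\Spec(B_0)\to\Theta$ is the identity, which is open. It remains to check that the specialisations $E_{0,x}\colon U_{0,x}\to (B_0)_x = \C$ are faithful: but $U_{0,x} = U_q$ is the CQG C$^*$-algebra $C(U_q(N))$ and $E_{0,x} = \Phi_x$ is the Haar state, which is faithful by \cite[Proposition 3.4]{VDae98} as noted above. Finally $\mcO_\mbq(U(N))$ is countably generated (finitely many $U_{ij}$, together with $\mbq^{\pm1}$ and the countably many $[n]_\mbq^{-1}$). All hypotheses of Theorem \ref{TheoFieldCont} are met, so $U_0$ is a continuous field of C$^*$-algebras over $\Theta$, completing the proof.

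The main obstacle is purely bookkeeping: making sure the ``compact type'' machinery of Section 2 applies with $\msU = \msA = \mcO_\mbq(U(N))$ and $\alpha = \Delta$, in particular that $\msB = E(\mcO_\mbq(U(N)))$ really collapses to $\msR$ (so that the openness of $\Spec(B_0)\to\Theta$ and the faithfulness of $E_{0,x}$ become trivial), and confirming that $U_{x,b} = U_{x,0}$ holds here because $[2]_\mbq$ is a central control for $\msU$ as well. No genuinely hard estimate is needed; the representation-theoretic input (Theorem \ref{TheoRepTConc}, Lemma \ref{LemNonDegForm}) and faithfulness of the Haar state do all the work.
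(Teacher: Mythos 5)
Your overall architecture matches the paper's (same appeal to Theorem \ref{TheoFieldCont} with $\msA=\msU=\mcO_\mbq(U(N))$, $\alpha=\Delta$, $\msB=\msR$), but there are two genuine errors in the details.

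First, the type $I$ argument is wrong as stated. You claim the irreducible $*$-representations of $\mcO_q(U(N))$ are finite-dimensional and indexed by the $\overline{V}_\mbq(\hat\mbl)$ via the pairing with $U_q(\mfu(N))$. That pairing classifies finite-dimensional \emph{corepresentations} of $\mcO_q(U(N))$, i.e.\ type $1$ $U_q(\mfu(N))$-modules — not $*$-representations of the algebra $\mcO_q(U(N))$ itself. For $q\neq 1$, $\mcO_q(U(N))$ has infinite-dimensional irreducible $*$-representations (already for $\mcO_q(SU(2))$ these act on $\ell^2(\N)$). You have imported the argument that works on the Borel/Cholesky side (Lemma \ref{LemAdmRepT}, where admissible $*$-reps of $\mcO_q(T_+(N))$ really are finite-dimensional because they correspond to type $1$ modules of $U_q(\mfu(N))$). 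The conclusion that $\mcO_q(U(N))$ is type $I$ is still correct, but it requires the actual classification of irreducible $*$-representations; the paper cites \cite[Theorem 5.3]{Koe91} for exactly this.

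Second, the faithfulness of $E_{0,x}$. You invoke \cite[Proposition 3.4]{VDae98}, but that result gives faithfulness of the Haar functional $\Phi_x$ on the dense Hopf $*$-algebra $\msU_x$ only. What Theorem \ref{TheoFieldCont} requires is faithfulness of $E_{0,x}$ on the C$^*$-algebra $A_{0,x}=C_q(U(N))$, i.e.\ faithfulness of the Haar \emph{state} on the universal CQG C$^*$-algebra. This is precisely coamenability of $C_q(U(N))$, which is a nontrivial analytic fact and not a formal consequence of algebraic faithfulness; the paper cites \cite{Nag93} or \cite[Theorem 2.7.14]{NT13} for it. Without this ingredient your continuity argument does not close.

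The rest (central control, coinvariance of $[2]_\mbq$, $\msB=\msR$, openness of $\Spec(B_0)\to\Theta$, strong C$^*$-faithfulness via freeness over $k$) is fine and essentially agrees with the paper.
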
 
\begin{proof}
It is immediate that $\mcO_\mbq(U(N))$ is an $\msR$-algebra. The strong C$^*$-faithfulness can be checked for each fiber $\mcO_q(U(N))$. For $q=1$ this is immediate, while for $q\neq 1$ this follows from \cite[Proposition 5.1]{Koe91}. Also the type $I$-property can be checked on fibers, and follows from \cite[Theorem 5.3]{Koe91}. It is further immediate that the Hopf $k_*$-algebra structures $\Delta$ and $\varepsilon$ are $\msR$-morphisms.

It remains to show continuity. This can easily be deduced with the techniques for quantum $SU(N)$ treated in \cite{Nag00}. Let us briefly spell out the argument. Note that by C$^*$-faithfulness each $\mcO_q(U(N))$ embeds in its C$^*$-envelope $C_q(U(N))$, and that the latter is hence a compact quantum group in the sense of Woronowicz \cite{Wor98}. Hence with $\Phi$ as in Corollary \ref{CorExInv}, the Haar state on $C_q(U(N))$ must restrict to the functional $\Phi_q$ on $\mcO_q(U(N))$, and in particular $\Phi$ is positive, and $(\mcO_\mbq(U(N)),\Delta)$ of compact type. Moreover, the $\Phi_q$ are faithful as each $C_q(U(N))$ defines a coamenable compact quantum group, see e.g. \cite{Nag93} or \cite[Theorem 2.7.14]{NT13}. It now follows from Theorem \ref{TheoFieldCont} that $\mcO_{\mbq}(U(N))$ is continuous. 
\end{proof}

\subsection{Proof for $\mcO_\mbq(T_+(N))$}

We first introduce a weak $\msR$-algebra-structure on $\mcO_\mbq(H(N))$. Recall the elements $B_i \in \mcO_\mbq(H(N))$ from Lemma \ref{LemElB}. Note that $B_N$ is central.

\begin{Def}
We define $\mcO_\mbq(P(N))$ to be the weak $\msR$-algebra defined by $\mcO_\mbq(H(N))$ with closed spectral condition $Z\geq 0$ and quantum control $[2]_\mbq \oplus Z$. 

We define $\mcO_\mbq(PD(N))$ to be the weak $\msR$-algebra defined by $\mcO_\mbq(H(N))[B_N^{-1}]$ with closed spectral condition $Z\geq 0$ and quantum control $[2]_\mbq \oplus Z \oplus B_N^{-1}$. 
\end{Def}

We can view the completion $C_{0,\mbq}(P(N))$, resp.~ $C_{0,\mbq}(PD(N))$ as a quantization of the algebra of $C_0$-functions on the locally compact space of positive (resp.~ positive-definite) matrices.

Most of the properties we seek for $\mcO_\mbq(T_+(N))$ will be developed in parallel with $\mcO_\mbq(PD(N))$. 

\begin{Prop}\label{PropMainR}
The weak $\msR$-algebras $\mcO_\mbq(PD(N))$ and $\mcO_\mbq(T_+(N))$ are $\msR$-algebras.
\end{Prop}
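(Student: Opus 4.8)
The plan is to upgrade each of the given quantum controls to a \emph{central} control by replacing the matrix block with an appropriate twisted quantum trace. For $\mcO_\mbq(PD(N))$ I would take the tuple
\[
[2]_\mbq \;\oplus\; \Tr_{\mbQ^2}(Z) \;\oplus\; B_N^{-1},
\]
and for $\mcO_\mbq(T_+(N))$ the tuple
\[
[2]_\mbq \;\oplus\; \Tr_{\mbQ^2}(T^*T) \;\oplus\; \Det_\mbq(T)^{-1}.
\]
Since being a weak $\msR$-algebra already includes admissibility of $\iota:\msR\rightarrow\msA$, it suffices to show that these tuples are central controls.

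\emph{Step 1: centrality.} The element $[2]_\mbq$ lies in $\iota(k_*)\subseteq\msZ(\msA)$. The element $B_N$ is central in $\mcO_\mbq(H(N))$ (equivalently $C_N=\mbq^{N(N-1)}B_N$, see \eqref{EqCNB}), so $B_N^{-1}$ is central in the localization $\mcO_\mbq(H(N))[B_N^{-1}]$; and $\Det_\mbq(T)=\pi_T(\Det_\mbq(X))$ is the image of a central element under the surjection $\pi_T$, hence central in $\mcO_\mbq(T(N))$, so $\Det_\mbq(T)^{-1}$ is too. Centrality of $\Tr_{\mbQ^2}(T^*T)$ in $\mcO_\mbq(T(N))$ is Corollary \ref{CorNewt}. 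For $\Tr_{\mbQ^2}(Z)$ I would pull this back along the quantum Cholesky map: since $\chi_T^\mbq(\Tr_{\mbQ^2}(Z))=\Tr_{\mbQ^2}(T^*T)$ is central and $\chi_T^\mbq$ is an injective $k_*$-algebra morphism (Lemma \ref{LemCholInj}), the identity $\chi_T^\mbq\big(\Tr_{\mbQ^2}(Z)\,b-b\,\Tr_{\mbQ^2}(Z)\big)=0$ for all $b$ forces $\Tr_{\mbQ^2}(Z)\in\msZ(\mcO_\mbq(H(N)))$, and centrality then passes to the localization.

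\emph{Step 2: quantum control.} It suffices to show that the $C^*$-filtration attached to the new tuple is equivalent to the one attached to the given quantum control (equivalent filtrations are simultaneously $C^*$-filtrations). Since for a block-diagonal matrix the norm of an irreducible image is the maximum over the blocks, and the blocks $[2]_\mbq$ and $B_N^{-1}$ (resp.\ $\Det_\mbq(T)^{-1}$) are common to both tuples, everything reduces to comparing $\|\pi(Z)\|$ with $\|\pi(\Tr_{\mbQ^2}(Z))\|$ (resp.\ $\|\pi(T)\|^2=\|\pi(T^*T)\|$ with $\|\pi(\Tr_{\mbQ^2}(T^*T))\|$) on irreducible admissible $\pi$ with $\|\pi([2]_\mbq)\|\le M$. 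The latter bound forces $\pi(\mbq)\in[\lambda,\lambda^{-1}]$ with $\lambda+\lambda^{-1}=M$, so $\|\pi(\mbQ)\|$ and $\|\pi(\mbQ)^{-1}\|$ are bounded by a function of $M$. One inequality is elementary, $\|\pi(\Tr_{\mbQ^2}(Z))\|=\|(\Tr\otimes\id)(\mbQ^2\pi(Z))\|\le N\|\pi(\mbQ)\|^2\|\pi(Z)\|$. For the reverse I use the closed spectral condition $Z\ge 0$ (resp.\ the automatic positivity of $T^*T$): then $\mbQ\pi(Z)\mbQ\ge 0$, so applying Lemma \ref{LemPimPop} to $(\mbQ\pi(Z)\mbQ)^{1/2}$ gives $\tfrac1N\,\mbQ\pi(Z)\mbQ\le\Tr(\mbQ\pi(Z)\mbQ)=\pi(\Tr_{\mbQ^2}(Z))$ (as a constant diagonal matrix), whence $\|\pi(Z)\|\le\|\pi(\mbQ)^{-1}\|^2\,N\,\|\pi(\Tr_{\mbQ^2}(Z))\|$. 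Combining, each $P_{\le M}$ of either filtration is contained in some $P_{\le M'}$ of the other, so the filtrations are equivalent; hence the new tuple is a quantum control with central entries, i.e.\ a central control. Therefore $\mcO_\mbq(PD(N))$ and $\mcO_\mbq(T_+(N))$ are s$^*$-algebras, hence $\msR$-algebras; if desired one may finally replace each central control by a single positive central element via Lemma \ref{LemLem}(3).

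\emph{Main obstacle.} The delicate point is the centrality of $\Tr_{\mbQ^2}(Z)$ in $\mcO_\mbq(H(N))$: one must make sure one route to it is airtight — the pull-back along the injective quantum Cholesky map used above, or directly the quantum-trace theory for the reflection equation algebra of \cite{P-S95} underlying Theorem \ref{TheoQCHRE}. A secondary care in Step 2 is to package the non-uniform $\mbq$-dependence of $\mbQ$ as an honest \emph{equivalence of filtrations} rather than a (false) bounded equivalence of controls; this is exactly why $[2]_\mbq$ must be carried along, and why the positivity conditions defining $\mcO_\mbq(PD(N))$ and $\mcO_\mbq(T_+(N))$ — as opposed to $\mcO_\mbq(H(N))$, $\mcO_\mbq(T(N))$ — are what powers the Pimsner–Popa estimate.
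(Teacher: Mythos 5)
Your proof is correct and follows essentially the same route as the paper: replace the matrix block by the twisted quantum trace $\Tr_{\mbQ^2}(Z)$ (resp.\ $\Tr_{\mbQ^2}(T^*T)$), whose centrality comes from the quantum Cayley--Hamilton package, and use the Pimsner--Popa estimate --- which is exactly the content of Proposition \ref{PropDensr} --- to compare the resulting filtration with the original one, carrying $[2]_\mbq$ along to absorb the $\mbq$-dependence of $\mbQ$. The paper states this very compactly by invoking Proposition \ref{PropDensr} together with Theorem \ref{TheoQCHRE} (and Corollary \ref{CorNewt} for the $T_+(N)$ case); your Step 2 just spells out the filtration-equivalence bookkeeping that makes the Pimsner--Popa bound applicable despite the non-constant conjugating matrix $\mbQ$, and your Cholesky-pullback argument for centrality of $\Tr_{\mbQ^2}(Z)$ is a valid alternative to citing the Pyatov--Saponov result directly.
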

\begin{proof}
Recall that $\mbQ = \Diag(\mbq^{N-i})_{1\leq i \leq N}$. By Proposition \ref{PropDensr} and the last part of Theorem \ref{TheoQCHRE},
\[
[2]_\mbq \oplus \Tr(\mbQ Z \mbQ) \oplus B_N^{-2} 
\]
is a central control for $\mcO_\mbq(PD(N))$. Hence $\mcO_\mbq(PD(N))$ is an s$^*$-algebra. It is clearly also an $\msR$-algebra. 

Replacing $Z$ by $T^*T$ using Corollary \ref{CorNewt}, we obtain the result for $\mcO_\mbq(T_+(N))$. 
\end{proof}

We now show that the above $\msR$-algebras are strongly C$^*$-faithful. 

\begin{Lem}\label{LemAdmRepT}
For $0<q$ and $q\neq 1$, we have that, under the $*$-isomorphism $\mcO_q(T(N)) \cong U_q(\mfu(N))$, any irreducible admissible $\mcO_q(T_+(N))$-representation corresponds to an irreducible type 1 representation of $U_q(\mfu(N))$. 
\end{Lem}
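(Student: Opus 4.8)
The plan is to translate the admissible representation condition on $\mcO_q(T_+(N))$ into representation-theoretic language on $U_q(\mfu(N))$ via the isomorphism of Lemma \ref{LemIsoUT}, and then invoke the classification of Theorem \ref{TheoRepTConc}. First I would fix $0<q$ with $q\neq 1$ and let $\pi$ be an irreducible admissible $\mcO_q(T_+(N))$-representation on a Hilbert space $\Hsp$. The key observation is that admissibility forces the closed spectral conditions to hold: the generators $T_i = T_{ii}^+$ (with $T_i = (T_{ii}^-)^{-1}$) act as strictly positive, hence invertible, self-adjoint operators, and the $\mbq$-commutation relations from the Remarks after Definition \ref{DefMainObj}, namely $T_i T_{il} = q T_{il} T_i$ for $i<l$ and $T_i T_{li} = q^{-1} T_{li} T_i$ for $i>l$ and $T_i T_{jl} = T_{jl}T_i$otherwise, show that conjugation by $T_i$ scales the off-diagonal generators by powers of $q$.

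Next I would transport everything through $\mcO_q(T(N)) \cong U_q(\mfu(N))$: under the (specialisation at $q$ of the) isomorphism of Lemma \ref{LemIsoUT} we have $K_i \cong T_i^{-1}$, $E_i \cong (q-q^{-1})^{-1} T_{i+1} T_{i+1,i}^-$ and $F_i \cong -(q-q^{-1})^{-1} T_{i,i+1}^+ T_i^{-1}$. Since the $T_i$ are positive invertible, the $K_i$ are positive invertible self-adjoint operators, so $\Hsp$ decomposes as a (possibly continuous a priori) direct integral of joint $K_i$-eigenspaces with \emph{positive} eigenvalues; one may write $K_i$ eigenvalues as $q^{\lambda_i}$ for real $\lambda_i$. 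The crucial point is then to show the representation is \emph{locally finite} and \emph{type $1$} in the sense of Definition \ref{DefRepGL2}: one argues that the $*$-structure $E_i^* = \mbq^{-1} F_i \hat{K}_i$ makes $U_q(\mfu(N))$ act with each $E_i, F_i$ locally nilpotent on the dense subspace generated by a single weight vector, by the standard $\mfsl_2$-argument — for a $*$-representation of $U_q(\mfsu_2)$ on a Hilbert space the operators $E_i$ lower/raise the $\hat{K}_i$-eigenvalue and unitarity bounds the weights, forcing finite-dimensional $\mfsl_2$-strings, hence a highest weight vector killed by all $E_i$; irreducibility then identifies $\Hsp$ with an irreducible highest weight module, necessarily finite-dimensional with dominant associated $K$-weight by Theorem \ref{TheoRepTConc}. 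Conversely any such type $1$ module carries the compatible Hilbert space structure making it a $*$-representation of $U_q(\mfu)$, and one checks it is admissible for $\mcO_q(T_+(N))$ because the diagonal $T_i = K_i^{-1} = q^{-\lambda_i}$ are indeed strictly positive.

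The main obstacle I anticipate is the passage from ``$*$-representation of $U_q(\mfu(N))$ on a Hilbert space with positive $K_i$'' to ``type $1$, i.e. locally finite with integral $\hat K$-weight differences'': a priori the $K_i$-eigenvalues are arbitrary positive reals and the representation could be infinite-dimensional, so one must genuinely use the $*$-structure $F_i^* = \mbq \hat{K}_i^{-1} E_i$ together with positivity of $E_i^* E_i$ and $F_i^* F_i$ to run the $\mfsl_2$ integrality-and-finiteness argument fiber by fiber; this is where the bulk of the work lies, and it is essentially the content of \cite[Proposition 5.1]{Koe91} / \cite[Theorem 7.23]{K-S97} applied to the Borel, so I would cite those for the detailed Verma-module computation rather than reproduce it. Once local finiteness and type $1$ are established, the identification with the list in Theorem \ref{TheoRepTConc} and the verification of admissibility are routine.

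\begin{proof}
Fix $0<q$ with $q\neq 1$ and identify $\mcO_q(T(N)) \cong U_q(\mfu(N))$ as in Lemma \ref{LemIsoUT}, so that $K_i \cong T_i^{-1}$, $E_i \cong (q-q^{-1})^{-1}T_{i+1}T_{i+1,i}^-$ and $F_i \cong -(q-q^{-1})^{-1}T_{i,i+1}^+T_i^{-1}$. Let $(\pi,\Hsp)$ be an irreducible admissible $\mcO_q(T_+(N))$-representation. By the spectral condition $T_{ii}\geq 0$ and invertibility of the $T_i$, each $\pi(K_i)$ is a positive invertible self-adjoint operator, so its spectrum consists of positive reals which we write as $q^{\lambda_i}$. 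Because $\pi$ is a $*$-representation and $F_i^* = \mbq\hat K_i^{-1}E_i$, $E_i^* = \mbq^{-1}F_i\hat K_i$, the operators $E_i^*E_i, F_i^*F_i \geq 0$, and the standard $U_q(\mfsu_2)$-argument (see \cite[Proposition 5.1]{Koe91}, cf. \cite[Section 7]{K-S97}) applied to each pair $(E_i,F_i,\hat K_i)$ shows that on the cyclic submodule generated by any joint $K$-eigenvector the $E_i$ and $F_i$ act locally nilpotently, the $\hat K_i$-eigenvalues differ by integral powers of $q$, and there is a vector killed by all $E_i$. By irreducibility $\Hsp$ is an irreducible highest weight type $1$ module in the sense of Definition \ref{DefRepGL2}, hence by Theorem \ref{TheoRepTConc} finite dimensional with dominant weakly integral associated $K$-weight. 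Conversely, any such module $V_q(\hat{\mbl})$ carries the invariant Hilbert space structure making $\pi_{\mbl}$ a $*$-representation of $U_q(\mfu(N))$, and since $\pi_{\mbl}(T_i) = q^{\lambda_i}>0$ the spectral condition $T_{ii}\geq 0$ holds, so $\pi_{\mbl}$ is admissible for $\mcO_q(T_+(N))$.
\end{proof}
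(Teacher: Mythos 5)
Your proposal is correct and follows essentially the same route as the paper: transport admissibility through the isomorphism $\mcO_q(T(N)) \cong U_q(\mfu(N))$, use positivity and invertibility of the $K_i$ together with boundedness of the representation to establish a weight-space decomposition with a highest weight, conclude finite-dimensionality, and then invoke Theorem \ref{TheoRepTConc}. The only stylistic difference is that the paper argues directly that boundedness forces both a highest and a lowest weight, while you route through local nilpotence and $\mfsl_2$-strings with a reference to \cite{Koe91}; both are standard ways of packaging the same observation.
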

\begin{proof}
Let us transport the notion of admissibility from $\mcO_q(T_+(N))$ to $U_q(\mfu(N))$, so we consider (bounded) $U_q(\mfu(N))$-representations with the $K_i$ positive. By the commutation relation between the $K_i$ and $E_j,F_j$, it is clear that any irreducible admissible $U_q(\mfu(N))$-representation must have an orthonormal basis consisting of positive weight vectors. By the commutation relations between the $E_i,F_i$ it is moreover clear that any irreducible bounded $U_q(\mfu(N))$-representation must have a highest  and a lowest weight. It hence follows that the representation must be finite-dimensional. The lemma now follows immediately from Theorem \ref{TheoRepTConc}.
\end{proof}

\begin{Prop}\label{PropFaith}
The $\msR$-algebras $\mcO_\mbq(PD(N))$ and $\mcO_\mbq(T_+(N))$ are strongly C$^*$-faithful.
\end{Prop}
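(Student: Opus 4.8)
The plan is to reduce strong C$^*$-faithfulness to the individual fibers, and then to treat the generic fibers through the identification $\mcO_q(T(N))\cong U_q(\mfu(N))$ and the classification of admissible representations in Lemma~\ref{LemAdmRepT}.

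First I would note that both $\mcO_\mbq(T_+(N))$ and $\mcO_\mbq(PD(N))=\mcO_\mbq(H(N))[B_N^{-1}]$ are free as $k$-modules --- for the first this is Lemma~\ref{LemPBWUq}, for the second one uses freeness of $\mcO_\mbq^{\ad}(M_N(\C))$ together with the fact that $B_N$ is central and a non-zero-divisor (since $\chi_T^\mbq(B_N)=T_1^2\cdots T_N^2$ is invertible in $\mcO_\mbq(T(N))$ and $\chi_T^\mbq$ is injective by Lemma~\ref{LemCholInj}) --- and that these PBW bases remain bases upon each specialisation $\mbq=q$, $q>0$. Hence each algebra embeds into the product of its fibers, and since the irreducible admissible representations of a fiber $\msA_x$ are exactly those of $\msA$ lying in $P^\msA_x$ while $P^\msA=\sqcup_x P^\msA_x$, it is enough to prove that every fiber $\mcO_q(PD(N))$ and $\mcO_q(T_+(N))$, $q>0$, is C$^*$-faithful. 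The case $q=1$ is immediate: these become the commutative algebras of regular functions on the real affine varieties of invertible hermitian matrices, resp. of upper triangular matrices with nonzero real diagonal (at $\mbq=1$ one has $B_N=\det Z$), and the admissible characters are the evaluations at the positive-definite, resp. positive-diagonal, points, a subset that is open in the real topology and hence Zariski dense, so no nonzero regular function vanishes on all of them.

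Next, for $q\neq 1$ and $\mcO_q(T_+(N))$, Lemma~\ref{LemAdmRepT} together with Theorem~\ref{TheoRepTConc} tells us that the highest weight modules $\pi_{\mbl}$ with $\mbl\in P^+$, equipped with their invariant Hilbert space structure, are irreducible admissible $\mcO_q(T_+(N))$-representations (admissibility holds because $T_{ii}=T_i=K_i^{-1}$ acts by a positive scalar on a type~$1$ module). The step I would spend real effort on is that these alone already separate the points of $\mcO_q(T(N))$: I would combine the cosemisimplicity of $\mcO_q(U(N))$ (Lemma~\ref{LemCosemik}, so that $\mcO_q(U(N))$ is spanned by the matrix coefficients of the corepresentations $V_q(\hat{\mbl})$) with the non-degeneracy of the Hopf pairing $p$ between $\mcO_q(T(N))^{\cop}$ and $\mcO_q(U(N))$ (Lemma~\ref{LemNonDegk}); for $0\neq t\in\mcO_q(T(N))$ there is then a matrix coefficient of some $V_q(\hat{\mbl})$ not annihilated by $p(t,\cdot)$, which says precisely that $t$ acts nontrivially on $V_q(\hat{\mbl})$ in the module structure induced by $p$, and that module is $\pi_{\mbl}$ by Lemma~\ref{LemCosemik}.

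Finally, for $q\neq 1$ and $\mcO_q(PD(N))$, I would use that $\chi_T^q$ realises $\mcO_q(PD(N))$ as a $*$-subalgebra of $\mcO_q(T(N))$ with $Z\mapsto T^*T$ (Lemma~\ref{LemCholInj}, extended over the localisation since $\chi_T^q(B_N)$ is invertible). Restricting $\pi_{\mbl}$ to this subalgebra gives a $*$-representation $\bar\pi_{\mbl}$ with $\bar\pi_{\mbl}(Z)=\pi_{\mbl}(T)^*\pi_{\mbl}(T)\geq 0$, hence admissible; being a $*$-representation on a finite dimensional Hilbert space it decomposes into irreducible admissible subrepresentations (on each of which $Z$ is still positive). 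Since the $\pi_{\mbl}$ separate points of $\mcO_q(T(N))$, their restrictions separate points of the subalgebra $\mcO_q(PD(N))$, hence so do these irreducible constituents, and C$^*$-faithfulness follows.

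The hard part will be the separation claim in the third paragraph: one must check that the non-degeneracy of the pairing of Lemma~\ref{LemNonDegk} and the cosemisimplicity of Lemma~\ref{LemCosemik} genuinely specialise from $k$ to $\C$ at $\mbq=q$ --- equivalently, that $U_q(\mfu(N))$ is residually finite dimensional with respect to its type~$1$ modules --- and that the module induced on $V_q(\hat{\mbl})$ by the pairing is the full irreducible $\pi_{\mbl}$ and not a proper submodule; granting this, everything else is bookkeeping.
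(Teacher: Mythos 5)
Your overall structure matches the paper's: reduce strong C$^*$-faithfulness to the fibers, handle $q=1$ by Zariski density, and for $q\neq 1$ use the injectivity of the quantum Cholesky map (Lemma~\ref{LemCholInj}) to reduce $\mcO_q(PD(N))$ to $\mcO_q(T_+(N))$. The extra details you supply --- freeness over $k$ to get the embedding into the product of fibers, and the decomposition of restricted representations into irreducible admissible constituents --- are things the paper leaves implicit but are needed, so that part is sound.

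The genuine gap is exactly where you flag it. You propose to derive C$^*$-faithfulness of $\mcO_q(T_+(N))$ from the non-degeneracy of the Hopf pairing (Lemma~\ref{LemNonDegk}) and cosemisimplicity (Lemma~\ref{LemCosemik}), both of which are stated over the ring $k$. But non-degeneracy of a $k$-bilinear pairing does \emph{not} descend to non-degeneracy of the specialisation at $\mbq = q$: injectivity of the induced map $\mcO_\mbq(T(N)) \to \mcO_\mbq(U(N))^{\circ}$ over $k$ says nothing about the fiber (multiplication by $\mbq-q$ is an injective $k$-module endomorphism with zero specialisation). So your third paragraph as written does not close. As you observe, what is really needed is that $U_q(\mfu(N))$ is residually finite-dimensional with respect to its type $1$ modules for each fixed $q>0$, $q\neq 1$ --- but that statement is precisely \cite[Theorem~7.13]{K-S97}, which is what the paper cites (together with Lemma~\ref{LemAdmRepT} and the remark after Lemma~\ref{LemIsoUT} to transport it to $\mcO_q(T_+(N))$). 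In other words, your alternative route circles back to needing the very external result the paper invokes; you have correctly identified the missing ingredient but not supplied it, so the key separation step is not actually proved.
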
 
\begin{proof}
We have to show that for each $q>0$ the s$^*$-algebras $\mcO_q(PD(N))$ and $\mcO_q(T_+(N))$ are C$^*$-faithful.

For $q=1$ this is clear, since $PD(N)$ is Zariski dense in the complex variety $M_N(\C)$, and $T_+(N)$ is Zariski dense in $T(N)$.

For $q\neq 1$ it is by Lemma \ref{LemCholInj} enough to show C$^*$-faithfulness for $\mcO_q(T_+(N))$. This follows\footnote{The result in  \cite[Theorem 7.13]{K-S97} can be immediately extended from $U_q(\mfsu(N))$ to $U_q(\mfu(N))$ by the embedding $U_q(\mfu(N)) \subseteq U_q(\mfsu(N))\otimes \C[D^{\pm 1}]$ used in Lemma \ref{LemCholInj}.} from \cite[Theorem 7.13]{K-S97}, Lemma \ref{LemAdmRepT} and the remark following Lemma \ref{LemIsoUT}.
\end{proof}

To deal with the type $I$-property for $\mcO_\mbq(PD(N))$, we will need uniqueness of the quantum Cholesky decomposition. We make some preparations.

\begin{Lem}\label{LemL}
For $0<q$ and $1\leq k \leq N$, write 
\[
L_k = F_{k}^*Z_{N,N+1} F_{k}\in (\otimes_{i=1}^N M_N(\C))\otimes \mcO_q(PD(N)),
\]
where 
\[
F_k =\hat{R}_{N-1,N} \ldots \hat{R}_{k+1,k+2} \hat{R}_{k,k+1},\qquad F_N = 1,\qquad F_{k}\in (\otimes_{i=1}^N M_N(\C)).
\]
Then the $L_k$ commute, and for $1\leq k \leq N$
\[
L_NL_{N-1}\ldots L_{k} = (Z_{N,N+1}F_k)^{N-k+1}.
\]
\end{Lem}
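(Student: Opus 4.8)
The plan is to verify the commutation of the $L_k$ and the product formula by a downward induction on $k$, using only the defining reflection-equation relation for $Z$ and the braid relations for $\hat{R}$. First I would unwind the definitions: $L_k = F_k^* Z_{N,N+1} F_k$ with $F_k = \hat{R}_{N-1,N}\cdots\hat{R}_{k,k+1}$, and observe that $F_k = F_{k+1}\hat{R}_{k,k+1}$, so $L_k = \hat{R}_{k,k+1}^* L_{k+1}^{(k\to k+1)} \hat{R}_{k,k+1}$ in a suitable sense — more precisely I would track that conjugating $Z_{N,N+1}$ successively by the $\hat{R}_{i,i+1}$ ``walks'' the REA-generator from leg $N$ down to leg $k$, so that morally $L_k$ is ``$Z$ sitting in leg $k$, dressed by $R$-matrices in legs $k,\dots,N$.'' The key algebraic input is that the reflection equation $\hat{R}_{12}Z_{2,N+1}\hat{R}_{12}^{?}Z\cdots$ — in the form used throughout the paper, $\mbR_{21}Z_{13}\mbR_{12}Z_{23} = Z_{23}\mbR_{21}Z_{13}\mbR_{12}$ — says exactly that two adjacent ``dressed copies'' of $Z$ can be swapped. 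So the crux is a single two-variable identity: $L_{k+1}L_k = L_k L_{k+1}$, together with the telescoping identity $L_{k+1}L_k = (Z_{N,N+1}F_k)^2 \cdot(\text{something})$, which I would set up so the induction closes.

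Concretely, for the product formula I would prove by induction on $m = N-k+1$ the statement $L_N L_{N-1}\cdots L_k = (Z_{N,N+1}F_k)^{N-k+1}$. The base case $m=1$ is $L_N = Z_{N,N+1} = (Z_{N,N+1}F_N)^1$ since $F_N=1$. For the inductive step, assuming $L_N\cdots L_{k+1} = (Z_{N,N+1}F_{k+1})^{N-k}$, I must show $(Z_{N,N+1}F_{k+1})^{N-k} L_k = (Z_{N,N+1}F_k)^{N-k+1}$. Since $F_k = F_{k+1}\hat{R}_{k,k+1}$ and $L_k = \hat{R}_{k,k+1}^{-1}\cdots$ (here I would need to be careful whether $F_k^*$ equals $F_k^{-1}$ up to the $*$-structure — note $\hat{R}=\Sigma\mbR$ and $\mbR^* = \mbR_{21}$, so $F_k$ is not unitary, and I expect the correct reading is $F_k^* = $ the reversed product of the $\mbR_{21}$'s, which I would pin down carefully), the identity should reduce, after cancelling the outer $F_{k+1}$-factors, to the statement that $\hat{R}_{k,k+1}$ intertwines $Z_{N,N+1}$-type factors appropriately — and this is precisely a rearranged form of the reflection equation relation applied iteratively to move $Z$ past the braids. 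The commutativity $[L_j,L_k]=0$ for $j>k$ then follows from the product formula itself: both $L_N\cdots L_k$ and the rearranged products are powers of the single element $Z_{N,N+1}F_k$ (or can be extracted as ratios thereof), hence lie in a commutative subalgebra; alternatively I would prove commutativity of adjacent $L_k,L_{k+1}$ directly from the REA relation and extend by the braid relations.

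The main obstacle I anticipate is bookkeeping: keeping straight which matrix legs each $\hat{R}_{i,i+1}$ and each $Z_{N,N+1}$ act on after repeated conjugation, and in particular verifying that conjugating $Z_{N,N+1}$ by $\hat{R}_{N-1,N}\cdots\hat{R}_{k,k+1}$ genuinely produces ``$Z$ in leg $k$ dressed by $R$-matrices'' in a form compatible with the reflection equation — this is exactly the kind of ``quantum matrix sitting in a tensor leg, conjugated down the chain'' computation that is standard in the REA literature (the paper cites \cite{P-S95,JW17}) but error-prone. A secondary subtlety is the $*$ versus inverse question for $F_k$: since $\mbR$ is not unitary, $F_k^*\neq F_k^{-1}$, and I would need to check that the identity $L_NL_{N-1}\cdots L_k = (Z_{N,N+1}F_k)^{N-k+1}$ is consistent with this — most likely the resolution is that the relevant ``dressed $Z$'' identities hold with $F_k^*$ on the left precisely because $Z^*=Z$ and $\mbR^* = \mbR_{21}$ conspire, so conjugation by $F_k$ lands back in the reflection-equation form. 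Once the leg-tracking is set up cleanly, each individual step is a one-line application of either a braid relation or the reflection equation, so the proof should be short modulo this setup.
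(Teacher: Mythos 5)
The approach you outline for the product formula — downward induction via $L_N\cdots L_k = M_k^{N-k+1}$ with $M_k := Z_{N,N+1}F_k$, base case $L_N = Z_{N,N+1}$, and inductive step $M_{k+1}^{N-k}L_k = M_k^{N-k+1}$ — is exactly the paper's strategy. However, there are two substantive misreadings of the mechanism and one genuine gap.

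First, the inductive step does \emph{not} rest on the reflection equation, rearranged or otherwise. It rests entirely on the braid relations for $\hat{R}$ together with the leg-disjointness commutations. The key technical identity is
\[
\hat{R}_{l,l+1}\,M_k \;=\; M_k\,\hat{R}_{l+1,l+2}\qquad (k\le l\le N-2),
\]
which follows by pushing $\hat{R}_{l,l+1}$ past $Z_{N,N+1}$ and the high-index $\hat{R}$'s by disjointness, using the braid relation once to turn $\hat{R}_{l,l+1}\hat{R}_{l+1,l+2}\hat{R}_{l,l+1}$ into $\hat{R}_{l+1,l+2}\hat{R}_{l,l+1}\hat{R}_{l+1,l+2}$, and pushing the resulting $\hat{R}_{l+1,l+2}$ past the low-index $\hat{R}$'s. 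Iterating this converts $\hat{R}_{k,k+1}M_k^{N-k-j}$ into $M_k^{N-k-j}\hat{R}_{N-j,N-j+1}$, and peeling off $M_k=M_{k+1}\hat{R}_{k,k+1}$ one factor at a time telescopes the expression into $M_{k+1}^{N-k}F_k^*Z_{N,N+1}F_k$. The reflection equation enters nowhere here. Your worry about whether $F_k^*$ equals $F_k^{-1}$ is a red herring: $\hat{R}$ is selfadjoint (since $\hat{R}^*=\mbR^*\Sigma=\mbR_{21}\Sigma=\Sigma\mbR=\hat{R}$ for real $q$), so $F_k^*$ is simply the reversed product $\hat{R}_{k,k+1}\cdots\hat{R}_{N-1,N}$ of the \emph{same} operators; no inversion or $\mbR_{21}$'s enter.

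Second, your primary proposal for commutativity of the $L_k$ — that it "follows from the product formula itself" because the products are "powers of the single element $Z_{N,N+1}F_k$" — does not hold up, and this is the real gap. The elements $M_k$ depend on $k$, and the various powers $M_N^1,M_{N-1}^2,\ldots,M_1^N$ are \emph{not} powers of a single element; nor is it apparent that the $M_k$ commute among themselves. Even assuming invertibility (which is not given at the algebra level) the relation $L_k = M_{k+1}^{-(N-k)}M_k^{N-k+1}$ expresses each $L_k$ as a ratio in two noncommuting $M$'s, so nothing forces the $L_k$ into a commutative subalgebra. Your fallback plan — prove $[L_{k+1},L_k]=0$ directly from the reflection equation (in the braid form $(\hat{R}Z)^2 = (Z\hat{R})^2$, which is exactly $[L_N,L_{N-1}]=0$) and then extend — is the correct route and is what the paper does, but the extension itself has structure you would need to supply: for $l\ge k+2$ one writes $L_k = \hat{R}_{k,k+1}L_{k+1}\hat{R}_{k,k+1}$ and uses that $\hat{R}_{k,k+1}$ lives on legs disjoint from $L_l$; for $l=k+1$ one needs a separate braid-relation computation showing $L_k\hat{R}_{k+1,k+2} = \hat{R}_{k+1,k+2}L_k$, after which $L_kL_{k+1}=\hat{R}_{k+1,k+2}L_kL_{k+2}\hat{R}_{k+1,k+2}$, and one invokes the $l=k+2$ case already handled \emph{within the same inductive step}. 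This second induction, and the fact that it uses the RE only once (in the base case), is the piece your proposal does not reach.
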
 
\begin{proof}
Note first that the $\hat{R}$ are selfadjoint, so 
\[
F_k^* = \hat{R}_{k,k+1}\hat{R}_{k+1,k+2}  \ldots \hat{R}_{N-1,N}.
\]
Write  $M_k = Z_{N,N+1}F_k$. Then for $k\leq l\leq  N-2$, we have by the braid relations for $\hat{R}$ that 
\begin{eqnarray*} \hat{R}_{l,l+1} M_k &=& Z_{N,N+1} \hat{R}_{N,N+1} \ldots \hat{R}_{l,l+1}\hat{R}_{l+1,l+2}\hat{R}_{l,l+1}\hat{R}_{l-1,l} \ldots \hat{R}_{k,k+1} \\ &=&  Z_{N,N+1} \hat{R}_{N,N+1} \ldots \hat{R}_{l+1,l+2}\hat{R}_{l,l+1}\hat{R}_{l+1,l+2}\hat{R}_{l-1,l}\ldots \hat{R}_{k,k+1}\\ &=& M_k \hat{R}_{l+1,l+2}.
\end{eqnarray*} 
Hence for $k \leq N-1$ 
\begin{eqnarray*} 
M_k^{N-k+1} &=&  M_{k+1} \hat{R}_{k,k+1} M_k^{N-k-1} M_{k+1} \hat{R}_{k,k+1} \\ &=& M_{k+1} M_k^{N-k-1} \hat{R}_{N-1,N} M_{k+1}\hat{R}_{k,k+1} \\ &=& 
M_{k+1}^2 \hat{R}_{k,k+1} M_k^{N-k-2} \hat{R}_{N-1,N} M_{k+1}\hat{R}_{k,k+1} \\ &=&M_{k+1}^2  M_k^{N-k-2} \hat{R}_{N-2,N-1}\hat{R}_{N-1,N} M_{k+1}\hat{R}_{k,k+1}  \\ &=& \ldots \\ &=& M_{k+1}^{N-k} \hat{R}_{k,k+1} \ldots \hat{R}_{N-2,N-1}\hat{R}_{N-1,N} M_{k+1}\hat{R}_{k,k+1}\\ 
&=& M_{k+1}^{N-k}L_k.
\end{eqnarray*}
By induction, we find that $M_k^{N-k+1} = L_{N}\ldots L_k$. 

Note now that $L_N$ and $L_{N-1}$ commute by the reflection equation. Assume that the $L_l$ pairwise commute for all $l>k$, where $k\leq N-2$.  We are to show that $L_k$ commutes with the $L_l$ for $l>k$. For $l\geq k+2$, we have by induction that
\[
L_k L_l = \hat{R}_{k,k+1} L_{k+1} \hat{R}_{k,k+1} L_l  =   \hat{R}_{k,k+1} L_{k+1} L_l \hat{R}_{k,k+1} =  L_l \hat{R}_{k,k+1} L_{k+1}\hat{R}_{k,k+1}  = L_l L_k.
\]
For $l=k+1$, we note that by the braid relations
\begin{eqnarray*}
L_k\hat{R}_{k+1,k+2}  &=& \hat{R}_{k,k+1} \ldots \hat{R}_{N,N+1} Z_{N,N+1} \hat{R}_{N,N+1} \ldots  \hat{R}_{k+1,k+2} \hat{R}_{k,k+1} \hat{R}_{k+1,k+2} \\ &=& \hat{R}_{k,k+1} \ldots \hat{R}_{N,N+1} Z_{N,N+1} \hat{R}_{N,N+1} \ldots  \hat{R}_{k,k+1} \hat{R}_{k+1,k+2} \hat{R}_{k,k+1} \\ &=& \hat{R}_{k,k+1}\hat{R}_{k+1,k+2}\hat{R}_{k,k+1}\ldots  \hat{R}_{N,N+1} Z_{N,N+1} \hat{R}_{N,N+1} \ldots  \hat{R}_{k,k+1} \\ &=& \hat{R}_{k+1,k+2} L_k.
\end{eqnarray*}
Hence
\[
L_kL_{k+1} = L_k \hat{R}_{k+1,k+2}L_{k+2}\hat{R}_{k+1,k+2} = \hat{R}_{k+1} L_k L_{k+2} \hat{R}_{k+1,k+2} = \hat{R}_{k+1}L_{k+2}L_k\hat{R}_{k+1,k+2} = L_{k+1}L_k.
\]
\end{proof}

\begin{Lem}\label{LemPosZB}
Let $0<q$, and assume that $\pi$ is an admissible representation of $\mcO_q(P(N))$. Then $\pi(B_N) \geq 0$. Moreover, $\pi(Z)$ has a bounded inverse if and only if $\pi(B_N)$ has a bounded inverse. 
\end{Lem}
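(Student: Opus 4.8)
The plan is to exploit the identity from Lemma \ref{LemIdBN}, which expresses $B_N$ (up to a positive scalar) as $(\omega_q\otimes \id)((Z_{N,N+1}\hat R_{N-1,N}\cdots\hat R_{12})^N)$, together with the factorization from Lemma \ref{LemL}. Indeed, write $M_1 = Z_{N,N+1}F_1$ with $F_1 = \hat R_{N-1,N}\cdots\hat R_{12}$, so that $B_N \sim (\omega_q\otimes\id)(M_1^N)$ and, by Lemma \ref{LemL}, $M_1^N = L_N L_{N-1}\cdots L_1$ where the $L_k = F_k^* Z_{N,N+1}F_k$ mutually commute. In any admissible representation $\pi$ of $\mcO_q(P(N))$ we have $\pi(Z)\geq 0$, hence each $\pi(L_k) = F_k^*\pi(Z)_{N,N+1}F_k \geq 0$ (a matrix amplification of a positive operator conjugated by a fixed matrix $F_k$ over $\C$). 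Since the $\pi(L_k)$ are commuting positive operators, their product $\pi(L_N)\cdots\pi(L_1) = \pi(M_1^N)$ is positive. Applying the state $\omega_q\otimes\id$ (which is positive and unital on the $\C$-matrix factors) then gives $\pi(B_N)\geq 0$, up to the positive scalar from Lemma \ref{LemIdBN}.

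For the second assertion, I would argue in both directions using the same factorization. If $\pi(B_N)$ is bounded below, i.e.\ $\pi(B_N)\geq \delta$ for some $\delta>0$, then since $\pi(B_N)$ is (a scalar multiple of) $(\omega_q\otimes\id)(\pi(L_N)\cdots\pi(L_1))$ with all factors commuting and positive, and each $\|\pi(L_k)\| \leq \|F_k\|^2\|\pi(Z)\|$, invertibility of the product forces each commuting factor $\pi(L_k)$ to be bounded below as well; in particular $\pi(L_N) = \pi(Z)_{N,N+1}$ is bounded below, which means $\pi(Z)\otimes 1_{N^{N-1}}$ is bounded below on the appropriate amplified space, hence $\pi(Z)$ has bounded inverse. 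Conversely, if $\pi(Z)$ has a bounded inverse, then $\pi(Z)\geq \varepsilon$ for some $\varepsilon>0$, so each $\pi(L_k) = F_k^*\pi(Z)_{N,N+1}F_k$; here one must be careful since $F_k$ need not be invertible. Instead I would use the \emph{other} half of Lemma \ref{LemL}: $\pi(M_k^{N-k+1}) = \pi(L_N\cdots L_k)$, and observe that $\pi(M_1) = \pi(Z)_{N,N+1}F_1$ where $F_1 = \hat R_{N-1,N}\cdots\hat R_{12}$ is invertible (each $\hat R_{i,i+1} = \Sigma\mbR_{i,i+1}$ is invertible since $\mbR$ is). Hence $\pi(M_1)$ is invertible, so $\pi(M_1^N) = \pi(L_N\cdots L_1)$ is invertible, and then $\pi(B_N) \sim (\omega_q\otimes\id)$ of an invertible commuting product of positives; since $\omega_q$ is a \emph{vector} state it suffices to check it does not annihilate, and one can push this through using that $\omega_q\otimes\id$ of a positive operator bounded below by a positive operator in the last leg remains bounded below — this needs the explicit form of $\omega_q$ as the joint $(-q)$-eigenvector state.

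The main obstacle I anticipate is precisely this last point: showing that applying the vector state $\omega_q\otimes\id$ to the invertible positive element $\pi(M_1^N)$ yields something with bounded inverse. A vector state applied to a positive invertible operator need not be invertible in general (e.g.\ $\omega$ applied to a projection can be $0$), so one genuinely needs to use the commuting structure of the $\pi(L_k)$ and the specific vector $\omega_q$. The cleanest route is probably: since the $\pi(L_k)$ commute, work in their joint spectral decomposition, note $\omega_q$ is a \emph{faithful}-on-the-relevant-subspace vector (the $-q$-eigenspace of all $\hat R_{i,i+1}$ is one-dimensional and the $L_k$ preserve a decomposition adapted to it), so $(\omega_q\otimes\id)(\prod_k\pi(L_k))$ is bounded below by $c\prod_k(\text{bottom of }\mathrm{spec}\,\pi(L_k))$ for some $c>0$ — alternatively, invoke that $\chi_T^q(B_N) = T_1^2\cdots T_N^2$ under the Cholesky map and that $\pi(Z)$ invertible forces the corresponding $T_i$ to be bounded below (via Proposition \ref{PropIdHT} identifying $\mcO_q(T(N))\cong U_q(\mfu)$, under which $B_N^{-1}$ is adjoined precisely when the $K_i^{\pm 2}$ are bounded), which may actually be the slicker argument. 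I would present both and keep whichever compiles most economically.
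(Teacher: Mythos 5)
Your argument for $\pi(B_N)\geq 0$ is exactly the paper's: factor $\pi(M_1^N) = \pi(L_N)\cdots\pi(L_1)$ via Lemma \ref{LemL}, observe each $\pi(L_k) = F_k^*\pi(Z)_{N,N+1}F_k\geq 0$, hence the commuting product is positive, and then slice by the (positive, unital) state $\omega_q$ using Lemma \ref{LemIdBN}. That part is fine.

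For the direction ``$\pi(Z)$ invertible $\Rightarrow\pi(B_N)$ invertible,'' your worry at the end is a misconception. If $T$ is a positive operator with $T\geq\delta\cdot 1$ for $\delta>0$, then for \emph{any} state $\omega$ one has $(\omega\otimes\id)(T)\geq\delta\cdot 1$, simply because $\omega\otimes\id$ is positive and unital, so it maps $T-\delta\cdot 1\geq 0$ to something $\geq 0$. Your counterexample (a projection sliced to $0$) is a positive operator that is \emph{not} bounded below, so it does not threaten this step. Since the $F_k$ are invertible (products of the invertible $\hat R_{i,i+1}$) and the $\pi(L_k)$ commute, one has $\pi(L_N\cdots L_1)\geq\delta$ for some $\delta>0$ when $\pi(Z)\geq\varepsilon>0$, and slicing preserves this lower bound. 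No spectral decomposition, faithfulness of $\omega_q$, or detour through Proposition \ref{PropIdHT} is needed.

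For the direction ``$\pi(B_N)$ invertible $\Rightarrow\pi(Z)$ invertible,'' your argument has a genuine gap. You write ``invertibility of the product forces each commuting factor $\pi(L_k)$ to be bounded below,'' but what you actually know invertible is the \emph{slice} $(\omega_q\otimes\id)(\pi(L_N\cdots L_1))$, not the product $\pi(L_N\cdots L_1)$ itself. Slicing by a state preserves lower bounds but does not reflect them: a slice can be bounded below while the operator is not (e.g.\ $\omega\otimes\id$ applied to $p\otimes 1 + (1-p)\otimes T$ for small $T$ is near $\omega(p)$ even though the operator is near $0$ on part of the space). So you cannot conclude that $\pi(Z)_{N,N+1}$ is bounded below from invertibility of $\pi(B_N)$ by this route. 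The paper instead uses the quantum Cayley--Hamilton identity: by Theorem \ref{TheoQCHRE} and \eqref{EqCNB}, $\pi(Z)$ satisfies a monic polynomial $\pi(Z)^N - \pi(C_1)\pi(Z)^{N-1}+\cdots+(-1)^N\pi(C_N)=0$ whose coefficients $\pi(C_i)$ are scalar (central) and commute with $\pi(Z)$, and whose constant term is an invertible scalar multiple of $\pi(B_N)$. Rearranging gives an explicit two-sided bounded inverse for $\pi(Z)$. This is a genuinely different mechanism that you did not invoke. Your proposed fallback via Proposition \ref{PropIdHT} and the Cholesky identification is circular: that admissible representations of $\mcO_q(PD(N))$ factor through $\mcO_q(T(N))$ is precisely what Proposition \ref{PropAdmExt} establishes, and its proof \emph{uses} the very implication of Lemma \ref{LemPosZB} you are trying to prove.
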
 
\begin{proof}
Assume first that $\pi(B_N)$ is invertible. We have $\pi(Z)\geq 0$ by admissibility. By Theorem \ref{TheoQCHRE}, equation \eqref{EqCNB} and the hypothesis,  we know that $\pi(Z)$ satisfies a polynomial equation with commuting coefficients and invertible constant term. Hence $\pi(Z)$ is invertible. 

On the other hand, let $\pi$ be a general admissible representation of $\mcO_q(P(N))$. Using the notation of Lemma \ref{LemL}, we have that each $\pi(L_k) \geq 0$. Since the $L_k$ commute, also the product 
\[
\pi(L_N\ldots L_1) = (\pi(Z)_{N,N+1}\hat{R}_{N-1,N}\ldots \hat{R}_{12})^N \geq 0.
\]
However, by Lemma  \ref{LemIdBN} we know that $B_N$ is a slice of $\pi(L_N\ldots L_1)$ by a non-zero positive multiple of a vector state. It follows that $\pi(B_N)\geq 0$. If moreover $\pi(Z)$ is invertible, it follows that each of the $\pi(L_k)$ has a bounded inverse, and so there exists $\delta>0$ with $\delta \leq \pi(L_N\ldots L_1)$. Being a slice, it follows that for some $\delta'>0$ we have $\delta' \leq \pi(B_N)$, hence $\pi(B_N)$ has a bounded inverse.
\end{proof}

\begin{Prop}\label{PropAdmExt}
Any admissible representation of $\mcO_\mbq(PD(N))$ extends uniquely through the quantum Cholesky map $\chi_T^\mbq$ to an admissible representation of $\mcO_\mbq(T_+(N))$, preserving the set of intertwiners. Moreover, this induces a C$^*$-isomorphism between the C$^*$-completions
\[
\chi_{T,0}^\mbq: C_{0,\mbq}(PD(N)) \overset{\cong}{\rightarrow} C_{0,\mbq}(T_+(N)).
\] 
\end{Prop}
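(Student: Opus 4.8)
The plan is to work fibrewise. In any admissible representation of $\mcO_\mbq(PD(N))$ the central element $[2]_\mbq=\mbq+\mbq^{-1}$ is bounded, so $\mbq$ acts with spectrum in a compact subset of $(0,\infty)$; disintegrating along this spectrum (or restricting attention to irreducible representations, which suffice to recover the $C_0$-hull by Lemma \ref{LemRepSam}) reduces the statement to representations of the fibres $\mcO_q(PD(N))$ and $\mcO_q(T_+(N))$ for a fixed $q>0$. The core of the argument is to route the extension through the algebra $\mcO_q(H(N))[A_1^{-1},\ldots,A_N^{-1}]$, which by Proposition \ref{PropIdHT} (for $q\neq 1$) is isomorphic to $\mcO_q(T(N))$ via $\chi_T^q$ with $A_i\mapsto T_1\cdots T_i$; for $q=1$ the same scheme applies with the classical Cholesky decomposition (every positive-definite $M$ factors uniquely as $T^*T$ with $T$ upper-triangular and positive diagonal, $T$ depending regularly on the entries of $M$ and on $(\det M_{\le i})^{-1/2}$) playing the role of Proposition \ref{PropIdHT}.

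First I would show that for an admissible representation $\pi$ of $\mcO_\mbq(PD(N))$ every $\pi(B_i)$, $1\le i\le N$, is positive and invertible. Indeed $\pi(B_N)$ is invertible by definition of $\mcO_\mbq(PD(N))$, hence $\pi(Z)$ is invertible by Lemma \ref{LemPosZB}, so $\pi(Z)\ge\epsilon>0$. Restricting $\pi$ along the principal $i\times i$-block embedding $\mcO_q(H(i))\hookrightarrow\mcO_q(H(N))$ used in the proof of Lemma \ref{LemElB} gives an admissible representation of $\mcO_q(P(i))$, since the generating matrix of $\mcO_q(H(i))$ goes to $(Z_{kl})_{k,l\le i}$ and $\pi$ of this is the compression of $\pi(Z)$ to $\C^i\otimes\Hsp_\pi$, still $\ge\epsilon>0$; moreover the element $B_i$ of $\mcO_q(H(i))$ is carried to the element $B_i$ of $\mcO_q(H(N))$, its defining formula using only $Z_{kl}$ with $k,l\le i$. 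Applying Lemma \ref{LemPosZB} inside $\mcO_q(P(i))$ gives $\pi(B_i)\ge 0$ and invertible. Setting $\pi(A_i):=\pi(B_i)^{1/2}$, the commutation relations of the $A_i$ follow from those of the $B_i$ by uniqueness of the positive square root, so $\pi$ extends to $\mcO_q(H(N))[A_1^{-1},\ldots,A_N^{-1}]$, and transporting through $\chi_T^q$ (or through classical Cholesky) yields a representation $\tilde\pi$ of $\mcO_q(T(N))$ with $\tilde\pi\circ\chi_T^\mbq=\pi$. It is admissible for $\mcO_\mbq(T_+(N))$ because $\tilde\pi(T_{ii})=\pi(B_i)^{1/2}\pi(B_{i-1})^{-1/2}\ge 0$, the $B_j$ commuting since they are identified with the commuting elements $T_1^2\cdots T_j^2$. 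Uniqueness is forced: $\mcO_q(T(N))$ is generated, via the isomorphism, by $\chi_T^\mbq(Z_{kl})$ and $\chi_T^\mbq(A_i)$, on which $\tilde\pi$ is prescribed (for $A_i$ one uses that $\tilde\pi(T_1\cdots T_i)$ must be the unique positive square root of $\pi(B_i)$).

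For the intertwiner statement: if $V$ intertwines admissible representations $\pi_1,\pi_2$ of $\mcO_\mbq(PD(N))$, then $V$ intertwines each $\pi_j(Z_{kl})$ and each $\pi_j(B_i)$; since $\pi_j(B_i)$ is positive and invertible, $V$ intertwines $\pi_j(B_i)^{\pm1}$ and, by uniform polynomial approximation of $x\mapsto x^{\pm1/2}$ on the relevant compact spectra, also $\pi_j(B_i)^{\pm1/2}=\tilde\pi_j(A_i^{\pm1})$; as $\chi_T^q$ writes every $T_{kl}$ as a polynomial in the $Z_{mn}$ and $A_i^{\pm1}$, $V$ intertwines $\tilde\pi_1,\tilde\pi_2$. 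Conversely, $\chi_T^\mbq$ being an algebra map, any intertwiner of $\tilde\pi_1,\tilde\pi_2$ is one of $\pi_1,\pi_2$. Hence $\pi\mapsto\tilde\pi$ is a bijection of admissible representations identifying intertwiner spaces, in particular preserving irreducibility.

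Finally, $\chi_T^\mbq:\mcO_\mbq(PD(N))\to\mcO_\mbq(T_+(N))$ is injective (Lemma \ref{LemCholInj} together with invertibility of $\chi_T^\mbq(B_N)=(\Det_\mbq T)^2$) and a proper admissible $*$-morphism, since $\|\tilde\pi(T)\|^2=\|\pi(Z)\|$ and $\|\tilde\pi(\Det_\mbq T)^{-1}\|^2=\|\pi(B_N^{-1})\|$ show that the quantum controls $[2]_\mbq\oplus Z\oplus B_N^{-1}$ and $[2]_\mbq\oplus T\oplus\Det_\mbq T^{-1}$ induce matching C$^*$-filtrations under the representation correspondence. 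The induced map $\chi_{T,0}^\mbq:C_{0,\mbq}(PD(N))\to C_{0,\mbq}(T_+(N))$ is then injective, as it separates the irreducible representations (the $\pi_0$, all of which arise by pull-back through $\chi_{T,0}^\mbq$), and surjective by the argument of Proposition \ref{PropSun} once one knows that $\chi_T^\mbq$ induces surjections $A^{PD}_{\le M'}\twoheadrightarrow A^{T_+}_{\le M}$; the latter holds because every $\theta_M(T_{kl})$ lies in the C$^*$-subalgebra generated by $\theta_M(\chi_T^\mbq(\mcO_\mbq(PD(N))))$ — one extracts $\theta_M(T_1\cdots T_i)$ and $\theta_M((T_1\cdots T_i)^{-1})$ as positive square roots of $\theta_M(\chi_T^\mbq(B_i))^{\pm1}$, using the spectral condition $T_{ii}\ge 0$ and spectral permanence for the inverse, and then assembles the remaining $\theta_M(T_{kl})$ polynomially. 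The main obstacle is the positivity-and-invertibility of all the $B_i$ in an arbitrary admissible representation: this is exactly what legitimizes the passage to $\mcO_q(H(N))[A_1^{-1},\ldots,A_N^{-1}]$ and hence the use of Proposition \ref{PropIdHT}; the rest is bookkeeping with continuous functional calculus and the $C_0$-hull formalism of Section \ref{Sec1}.
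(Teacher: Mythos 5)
Your argument is correct and follows essentially the same route as the paper's proof: use the invertibility of $\pi(B_N)$ and Lemma \ref{LemPosZB} to get $\pi(Z)\geq\epsilon>0$, restrict to the principal $K\times K$-block embeddings $\mcO_\mbq(H(K))\hookrightarrow\mcO_\mbq(H(N))$ to conclude that all $\pi(B_K)$ are positive with bounded inverse, then invoke Proposition \ref{PropIdHT} to extend via $\chi_T^\mbq$, and conclude with matching quantum controls and functional calculus for the $C^*$-isomorphism. One point where you are actually more careful than the published proof: Proposition \ref{PropIdHT} is stated only for $q\neq 1$, and the paper applies it without comment; you explicitly note that at $q=1$ the classical Cholesky decomposition provides the required isomorphism for the commutative fibre. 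Your remark that the $B_j$ commute ``since they are identified with $T_1^2\cdots T_j^2$'' is slightly circular at the point where you invoke it (you need the commutativity of the $B_j$ in $\mcO_\mbq(H(N))$ already to form the localisation $\mcO_\mbq(H(N))[B_1^{-1},\ldots,B_N^{-1}]$), but this commutativity follows directly from the $\mbq$-commutation relations of the $B_i$ with the $Z_{kl}$ displayed in the paper, so the gap is cosmetic.
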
 
\begin{proof} 
Let $\pi$ be an admissible representation, which we may assume factors through some $\mcO_q(PD(N))$ for $0<q$. As $\pi$ is defined on $B_N^{-1}$, it follows that $\pi(B_N)$ has a bounded inverse, and the easy direction in Lemma \ref{LemPosZB} gives that $\pi(Z)$ is a positive invertible operator. In particular, for all $\xi \in \C^n\otimes \Hsp_{\pi} = \oplus_{i=1}^N \Hsp_\pi$ we have
\[
\langle \xi, \pi(Z)\xi \rangle \geq \|\pi(Z)^{-1}\|^{-1} \|\xi\|^2.
\] 
Recall now again that the $Z_{ij}$ with $1\leq i,j\leq K\leq M$ generate a copy of $\mcO_\mbq(H(K))$. Let 
\[
Z_{\leq K} \in M_K(\mcO_\mbq(H(K))) \subseteq M_K(\mcO_\mbq(H(N)))
\]
be the principal $K\times K$-block of $Z$. Then we clearly have for each $\xi \in \oplus_{i=1}^K \Hsp_\pi$ that
\[
\langle \xi, \pi(Z_{\leq K})\xi \rangle \geq \|\pi(Z)^{-1}\|^{-1} \|\xi\|^2.
\] 
Let $\pi_K$ be the restriction of $\pi$ to $\mcO_\mbq(H(K))$. Then we have $\pi_K(Z_{\leq K}) \geq \|\pi(Z)^{-1}\|^{-1}$, and by the other direction of Lemma \ref{LemPosZB} we have that $\pi_K(B_K) = \pi(B_K)$ is positive with bounded inverse. By Proposition \ref{PropIdHT} this leads to an admissible representation of $\mcO_\mbq(T_+(N))$. Clearly this construction preserves sets of intertwiners. 

Finally, since $[2]_\mbq \oplus \Tr_{\mbQ^2}(Z) \oplus B_N^{-1}$, resp.~ its image $[2]_\mbq \oplus \Tr_{\mbQ^2}(T^*T) \oplus \Det_{\mbq}(T)^{-2}$ under the quantum Cholesky map, is a central control for $\mcO_\mbq(PD(N))$, resp.~ $\mcO_\mbq(T_+(N))$, it is  by functional calculus clear that the resulting map $C_{b,\mbq}(PD(N)) \rightarrow C_b^{\mbq}(T_+(N))$, and hence also $C_{0,\mbq}(PD(N)) \rightarrow C_{0,\mbq}(T_+(N))$, is an isomorphism.
\end{proof}

\begin{Prop}\label{PropMainTypeI}
The $\msR$-algebras $\mcO_\mbq(PD(N))$ and $\mcO_\mbq(T_+(N))$ are type $I$.
\end{Prop}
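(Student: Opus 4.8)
The plan is to reduce the statement to the fact that every irreducible admissible representation of the relevant fibres is finite dimensional, which makes the type $I$ condition automatic. First I would use Proposition~\ref{PropAdmExt}: the quantum Cholesky map induces a C$^*$-isomorphism $C_{0,\mbq}(PD(N))\cong C_{0,\mbq}(T_+(N))$, and both $\mcO_\mbq(PD(N))$ and $\mcO_\mbq(T_+(N))$ are s$^*$-algebras by Proposition~\ref{PropMainR}. Hence, applying the two directions of Proposition~\ref{PropTypI}, it is enough to prove that $\mcO_\mbq(T_+(N))$ is type $I$: this gives that its $C_0$-hull $C_{0,\mbq}(T_+(N))$ is type $I$, therefore so is $C_{0,\mbq}(PD(N))$, and then the converse direction of Proposition~\ref{PropTypI} yields that $\mcO_\mbq(PD(N))$ is type $I$. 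Being type $I$ is moreover a fibrewise property here, since $P^{\mcO_\mbq(T_+(N))}=\sqcup_{q>0}P^{\mcO_q(T_+(N))}$ and every irreducible admissible representation of $\mcO_\mbq(T_+(N))$ factors through a single fibre $\mcO_q(T_+(N))$; so I would fix $q>0$ and treat $\mcO_q(T_+(N))$.

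For $q=1$ the algebra $\mcO_1(T_+(N))$ is the commutative $*$-algebra of regular functions on $T_+(N)$, so every irreducible admissible representation is one dimensional and the type $I$ condition holds trivially. For $q\neq 1$ I would invoke Lemma~\ref{LemAdmRepT}: under the $*$-isomorphism $\mcO_q(T(N))\cong U_q(\mfu(N))$, any irreducible admissible representation of $\mcO_q(T_+(N))$ corresponds to an irreducible type $1$ representation of $U_q(\mfu(N))$, and by Theorem~\ref{TheoRepTConc} every such representation is finite dimensional. But a nonzero representation $\pi$ on a finite dimensional Hilbert space $\Hsp_\pi$ has $\pi(\mcO_q(T_+(N)))^{\nc}=\pi(\mcO_q(T_+(N)))\subseteq B(\Hsp_\pi)$, and all operators on $\Hsp_\pi$ are then of finite rank, hence compact; so the normclosure of the image contains a compact operator and $\mcO_q(T_+(N))$ is type $I$.

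There is essentially no hard step here: the representation-theoretic input (Lemma~\ref{LemAdmRepT} together with Theorem~\ref{TheoRepTConc}) is already in hand, and finite dimensionality of the fibre representations makes the defining condition for type $I$ immediate. The only points that need a little care are that the fibre $q=1$ must be handled separately, since Proposition~\ref{PropIdHT} — and hence Lemma~\ref{LemAdmRepT} — is stated only for $q\neq 1$, and that the reduction via Proposition~\ref{PropTypI} must be applied in the right order, using the forward implication to pass from $\mcO_\mbq(T_+(N))$ to $C_{0,\mbq}(T_+(N))$ and the converse to pass from $C_{0,\mbq}(PD(N))$ to $\mcO_\mbq(PD(N))$. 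Alternatively, one can avoid the $C_0$-hull entirely: Proposition~\ref{PropAdmExt} transports irreducible admissible representations of $\mcO_\mbq(PD(N))$ to irreducible admissible representations of $\mcO_\mbq(T_+(N))$ on the same Hilbert space while preserving intertwiners, so finite dimensionality transfers directly, and the $q=1$ fibre $\mcO_1(PD(N))=\mcO_1(H(N))[B_N^{-1}]$ is again commutative.
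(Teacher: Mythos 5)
Your proof is correct and the core argument is the same as the paper's: reduce to finite dimensionality of the irreducible admissible representations of the fibres $\mcO_q(T_+(N))$ via Lemma~\ref{LemAdmRepT} (for $q\neq 1$) and Theorem~\ref{TheoRepTConc}, then transfer to $\mcO_\mbq(PD(N))$ using Proposition~\ref{PropAdmExt}. The published proof simply states ``any irreducible admissible representation $\ldots$ is finite-dimensional by Lemma~\ref{LemAdmRepT}'' and invokes Proposition~\ref{PropAdmExt}, without passing through $C_0$-hulls as in your first route; your alternative, $C_0$-hull-free route at the end is the one that matches the paper. One thing you do better: you explicitly note that Lemma~\ref{LemAdmRepT} is stated only for $q\neq 1$, so the fibre $q=1$ has to be disposed of separately (commutativity making all irreducible representations one-dimensional). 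The paper's one-line proof implicitly appeals to Lemma~\ref{LemAdmRepT} for all $q>0$, so your care on this point is a genuine, if small, improvement in rigour.
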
 

\begin{proof}
The $\msR$-algebra $\mcO_\mbq(T_+(N))$ is type $I$ since any irreducible admissible representation, which necessarily factors through $\mbq = q$ for some $q>0$, is finite-dimensional by Lemma \ref{LemAdmRepT}. The same holds for $\mcO_\mbq(PD(N))$ by Proposition \ref{PropAdmExt}.
\end{proof}


\begin{Theorem}
The type $I$ $\msR$-algebras $\mcO_\mbq(T_+(N))$ and $\mcO_\mbq(PD(N))$ are continuous. 
\end{Theorem}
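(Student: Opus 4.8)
The plan is to deduce the result from Theorem~\ref{TheoFieldCont}, applied to $\msA=\mcO_\mbq(PD(N))$. Continuity of $\mcO_\mbq(T_+(N))$ then comes for free: by Proposition~\ref{PropAdmExt} the quantum Cholesky map induces a $*$-isomorphism $\chi_{T,0}^{\mbq}\colon C_{0,\mbq}(PD(N))\overset{\cong}{\rightarrow}C_{0,\mbq}(T_+(N))$, and it is $C_0(\Theta)$-linear since $\chi_T^{\mbq}$ is a $k_*$-morphism, so one of these fields is a continuous field of C$^*$-algebras over $\Theta$ precisely when the other is.

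First I would assemble the data demanded by Theorem~\ref{TheoFieldCont}. The coaction is the extension of $\Ad_\mbq$ from Lemma~\ref{LemDefCoad} to the localisation $\mcO_\mbq(PD(N))=\mcO_\mbq(H(N))[B_N^{-1}]$; this is well defined because $B_N$ is central, hence $\Ad_\mbq$-coinvariant, so $B_N^{-1}$ is sent to $B_N^{-1}\otimes 1$. One checks directly that the resulting $\alpha\colon\mcO_\mbq(PD(N))\rightarrow\mcO_\mbq(PD(N))\msRtimes\mcO_\mbq(U(N))$ is an $\msR$-coaction. The coacting $k_*$-algebra $\mcO_\mbq(U(N))$ is a type $I$ Hopf $\msR$-algebra of compact type, by Theorem~\ref{TheoHopfRU} and Corollary~\ref{CorExInv} (which provides the $*$-preserving, unit-preserving, positive invariant functional $\Phi$ satisfying \eqref{EqInv}); note that $[2]_\mbq$ is a central control for both $\msR$ and $\mcO_\mbq(U(N))$. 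For the coinvariant central control, recall from the proof of Proposition~\ref{PropMainR} that $[2]_\mbq\oplus\Tr(\mbQ Z\mbQ)\oplus B_N^{-2}$ is a central control; each entry lies in $\msR$ or in $\msZ(\mcO_\mbq(H(N)))$, and central elements of $\mcO_\mbq(H(N))$ are $\Ad_\mbq$-coinvariant, since they coincide with the range of $E$, so this control is coinvariant. Finally $\msB:=E(\mcO_\mbq(PD(N)))=\msZ(\mcO_\mbq(H(N)))[B_N^{-1}]=\msZ(\mcO_\mbq(PD(N)))$ is central in $\mcO_\mbq(PD(N))$, and $\mcO_\mbq(PD(N))$ is countably generated and type $I$ (Proposition~\ref{PropMainTypeI}), hence nuclear.

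Two hypotheses of Theorem~\ref{TheoFieldCont} then remain. The faithfulness of the fibre maps $E_{0,x}\colon A_{0,x}\rightarrow B_{0,x}$ I would verify fibrewise. At $\mbq=1$, $E_1$ is the averaging projection for the conjugation action of the compact group $U(N)$ on $C_0(PD(N))$, which is visibly faithful. At $q\neq 1$, $E_{0,x}=(\id\otimes\Phi_q)\circ\alpha_{0,x}$ is the conditional expectation onto the fixed-point algebra of the coaction of the compact quantum group $C_q(U(N))$, whose Haar state $\Phi_q$ is faithful because $C_q(U(N))$ is coamenable (as used in the proof of Theorem~\ref{TheoHopfRU}); coamenability also furnishes the counit, hence injectivity of $\alpha_{0,x}$, and, $C_{0,q}(PD(N))$ being separable, the standard argument ($E_{0,x}(a^*a)=0$ forces $(\psi\otimes\Phi_q)(\alpha_{0,x}(a)^*\alpha_{0,x}(a))=0$ for a faithful state $\psi$, whence $\alpha_{0,x}(a)=0$ and $a=0$) yields faithfulness of $E_{0,x}$.

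The remaining hypothesis, openness of the continuous map $f\colon\Spec(B_0)\rightarrow\Theta$, is where the real work lies. By Proposition~\ref{PropCharComm}, $\Omega:=\Spec(B_0)$ is the space of characters of $\msB$ coming from irreducible admissible representations of $\mcO_\mbq(PD(N))$, with the topology of pointwise convergence; $\Omega$ is second countable, so I would check openness through the sequential criterion, producing, for $\omega_0\in\Omega$ and $q_n\to f(\omega_0)$, points $\omega_n\in\Omega$ over $q_n$ with $\omega_n\to\omega_0$. A character in $\Omega$ is determined by the multiset of eigenvalues of $\pi(Z)$ of the underlying representation $\pi$: for $q\neq1$, via the quantum Cholesky isomorphism and Lemma~\ref{LemAdmRepT} these are the $\pi_\mbl$ with $\mbl$ dominant weakly integral, on which $Z=T^*T$ has eigenvalues $\mbq^{-2(\lambda_k-k-1)}$ by Remark~\ref{RemMud}; for $q=1$ one obtains all of $\R_{>0}^{N}/S_N$. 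Separating the overall scale $\mbq^{-2\lambda_N}\in\R_{>0}$ from the ``shape'' governed by the integers $\lambda_k-\lambda_N$, one sees that over a neighbourhood of any $q_0\neq1$ the fibres carry the same combinatorial parametrisation varying continuously in $q$, so $f$ is open there. At $q_0=1$ the argument is delicate: given a target multiset $\boldsymbol\sigma_0$ and $q_n\to1$, I would choose $\lambda_N=\lambda_N(q_n)$ to match the overall scale and round the remaining $\lambda_k$ to the integers dictated by $\log\sigma_{0,k}/\log q_n$ (handling clusters of equal $\sigma_{0,k}$ jointly so that dominance is preserved), then verify that $\{q_n^{-2(\lambda_k-k-1)}\}\to\boldsymbol\sigma_0$ and that the associated characters remain in $\Omega$ — the latter because the resulting $\|\pi_\mbl(Z)\|$, hence $\|\pi_\mbl(\Tr(\mbQ Z\mbQ))\|$, stays bounded as $q_n\to1$. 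I expect this approximation argument at $\mbq=1$, together with the bookkeeping translating the eigenvalue picture of $\Omega$ into the spectral-condition language of $\msB$, to be the technical heart of the proof. Once all hypotheses are in place, Theorem~\ref{TheoFieldCont} shows that $\mcO_\mbq(PD(N))$, and therefore $\mcO_\mbq(T_+(N))$, is a continuous field of C$^*$-algebras over $\Theta$.
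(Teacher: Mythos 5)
Your proposal is correct and follows the same overall strategy as the paper: reduce to $\mcO_\mbq(PD(N))$ via the Cholesky isomorphism of Proposition~\ref{PropAdmExt}, apply Theorem~\ref{TheoFieldCont} to the adjoint coaction $\Ad_\mbq$ with conditional expectation $E$ onto the central subalgebra $\msB=\msZ(\mcO_\mbq(PD(N)))$, verify faithfulness of the $E_{0,x}$ fibrewise using coamenability of $C_q(U(N))$, and reduce openness of $\Omega\to\Theta$ to the Harish-Chandra eigenvalue parametrisation from Corollary~\ref{CorHC}. The only organisational difference is at the openness step: instead of your direct sequential construction of approximating weights over $q_n\to 1$ (with the ad hoc rounding and cluster-handling), the paper packages the same $q$-lattice approximation more cleanly by introducing the auxiliary space $\Omega'\subseteq\R_0^+\times(\R_0^+)^N$ of ordered eigenvalue tuples with ratio constraint $x_i/x_j\in q^{2\Z}\setminus\{1\}$, noting that the elementary symmetric polynomials give a continuous surjection $f\colon\Omega'\twoheadrightarrow\Omega$, and then transferring openness of the evident projection $\Omega'\to\R_0^+$ along $f$; both arguments ultimately rest on the density of $q^{2\Z}$ as $q\to 1$, so the content is the same.
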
 
\begin{proof}
By Proposition \ref{PropAdmExt} it is sufficient to prove this for $\mcO_\mbq(PD(N))$. 

Consider the adjoint coaction $\Ad_\mbq$ from Lemma \ref{LemDefCoad}, and let $E$ be the associated conditional expectation. With $C=  \Tr_{\mbQ^2}(Z)$, note that $[2]_\mbq \oplus C\oplus B_N^{-1}$ is a coinvariant central control. Moreover, for each $q>0$ the conditional expectation $E_q$ is faithful, as it consist of integrating out a continuous action by a coamenable compact quantum group. Note that we also use here that the completion of $\Ad_\mbq$ will still be injective, by the existence of a counit.  

Let now $C_0(\Omega) := E_0(C_{0,\mbq}(PD(N)))$. It follows from Theorem \ref{TheoFieldCont} that $\mcO_\mbq(PD(N))$ will be continuous once we can show that the natural projection map $\Omega \rightarrow \R_0^+$ is open.

However, consider in $\msZ(\mcO_\mbq(PD(N)))$ the elements 
\[
C_k' = \mbq^{(N+1)k} \binom{N}{k}_{\mbq}E(B_k),
\]
which by Remark \ref{RemMud} are just rescalings of the $C_k$. We claim that $\mbq$ and the $C_k'$ separate the representations of $\mcO_\mbq(T(N))$. Indeed, it is sufficient for this that the $C_{k,q}'$ separate the representations of $\mcO_q(T(N))$. For $q\neq 1$ this follows immediately from Corollary \ref{CorHC}. For $q= 1$  this follows from Corollary \ref{CorHC} and the fact that a unitary conjugacy class of a positive-definite matrix is completely determined by its (unordered) list of eigenvalues (with multiplicities).

Let now again $(0,\infty)_q = q^{\Z}$ for $q>0$ and $q\neq 1$, and let $(0,\infty)_1 = (0,\infty)$. Let 
\[
\Omega' = \{(q,x_1,\ldots,x_N)\mid q>0,x_i >0 \textrm{ and }x_i/x_j \in (0,+\infty)_{q^2}\setminus\{1\} \textrm{ for }q\neq 1\textrm{ and }i\neq j\} \subseteq   \R_0^+\times (\R_0^+)^N.
\]
and consider the continuous map 
\[
f: \Omega' \rightarrow \R_0^+\times (\R_0^+)^N,\quad (q,x) \mapsto (q,e_1(x),\ldots,e_N(x)).
\]
Then clearly $\Omega = f(\Omega')$ by Corollary \ref{CorHC}. Since the projection map 
\[
\Omega' \rightarrow \R_0^+,\quad (q,x)\mapsto q
\]
is open, the same holds for the projection map $\Omega \rightarrow \R_0^+$. 
\end{proof}

\begin{Theorem}\label{TheoHopfR}
Together with its natural Hopf $k_*$-algebra structure, $\mcO_\mbq(T_+(N))$ is a strongly C$^*$-faithful, continuous, type $I$ Hopf $\msR$-algebra.
\end{Theorem}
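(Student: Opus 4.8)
The plan is to recognise that this theorem is essentially a repackaging of the results established just above, together with a short verification of the coalgebra compatibilities. Concretely, I would first invoke Proposition~\ref{PropMainR} (that $\mcO_\mbq(T_+(N))$ is an $\msR$-algebra), Proposition~\ref{PropFaith} (strong C$^*$-faithfulness), Proposition~\ref{PropMainTypeI} (type $I$), and the preceding theorem (continuity of the associated field of C$^*$-algebras over $\R_0^+$). What then remains is to check that the natural Hopf $k_*$-algebra structure that $\mcO_\mbq(T_+(N))$ inherits from $\mcO_\mbq(T(N))$---the same underlying $*$-algebra equipped with the additional spectral conditions $T_{ii}\geq 0$---makes it a Hopf $\msR$-algebra, i.e.\ that $\Delta$ and $\varepsilon$ are $\msR$-morphisms with $\varepsilon$ proper. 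The antipode is simply inherited from $\mcO_\mbq(T(N))$ and, by the definition of Hopf $\msR$-algebra, need not respect either the $*$-structure or the $\msR$-module structure, so no extra work is required there.

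For the coproduct I would use that, since $T=T^+$ is upper triangular and $(\id\otimes\Delta)(T)=T_{12}T_{13}$, each diagonal generator $T_i=T_{ii}$ is grouplike, $\Delta(T_i)=T_i\otimes T_i$. Hence for any admissible representation $\rho$ of $\mcO_\mbq(T_+(N))\msRtimes\mcO_\mbq(T_+(N))$ one has $(\rho\circ\Delta)(T_{ii})=\rho(T_i\otimes 1)\rho(1\otimes T_i)$, a product of two commuting positive operators and therefore positive; this shows $\rho\circ\Delta$ is $\mcO_\mbq(T_+(N))$-admissible, and since $\Delta$ is $\msR$-linear it is an $\msR$-morphism into $\mcO_\mbq(T_+(N))\msRtimes\mcO_\mbq(T_+(N))$. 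For $\varepsilon$, the fact that $\varepsilon(T_{ii})=1\geq 0$ gives admissibility at once, and $\varepsilon$ is $\msR$-linear and surjective since $\varepsilon\circ\iota=\id_{k_*}$. To see that $\varepsilon$ is proper, I would appeal to the remark around \eqref{EqNewFilt}: being a surjective admissible $*$-morphism, $\varepsilon$ is proper once its induced C$^*$-filtration on $\msR$ is shown to be equivalent to $\msP^{[2]_\mbq}$, which is immediate because on the quantum control $[2]_\mbq\oplus T\oplus\Det_\mbq(T)^{-1}$ a character $\chi\circ\varepsilon$ satisfies $\|(\chi\circ\varepsilon)(T)\|=\|(\chi\circ\varepsilon)(\Det_\mbq(T)^{-1})\|=1$, so its filtration value is governed solely by $\chi([2]_\mbq)$.

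Since essentially all the hard analysis (the s$^*$-algebra structure via the quantum Cayley--Hamilton identity, the extension across the quantum Cholesky map, type $I$-ness, and the openness of $\Omega\to\R_0^+$ yielding continuity) has been carried out in the preceding propositions, I do not expect a genuine obstacle here; the only point needing a little care is the verification that adjoining the positivity conditions $T_{ii}\geq 0$ does not spoil comultiplicativity, which is precisely the observation that the $T_i$ are grouplike. Once $\Delta$ and $\varepsilon$ are known to be $\msR$-morphisms with $\varepsilon$ proper, the general machinery recalled earlier automatically promotes $(\mcO_\mbq(T_+(N)),\Delta,\varepsilon)$ to a type $I$ Hopf $\msR$-algebra, which together with the properties recalled at the outset completes the proof.
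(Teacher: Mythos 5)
Your proof is correct and follows the same route as the paper: cite the preceding propositions for strong C$^*$-faithfulness, continuity, and type~$I$, then observe that $\Delta$ and $\varepsilon$ are compatible with the added spectral conditions $T_{ii}\geq 0$. The paper dismisses this last point as ``immediate,'' and your explicit verification via the grouplike property of the diagonal generators $T_i$ (together with the easy properness of $\varepsilon$ via the induced filtration on $\msR$) is exactly the observation being appealed to.
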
 
\begin{proof}
We have already shown that $\mcO_\mbq(T_+(N))$ is a strongly C$^*$-faithful, continuous type $I$ $\msR$-algebra. We are only to show the existence of the $\msR$-compatible Hopf algebra structure. However, it is immediate that $\Delta$ and $\varepsilon$ are admissible $*$-homomorphisms for $\mcO_\mbq(T_+(N))$. 
\end{proof}

\begin{Rems}
\begin{itemize}
\item For $N=2$, a similar theorem had been proven by a more concrete approach in \cite{DCF17}.
\item Together with Theorem \ref{TheoHopfRU}, Theorem \ref{TheoHopfR} realizes in the C$^*$-algebraic setting the Poisson-Drinfeld duality between the quantizations of the dual Poisson-Lie groups $U(N)$ and $T_+(N)$. 
\end{itemize}
\end{Rems}

\subsection{Proof for $\mcO_\mbq^{\R}(GL(N,\C))$}

\begin{Prop}\label{PropMainR}
The weak $\msR$-algebra $\mcO_\mbq^{\R}(GL(N,\C))$ is an $\msR$-algebra.
\end{Prop}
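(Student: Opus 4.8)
The goal is only to upgrade the given \emph{weak} $\msR$-algebra structure to an $\msR$-algebra structure, i.e.\ to exhibit a \emph{central} control, since admissibility of $\iota\colon\msR\to\mcO_\mbq^{\R}(GL(N,\C))$ is already part of the hypothesis. The plan is to take as candidate the element
\[
C \;=\; [2]_\mbq \;+\; \Tr_{\mbQ^2}(X^*X) \;+\; D^{-1},\qquad D:=\Det_\mbq(X)^*\Det_\mbq(X),
\]
and to show that it is a positive central element whose filtration $\msP^{C}$ is equivalent to the quantum-control filtration $\msP^{[2]_\mbq\oplus X\oplus \Det_\mbq(X)^{-1}}$ furnished by the preceding lemma; equivalence with a known C$^*$-filtration then makes $\msP^{C}$ a C$^*$-filtration, so that $C$ is a central control.

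First I would check that $C$ is a positive central element. Centrality and positivity of $[2]_\mbq=\mbq+\mbq^{-1}$ are clear ($\mbq$ is central and $\mbq>0$ in every admissible representation). The term $\Tr_{\mbQ^2}(X^*X)$ is central by Lemma \ref{LemNewt}, and positive since $\Tr_{\mbQ^2}(X^*X)=\Tr\bigl((X\mbQ)^*(X\mbQ)\bigr)$ with $\mbQ$ self-adjoint (diagonal, with positive scalar entries in any admissible representation). For the last term, the identity $\Det_\mbq(Y)=(\Det_\mbq(X)^{-1})^*$ gives $D^{-1}=\Det_\mbq(X)^{-1}(\Det_\mbq(X)^{-1})^*=\Det_\mbq(X)^{-1}\Det_\mbq(Y)$, which is positive and, by centrality of $\Det_\mbq(X)$ and $\Det_\mbq(Y)$, central.

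Next I would run the filtration comparison at an irreducible admissible $\pi$, where each summand of $C$ acts by a non-negative scalar, so $\|\pi(C)\|$ is comparable to the triple $\bigl(\|\pi([2]_\mbq)\|,\pi(\Tr_{\mbQ^2}(X^*X)),\pi(D^{-1})\bigr)$. One has $\pi(D^{-1})=\|\pi(\Det_\mbq(X)^{-1})\|^2$ exactly, so $D^{-1}$ and $\Det_\mbq(X)^{-1}$ determine the same cut; and the Pimsner--Popa estimate of Lemma \ref{LemPimPop}, applied to $X\mbQ$, together with the trivial bound in the other direction, gives $\pi(X^*X)\le N\|\pi(\mbQ^{-2})\|\,\pi(\Tr_{\mbQ^2}(X^*X))$ and $\pi(\Tr_{\mbQ^2}(X^*X))\le N\|\pi(\mbQ)\|^2\|\pi(X)\|^2$. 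Since $\|\pi(\mbQ)\|=\max(q^{N-1},1)$ and $\|\pi(\mbQ^{-2})\|=\max(q^{-2(N-1)},1)$ with $q=\pi(\mbq)$, both dominated by powers of $q+q^{-1}=\|\pi([2]_\mbq)\|$, chaining these inequalities (harmlessly assuming $M\ge1$) yields $\msP^{C}\sim\msP^{[2]_\mbq\oplus X\oplus\Det_\mbq(X)^{-1}}$. As the latter is a C$^*$-filtration, so is the former; hence $C$ is a central control, $\mcO_\mbq^{\R}(GL(N,\C))$ is an s$^*$-algebra, and, $\iota$ being admissible, an $\msR$-algebra.

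The substantive inputs --- centrality of the quantum trace $\Tr_{\mbQ^2}(X^*X)$ (Lemma \ref{LemNewt}) and of the quantum determinants --- are already in hand, so this argument is mostly bookkeeping. The one point I would be careful about is that the weight matrix $\mbQ$ depends on $\mbq$: this is precisely why $[2]_\mbq$ has to be kept as its own summand of the central control rather than being absorbed into $\Tr_{\mbQ^2}(X^*X)$, and it is the place where the estimates in the filtration comparison genuinely use the extra control $[2]_\mbq$.
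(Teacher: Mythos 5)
Your proof is correct and takes essentially the same route as the paper: the paper's (one-line) proof cites Lemma \ref{LemNewt} and refers back to the argument for $\mcO_\mbq(PD(N))$, which invokes Proposition \ref{PropDensr} (the packaged Pimsner--Popa estimate of Lemma \ref{LemPimPop}) to promote the quantum control $[2]_\mbq \oplus X \oplus \Det_\mbq(X)^{-1}$ to the central control $[2]_\mbq \oplus \Tr_{\mbQ^2}(X^*X) \oplus \Det_\mbq(X)^{-1}(\Det_\mbq(X)^{-1})^*$, whereas you unpack the bounded-equivalence estimates by hand. Your remark that $[2]_\mbq$ is retained precisely to control $\|\pi(\mbQ^{\pm 1})\|$ is indeed the relevant subtlety, and corresponds to the paper's use of Lemma \ref{LemLem}(4).
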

\begin{proof}
Using Lemma \ref{LemNewt}, this follows as in Proposition \ref{PropMainR}.
\end{proof}

We have the following strong version of the quantum $QR$-decomposition.  

\begin{Prop}\label{PropULDecomp} Under the quantum $QR$-decomposition map $\pi_{UT}^{\mbq}$, any admissible representation of  $\mcO_\mbq^{\R}(GL(N,\C))$ lifts uniquely to an admissible representation of  $\mcO_\mbq(U(N))\msRtimes \mcO_\mbq(T_+(N))$. Moreover, the completion 
\[
\pi_{UT,0}^\mbq: C_{0,\mbq}(GL(N,\C)) \rightarrow C_{0,\mbq}(U(N)) \underset{C_0(\Theta)}{\otimes} C_{0,\mbq}(T_+(N))
\]
becomes an isomorphism of $C_0(\Theta)$-algebras.
\end{Prop}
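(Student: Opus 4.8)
The plan is to mirror the argument for the quantum Cholesky map in Proposition~\ref{PropAdmExt}: first establish an exact correspondence of admissible representations along $\pi_{UT}^\mbq$, then upgrade it to an isomorphism of $C_0$-hulls using that the relevant quantum controls match up and that the target is type $I$. For the first part I would begin by checking that $\pi_{UT}^\mbq$ is an admissible, proper $\msR$-morphism into $\mcO_\mbq(U(N))\msRtimes\mcO_\mbq(T_+(N))$: admissibility and $\msR$-linearity are immediate since $\mcO_\mbq^{\R}(GL(N,\C))$ carries no closed spectral condition beyond $\mbq\geq 0$ and $(\pi_U\otimes\pi_T)\Delta$ is $k_*$-linear. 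Writing $W=\pi_{UT}^\mbq(X)=(U\otimes 1)(1\otimes T)$ and $\pi_{UT}^\mbq(\Det_\mbq(X))=\Det_\mbq(U)\otimes\Det_\mbq(T)$, and using that $U$ and $\Det_\mbq(U)$ act by unitaries in every admissible representation, one obtains $\|\pi(W)\|=\|\pi_2(T)\|$ and $\|\pi(\Det_\mbq(W)^{-1})\|=\|\pi_2(\Det_\mbq(T))^{-1}\|$ for $\pi=\pi_1\otimes\pi_2$. Comparing the quantum controls $[2]_\mbq\oplus X\oplus\Det_\mbq(X)^{-1}$ on $\mcO_\mbq^{\R}(GL(N,\C))$ and $([2]_\mbq\otimes 1)\oplus(1\otimes([2]_\mbq\oplus T\oplus\Det_\mbq(T)^{-1}))$ on the relative tensor product, this shows the filtrations are transported \emph{exactly}: $\pi_1\otimes\pi_2$ lies in the $M$-th level iff $(\pi_1\otimes\pi_2)\circ\pi_{UT}^\mbq$ does, for all $M>0$; in particular $\pi_{UT}^\mbq$ is proper.

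Next, given an admissible representation $\sigma$ of $\mcO_\mbq^{\R}(GL(N,\C))$ on $\Hsp$, I would construct its lift. As $X$ is invertible with $X^{-1}=(S(X_{ij}))_{ij}$ and $\Det_\mbq(X)^{-1}$ is a generator, $\sigma(X)$ and $\sigma(X^*X)$ are bounded and invertible and $\sigma(X^*X)\geq 0$; since $X^*X=Y^{-1}X$ obeys the reflection equation, $Z\mapsto\sigma(X^*X)$ is an admissible representation of $\mcO_\mbq(H(N))$ with $Z$ going to a positive invertible operator, so by Lemma~\ref{LemPosZB} it extends to $\mcO_\mbq(PD(N))$ and, by Proposition~\ref{PropAdmExt}, uniquely along $\chi_T^\mbq$ to an admissible representation $\rho_T$ of $\mcO_\mbq(T_+(N))$ with $\rho_T(T^*T)=\sigma(X^*X)$. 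Set $A:=\sigma(X)\rho_T(T)^{-1}$, legitimate because $\rho_T(T)$ is upper triangular with invertible diagonal $\rho_T(T_i)$; from $\sigma(X)^*\sigma(X)=\rho_T(T)^*\rho_T(T)$ and invertibility of $\sigma(X)$ one computes $A^*A=AA^*=1$, so $A$ is a unitary matrix over $B(\Hsp)$. The heart of the proof is to show that $\rho_U(U):=A$ determines a $*$-representation $\rho_U$ of $\mcO_\mbq(U(N))$ whose range commutes with that of $\rho_T$. This amounts to: (i) the norm closure of $\rho_T(\mcO_\mbq(T_+(N)))$ equals the C$^*$-algebra $\mcM$ generated by the entries of $\sigma(X^*X)$---true because, by the quantum Cholesky identities (Proposition~\ref{PropIdHT} and the proof of Proposition~\ref{PropAdmExt}), the entries of $\rho_T(T)^{\pm 1}$ are noncommutative polynomials in those of $\rho_T(T^*T)$ together with functional-calculus square roots of the positive invertible quantum principal minors $B_i(\sigma(X^*X))$; and (ii) the entries of $A=\sigma(X)\rho_T(T)^{-1}$ commute with $\mcM$ and satisfy $\mbR_{12}A_{13}A_{23}=A_{23}A_{13}\mbR_{12}$ with $\Det_\mbq(A)$ invertible---once the commutation is known, the $RTT$-relation for $A$ follows from those for $\sigma(X)$ and $\rho_T(T)$ by right-cancelling the invertible $\rho_T(T)$, and the commutation itself would be deduced directly from the $RTT$-relation between $X$ and $X^*$, again exploiting cancellability of $\rho_T(T)$. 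Granting this, $\tilde\sigma:=\rho_U\times\rho_T$ is a representation of $\mcO_\mbq(U(N))\msRtimes\mcO_\mbq(T_+(N))$, admissible by the first step, with $\tilde\sigma(\pi_{UT}^\mbq(X))=A\rho_T(T)=\sigma(X)$ and hence $\tilde\sigma\circ\pi_{UT}^\mbq=\sigma$ since the entries of $X$ generate.

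For uniqueness, two lifts of $\sigma$ agree on $\pi_{UT}^\mbq(X)$ and on $\pi_{UT}^\mbq(X^*X)=1\otimes T^*T$; being admissible $*$-representations of the tensor product their restrictions to $1\otimes\mcO_\mbq(T_+(N))$ are admissible, hence (by the uniqueness in Proposition~\ref{PropAdmExt}) determined by their common value on $1\otimes T^*T$, so they agree on $1\otimes T$, hence on $U\otimes 1=\pi_{UT}^\mbq(X)(1\otimes T^{-1})$, hence everywhere; and $\sigma\mapsto\tilde\sigma$ is realised on the same Hilbert space and preserves intertwiners. To conclude, since $\mcO_\mbq(U(N))$ and $\mcO_\mbq(T_+(N))$ are type $I$ $\msR$-algebras, $\mcO_\mbq(U(N))\msRtimes\mcO_\mbq(T_+(N))$ is a type $I$ $\msR$-algebra whose $C_0$-hull is $C_{0,\mbq}(U(N))\underset{C_0(\Theta)}{\otimes}C_{0,\mbq}(T_+(N))$; the filtration-preserving bijection $\sigma\mapsto\tilde\sigma$, realised on the same Hilbert spaces, shows that for each $M$ the C$^*$-algebra $A_{\leq M}$ of $\mcO_\mbq^{\R}(GL(N,\C))$ sits inside that of the tensor product, while conversely the latter is generated by $\theta_M(1\otimes T^*T)=\theta_M(\pi_{UT}^\mbq(X^*X))$ and $\theta_M(U\otimes 1)=\theta_M(\pi_{UT}^\mbq(X))\theta_M(1\otimes T^{-1})$, both lying in the former once $\theta_M(1\otimes T^{\pm 1})$ is recovered from $\theta_M(1\otimes T^*T)$ by quantum Cholesky inside $A_{\leq M}$; hence the two $A_{\leq M}$ coincide. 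Passing to the projective limit over $M$ and to the ideals $A_{<M}$ (whose index sets $P_{\geq M}$ correspond as well) yields the desired $C_0(\Theta)$-linear isomorphism $C_{0,\mbq}(GL(N,\C))\cong C_{0,\mbq}(U(N))\underset{C_0(\Theta)}{\otimes}C_{0,\mbq}(T_+(N))$, namely $\pi_{UT,0}^\mbq$; by Proposition~\ref{PropTypI} this also re-derives that $\mcO_\mbq^{\R}(GL(N,\C))$ is type $I$.

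The step I expect to be the main obstacle is the crux of the second paragraph: showing that the ``unitary part'' $A$ commutes with the C$^*$-algebra generated by the entries of $\sigma(X^*X)$ and satisfies the defining relations of $\mcO_\mbq(U(N))$---equivalently, that the algebraic factorisation $\pi_{UT}^\mbq(X)=(U\otimes 1)(1\otimes T)$ survives at the operator level as a genuine representation of the \emph{algebraic} relative tensor product. This is the operator-algebraic manifestation of the quantum $QR$-decomposition being a ``free'' decomposition, i.e.\ of $\mcO_\mbq^{\R}(GL(N,\C))$ being a cleft $\mcO_\mbq(U(N))$-extension of $\msB=E(\mcO_\mbq^{\R}(GL(N,\C)))$.
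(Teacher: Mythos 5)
You take a genuinely different route from the paper, and it has a gap that you yourself flag as ``the main obstacle'' without closing it. Your plan is to build the lift by constructing the unitary factor explicitly, $A=\sigma(X)\rho_T(T)^{-1}$, and then to show that the entries of $A$ commute with the C$^*$-algebra $\mcM$ generated by $\sigma(X^*X)$ and that $A$ satisfies the $RTT$-relation. You assert that the commutation ``would be deduced directly from the $RTT$-relation between $X$ and $X^*$, again exploiting cancellability of $\rho_T(T)$,'' but you do not carry out this deduction, and it is far from evident: $\rho_T(T)$ is produced from $\sigma(X^*X)$ by \emph{functional calculus} (square roots of the positive invertible quantum minors $B_i(\sigma(X^*X))$), so $A$ is a transcendental function of the operators $\sigma(X),\sigma(X^*)$, and the algebraic relation $X^*_{23}\mbR_{12}X_{13}=X_{13}\mbR_{12}X^*_{23}$ gives no obvious purchase on $[A_{13},(X^*X)_{23}]=0$. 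Without this, $\rho_U$ is not shown to be a representation of $\mcO_\mbq(U(N))$ commuting with $\rho_T(\mcO_\mbq(T_+(N)))$, and the rest of your argument -- the lift construction, the uniqueness argument, and the identification of the filtered completions $A_{\leq M}$ (which again needs $\theta_M(U\otimes 1)$ to live inside the $GL$-completion) -- has no foundation.

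The paper avoids the operator-level QR factorisation altogether. It uses the left translation coaction $\gamma$ of $\mcO_\mbq(U(N))$ on $\mcO_\mbq^{\R}(GL(N,\C))$, the coinvariant conditional expectation $E=(\Phi\otimes\id)\gamma$ whose completion $E_0$ is pointwise faithful, and Rieffel induction from the coinvariant algebra $B_0$: every irreducible admissible $\pi$ is contained in $\Ind_{B_0}^{A_0}(\wpi')$ for some irreducible $\wpi'$ of $B_0$, the corresponding representation $\pi'$ of $\msB$ extends to $\mcO_\mbq(T_+(N))$ via Lemma~\ref{LemMSB} and Proposition~\ref{PropAdmExt}, and the induced representation is then compared with the restriction of one induced from $U_0\underset{C_0(\Theta)}{\otimes}C_0$. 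This indirect route buys exactly what your direct approach is missing: the fact that the ``unitary part'' exists is never needed; one only needs faithfulness of $E_0$ and the already-established structure of $\msB$. Your first paragraph (admissibility, properness, matching of quantum controls via unitarity of $U$ and $\Det_\mbq(U)$) is correct, but it cannot rescue the missing core step.
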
 
\begin{proof}
Let us write $\msA = \mcO_\mbq^{\R}(GL(N,\C))$, $\msU = \mcO_\mbq(U(N))$ and $\msC = \mcO_\mbq(T_+(N))$. 

Recall from \eqref{EqLeftTrans} that we have on $\msA$ the left translation coaction $\gamma$ by $\msU$. Since $\msA$ has the coinvariant central control $[2]_\mbq \oplus C \oplus \Det_\mbq(X)$, it follows that $\gamma$ completes to a coaction 
\[
\gamma: A_0 \rightarrow U_0 \underset{C_0(\Theta)}{\otimes} A_0
\]
(note that this is meaningful by Remark \ref{RemTypI}). Let $\msB$ be the set of coinvariants for $\gamma$, and let 
\[
E = (\Phi\otimes \id)\gamma: \msA \rightarrow \msB
\]
be the associated conditional expectation with completion $E_0$. Since the completion $\gamma_0$ of $\gamma$ is still injective by the existence of a counit, and since $\Phi_0$ is pointwise faithful on $U_0$, it follows that $E_0$ is a pointwise faithful conditional expectation on $A_0$. 

Let now $\pi$ be an irreducible admissible representation of $\msA$, and $\wpi$ the corresponding representation of $A_0$. Since $E_0$ is faithful, we can find an irreducible representation $\wpi'$ of $B_0$ such that $\wpi \preceq \Ind_{B_0}^{A_0}(\wpi')$ where we induce from $E_0$. Since $\msA$ has a coinvariant central control, it follows that $\wpi'$ factors through some $B_{<M}$ and hence is the completion of some (bounded) irreducible representation $\pi'$ of $\msB$. However, by Lemma \ref{LemMSB} and Proposition \ref{PropAdmExt} such a representation can be uniquely extended to an irreducible admissible representation $\pi''$ of $\msC$. Since $\Ind_{B_0}^{A_0}(\wpi')$ factors through $A_{\leq M}$, it is easily seen that $\Ind_{B_0}^{A_0}(\wpi')$ is a subrepresentation of the restriction to $\msA$ of the induction of $\pi''$ to $\msU \msRtimes \msC$ via its natural conditional expectation onto $\msC$. However, since invariance of a subspace under $\msB$ implies invariance under $\msC$, it follows that in fact $\Ind_{B_0}^{A_0}(\wpi')$ is the restriction to $A_0$ of a representation of $U_0 \underset{C_0(\Theta)}{\otimes} C_0$, and that the images in this representation are the same.

It is now clear that $\pi$ extends in a unique way to an admissible irreducible representation of $\msU \msRtimes \msC$, and that this sets up a one-to-one correspondence between admissible irreducible representations. By compatibility of the central controls, this in fact identifies $A_0 \cong U_0 \underset{C_0(\Theta)}{\otimes} C_0$.
\end{proof}

\begin{Theorem}\label{TheoHopfRU}
Together with its natural Hopf $k_*$-algebra structure, $\mcO_\mbq^{\R}(GL(N,\C))$ is a strongly C$^*$-faithful, continuous, type $I$ Hopf $\msR$-algebra.
\end{Theorem}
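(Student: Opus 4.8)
The plan is to transport every assertion across the quantum $QR$-decomposition. By Proposition~\ref{PropMainR}, $\mcO_\mbq^{\R}(GL(N,\C))$ is an $\msR$-algebra, and by Proposition~\ref{PropULDecomp} the map $\pi_{UT,0}^{\mbq}$ is an isomorphism of $C_0(\Theta)$-algebras
\[
C_{0,\mbq}(GL(N,\C)) \;\cong\; C_{0,\mbq}(U(N)) \underset{C_0(\Theta)}{\otimes} C_{0,\mbq}(T_+(N)),
\]
together with a bijection between the admissible irreducible representations of $\mcO_\mbq^{\R}(GL(N,\C))$ and of $\mcO_\mbq(U(N))\msRtimes\mcO_\mbq(T_+(N))$ that is compatible with $\pi_{UT}^{\mbq}$ and matches images. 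So what remains is to feed into this the corresponding statements for $\mcO_\mbq(U(N))$ and $\mcO_\mbq(T_+(N))$ (the theorem for $\mcO_\mbq(U(N))$ proved above and Theorem~\ref{TheoHopfR}), and to supply the Hopf structure.

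For the type~$I$-property I would note that the $C_0$-hulls $C_{0,\mbq}(U(N))$ and $C_{0,\mbq}(T_+(N))$ are type~$I$ C$^*$-algebras by Proposition~\ref{PropTypI}, that a balanced tensor product over $C_0(\Theta)$ of type~$I$ C$^*$-algebras is again type~$I$, hence $C_{0,\mbq}(GL(N,\C))$ is type~$I$, and then that $\mcO_\mbq^{\R}(GL(N,\C))$ is type~$I$ by the converse half of Proposition~\ref{PropTypI} since it is an s$^*$-algebra. For strong C$^*$-faithfulness I would argue as in Proposition~\ref{PropFaith}: at $\mbq=1$ it is clear, since $\mcO_1^{\R}(GL(N,\C))$ is the coordinate ring of the affine variety $GL(N,\C)$; and for $q\neq 1$ it follows from injectivity of the quantum $QR$-map (Lemma~\ref{LemMorQR} and its fibrewise specialisations), the representation bijection of Proposition~\ref{PropULDecomp}, and C$^*$-faithfulness of $\mcO_q(U(N))\otimes\mcO_q(T_+(N))$, which holds because both factors are C$^*$-faithful and type~$I$. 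For continuity I would simply observe that continuity of a $C_0(\Theta)$-algebra is an isomorphism invariant, and reduce to the statement that the balanced tensor product over $C_0(\Theta)$ of two continuous fields of nuclear C$^*$-algebras is continuous. Finally, to upgrade to a Hopf $\msR$-algebra, I would check directly --- exactly as in the proof of Theorem~\ref{TheoHopfR} --- that $\Delta$ and $\varepsilon$ are admissible $*$-homomorphisms (immediate, since the only spectral condition is $\mbq\geq 0$ and $\Delta$, $\varepsilon$ carry $X_{ij}$ and $\Det_\mbq(X)^{\pm1}$ to bounded expressions in the generators) and that $\varepsilon$ is proper, using that $\varepsilon$ sends the central-control element $\Tr_{\mbQ^2}(X^*X)$ to $\sum_i\mbq^{2(N-i)}$, which grows with $\mbq$.

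The main obstacle I expect is the continuity step. For $\mcO_\mbq(T_+(N))$ continuity was obtained by exhibiting a commutative conditional-expectation image and invoking Theorem~\ref{TheoFieldCont}; here no such direct route is available, and instead one must rely on the fact that forming a balanced tensor product over the base preserves continuity of the field, which uses exactness/nuclearity of the fibres and is the only genuinely non-elementary ingredient. The remaining points --- type~$I$, strong C$^*$-faithfulness, and the Hopf axioms --- are essentially bookkeeping on top of Proposition~\ref{PropULDecomp}.
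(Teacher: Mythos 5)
Your proof is correct and follows essentially the same route as the paper: strong C$^*$-faithfulness from the injectivity of the quantum $QR$-map (Lemma \ref{LemMorQR}), type $I$ and continuity from the $C_0(\Theta)$-isomorphism of Proposition \ref{PropULDecomp} combined with the corresponding properties of $\mcO_\mbq(U(N))$ and $\mcO_\mbq(T_+(N))$, and the Hopf axioms by direct verification. The only difference is that you spell out the implicit ingredients (that type $I$ and continuity are preserved under the balanced tensor product over $C_0(\Theta)$ of separable nuclear fields) which the paper leaves tacit in the phrase "follow from Proposition \ref{PropULDecomp}."
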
 
\begin{proof}
The strong C$^*$-faithfulness already follows from Lemma \ref{LemMorQR}, while the type $I$-condition and continuity follow from Proposition \ref{PropULDecomp}. The existence of a compatible coproduct and counit is again trivial.
\end{proof}

\section*{Appendix}

\appendix
\renewcommand{\thesection}{A.\arabic{section}}
\setcounter{section}{0}

\section{ Quantum generating families in the C$^*$-algebraic setting}\label{Ap1}

Note that although the proof of Theorem \ref{TheoMainMainExtra} required to work over $k$, the conclusion is still valid over $\C[\mbq,\mbq^{-1}]$ since the C$^*$-completions are the same. The benefit is that one can find quantum controls whose entries generate the $*$-algebra. In this appendix, we want to see how this generating property on the algebraic level can be compared to Woronowicz's generating property on the C$^*$-level \cite{Wor95}. 

Recall from \cite{BJ83} the notion of \emph{element $T$ affiliated to a C$^*$-algebra $A_0$}, $T\eta A_0$, and from \cite{Wor95} the \emph{z-transform} $z_T = T(1+T^*T)^{-1/2} \in M(A_0)$. Recall further that any non-degenerate $*$-homomorphism between C$^*$-algebras $\alpha: A_0 \rightarrow M(B_0)$ can be extended to a map $\alpha^{\eta}: A_0^{\eta}\rightarrow B_0^{\eta}$. 

\begin{Def} 
Let $A_0$ be a separable C$^*$-algebra, and let $\mbX \eta M_N(A_0)$ be an element affiliated with $M_N(A_0)$. We say that $\mbX$ is a \emph{quantum generating family} for $A_0$ if the following holds: for every representation $(\pi,\Hsp_{\pi})$ of $A_0$ and every separable C$^*$-algebra $B_0$, the condition $\pi(\mbX) \eta M_N(B_0)$ implies that $\pi(A_0) \subseteq M(B_0)$ with  
\[
\pi: A_0 \rightarrow M(B_0)
\] 
non-degenerate.

If $\msA$ is just a unital $*$-algebra, we will say that $X \in M_N(\msA)$ is a \emph{quantum generator} if $\msA$ is generated by the $X_{ij}$ as a unital $*$-algebra.
\end{Def} 

It is in practice quite hard to show that an affiliated element $\mbX$ is a generating quantum family. We will in this appendix consider this question in the setting of s$^*$-algebras which are quantum generated by a quantum control.

We will fix in the following a countably generated s$^*$-algebra $\msA$, so that $A_0$ is separable. We first present some general results on realizing elements of $\msA$ as affiliated elements of $A_0$.

\begin{Lem}\label{LemIdAff}
The collection of maps $\wtheta_M^{\eta}:A_0^{\eta} \rightarrow A_{\leq M}$ provide an identification of vector spaces 
\[
A_0^{\eta} \cong \underset{\longleftarrow}{\lim}\, A_{\leq M},
\] 
where we take the \emph{algebraic} projective limit.
\end{Lem}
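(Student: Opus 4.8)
The plan is to unravel the definitions on both sides and match them termwise. Recall that an element $T\eta A_0$ is, by the Baaj--Julg picture, the same data as a morphism from $C_0(\R)$ (or $C(\overline{\R})$, the functions on the two-point compactification) into $M(A_0)$; concretely, $T$ is determined by its $z$-transform $z_T\in M(A_0)$ together with the requirement that $(1-z_T^*z_T)^{1/2}$ have strictly positive "value", and $T=z_T(1-z_T^*z_T)^{-1/2}$. Since $M(A_0)=A_b$ by Lemma~\ref{LemEqMulti} (the Tietze extension property holds because $\msA$ is an s$^*$-algebra, Proposition~\ref{PropCentTiet}), and $A_b=\varprojlim^b A_{\leq M}$ is a bounded projective limit, an element of $M_N(M(A_0))$ is precisely a bounded-uniformly compatible family $(x_M)$ with $x_M\in M_N(A_{\leq M})$. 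Dropping the boundedness requirement is exactly what passing from an affiliated element of $M_N(A_0)$ to a mere compatible family in $\varprojlim A_{\leq M}$ should amount to, and this is the statement to be proved.

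Concretely I would proceed as follows. First I would fix $\mbX\eta M_N(A_0)$ and set $x_M=\wtheta_M^{\eta}(\mbX)\in A_{\leq M}$, using that each $\wtheta_M:A_0\to A_{\leq M}$ is a surjective (hence non-degenerate, since it is unital onto a unital C$^*$-algebra) $*$-homomorphism, so its extension $\wtheta_M^{\eta}$ to affiliated elements is defined; compatibility $\wtheta_{M',M}(x_{M'})=x_M$ for $M\le M'$ follows from functoriality of the $\eta$-extension applied to the commuting triangle $\wtheta_{M',M}\circ\wtheta_{M'}=\wtheta_M$. This gives a well-defined map $A_0^{\eta}\to\varprojlim A_{\leq M}$ (algebraic projective limit); I would note it is $\C$-linear and $*$-preserving directly from the corresponding properties of the $\wtheta_M^{\eta}$. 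For injectivity: if all $x_M=0$ then all $z_{x_M}=\wtheta_M(z_{\mbX})=0$, and since $z_{\mbX}\in M_N(M(A_0))=M_N(A_b)\hookrightarrow\bprod_M M_N(A_{\leq M})$, the element $z_{\mbX}$ is zero, hence $\mbX=0$. For surjectivity, given a compatible family $(x_M)$, I would form the candidate $z$-transforms $w_M:=z_{x_M}=x_M(1+x_M^*x_M)^{-1/2}\in M_N(A_{\leq M})$; these are contractions, they are compatible under $\wtheta_{M',M}$ because the $z$-transform is a continuous functional-calculus expression preserved by $*$-homomorphisms, and hence $(w_M)$ assembles to a contraction $w\in M_N(A_b)=M_N(M(A_0))$; then I must check $w$ is the $z$-transform of an affiliated element, i.e. that $(1-w^*w)^{1/2}$ is strictly positive as a multiplier of $M_N(A_0)$, equivalently $(1-w^*w)^{1/2}M_N(A_0)$ is dense. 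This last density can be checked fibrewise on the $A_{\leq M}$ (where $1-w_M^*w_M=(1+x_M^*x_M)^{-1}$ is strictly positive invertible in the unital algebra $M_N(A_{\leq M})$), combined with Lemma~\ref{LemTietDens} to pass from fibres back to $A_0$; then $\mbX:=w(1-w^*w)^{-1/2}\eta M_N(A_0)$ satisfies $\wtheta_M^{\eta}(\mbX)=x_M$.

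I expect the main obstacle to be the surjectivity step, specifically verifying the strict positivity of $(1-w^*w)^{1/2}$ as an element affiliated with / a strictly positive multiplier of $M_N(A_0)$ (rather than merely of $M_N(A_b)$). The family $(1+x_M^*x_M)^{-1/2}$ need not be uniformly bounded below across $M$ — indeed its norm may tend to $0$ as $M\to\infty$ precisely because $\mbX$ is unbounded — so $w$ genuinely lies in the multiplier algebra and not in $A_0$ itself, and one must argue that nonetheless $(1-w^*w)^{1/2}A_0$ is dense. The key point is that it suffices to hit each $A_{<M}$ densely, and on $A_{<M}\subseteq A_b$ only the components up to $M$ matter, where $(1-w^*w)^{1/2}$ restricts to an invertible element of $M_N(A_{\leq M})$; combining this with the local units $e_M$ and Lemma~\ref{LemTietDens} (density of $\sum_M e_M\theta(\msA)$ in $A_0$, noting $e_M(1-w^*w)^{1/2}$ has the same fibrewise-invertible behaviour on the range up to its index) closes the argument. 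The remaining bookkeeping — that the $\eta$-extension is functorial, that the $z$-transform commutes with $*$-homomorphisms, that an affiliated element is recovered from its $z$-transform — is standard from \cite{BJ83,Wor95} and I would cite it rather than reprove it.
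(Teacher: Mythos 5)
Your proof is correct and fills in exactly the ingredients the paper labels ``straightforward'' (Lemma~\ref{LemEqMulti} to identify $M(A_0)=A_b$, plus the $z$-transform characterization of affiliated elements from \cite{Wor95}). For the one genuinely delicate point—showing $(1-w^*w)^{1/2}A_0$ is dense—your route via Lemma~\ref{LemTietDens} and local units works, but can be compressed: $A_{<M}$ sits inside $A_{\leq M}$ as a closed two-sided ideal on which $(1-w^*w)^{1/2}$ acts through the invertible element $\wtheta_M\bigl((1-w^*w)^{1/2}\bigr)=(1+x_M^*x_M)^{-1/2}$ of $A_{\leq M}$, so $(1-w^*w)^{1/2}A_{<M}=A_{<M}$ outright, and $(1-w^*w)^{1/2}A_0\supseteq A_c$ is already dense.
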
 
\begin{proof}
Straightforward using Lemma \ref{LemEqMulti} and the characterization of affiliated elements through the $z$-transform, see \cite[Section 1]{Wor95}.    
\end{proof}

We can hence identify $A_0^{\eta}$ as a subspace 
\[
A_0^{\eta} \subseteq \prod_{\pi \in P} B(\Hsp_{\pi}). 
\]
Remark that this gives a natural unital $*$-algebra structure on $A_0^{\eta}$. More precisely, we obtain an isomorphism 
\[
A_0^{\eta} \cong M(A_c).
\]

If now $\mbS,\mbT$ are two affiliated operators, denote by $\mbS\dotplus \mbT$ the closure of their sum, and by $\overline{\mbS\mbT}$ the closure of their product. In general, it is not clear if these are again affiliated elements, but the existence of local units $e_M \in \msZ(A_c)$ immediately gives the following lemma. 

\begin{Lem}\label{LemCompOK}
With respect to $\mbS\dotplus \mbT$, $\overline{\mbS\mbT}$ and the adjoint operation, the space $A^{\eta}_0$ becomes a unital $*$-algebra. Moreover, the natural isomorphism $A^{\eta}_0\rightarrow M(A_c)$ is an isomorphism of $*$-algebras, and $A_c$ is a joint core for all $\mbT\in A^{\eta}_0$. 
\end{Lem}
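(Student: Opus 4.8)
\textbf{Proof proposal for Lemma \ref{LemCompOK}.}

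The plan is to run everything through the identification $A_0^\eta \cong M(A_c)$ from Lemma \ref{LemIdAff} (and the remark following it), so that the operatorial closures $\mbS \dotplus \mbT$ and $\overline{\mbS\mbT}$ get matched up with the purely algebraic operations on $M(A_c)$. First I would recall that, by the discussion in the excerpt, each $A_{<M}$ sits inside $\prod_{\pi\in P}B(\Hsp_\pi)$ with non-zero components only in the factors with $\pi\in P_{\leq M}$, that $A_c = \cup_M A_{<M}$ carries the algebraic multiplier algebra $M(A_c)$, and that under Lemma \ref{LemIdAff} an element $\mbT\in A_0^\eta$ is nothing but a compatible family $(\wtheta_M^\eta(\mbT))_M \in \varprojlim_{\mathrm{alg}} A_{\leq M}$, i.e.\ a (possibly unbounded) tuple $(\pi(\mbT))_{\pi\in P}$ with $\sup_{\pi\in P_{\leq M}}\|\pi(\mbT)\|<\infty$ for every $M$. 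The $*$-algebra structure on $A_0^\eta$ referred to in the statement is exactly this componentwise one, which is manifestly well defined (componentwise sum, product, adjoint) and has unit $1 = (\id_{\Hsp_\pi})_\pi$; that it coincides with $M(A_c)$ is the content of the remark $A_0^\eta \cong M(A_c)$, using that the $e_M\in\msZ(A_c)$ multiply each bounded slice back into $A_c$.

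The substantive point is then to check that the \emph{analytic} operations agree with the componentwise ones: that $\mbS\dotplus\mbT$ (closure of the sum of two affiliated elements) and $\overline{\mbS\mbT}$ (closure of the product) are again affiliated, and equal the componentwise sum and product. Here I would use the central local units $e_M\in\msZ(A_c)$. Fix $\mbS,\mbT\in A_0^\eta$. For each $M$, $e_M\mbS$, $e_M\mbT$, $e_M\overline{\mbS\mbT}$ are bounded (they are the images of $\mbS,\mbT,\overline{\mbS\mbT}$ in $A_{\leq M}$, transported back via $e_M$, using Lemma \ref{LemIdAff}), so $A_c\subseteq e_M A_0^\eta$ is a common core: on the dense subspace $\pi(e_M)\Hsp_\pi$ the operators $\pi(\mbS)+\pi(\mbT)$ and $\pi(\mbS)\pi(\mbT)$ agree with the componentwise candidates and are bounded, and letting $M\to\infty$ the $e_M$ converge strictly to $1$, so the closures are exactly the componentwise elements. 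Finiteness of $\sup_{\pi\in P_{\leq M}}$ of the relevant norms then shows these closures lie in $\varprojlim_{\mathrm{alg}}A_{\leq M} = A_0^\eta$, i.e.\ they are affiliated; associativity, distributivity and the $*$-identities descend from the componentwise picture. Finally, since every $e_M$ lies in $A_c$ and $e_M\mbT\in A_c$ for every $\mbT\in A_0^\eta$ and every $M$, while $e_M\to 1$ strictly on $A_0$, the subspace $A_c$ is a joint core for all $\mbT\in A_0^\eta$.

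The main obstacle I anticipate is purely the bookkeeping around closures of unbounded operators — verifying that $A_c$ genuinely is a core (not merely an invariant dense domain) for $\mbS\dotplus\mbT$ and $\overline{\mbS\mbT}$ simultaneously, and that no domain subtleties spoil the matching with $\varprojlim_{\mathrm{alg}}A_{\leq M}$ — but the centrality of the $e_M$ makes this routine: it lets one reduce every identity to the bounded algebras $A_{\leq M}$, where there is nothing to prove. In other words, once Lemma \ref{LemIdAff} is in hand, the lemma is essentially a translation statement, and the proof is little more than "apply $\wtheta_M^\eta$, work in $A_{\leq M}$, take the limit".
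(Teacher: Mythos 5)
Your proposal is correct and takes the approach the paper itself indicates: the paper states this lemma without proof, remarking only that it follows immediately from the existence of the central local units $e_M\in\msZ(A_c)$, and your argument fills in exactly those details — using the $e_M$ to reduce all identities to the bounded quotients $A_{\leq M}$, and their strict convergence to $1$ to pass back. One piece of the writeup is circular as phrased: you list $e_M\overline{\mbS\mbT}$ as bounded "because it is the image of $\overline{\mbS\mbT}$ in $A_{\leq M}$ transported back via $e_M$," but at that stage you have not yet shown that $\overline{\mbS\mbT}$ is affiliated (hence that it has an image in $A_{\leq M}$ at all). The order should be reversed: first observe that the \emph{componentwise} product $(\pi(\mbS)\pi(\mbT))_\pi$ manifestly lies in $\varprojlim_{\mathrm{alg}}A_{\leq M}\cong A_0^\eta$, then use the $e_M$ and their centrality to show that $A_c$ is a core for this element and that it restricts to $\mbS\mbT$ on $A_c$, whence the closure $\overline{\mbS\mbT}$ equals it. With that reordering the argument is sound and matches the paper's intent.
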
 

This result also holds of course for elements in $M_N(\msA)$. 

Let now $\msA$ be an s$^*$-algebra, and let $X = (X_{ij}) \in M_N(\msA)$ be a quantum control whose entries generate $\msA$ as a unital $*$-algebra. Then from the above, we can in particular make sense of 
\[
\mbX \eta M_N(A_0).
\] 
However, it is not clear unfortunately with this information alone if $\mbX$ generates $A_0$ in the sense of Woronowicz. The problem is that it is not clear to what extent functional calculus on $\mbX \eta A_0$ can be used to produce elements in $A_c$.

\begin{Lem}\label{LemConvAbs} 
Let $\msA$ be an s$^*$-algebra, and assume that $X \in M_N(\msA)$ with associated affiliated element $\mbX \eta M_N(A_0)$. Put 
\[
z = (\Tr\otimes \id)(X^*X) \in \msA,
\] 
and let $\mbz \eta A_0$ be the associated affiliated element. Let $(f_k)_k$ be any sequence of bounded continuous functions on $\R^+$ which converges to the identity function on $\R^+$ uniformly on compacts, and put 
\[
z_k = (\Tr\otimes \id)(f_k(\mbX^*\mbX)) \in A_0.
\] 
Then $z_k \rightarrow \mbz$ in the almost uniform topology
\end{Lem}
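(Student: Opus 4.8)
The statement asserts that the net $z_k = (\Tr\otimes\id)(f_k(\mbX^*\mbX))$ converges to $\mbz = (\Tr\otimes\id)(\mbX^*\mbX)$ in the almost uniform (a.k.a.\ strict-on-each-ideal) topology on $A_0^\eta \cong M(A_c)$, i.e.\ that $e\cdot z_k \to e\cdot\mbz$ and $z_k\cdot e\to\mbz\cdot e$ in norm for every $e\in A_c$. By Lemma~\ref{LemIdAff} and Lemma~\ref{LemCompOK} the affiliated element $\mbX^*\mbX\eta M_N(A_0)$ is completely described fiberwise: for each $\pi\in P$ it acts as the (in general unbounded) positive operator $\pi(X)^*\pi(X)$ on $\C^N\otimes\Hsp_\pi$, and $z = (\Tr\otimes\id)(\mbX^*\mbX)$ acts fiberwise as $\pi(z) = \sum_i \pi(X)_{*i}^*\pi(X)_{*i}$. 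The first step is therefore to reduce the claim to a uniform-norm estimate on each $A_{\leq M}$: since $A_c = \cup_M A_{<M}$ and each $A_{<M}$ sits inside $A_{\leq M}$ on which the quantum control $[2]_\mbq$-type filtration bounds $\|\pi(X)\|$ uniformly, it suffices to show that $\wtheta_M(z_k)\to\wtheta_M(\mbz)$ in norm in $A_{\leq M}$ for every $M$ — this is exactly the content of ``almost uniform convergence'' once one multiplies by a local unit $e_M$ with $\wtheta_M(e_M)=1$.

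The second step is the norm estimate in $A_{\leq M}$. Fix $M$. Since $z$ (equivalently, since $\msA$ is an s$^*$-algebra with central control bounded-equivalent to $z$, cf.\ the role of $\Tr$ in Proposition~\ref{PropDensr}) defines the filtration up to equivalence, there is a constant $R_M$ with $\|\pi(z)\| = \|\pi(X)\|^2 \leq R_M$ for all $\pi\in P_{\leq M}$; a fortiori each bounded operator $\pi(X)^*\pi(X)$ has spectrum in $[0,R_M]$. Hence $\pi(z_k) - \pi(\mbz) = (\Tr\otimes\id)\big((f_k - \mathrm{id})(\pi(X)^*\pi(X))\big)$, and by the trace bound $\|(\Tr\otimes\id)(\,\cdot\,)\| \leq N\|\cdot\|$ together with the functional-calculus estimate $\|(f_k-\mathrm{id})(\pi(X)^*\pi(X))\| \leq \sup_{t\in[0,R_M]}|f_k(t)-t|$, we get
\[
\|\wtheta_M(z_k) - \wtheta_M(\mbz)\| = \sup_{\pi\in P_{\leq M}}\|\pi(z_k)-\pi(\mbz)\| \leq N\sup_{t\in[0,R_M]}|f_k(t)-t|,
\]
which tends to $0$ by the hypothesis that $f_k\to\mathrm{id}$ uniformly on compacts. (A small wrinkle: one must check that $\wtheta_M(z_k)$ genuinely equals the fiberwise expression $\pi\mapsto(\Tr\otimes\id)(f_k(\pi(X)^*\pi(X)))$; this follows because $f_k$ is a bounded continuous function and functional calculus is compatible with the quotient $*$-homomorphisms $A_b\to A_{\leq M}\to B(\Hsp_\pi)$.) Finally, to upgrade this to a.u.\ convergence: given $e\in A_c$, pick $M$ with $e\in A_{<M}$; then $e\,z_k,\ e\,\mbz\in A_{<M}$ and $\|e(z_k-\mbz)\| \leq \|e\|\cdot\|\wtheta_M(z_k-\mbz)\|\to 0$, and symmetrically on the right, so $z_k\to\mbz$ almost uniformly.

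\textbf{Main obstacle.} The only genuinely delicate point is the interface between the functional calculus performed on the \emph{unbounded} affiliated operator $\mbX^*\mbX\eta M_N(A_0)$ — where $f_k(\mbX^*\mbX)\in M_N(M(A_0))$ is defined via the $z$-transform / bounded transform machinery of \cite{Wor95} — and its fiberwise description. One needs to know that applying a bounded continuous function commutes with all the projections $\wtheta_M$ and $\pi_b$, so that $f_k(\mbX^*\mbX)$ really does act on $\Hsp_\pi$ as $f_k(\pi(X)^*\pi(X))$; this is where Lemma~\ref{LemIdAff} and Lemma~\ref{LemCompOK} do the work, identifying $A_0^\eta$ with the algebraic projective limit $\varprojlim A_{\leq M}$ and hence reducing everything to honest bounded functional calculus in each $A_{\leq M}$, where the estimate above is routine. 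Once that identification is invoked the rest is the elementary $\varepsilon$-argument sketched.
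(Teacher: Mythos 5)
Your approach is correct in substance but diverges from the paper's proof, so let me compare and flag one small imprecision.

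The paper proceeds differently: it represents $A_0$ faithfully on a Hilbert space $\Ksp$, notes that $A_c\Ksp$ is a core for $\mbz$ and that $z_k\xi\to\mbz\xi$ for each $\xi\in A_c\Ksp$ (the same ``factorize through $A_{\leq M}$'' phenomenon you exploit), invokes \cite[Theorem A.6]{Tak03} for resolvent convergence, and then passes through functional calculus to strict convergence of the $z$-transforms. Your proof skips the Hilbert-space detour entirely and works directly with the estimate
\[
\|\wtheta_M(z_k) - \wtheta_M(\mbz)\|_{A_{\leq M}} \leq N\sup_{t\in[0,R_M]}|f_k(t)-t|
\]
on each $A_{\leq M}$, where $R_M = \sup_{\pi\in P_{\leq M}}\|\pi(X)\|^2<\infty$ by the first axiom of a C$^*$-filtration. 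That estimate is correct and is genuinely the engine of the proof; working on $A_{\leq M}$ rather than on a core of vectors is the cleaner version of what the paper also does implicitly. (One small side remark: you do not need to appeal to Proposition~\ref{PropDensr} or any relation between $z$ and a central control here — the bound on $\|\pi(X)\|$ over $P_{\leq M}$ is immediate from the definition of a C$^*$-filtration applied to the entries $X_{ij}$.)

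The one place you are too quick is the very first sentence, where you assert that the almost uniform topology on $A_0^\eta\cong M(A_c)$ \emph{is} the topology of norm convergence after left/right multiplication by elements of $A_c$. That is not Woronowicz's definition: almost uniform convergence means that the $z$-transforms $z_{z_k}\to z_{\mbz}$ in the \emph{strict} topology of $M(A_0)$, with the multiplications taken against $A_0$ rather than against $A_c$, and applied to the (bounded) $z$-transforms rather than to the unbounded affiliated elements themselves. Your argument is nonetheless salvageable in two short steps which you should add. First, your estimate gives $\wtheta_M(z_k)\to\wtheta_M(\mbz)$ in $A_{\leq M}$ for every $M$; since the $z$-transform function $g(t)=t(1+t^2)^{-1/2}$ is $1$-Lipschitz and functional calculus commutes with the $*$-homomorphisms $\wtheta_M$, this yields $\wtheta_M(z_{z_k})\to\wtheta_M(z_{\mbz})$ in $A_{\leq M}$ for every $M$, hence $e\,z_{z_k}\to e\,z_{\mbz}$ and $z_{z_k}\,e\to z_{\mbz}\,e$ in norm for every $e\in A_c$. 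Second, since $\|z_{z_k}\|\leq 1$ and $\|z_{\mbz}\|\leq 1$ and $A_c$ is dense in $A_0$, a $3\varepsilon$-argument upgrades this to strict convergence against all of $A_0$, which is exactly a.u.\ convergence. With that insertion the proof is complete and is, if anything, more self-contained than the paper's.
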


\begin{proof}
Represent $A_0$ faithfully and non-degenerately on a Hilbert space $\Ksp$, and write $\Ksp_0 = A_c \Ksp$. Then $A_c\Ksp$ is a core for $\mbz$, and clearly for each $\xi\in A_c\Ksp$ one has 
\[
z_k\xi \rightarrow \mbz\xi,
\] 
as one can factorize through some fixed $A_{\leq M}$ (with $M$ depending only on $\xi$). It follows that $z_k \rightarrow \mbz$ in the strong resolvent sense, that is $(i-z_k)^{-1} \rightarrow (i-\mbz)^{-1}$ in norm, by \cite[Theorem A.6]{Tak03}. In particular, $f(z_k) \rightarrow f(\mbz)$ in norm for each function $f \in C_0(\R^+)$, in particular for $x \mapsto  1-x(1+x^2)^{-1/2}$. It follows that the $z$-transform of $z_k$ converges (in norm) to the $z$-transform of $\mbz$, and hence $z_k$ converges in the almost uniform topology to $\mbz$. 
\end{proof}

\begin{Theorem}\label{TheoMain}  
Let $\msA$ be an s$^*$-algebra. Assume that $Y_1,\ldots, Y_n \in \msZ(\msA)$, $X\in M_N(\msA)$, and assume that $Y_1\oplus \ldots\oplus Y_n \oplus X$ is at the same time a quantum control and quantum generator for $\msA$. Let $k_*$ be the unital algebra generated by the $Y_i$, and let $Q\in M_N(k_*)$ be invertible and selfadjoint. Assume further that $\Tr_{Q^2}(X^*X)$ is central in $\msA$. Then $\mbY_1\oplus \ldots \oplus \mbY_n\oplus \mbX$ is a quantum generating family for $A_0$.
\end{Theorem}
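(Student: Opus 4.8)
The plan is to reduce the C$^*$-algebraic generating property to a statement about functional calculus, using the fact that an s$^*$-algebra has a central control together with the associated central local units, and then to exploit Lemma \ref{LemConvAbs} to recover all relevant elements inside $A_c$ from the affiliated element $\mbY_1\oplus\ldots\oplus\mbY_n\oplus\mbX$. First I would fix a representation $(\pi,\Hsp_\pi)$ of $A_0$ and a separable C$^*$-algebra $B_0$, and assume $\pi(\mbY_i)\eta M(B_0)$ for all $i$ and $\pi(\mbX)\eta M_N(B_0)$. The key observation is that $C := \Tr_{Q^2}(X^*X)$ is central, and by Lemma \ref{LemLem} and Proposition \ref{PropDensr}-type reasoning (applied to $Q^*X^*XQ$) we may take as central control a single positive element built from $C$ and the $Y_i$; indeed $[2]$-style arguments give that $Y := \sum_i Y_i^*Y_i + C$ (say) is bounded equivalent to the quantum control, hence is itself a central control, and the filtration $\msP = \msP^{Y}$ may be used throughout. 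The associated central local units are $e_M = f_M((1+Y)^{-1})\in A_c$ for suitable $f_M\in C_c((0,1])$.

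Next I would show $\pi(\mbY)\eta M(B_0)$ and hence that functional calculus in $C_0$-functions of $\pi(\mbY)$ lands in $M(B_0)$ as a non-degenerate object. Here $Y$ is a positive affiliated element of $A_0$; by Lemma \ref{LemConvAbs}, applied with $X$ replaced by a column matrix assembling the $Y_i$ and $X$, the affiliated element $\mbY$ is the almost-uniform limit of elements $(\Tr\otimes\id)(f_k((\mbY_1\oplus\ldots\oplus\mbY_n\oplus\mbX)^*(\cdots)))$, which by hypothesis all lie in (the multiplier algebra generated inside) $B_0$; the almost-uniform limit of affiliated elements whose $z$-transforms lie in $M(B_0)$ again has $z$-transform in $M(B_0)$, so $\pi(\mbY)\eta M(B_0)$. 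Since $e_M$ is a $C_0$-function of $(1+\mbY)^{-1}$, which is a bounded continuous function of $\pi(\mbY)$, we get $\pi(e_M)\in M(B_0)$, and in fact $\pi(e_M)\to 1$ strictly; since the $e_M$ form an approximate unit of $A_0$ this already gives $\pi(A_c)\subseteq M(B_0)$ provided we know $\pi(e_M\theta(a))\in M(B_0)$ for every $a\in\msA$.

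The core step is therefore: for every generator $a\in\{Y_i, X_{ij}\}$ and every $M$, the element $\pi(e_M\theta(a))$ lies in $M(B_0)$. Because $e_M$ is supported (in the sense of the filtration) on $P_{\leq M'}$ for some $M'$, the map $\msA\to A_{<M'}$, $a\mapsto e_M\theta(a)$ factors through the C$^*$-algebra $A_{\leq M'}$, on which $\theta(\msA)$ is dense; so it suffices to produce $\pi(e_M X_{ij})$ and $\pi(e_M Y_i)$ in $M(B_0)$ and then close up. For the $Y_i$ this is the bounded functional calculus of $\pi(\mbY)$ composed with multiplication by $\pi(e_M)$, both in $M(B_0)$. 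For $X_{ij}$ one writes $e_M\theta(X)$ via functional calculus: choosing a continuous compactly supported $g_M$ on $\R^+$ with $g_M\equiv 1$ on the spectrum-relevant range and $g_M(0)=0$, one has $e_M\theta(X) = \theta(X)\,g_M(\mbX^*\mbX)^{1/2}\cdot(\text{bounded correction})$ inside $M_N(A_c)$, and $\pi$ of each factor lies in $M_N(M(B_0))$: $\pi(g_M(\mbX^*\mbX))$ is a $C_0$-function of $\pi(\mbX)\eta M_N(B_0)$, and $\theta(X)g_M(\mbX^*\mbX)^{1/2}$ is a bounded element obtained from the polar-type decomposition of the affiliated element $\mbX$, whose truncations by $C_0$-functions lie in $M_N(B_0)$ by definition of affiliation. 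Assembling: $\pi(e_M\theta(\msA))\subseteq M(B_0)$, hence $\pi(A_c)\subseteq M(B_0)$ by Lemma \ref{LemTietDens}, hence $\pi(A_0)\subseteq M(B_0)$; non-degeneracy follows since $\pi(e_M)\to 1$ strictly in $M(B_0)$.

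The main obstacle I expect is the bookkeeping in the core step — making precise that multiplication of the affiliated element $\mbX$ by an appropriate $C_0$-function of $\mbX^*\mbX$ produces a genuine element of $M_N(A_c)$ whose image under $\pi$ is controlled by $\pi(\mbX)\eta M_N(B_0)$, i.e.\ tracking the interaction between functional calculus on the (non-self-adjoint, rectangular-looking) $\mbX$, the central local units, and the passage to $B_0$. The rest is a combination of Lemma \ref{LemConvAbs}, Lemma \ref{LemTietDens}, Lemma \ref{LemEqMulti} and Lemma \ref{LemCompOK}, together with the standard fact (from \cite{Wor95}) that affiliation is detected by the norm-convergence of $z$-transforms under $C_0$-functional calculus.
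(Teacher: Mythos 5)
Your proposal takes a genuinely different route from the paper's: rather than invoking Woronowicz's criterion (\cite{Wor95}, Theorem 4.2), which reduces the generation question to (a) exhibiting a single element $r\in A_0$ with $\rho(r)\in M(B_0)$ and $\rho(r)B_0$ dense, and (b) a uniqueness statement for representations agreeing on $\mbW$ (this is where algebraic generation by the $Y_i,X_{ij}$ is actually used), you try to show $\pi(A_0)\subseteq M(B_0)$ directly by cutting each generator with central local units. The trouble is that this effectively asks you to re-derive the content of Woronowicz's theorem from scratch, and that is precisely where your ``core step'' fails.

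Concretely, three points are gaps. First, the identity $e_M\theta(X) = \theta(X)\,g_M(\mbX^*\mbX)^{1/2}\cdot(\text{bounded correction})$ is never justified and is not an identity of the form you need: $e_M$ is a function of the central element $\mbY$ and does commute with $\theta(X)$, whereas $g_M(\mbX^*\mbX)$ does \emph{not} commute with $\theta(X)$, so there is no simple polar-type decomposition, and the ``bounded correction'' is left entirely undefined. (One can get at $\pi(e_M\theta(X))\in M_N(B_0)$ by joint functional calculus of the commuting pair $(\mbX^*\mbX,\mbY)$, using $\mbX^*\mbX\leq N\|Q^{-1}\|^2\,\mbY$, but that is a different and more delicate argument than the one you sketch.) Second, you apply Lemma \ref{LemConvAbs} to the block matrix $\oplus\,\mbY_i\oplus\mbX$, whose trace of the square is $\sum|Y_i|^2+\Tr(X^*X)$; but $\Tr(X^*X)$ is not assumed central --- only $\Tr_{Q^2}(X^*X)$ is --- so the lemma must be applied to $\oplus\,\mbY_i\oplus\mbQ\mbX$. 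That in turn requires first establishing $\rho(\mbQ\mbX)\eta M_N(B_0)$, which the paper gets by observing that the $\mbY_i$ generate $C_0(\Theta)$ (hence $\rho(\mbQ)\eta M_N(B_0)$) and then taking a closed product. You mention $Q$ at the outset but drop it exactly where it matters. Third, the claim ``the almost-uniform limit of affiliated elements whose $z$-transforms lie in $M(B_0)$ again has $z$-transform in $M(B_0)$, so $\pi(\mbY)\eta M(B_0)$'' conflates two different things: having $z$-transform in $M(B_0)$ is necessary for affiliation but not sufficient; affiliation also demands that $(1-z^*z)^{1/2}B_0$ be dense in $B_0$, and that density condition does not pass to norm limits of $z$-transforms. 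Deriving this non-degeneracy from a single well-chosen element $r$ is exactly what Woronowicz's criterion is for, and if you bypass it you must supply a replacement argument, which the proposal does not.
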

  
\begin{proof} 
Put $W = Y_1\oplus \ldots \oplus Y_n \oplus X$, and write $\mbW \eta M_{N+n}(A_0)$ for the associated affiliated element. Put $z = \Tr_{Q^2}(X^*X)$ and $r = (1+|\mbY|^2+\mbz)^{-1}$. Then $r\in A_0$ by Proposition \ref{PropDensr} and the remarks following Proposition \ref{PropCentTiet}. Further put $Z = QX$. Then $\mbZ$ is the closure of the product $\mbQ\mbX$ by Lemma \ref{LemCompOK}.

Let $\rho$ be a non-degenerate representation of $A_0$ on a (separable) Hilbert space $\Hsp$, and let $B_0 \subseteq B(\Hsp)$ be a separable C$^*$-algebra, represented non-degenerately on $\Hsp$. Assume that $\rho(\mbW) \eta M_{N+n}(B_0)$. By \cite[Theorem 4.2]{Wor95}, it is sufficient to prove the following:
\begin{itemize}
\item $\rho(r) \in M(B_0)$ with $\rho(r)B_0$ dense in $B_0$. 
\item If $\rho'$ is another non-degenerate representation of $A_0$ on $\Hsp$, and $\rho'(\mbW) = \rho(\mbW)$, then $\rho' = \rho$.
\end{itemize}

For the first point, note that if $\msR$ equals $k_*$ with central control $Y_1\oplus \ldots\oplus Y_n$, and $C_0(\Theta)$ the associated bounded universal C$^*$-envelope, the  $\mbY_i$ clearly generate $C_0(\Theta)$ in the sense of Woronowicz. It hence follows that $\rho(\mbQ) \eta M_N(B_0)$, and so $\rho(\mbZ) \eta M_N(B_0)$. The first point follows now immediately from Lemma \ref{LemConvAbs}. 

For the second point, let $f_n$ be a sequence of functions of compact support on $(0,1]$ tending uniformly to the identity function on $[0,1]$. Then $f_n(r)$ converges in norm to $r$, and $f_n(r) \in A_c$. It follows that $\rho(\mbW)$ is bounded when restricted to $\C^{N'}\otimes \overline{(\rho(f_n(r))\Hsp)}$, and hence the restriction of $\rho$ or $\rho'$ to this subspace factors through some $A_{\leq M}$. Since however the $X_{ij}$ and $Y_i$ generate $\msA$, it follows that $\rho_{\mid \overline{f_n(r)\Hsp}} = \rho_{\mid\overline{f_n(r)\Hsp}}'$. Since the span of all $f_n(r)\Hsp$ is dense in $\Hsp$, it follows that $\rho = \rho'$. 
\end{proof}

From Lemma \ref{LemNewt} we now obtain the following.

\begin{Cor}
Let $k_*$ be the $*$-algebra $\C[\mbq,\mbq^{-1}]$, and consider $\mcO_\mbq^{\R}(GL(N,\C))$ as in Definition \ref{DefMainObj} but defined over $k_*$. Let $C_{0,\mbq}(GL(N,\C))$ be the associated bounded universal C$^*$-envelope, and write $D = \Det_\mbq(X)$. Then $[2]_\mbq \oplus \mbX \oplus \mbD^{-1}$ is a generator for $\mcO_0^\mbq(GL(N,\C))$ in the sense of Woronowicz.
\end{Cor}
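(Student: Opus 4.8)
The plan is to apply Theorem \ref{TheoMain} directly, so the task reduces to checking its hypotheses for the datum $W = [2]_\mbq \oplus X \oplus D^{-1}$ over $k_* = \C[\mbq,\mbq^{-1}]$. First I would observe that the conclusion of Theorem \ref{TheoMainMainExtra} (equivalently Theorem \ref{TheoHopfRU} for $GL(N,\C)$) is unaffected by passing from the ring $k$ containing all $[n]_\mbq^{-1}$ down to $\C[\mbq,\mbq^{-1}]$: the admissible Hilbert-space representations are the same, since in any admissible representation the central element $[n]_\mbq = \mbq^{n-1}+\dots+\mbq^{-n+1}$ acts by a strictly positive scalar (as $\mbq>0$), hence is automatically invertible, so the C$^*$-seminorms and the C$^*$-filtration coincide. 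Thus $\mcO_\mbq^{\R}(GL(N,\C))$ over $\C[\mbq,\mbq^{-1}]$ is still an s$^*$-algebra with the same $C_0$-hull $C_{0,\mbq}(GL(N,\C))$, and $[2]_\mbq \oplus X \oplus D^{-1}$ is still a quantum control by the same argument as in the lemma preceding Theorem \ref{TheoMainMainExtra}.

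Next I would verify the three structural hypotheses of Theorem \ref{TheoMain}. The central elements $Y_i$ are to be taken as $[2]_\mbq$ together with $D^{-1}$ and $(D^*)^{-1} = D^{-1}$ (recall $\Det_\mbq(Y) = (\Det_\mbq(X)^{-1})^*$, so adjoining $D^{-1}$ forces $(D^*)^{-1}$); all of these lie in $\msZ(\mcO_\mbq^{\R}(GL(N,\C)))$ since $\Det_\mbq(X)$ and $\Det_\mbq(Y)$ are central. That $[2]_\mbq \oplus D^{-1} \oplus (D^*)^{-1} \oplus X$ is a quantum \emph{generator} follows from the already-noted fact that $\mcO_\mbq^{\R}(GL(N,\C))$ is generated as a $k_*$-algebra by $\{X_{ij}, X_{kl}^*\}$ together with $D^{-1}, (D^*)^{-1}$, combined with $X^* = Y^{-1} = (S(X_{ij}))_{ij}$ being expressible through $X$ and $D^{-1}$ (the entries of $X^{-1}$ are polynomials in the $X_{ij}$ divided by $\Det_\mbq(X)$). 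Here $k_*$ in the sense of Theorem \ref{TheoMain} is generated by $[2]_\mbq, D^{-1}, (D^*)^{-1}$; one should note this is a slight notational clash with the global $k_*$, but harmless. Finally, the centrality of $\Tr_{Q^2}(X^*X)$ for the diagonal matrix $Q = \mbQ = \Diag(\mbq^{N-i})$ is exactly Lemma \ref{LemNewt} (the case $k=1$), and $\mbQ \in M_N(k_*)$ is invertible and self-adjoint. It also bears checking that the quantum control and quantum generator roles can be played by the \emph{same} tuple: $[2]_\mbq \oplus X \oplus D^{-1}$ already is a quantum control, and augmenting it with $(D^*)^{-1}$ (which is bounded-equivalent to $D^{-1}$ in the sense that $\|\pi((D^*)^{-1})\| = \|\pi(D^{-1})\|$ since $\pi(D)$ is a scalar in any admissible representation — $D$ is a grouplike central unitary-up-to-scalar) keeps it a quantum control, so no harm is done.

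With all hypotheses in place, Theorem \ref{TheoMain} yields that $\mbY_1 \oplus \dots \oplus \mbY_n \oplus \mbX$, i.e. $[2]_\mbq \oplus \mbD^{-1} \oplus (\mbD^*)^{-1} \oplus \mbX$, is a quantum generating family for $C_{0,\mbq}(GL(N,\C))$; but since $(\mbD^*)^{-1}$ is the adjoint of $\mbD^{-1}$ it is redundant in the generating family, so $[2]_\mbq \oplus \mbX \oplus \mbD^{-1}$ already generates in the sense of Woronowicz. I expect the only genuinely delicate point to be the passage between the ring $k$ and $\C[\mbq,\mbq^{-1}]$ — making precise that ``the C$^*$-completions are the same'' requires checking that the set of admissible irreducible representations, together with the filtration, is literally unchanged, which is where the observation that $[n]_\mbq$ acts invertibly in every admissible representation does the work. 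Everything else is bookkeeping: identifying which central elements to take as the $Y_i$, and invoking Lemma \ref{LemNewt} for the trace condition.
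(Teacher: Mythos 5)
Your application of Theorem \ref{TheoMain} breaks at the hypothesis check, and the issue propagates to the conclusion. Inside Theorem \ref{TheoMain} the symbol $k_*$ is redefined to mean the unital $*$-algebra generated by the chosen $Y_i$, and the theorem requires $Q\in M_N(k_*)$. With your choice $Y_1=[2]_\mbq$, $Y_2=D^{-1}$, $Y_3=(D^*)^{-1}$, this local $k_*$ does not contain $\mbq$ (it contains $\mbq+\mbq^{-1}$, but not $\mbq$), so $\mbQ=\Diag(\mbq^{N-i})\notin M_N(k_*)$, contrary to the final claim of your third paragraph. For the same reason $[2]_\mbq\oplus D^{-1}\oplus(D^*)^{-1}\oplus X$ is \emph{not} a quantum generator of $\mcO_\mbq^{\R}(GL(N,\C))$ over $\C[\mbq,\mbq^{-1}]$: the counits $\varepsilon_q$ and $\varepsilon_{q^{-1}}$ (both sending $X_{ij}\mapsto\delta_{ij}$, hence $D^{\pm1}\mapsto1$) agree on every entry of your tuple but send $\mbq$ to $q$ and $q^{-1}$ respectively, so $\mbq$ cannot lie in the $*$-subalgebra the tuple generates. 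The hypotheses of Theorem \ref{TheoMain} are met instead with $Y_1=\mbq$, $Y_2=\mbq^{-1}$, $Y_3=D^{-1}$, $Y_4=(D^*)^{-1}$.

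Even after this correction, one cannot simply trade $\mbq\oplus\mbq^{-1}$ for $[2]_\mbq$. The map $q\mapsto q+q^{-1}$ is $2$-to-$1$ on $(0,\infty)$, and the representation $\varepsilon_q\oplus\varepsilon_{q^{-1}}$ of $C_{0,\mbq}(GL(N,\C))$ on $\C^2$, with $B_0=\C$ acting as scalars, has every entry of $[2]_\mbq\oplus\mbX\oplus\mbD^{-1}$ acting by a scalar (hence affiliated with $B_0$) while, say, $(1+\mbq)^{-1}e_M$ does not (for $q\neq1$ and $e_M$ a sufficiently large local unit); so $[2]_\mbq\oplus\mbX\oplus\mbD^{-1}$ is not a Woronowicz-generating family. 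The family Theorem \ref{TheoMain} actually produces here is $\mbq\oplus\mbq^{-1}\oplus\mbD^{-1}\oplus(\mbD^*)^{-1}\oplus\mbX$, and the appearance of $[2]_\mbq$ rather than $\mbq$ in the corollary's statement looks like a slip, which your proof reproduces rather than catches. (Minor: ``$(D^*)^{-1}=D^{-1}$'' in your second paragraph is a typo for $(D^*)^{-1}=(D^{-1})^*$, which is what your parenthetical actually argues.)
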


\section{Connection with Meyer's theory of C$^*$-hulls}\label{Ap2}

In \cite{Mey17}, R. Meyer introduced a general notion of C$^*$-hull for a $*$-algebra with respect to a given class of (possibly unbounded) representations on Hilbert modules. In \cite[Section 7]{Mey17}, the special case was treated of $*$-algebras with sufficiently many bounded representations. In this appendix, we clarify the connection between our notions and the ones introduced in \cite{Mey17}. We refer to \cite{Mey17} for all necessary definitions and background.

Let $(\msA,\msS,\msP)$ be a unital $*$-algebra with closed spectral conditions $\msS$ and C$^*$-filtration $\msP$ on the set $P$ of admissible irreducible representations. Let $\mcN(\msA)$ be the set of all C$^*$-seminorms on $\msA$. Then $\mcN(\msA)$ is directed, and we can form for $q\in \mcN(\msA)$ the separation-completion $\mcA_q$ of $\msA$ with respect to $q$. For $M>0$, we will denote $q_{M} = \|-\|_M$, so that $\mcA_{q_M} = A_{\leq M}$ in the notation of Section \ref{Sec1}. We let $\mcN_{\msS}(\msA)$ be the subset of $\mcN(\msA)$ consisting of all $q$ for which the projection map $\msA \rightarrow \mcA_q$ is admissible. Then $\mcN_{\msS}(\msA)$ is still directed. 

Let $\mcA$ be the (algebraic) projective limit of the $\mcA_q$ along $\mcN_{\msS}(\msA)$, so that $\mcA$ is a pro-C$^*$-algebra. On the other hand, let $A$ be the projective algebraic limit pro-C$^*$-algebra $A = \underset{\longleftarrow}{\lim}\, A_{\leq M}$. From the second condition in Definition \ref{DefCfiltr}, it follows immediately that the natural projection map
\[
j:\mcA \rightarrow A,\quad (a_q)_q \mapsto (a_{q_M})_M
\]
is a topological $*$-isomorphism, and that under this isomorphism the maximal C$^*$-subalgebra $\mcA_b$ of $\mcA$ is sent to $A_b$. 

In \cite[Definition 7.14]{Mey17}, the C$^*$-algebra $C_0(\mcA)$ is defined as the C$^*$-subalgebra of $\mcA_b$ consisting of all elements $a\in \mcA_b$ for which there exists $q \in \mcN_\msS(\msA)$ such that $\pi_b(a) = 0$ for all irreducible representations $\pi \in P$ which do not lift to $\mcA_q$. Letting the $q$ range over the $q_M$, it is easily seen that $A_0 \subseteq j(C_0(\mcA))$. On the other hand, since for each $q \in \mcN_{\msS}(\msA)$ there is $M>0$ with $q\leq q_{M}$, we have $j(C_0(\mcA)) = A_0$. In what follows, we will hence drop the isomorphism $j$. 

Let now $\Rep_{\msS}(\msA)$ be the class of all admissible representations of $\msA$ on Hilbert modules which are \emph{locally bounded} in the sense of \cite[Definition 7.4]{Mey17}. We have the following theorem.

\begin{Theorem}
Let $\msA$ be an s$^*$-algebra. Then the C$^*$-algebra $A_0$ is the $C^*$-hull of $\msA$ with respect to $\Rep_{\msS}(\msA)$.
\end{Theorem}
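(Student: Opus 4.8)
The plan is to verify directly that $A_0$, together with the canonical map $\msA \to M(A_0) = A_b$ (using Lemma \ref{LemEqMulti} and Proposition \ref{PropCentTiet}), satisfies the universal property defining Meyer's $C^*$-hull for the representation class $\Rep_{\msS}(\msA)$. Recall that a $C^*$-hull consists of a $C^*$-algebra $B$ together with a bijection between $\Rep_{\msS}(\msA)$ and the nondegenerate representations of $B$ on Hilbert modules, natural in the Hilbert module; concretely, one must produce a $*$-homomorphism $\msA \to M(B)$ such that composing a nondegenerate representation of $B$ with it recovers an element of $\Rep_{\msS}(\msA)$, and conversely every representation in $\Rep_{\msS}(\msA)$ arises this way, compatibly with intertwiners and with the Hilbert module structure.

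First I would show that every representation $\pi \in \Rep_{\msS}(\msA)$ on a Hilbert $B_0$-module $\mcE$ induces a nondegenerate representation of $A_0 = C_0(\mcA)$ on $\mcE$. The key point is the local boundedness hypothesis (\cite[Definition 7.4]{Mey17}): $\mcE$ is the closure of the union of the submodules on which $\pi$ restricts to a bounded representation, and each such bounded piece factors through some $A_{\leq M}$ by the second condition in Definition \ref{DefCfiltr} applied to the corresponding bounded representation; hence $\pi$ extends to a (generally unbounded) representation of the pro-$C^*$-algebra $A = \varprojlim A_{\leq M}$, and the central local units $e_M \in A_c$ of Proposition \ref{PropCentTiet} witness that $A_0 = A_c^{\nc}$ acts nondegenerately (because $\pi(e_M)$ converges strictly to $1$ on each bounded piece, the span of the ranges $\pi(e_M)\mcE$ being dense). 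Conversely, given a nondegenerate representation of $A_0$ on $\mcE$, precomposition with $\theta: \msA \to M(A_c) = A_0^{\eta}$ (the map discussed before Lemma \ref{LemTietDens}) yields an $\msA$-representation on $\mcE$; one checks it is admissible using that admissibility of $\msA \to A_{\leq M}$ (equivalently $\msA \to A_b$, which is admissible since $A_b \hookrightarrow \bprod_{\pi} B(\Hsp_\pi)$) passes to subrepresentations, and that it is locally bounded because $A_0 = \cup_M A_{<M}$ forces each cyclic submodule generated by $e_M\mcE$ to be $\|-\|_M$-bounded. The two constructions are mutually inverse: this is essentially the computation already carried out in the proof of Lemma \ref{LemRepInd}, now run internally to a fixed $\mcE$ rather than over all of $P$.

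Next I would record that this bijection is natural in the Hilbert module, i.e.\ respects the adjointable intertwiners and base changes along $*$-homomorphisms $B_0 \to M(B_0')$; this is immediate once one notes that the whole comparison is implemented by the single fixed $*$-homomorphism $\theta: \msA \to M(A_c) = M(A_0)$, since naturality of $C^*$-algebra representations along module maps is automatic. Finally I would identify $A_0$ with the object $C_0(\mcA)$ of \cite[Definition 7.14]{Mey17}, which was done in the paragraph preceding the theorem via the isomorphism $j:\mcA \xrightarrow{\cong} A$, and invoke \cite[Section 7]{Mey17} (in particular the characterization of $C_0(\mcA)$ as the $C^*$-hull for $\Rep_{\msS}(\msA)$ when enough bounded representations exist) to conclude; alternatively, one appeals to uniqueness of $C^*$-hulls to match our $A_0$ with Meyer's. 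The main obstacle I anticipate is the careful handling of local boundedness on Hilbert modules rather than Hilbert spaces: one must make sure that the decomposition of $\mcE$ into bounded pieces interacts correctly with the ideals $A_{<M}$ and the local units $e_M$, and that nondegeneracy is genuinely strict (not merely essential), which is where Lemma \ref{LemTietDens} and the explicit central local units of Proposition \ref{PropCentTiet} do the real work; everything else is bookkeeping that parallels Lemmas \ref{LemIrToRep}, \ref{LemRepInd} and \ref{LemRepSam}.
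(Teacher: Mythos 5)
Your plan differs from the paper's proof, which is short and leverages Meyer's machinery more sharply. The paper does not attempt a hands-on verification of the universal property; instead it sets $\widetilde{\mcA}$ to be the pro-C$^*$-algebra along \emph{all} C$^*$-seminorms $\mcN(\msA)$, observes that the kernel ideal $\mathcal{J}$ of the quotient $\widetilde{\mcA}\rightarrow \mcA\cong A$ is a closed two-sided $*$-ideal, and that $\Rep_{\msS}(\msA)$ coincides with the class $\Rep_b(\msA,A)$ of \cite[Theorem 7.16]{Mey17}. It then invokes \cite[Theorem 7.17]{Mey17}, which reduces the entire statement to a single remaining hypothesis: that $A_0 = C_0(\mcA)$ is \emph{dense in the pro-C$^*$-algebra} $A$ in its natural locally convex topology. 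That density is the sole genuine content of the argument, and it is precisely what Lemma \ref{LemIdAff} ($A_0^{\eta} \cong \varprojlim A_{\leq M} = A$) supplies.

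This is where your proposal has a real gap: you never identify this density condition as the crux, and your concluding appeal to ``the characterization of $C_0(\mcA)$ as the $C^*$-hull for $\Rep_{\msS}(\msA)$ when enough bounded representations exist'' is too vague to land on it. Meyer's $C_0(\mcA)$ is not automatically a $C^*$-hull — the density of $C_0(\mcA)$ in $\mcA$ is a nontrivial hypothesis of Theorem 7.17, and for a general unital spectral $*$-algebra it can fail (this is exactly why the theorem is stated for s$^*$-algebras, where the Tietze extension property and the local units of Proposition \ref{PropCentTiet} feed into Lemma \ref{LemIdAff}). Your direct verification in the first two paragraphs would also eventually need this: without density, the map $\theta:\msA \to M(A_0)$ need not carry enough information to reconstruct an arbitrary element of $\Rep_{\msS}(\msA)$, so the ``mutually inverse'' claim is not free. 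There is a secondary soft spot as well: you check admissibility and local boundedness by treating the ``bounded pieces'' of the Hilbert module $\mcE$ as if Definition \ref{DefP} and Definition \ref{DefCfiltr} applied to them directly, but those definitions are formulated for Hilbert space representations; passing to Hilbert module representations requires Meyer's local--global principle, which is precisely what Theorem 7.17 packages so that one does not have to re-derive it. In short, you should replace the hand-built bijection with a direct appeal to \cite[Theorem 7.17]{Mey17}, after which the only thing left to check is the density of $A_0$ in $A$, and that is Lemma \ref{LemIdAff}.
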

\begin{proof}
Let $\widetilde{\mcA}$ be the projective limit pro-C$^*$-algebra along $\mcN(\msA)$, and $\mathcal{J}$ be the ideal along the quotient map $\widetilde{\mcA}\rightarrow \mcA \cong A$. Then it is easily seen that $\mathcal{J}$ is a closed two-sided $*$-ideal, and that $\Rep_{\msS}(\msA)$ coincides with the class of representations $\Rep_b(\msA,A)$ as specified below Theorem 7.16 in \cite{Mey17}. By \cite[Theorem 7.17]{Mey17}, we are left with proving that $A_0$ is dense in $A$ with respect to the natural locally convex topology on the latter. However, this follows from the identification $A = A_0^{\eta}$ established in  Lemma \ref{LemIdAff}.
\end{proof}

\end{document}